\documentclass[12pt]{amsart}
%line spacing
\usepackage{setspace}
%commenting/verbatim
\usepackage{verbatim}

%math packages
\usepackage{amsfonts,fancyhdr,ifthen,bm}
\usepackage{amscd,amssymb} 
\usepackage[letterpaper,left=1.35in,right=1.35in,top=1.35in,bottom=1.35in]{geometry}
 
\usepackage{times}
\usepackage{graphicx}
\usepackage{color}
\usepackage{epsfig}
\usepackage{mathrsfs}
\usepackage{paralist}
\usepackage{comment}
\usepackage{mathtools}
\usepackage{multirow}
\usepackage{tikz-cd}
\usepackage[foot]{amsaddr}
\usepackage{caption}

%
%
% A few new lines

\usepackage[style=numeric, backend=bibtex, maxbibnames=99]{biblatex}
% A terrible solution to a terrible citation format
\renewbibmacro{in:}{}

\addbibresource{references.bib}

\DeclareBibliographyCategory{Smib}
\addtocategory{Smib}{ Smi22b}

\DeclareFieldFormat{prefixnumber}{\ifcategory{Smib}{\mkbibbrackets{Smi2}}{\mkbibbrackets{#1}}} 
\DeclareFieldFormat{labelnumber}{\ifcategory{Smib}{Smi2}{#1}}

%
%
% Unchanged below

\usepackage{enumitem}

\usepackage{appendix}

\usepackage{tikz,ifthen,calc}
\usetikzlibrary{automata,arrows,positioning,calc}

% For manuscript, doublespace and 1.5in margins
%\usepackage{setspace}
%\doublespacing
%\usepackage[lmargin=1.35in, rmargin = 1.35in]{geometry}

%allows page breaks inside math environments
%\allowdisplaybreaks

%Theorems, etc.
\newtheorem{thm}{Theorem}[section]

\newtheorem*{thm*}{Theorem}
\newtheorem{prop}[thm]{Proposition}
\newtheorem*{prop*}{Proposition}
\newtheorem{cor}[thm]{Corollary}
\newtheorem*{cor*}{Corollary}

\newtheorem{lem}[thm]{Lemma}
\newtheorem*{lem*}{Lemma}

\newtheorem*{oquest*}{Open Question}
\newtheorem{heur}[thm]{Heuristic}
\newtheorem{intres}[thm]{Intermediate Result}

\theoremstyle{remark}
\newtheorem{rmk}[thm]{Remark}

\theoremstyle{remark}
\newtheorem*{rmk*}{Remark}

\theoremstyle{definition}
\newtheorem{defn}[thm]{Definition}

\theoremstyle{definition}
\newtheorem{notat}[thm]{Notation}

\theoremstyle{definition}
\newtheorem{ass}[thm]{Assumption}

\theoremstyle{definition}

\theoremstyle{definition}
\newtheorem*{defn*}{Definition}

\theoremstyle{definition}
\newtheorem{ex}[thm]{Example}

\theoremstyle{definition}
\newtheorem{ctn}[thm]{Construction}

\theoremstyle{definition}

%Equation numbering
\numberwithin{equation}{section}%numbers equations by section

%definition font

%Macros
\newcommand{\vect}[1]{\bm{#1}}

 %such that for sets
 %set difference

 % not sure if used
 % not sure if used
 % not sure if used
 \usepackage{chngcntr}
\counterwithin{figure}{section}

%Common sets
\newcommand{\C}{\mathbb{C}}
\newcommand{\Z}{\mathbb{Z}}
\newcommand{\QQ}{\mathbb{Q}}

\DeclareMathOperator{\FFF}{\mathbb{F}}

\newcommand{\Sel}{\textup{Sel}}
\newcommand{\Gal}{\textup{Gal}}

% Specific to 2Selm
\newcommand{\mfp}{\mathfrak{p}}
\newcommand{\mfq}{\mathfrak{q}}
\newcommand{\Vplac}{\mathscr{V}}
\newcommand{\Frob}{\textup{Frob}}

\newcommand{\Hom}{\textup{Hom}}

\newcommand{\isoarrow}{\xrightarrow{\,\,\,\sim\,\,\,}}

\newcommand{\msS}{\mathscr{S}}

\newcommand{\mfa}{\mathfrak{a}}
\newcommand{\mfb}{\mathfrak{b}}

\newcommand{\cores}{\textup{cor}}
\newcommand{\res}{\textup{res}}
\newcommand{\Cl}{\textup{Cl}\,}

\newcommand{\saaa}{s_{\textbf{a}}}
\newcommand{\sbbb}{s_{\textbf{b}}}

\newcommand{\ovp}{\overline{\mfp}}
\newcommand{\ovq}{\overline{\mfq}}

%Sha misery
\DeclareFontFamily{U}{wncy}{}
\DeclareFontShape{U}{wncy}{m}{n}{<->wncyr10}{}
\DeclareSymbolFont{mcy}{U}{wncy}{m}{n}
\DeclareMathSymbol{\Sha}{\mathord}{mcy}{"58}

%Common vectors

\newcommand{\e}{\vect{e}}

\newcommand\restr[2]{{% we make the whole thing an ordinary symbol
  \left.\kern-\nulldelimiterspace % automatically resize the bar with \right
  #1 % the function
  \vphantom{\big|} % pretend it's a little taller at normal size
  \right|_{#2} % this is the delimiter
  }}

%Math operators
\DeclareMathOperator*{\rank}{rank}

\newcommand{\Tine}{\textup{Tine}}
\newcommand{\TineF}[2]{\textup{Tine}_{#1}\,#2}
\newcommand{\FrobF}[2]{\textup{Frob}_{#1}\,#2}

\newcommand{\Sigstrict}{\Sigma_{\textup{strict}}}

 %previously \mathcal{O}_{F, S}^{\times}
 %previously \mathcal{O}_{K,S_0}^{\times}

 %previously \overline{F}^{\times}
 %previously \overline{F_v}^{\times}
 %previously \overline{F}
 %previously F^s
 %previously F_v^s
 %previously \overline{F_v}

\newcommand{\Mod}{\textup{Mod}}

\usepackage{scalerel}
% These look okay with resprod

\newcommand{\ran}{r_{\textup{an}}}

\newcommand{\ovQQ}{\overline{\mathbb{Q}}}

\newcommand{\mfB}{\mathfrak{B}}
\newcommand{\mfR}{\mathfrak{R}}

\newcommand{\symb}[2]{\left[ #1,\, #2\right]}
\newcommand{\class}[1]{\left[#1\right]}
\newcommand{\spin}[1]{\symb{#1}{#1}}

\newcommand{\zs}{\textup{zs}}
\newcommand{\Zlz}{\mathbb{Z}_{\ell}[\xi]}

\begin{document}
\title[Selmer groups in twist families I]{The distribution of $\ell^{\infty}$-Selmer groups in degree $\ell$ twist families I}

\author{Alexander Smith}
\email{asmith13@stanford.edu}
\date{\today}

\maketitle

\begin{abstract}
In this paper and its sequel, we develop a technique for finding the distribution of $\ell^{\infty}$-Selmer groups in degree $\ell$ twist families of Galois modules over number fields. Given an elliptic curve $E$ over a number field satisfying certain technical conditions, this technique can be used to show that $100\%$ of the quadratic twists of $E$ have rank at most $1$. Given a prime $\ell$ and a number field $F$ not containing $\mu_{2\ell}$, this method also shows that the $\ell^{\infty}$-class groups in the family of degree $\ell$ cyclic extensions of $F$ have a distribution consistent with the Cohen--Lenstra--Gerth heuristics.

For this work, we develop the theory of the fixed point Selmer group, which serves as the base layer of the $\ell^{\infty}$-Selmer group. This first paper gives a technique for finding  the distribution of $\ell^{\infty}$-Selmer groups in certain families of twists where the fixed point Selmer group is stable. In the sequel paper, we will give a technique for controlling fixed point Selmer groups.
\end{abstract}

\setcounter{tocdepth}{1}

\tableofcontents

\section{Introduction}
\label{sec:intro}
The aim of this paper and its sequel \cite{Smi22b} is to give a method for finding the distribution of $\ell^{\infty}$-Selmer groups in degree $\ell$ twist families of Galois modules over number fields for a given prime $\ell$. Such Selmer groups contain a base layer that we call the fixed point Selmer group, with $2$-Selmer groups holding this role in the case $\ell = 2$. The main goal of this paper is to prove Theorem \ref{thm:main_higher}, which controls the distribution of $\ell^{\infty}$-Selmer groups in certain special families of twists where the fixed point Selmer group is stable. The goal of \cite{Smi22b} is to study fixed point Selmer groups in natural twist families and break these natural families into subsets of twists where Theorem \ref{thm:main_higher} can be applied. 

The advantage of splitting our work in this way is that it prioritizes the theory allowing us to control higher Selmer groups, which is the crux of this project. This work appears in Sections \ref{sec:algcomb} and \ref{ssec:TAR} of this paper. The disadvantage of this arrangement is that we can only state and prove the overarching theorem of this project in the second part; it appears as \cite[Theorem 2.14]{Smi22b}.  This pair of papers developed out of the revision of the preprint \cite{Smi17} and constitute the final form of that work.

We will start by listing some of the consequences of the overarching theorem of this project. The proof that \cite[Theorem 2.14]{Smi22b} implies these results will be given in \cite[Section 3]{Smi22b}.

\subsection{Ranks of elliptic curves in twist families}
\label{ssec:elliptic_curves}
Choose an abelian variety $A$ over a number field $F$. Given an integer $n > 1$, the $n$-Selmer group 
\[\Sel^n (A/F)\] 
is a certain effectively calculable finite abelian group that contains the quotient $A(F)/nA(F)$ of the Mordell--Weil group as a subgroup. Defining the $n$-Selmer rank $r_n(A/F)$ to be the maximal integer $r$ so that there is some injection
\[(\Z/n\Z)^r \hookrightarrow \Sel^n(A/F)\big/\text{im}\left(A(F)_{\text{tors}}\right),\]
 it is clear that the rank of $A$ is no larger than $r_n(A/F)$. 

Given a prime $\ell$, the $\ell$-power Selmer ranks of $A/F$ form a nonincreasing sequence
\[r_{\ell}(A/F) \ge r_{\ell^2}(A/F) \ge r_{\ell^3}(A/F) \ge \dots.\]
We define the \emph{$\ell^{\infty}$-Selmer corank} $r_{\ell^{\infty}}(A/F)$ to be the limit of this sequence. If the Shafarevich--Tate conjecture holds for $A/F$, it would follow that
\[\rank(A/F) =r_{\ell^{\infty}}(A/F)\]
for every prime $\ell$. Without this assumption, we still have the inequality
\[\rank(A/F) \le r_{\ell^{\infty}}(A/F).\]

Given nonzero $d$ in $F$, we take $A^d/F$ to be the quadratic twist of $A$ associated to the field extension $F(\sqrt{d})/F$. In the case that $A$ is an elliptic curve over $F$ with Weierstrass form
\[y^2 = x^3 + ax + b,\]
the curve $A^d$ has Weierstrass form
\[y^2 = x^3 + d^2ax + d^3b.\]
In a 1979 paper \cite{Gold79}, Goldfeld conjectured that, among the quadratic twists of a given elliptic curve $A/\QQ$,
\begin{itemize}
\item $50\%$ have analytic rank $0$,
\item $50\%$ have analytic rank $1$, and
\item $0\%$ have any higher analytic rank.
\end{itemize}
Here, the analytic rank $\ran(A/\QQ)$ of the elliptic curve $A/\QQ$ is the order of vanishing of the $L$-function associated to $A$ at $s = 1$. This invariant is known as the analytic rank because, under the Birch and Swinnerton-Dyer conjecture, it is expected to equal the actual rank of the elliptic curve $A/\QQ$.

One major goal of this pair of papers is to prove an analogue of Goldfeld's conjecture for $2^{\infty}$-Selmer coranks. For technical reasons, we need to place certain restrictions on the elliptic curves we consider. These restrictions vary depending on the structure of the $2$-torsion subgroup $A(\QQ)[2]$ of $A(\QQ)$.

\begin{ass}
\label{ass:elliptic_main}
An elliptic curve $A/\QQ$ obeys this assumption if one of the following holds:
\begin{enumerate}
\item $A(\QQ)[2] = 0$; or
\item $A(\QQ)[2] \cong \Z/2\Z\,$ and, writing $\phi: A \rightarrow A_0$ for the unique $\QQ$-isogeny of degree $2$, we have
\[\QQ(A_0[2]) \ne \QQ\quad \text{and}\quad\QQ(A_0[2]) \ne \QQ(A[2]);\,\,\text{ or}\]
\item $A(\QQ)[2] \cong (\Z/2\Z)^2\,$ and $A$ has no cyclic degree $4$ isogeny defined over $\QQ$.
\end{enumerate}
\end{ass}
By \cite[Theorem 1]{Duke97}, most elliptic curves over $\QQ$ satisfy $A(\QQ)_{\text{tor}} = 0$ and hence satisfy this assumption. Our main result for the ranks of elliptic curves is the following.

\begin{thm}
\label{thm:elliptic_main}
Suppose $A/\QQ$ is an elliptic curve satisfying Assumption \ref{ass:elliptic_main}. Then, for $r \ge 0$,
\[\lim_{H \rightarrow \infty} \frac{\#\left\{d \in \Z^{\ne 0} \,:\,\,  |d| \le H \,\,\text{ and }\,\, r_{2^{\infty}}\left(A^d/\mathbb{Q}\right) = r\right\}}{2H} \,=\, \begin{cases} 1/2 &\text{ for }\, r = 0 \\ 1/2 &\text{ for }\, r= 1\\ 0 & \text{ for }\, r\ge 2.\end{cases}\]
\end{thm}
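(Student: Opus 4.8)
The plan is to obtain Theorem~\ref{thm:elliptic_main} as the $\ell = 2$ case of the overarching distributional theorem of the project, \cite[Theorem 2.14]{Smi22b}, applied to the degree-$2$ twist family $\{A^d/\QQ\}$ of the Galois module $A[2^{\infty}]$; here the fixed point Selmer group is the classical $2$-Selmer group $\Sel^2(A^d/\QQ)$. The first step is to check that Assumption~\ref{ass:elliptic_main} translates into the hypotheses of that theorem. The three cases of the assumption, organized by the isomorphism type of $A(\QQ)[2]$, correspond to the three shapes that the base layer of the $2^{\infty}$-Selmer group can take, and the supplementary conditions --- that $\QQ(A_0[2])$ be distinct from $\QQ$ and from $\QQ(A[2])$ in case~(2), and that $A$ have no rational cyclic $4$-isogeny in case~(3) --- are exactly those excluding the degenerate configurations in which the module $A[2]$ or the associated local conditions fail the genericity the framework requires. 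Verifying this, and verifying that the fixed point Selmer group of $\{A^d\}$ is stable across the relevant subfamilies so that Theorem~\ref{thm:main_higher} of the present paper applies, is carried out in \cite[Section 3]{Smi22b}; I will take it as input and concentrate on why the distributional output collapses to the clean answer $(\tfrac12,\,\tfrac12,\,0)$.

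Granting the framework, the shape of the answer comes from two facts. First, the parity of $r_{2^{\infty}}(A^d/\QQ)$ is pinned down by the global root number: by the $2$-parity theorem $w(A^d/\QQ) = (-1)^{r_{2^{\infty}}(A^d/\QQ)}$, and for $A$ as in Assumption~\ref{ass:elliptic_main} the root number is equidistributed over squarefree $d$, taking each of the values $\pm 1$ with natural density $\tfrac12$ --- in particular the assumption rules out the curves whose twist family has constant or otherwise non-equidistributed root number --- so $50\%$ of twists have $r_{2^{\infty}}$ even and $50\%$ have it odd. Second, within a fixed parity class the corank is as small as possible for $100\%$ of twists: the passage from $\Sel^2(A^d)$ down to the $2^{\infty}$-part is governed by a sequence of higher Cassels--Tate-type alternating pairings over $\FFFF_2$ whose joint distribution, as $|d| \to \infty$, converges to that of independent uniformly random alternating forms, and the combinatorics of iterated random alternating $\FFFF_2$-forms forces the eventual corank to equal $\dim_{\FFFF_2}\Sel^2(A^d) \bmod 2$ with probability $1$. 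Since the distribution of $\dim_{\FFFF_2}\Sel^2(A^d)$ is tight over the twist family, integrating this over the $2$-Selmer rank (controlling finitely many levels of the tower at a time and then letting the number of levels grow) shows that $r_{2^{\infty}}(A^d/\QQ) \ge 2$ occurs in natural density $0$; combined with the parity statement this gives exactly $\tfrac12$ for $r = 0$, $\tfrac12$ for $r = 1$, and $0$ for $r \ge 2$.

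The main obstacle is all on the input side: proving \cite[Theorem 2.14]{Smi22b}, whose crux is precisely the control of higher Selmer groups --- showing that those higher pairings genuinely equidistribute like random alternating forms. This is the algebraic-combinatorial and sieve-theoretic heart of the present paper (Sections~\ref{sec:algcomb} and~\ref{ssec:TAR}): one runs a delicate sieve over squarefree $d$, organized around Chebotarev-type conditions that pin down the joint local behaviour of $A^d$ at many primes simultaneously, and must show that the relevant non-vanishing events for the higher pairings occur with the expected frequency, uniformly enough as $|d|$ grows to push the corank down to its parity value for $100\%$ of twists. The fact that $\QQ$ does not contain $\mu_4$ makes the arithmetic of the twisting cocycles genuinely noncommutative to track, and it is this feature, interacting with the structure of $\QQ(A[2])$, that forces the split into the three cases of Assumption~\ref{ass:elliptic_main}; separately, confirming that the fixed point Selmer group is stable on each resulting subfamily --- so that Theorem~\ref{thm:main_higher} of this paper can be invoked there --- is the remaining technical burden, discharged in \cite{Smi22b}.
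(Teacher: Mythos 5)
Your sketch is essentially correct, but it takes a genuinely different route to the parity split than the paper does. The paper derives Theorem~\ref{thm:elliptic_main} from Theorem~\ref{thm:elliptic_Selmer_13} (and, for Assumption~\ref{ass:elliptic_main}(2), from \cite[Theorem 1.4]{Smi22b}), by observing that the Markov chain $\left(r_2(A^d), r_4(A^d), \ldots\right)$ has absorbing states $0$ and $1$ and that the \emph{starting} distribution $P^{\textup{Alt}}(\cdot\mid\infty)$ already assigns probability $\tfrac12$ to even $r_2$ and $\tfrac12$ to odd $r_2$ (this is baked into Definition~\ref{defn:PAlt}, which averages alternating $n\times n$ forms over even and odd $n$, and in turn is established for $\Sel^2$ by the work of \cite{Smi22b}, building on Heath-Brown and Kane). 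Your parity input is instead the $2$-parity theorem of the Dokchitser brothers combined with root number equidistribution over squarefree twists. That route is valid, but it is logically redundant given the rest of the machinery --- once the distribution of $r_2$ is known to be $P^{\textup{Alt}}(\cdot\mid\infty)$, the $50$--$50$ parity split is already in hand --- and the paper deliberately avoids importing an external parity theorem so that the entire distributional picture comes from the gridding/Cassels--Tate framework. Two smaller issues: (i) your explanation of Assumption~\ref{ass:elliptic_main} as ``ruling out the curves whose twist family has constant or otherwise non-equidistributed root number'' is a misattribution --- over $\QQ$ that pathology does not occur for any elliptic curve, and the assumption is instead about the structure of $A[2]$ as a Galois module (no extra rational isogenies, distinctness of $\QQ(A_0[2])$ from $\QQ$ and $\QQ(A[2])$), which is what the gridding and symbol machinery needs; (ii) you should be a little careful when you say the limiting corank equals $\dim_{\FFFF_2}\Sel^2(A^d) \bmod 2$ rather than $r_2(A^d) \bmod 2$, since $r_2$ is defined after quotienting by the image of rational torsion, and it is $r_2$, not $\dim\Sel^2$, that feeds into the Markov chain; the two can differ by $\dim A(\QQ)[2]$, and it is the parity of $r_2$ that the starting distribution controls. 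Your final interchange-of-limits step (passing from tightness over finitely many levels to the statement about $r_{2^\infty}$) is the right idea and is what Remark~\ref{rmk:about_two_logs} makes quantitative.
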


\begin{rmk}
\label{rmk:about_two_logs}
A more precise version of the zero-density portion of this theorem is the following: given $A/\QQ$ satisfying Assumption \ref{ass:elliptic_main}, there are positive constants $c, C > 0$ so, for $H > C$, we have
\[\frac{\#\left\{d \in \Z \,:\,\, 0 < |d| \le H \,\text{ and }\, r_{2^{\infty}}\left(A^d/\mathbb{Q}\right) \ge 2\right\}}{2H} \,\le\, \exp\left(- c \cdot \left(\log \log \log H\right)^{1/2} \right).\]
It is conjectured that the left hand side of this expression can be bounded by $H^{-1/4 + \epsilon}$ \cite[Section 3.3]{Park16}, which is about two logarithms better than the saving term we prove.
\end{rmk}

Our results for $2^{\infty}$-Selmer coranks are a consequence of our results on the distribution of $2^k$-Selmer groups, which use the following transition probabilities.

\begin{defn}
\label{defn:PAlt}
Given $n \ge j \ge 0$, take
\[P^{\text{Alt}}(j \,|\, n)\]
to be the probability that a uniformly selected alternating $n \times n$ matrix with entries in $\mathbb{F}_2$ has kernel of dimension exactly $j$. This probability is zero unless $j$  and $n$ have the same parity. A formula for this probability appears in \cite[Section 2]{Smi22b}.

We will also define
\begin{align*}
&P^{\text{Alt}}(j \,|\,\infty) = \lim_{n \rightarrow \infty} \tfrac{1}{2} \left(P^{\text{Alt}}(j \,|\, 2n) + P^{\text{Alt}}(j \,|\, 2n + 1 ) \right).
\end{align*}
This averaged limit accounts for the fact that $P^{\text{Alt}}(j \,|\,n)$ is $0$ if $j$ and $n$ have different parities.
\end{defn}

\begin{thm}
\label{thm:elliptic_Selmer_13}
Suppose $A/\QQ$ is an elliptic curve that fits into either case (1) or (3) of Assumption \ref{ass:elliptic_main}. 
Given any nonincreasing sequence
\[r_2 \ge r_4 \ge \dots \ge r_{2^k} \ge \dots \]
of nonnegative integers, we have
\begin{align*}
&\lim_{H \rightarrow \infty} \frac{\# \{d \in \Z^{\ne 0} \,:\,\,  |d| < H\,\text{ and }\, r_{2^k}(A^d) = r_{2^k}\,\text{ for all } k \ge 1\}}{2H} \\
& \qquad =  P^{\textup{Alt}}(r_2\,|\,\infty) \cdot \prod_{k= 2}^{\infty} P^{\textup{Alt}}(r_{2^{k}}\, |\, r_{2^{k-1}})
\end{align*}
\end{thm}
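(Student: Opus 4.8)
The plan is to deduce this from the overarching theorem \cite[Theorem 2.14]{Smi22b}, whose proof in turn rests on Theorem \ref{thm:main_higher} of the present paper, via a four-stage pipeline. In the first stage I would set $M = A[2^{\infty}]$, regarded as a $G_{\QQ}$-module, and realize $\{A^d\}_d$ as a degree-$2$ twist family of $M$; with $\ell = 2$ the fixed point Selmer group of this family is exactly the $2$-Selmer group $\Sel^2(A^d)$, so that $r_2(A^d)$ is its $\mathbb{F}_2$-dimension with the rational $2$-torsion contribution removed.

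In the second stage I would use the input from \cite{Smi22b} on fixed point Selmer groups. For $A$ in case (1) or (3) of Assumption \ref{ass:elliptic_main}, the mod-$2$ representation and the local conditions cutting out $\Sel^2(A^d)$ are generic in the required sense — the self-dual pairing on the ambient cohomology restricts to an alternating form and no isogeny forces extra structure — so the twist parameters $d$ can be partitioned (up to a controlled collection of congruence and splitting conditions at the bad primes) into subfamilies on each of which the fixed point Selmer group is stable in the sense of Theorem \ref{thm:main_higher}, and on each of which $\Sel^2(A^d)$ is distributed over the subfamily like the kernel of a uniformly random alternating matrix of the appropriate size; averaging over the parity of that size, which is governed by root numbers, produces the factor $P^{\textup{Alt}}(r_2 \mid \infty)$. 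The restriction to cases (1) and (3) lives here: in case (2) the degree-$2$ isogeny $\phi$ forces the base layer to carry additional structure, changing the law.

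In the third stage I would fix one stable subfamily and invoke Theorem \ref{thm:main_higher} directly: conditioned on the fixed point Selmer data, the groups $\Sel^{2^k}(A^d)$ form a Markov chain in which the transition from $\Sel^{2^{k-1}}$ to $\Sel^{2^k}$ is governed by the kernel of an alternating pairing — a Cassels--Tate-type pairing — on an $r_{2^{k-1}}$-dimensional $\mathbb{F}_2$-space, giving $\Pro\big(r_{2^k}(A^d) = r_{2^k} \,\big|\, r_{2^{k-1}}(A^d) = r_{2^{k-1}}\big) = P^{\textup{Alt}}(r_{2^k} \mid r_{2^{k-1}})$. Internally, Theorem \ref{thm:main_higher} itself splits in two: the algebraic-combinatorial framework of Section \ref{sec:algcomb} reduces the growth of the Selmer tower to the equidistribution of these pairings, and the twist-and-adjust-ramification argument of Section \ref{ssec:TAR} shows that, as $d$ ranges over the subfamily, the pairings become asymptotically uniform among alternating forms of the relevant size. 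In the final stage I would sum the conditional joint distributions against the base-layer law over all subfamilies; the contributions collapse to $P^{\textup{Alt}}(r_2 \mid \infty)\prod_{k \ge 2} P^{\textup{Alt}}(r_{2^k}\mid r_{2^{k-1}})$, and since only finitely many $r_{2^k}$ can exceed $r_2$ and the infinite product converges, a routine tail estimate upgrades the cylinder events on finitely many coordinates to the statement for the whole sequence.

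The hard part is the twist-and-adjust-ramification input in stage three. The combinatorial reduction is essentially formal, but proving equidistribution of the Cassels--Tate pairings requires a genuinely new sieve: one must perturb the ramification of the twisting parameter at many auxiliary primes while the relevant cohomology classes live in ever-larger field extensions as $k$ grows, and one must control the resulting Chebotarev conditions in these towers without introducing conditioning biases — this is exactly the crux the introduction flags in Sections \ref{sec:algcomb} and \ref{ssec:TAR}. A secondary difficulty, handled in \cite{Smi22b}, is the second stage: producing a usable partition into stable subfamilies and pinning down the base-layer alternating-matrix law precisely in cases (1) and (3).
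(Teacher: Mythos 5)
Your proposal matches the paper's intended approach: the paper explicitly defers the proof of this theorem to \cite[Section 3]{Smi22b}, where it is obtained from the overarching theorem \cite[Theorem 2.14]{Smi22b}, and that theorem is proved by exactly the pipeline you describe --- realize $A[2^\infty]$ as a twistable module with alternating structure (Example \ref{ex:abelian_Selmerable}), carve the natural twist family into grid classes that are ready for higher work, apply Theorem \ref{thm:main_higher} on each, and sum. Your identification of the Cassels--Tate pairing driving the Markov transition, the role of Sections \ref{sec:algcomb} and \ref{ssec:TAR} in forcing equidistribution of those pairings, and the reason case (2) of Assumption \ref{ass:elliptic_main} requires a separate statement all agree with the paper's architecture.
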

To put it another way, as $d$ varies, the sequence $(r_2(A^d), r_4(A^d), \dots)$ behaves like a time-homogeneous Markov chain. We give a representation of this Markov chain in the left part of Figure \ref{fig:Markov}. The probability of starting in an even state is $50\%$, and the probability of starting in an odd state is $50\%$. The absorbing states of this process are $0$ and $1$, and we derive the first and third cases of Theorem \ref{thm:elliptic_main} as a consequence. 

The appearance of alternating matrices in these transition probabilities is explained by the fact that the Cassels--Tate pairing for Selmer groups of elliptic curves is alternating. Indeed, this theorem is consistent with the Cassels--Tate pairings being uniformly distributed among all alternating possibilities, in the sense of Heuristic \ref{heur:random_matrix}. We also note that this result is consistent with the heuristics  given in \cite{BKLPR15}, with the group $\Sel^{2^{\infty}} A^d$ having the distribution predicted by \cite[Conjecture 1.3]{BKLPR15} as $d$ varies.

\begin{figure}

\begin{minipage}[t]{0.5\textwidth}
\begin{center}
\begin{tikzpicture}[->, >=stealth', auto, semithick, node distance=3cm]
\tikzstyle{every state}=[fill=white,draw=black,,text=black,scale=1]
\node[state]    (A)    at (0pt, 0pt)    {$0$};
\node[state]    (B) at (60pt, 35pt) {$1$};
\node[state]    (C) at (0pt, 70pt) {$2$};
\node[state]    (D) at (60pt, 105pt) {$3$};
\node[state]    (E) at (0pt, 140pt) {$4$};
\path
(A) edge[loop below, min distance=5mm] node{$1$} (A)
(B) edge[loop below, min distance=5mm] node{$1$} (B)
(C) edge[loop below, min distance=5mm] node{$\frac{1}{2}$} (C)
(C) edge [bend right = 40] node[label = left: {$\frac{1}{2}$}]{} (A)
(D) edge[loop below, min distance=5mm]  node{$\frac{1}{8}$} (D)
(D) edge [bend left = 33] node{$\frac{7}{8}$} (B)
(E) edge[loop below, min distance=5mm]  node{$\frac{1}{64}$} (E)
(E) edge[bend right = 40] node[label = left : {$\frac{35}{64}$}]{} (C)
(E) edge[bend left = 27] node { $\frac{28}{64}$} (A);

\end{tikzpicture}
\end{center}
\end{minipage}\begin{minipage}[t]{0.5\textwidth}
\begin{center}
\begin{tikzpicture}[->, >=stealth', auto, semithick, node distance=3cm]
\tikzstyle{every state}=[fill=white,draw=black,,text=black,scale=1]
\node[state]    (A)    at (0pt, 0pt)    {$0$};
\node[state]    (B) at (60pt, 35pt) {$1$};
\node[state]    (C) at (0pt, 70pt) {$2$};
\node[state]    (D) at (60pt, 105pt) {$3$};
\path
(A) edge[loop below, min distance=5mm] node{$1$} (A)
(B) edge[loop below, min distance=5mm] node{$\frac{1}{2}$} (B)
(B) edge node[label =below: {\hspace{-0pt}$\frac{1}{2}$}]{} (A)
(C) edge[loop left, min distance=5mm] node{$\frac{1}{16}$} (C)
(C) edge node{\hspace{-7pt}$\frac{9}{16}$} (B)
(C) edge node{$\frac{6}{16}$} (A)
(D) edge[loop above, min distance=5mm]  node{$\frac{1}{512}$} (D)
(D) edge node[label = above:  {\hspace{-11pt}$\frac{49}{512}$}]{} (C)
(D) edge node[label = right: {\hspace{-8pt}$\frac{294}{512}$}]{} (B)
(D) edge [out= 165, in = 150,  looseness = 1.8] node {\hspace{-32pt}$\frac{168}{512}$} (A);

\end{tikzpicture}
\end{center}
\end{minipage}
\caption{Diagrams for the Markov chains that model $2^k$-Selmer ranks (on the left) and $2^k$-class ranks (on the right). Each diagram omits infinitely many possible higher-rank states. }
\label{fig:Markov}
\end{figure}
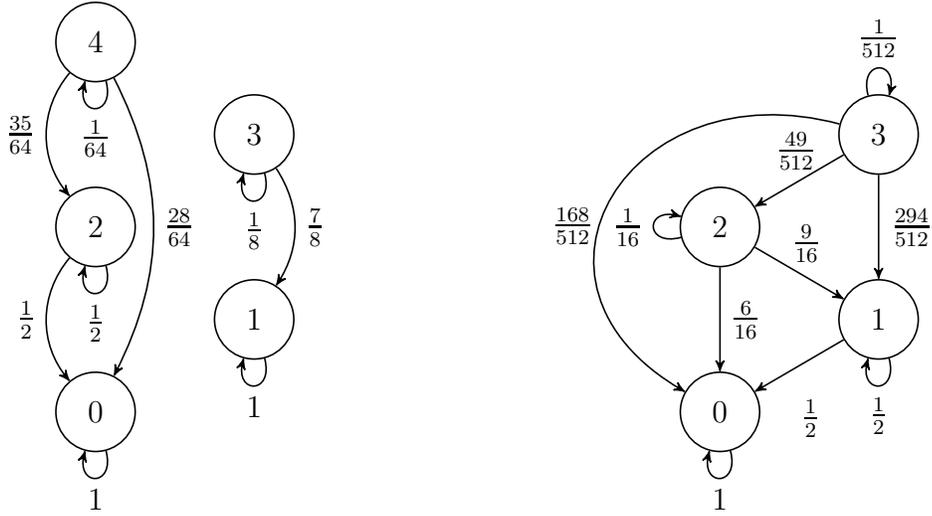

\begin{rmk}
The variant of Theorem \ref{thm:elliptic_Selmer_13} that applies to elliptic curves satisfying Assumption \ref{ass:elliptic_main} (2) will be given as \cite[Theorem 1.4]{Smi22b}.

Neither the restriction to abelian varieties of dimension $1$ nor the restriction to the base field $\QQ$ are necessary to prove a result like Theorem \ref{thm:elliptic_Selmer_13}. However, the required technical conditions become more complicated when these restrictions are removed. For some examples with higher dimensional abelian varieties, see \cite[Section 3]{Smi22b}.

\end{rmk}

\subsection{Class groups and Cohen--Lenstra--Gerth heuristics}
The simplest case of our results for class groups gives the distribution of $2$-primary class torsion of imaginary quadratic fields. 
\begin{notat}
Given an imaginary quadratic field $K$, take $r_2(K) \ge r_4(K) \ge \dots $ to be the unique sequence of nonnegative integers satisfying
\[ \Cl K[2^{\infty}]  \,\cong \,(\Z/2\Z)^{r_2(K) - r_4(K)} \oplus (\Z/4\Z)^{r_4(K) - r_8(K)} \oplus \dots\]
and $\lim_{k \rightarrow \infty} r_{2^k}(K) = 0$.
\end{notat}

As before, we need some notation for our probability distribution. We have no alternating restriction this time, which will be consistent with the fact that every class group is finite.
\begin{defn}
\label{defn:PMat}
For $n \ge j \ge 0$, take
\[P^{\text{Mat}}(j \,|\, n)\]
to be the probability that a uniformly selected $n \times n$ matrix with entries in $\mathbb{F}_2$ has kernel of rank exactly $j$. 

We also use the notation
\begin{align*}
&P^{\text{Mat}}(j \,| \, \infty) = \lim_{n \rightarrow \infty} P^{\text{Mat}}(j\, | \,n).
\end{align*}
\end{defn}

\begin{thm}
\label{thm:main_Cl_quad}
Given any nonincreasing sequence
\[r_4 \ge r_8 \ge \dots \ge r_{2^k} \ge \dots \]
of nonnegative integers, we have
\begin{align*}
&\lim_{H \rightarrow \infty} \frac{\# \left\{d \in \Z^{> 0}\,:\,\, d < H \,\text{ and }\, r_{2^k}\left(\QQ\left(\sqrt{-d}\right)\right) = r_{2^k}\,\text{ for }\, k \ge 2\right\}}{H} \\
&\qquad =   P^{\textup{Mat}}(r_4\,|\,\infty) \cdot \prod_{k= 3}^{\infty} P^{\textup{Mat}}(r_{2^{k}}\, |\, r_{2^{k-1}}).
\end{align*}
\end{thm}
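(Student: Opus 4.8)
The plan is to obtain Theorem~\ref{thm:main_Cl_quad} from the overarching theorem \cite[Theorem 2.14]{Smi22b}, which rests on Theorem~\ref{thm:main_higher} of the present paper; I sketch the reduction and then point to where the real difficulty sits.

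\emph{Reduction to a twist family.} For $K = \QQ(\sqrt{-d})$ with $d>0$ and $\Gal(K/\QQ) = \{1,\sigma\}$: since $h(\QQ) = 1$, for any ideal $\mathfrak{a}$ of $K$ the product $\mathfrak{a}\,\sigma(\mathfrak{a})$ is principal, so $\sigma$ acts as $-1$ on $\Cl K$ and $\Cl K[2^{\infty}]$ is entirely a ``minus part.'' By class field theory this group is one of the $\ell^{\infty}$-Selmer groups of \cite{Smi22b}, with $\ell = 2$, attached to the degree-$2$ twist family $\{\QQ(\sqrt{-d}) : d > 0\}$ (the underlying module being an unramified-local-condition twist, by the quadratic character $\chi_{-d}$, of a fixed module built from $\mu_{2^{\infty}}$). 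Under this dictionary $r_{2^{k}}(\QQ(\sqrt{-d}))$ is the $\ell^{k}$-Selmer rank of the twist, and the bottom layer $r_2$ --- governed by genus theory, $r_2 = t-1$ with $t$ the number of rational primes ramifying in $K$ --- is exactly the fixed point Selmer group.

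\emph{Stabilizing the fixed point Selmer group and applying Theorem~\ref{thm:main_higher}.} Because $t\to\infty$ for a density-one set of $d$, the fixed point Selmer group is not stable, so Theorem~\ref{thm:main_higher} does not apply to the whole family and \cite{Smi22b} is genuinely needed. Following \cite[Section 3]{Smi22b}, one partitions $\{d : 0 < d < H\}$ into subfamilies on which the fixed point Selmer group, together with the local data needed to run the higher machinery, is pinned down; on each such subfamily Theorem~\ref{thm:main_higher} applies. On a subfamily where the fixed point Selmer group has $\FFF_2$-dimension $n$, it gives that $(r_4, r_8, r_{16}, \dots)$ behaves, as $d$ ranges over the subfamily, like a time-homogeneous Markov chain with transition probabilities $P^{\textup{Mat}}(r_{2^{k}} \mid r_{2^{k-1}})$ and initial law $P^{\textup{Mat}}(r_4 \mid n)$. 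The matrices are arbitrary rather than alternating because the Cassels--Tate-type pairing on the genus Selmer group --- classically, R\'edei's pairing, read off the R\'edei matrix --- carries no alternating constraint; this is why $P^{\textup{Mat}}$ (Definition~\ref{defn:PMat}) rather than $P^{\textup{Alt}}$ occurs, matching the finiteness of class groups. Summing over subfamilies and using $n = t-1 \to \infty$ for a density-one set of $d$, the initial law averages to $P^{\textup{Mat}}(r_4 \mid \infty) = \lim_{n} P^{\textup{Mat}}(r_4 \mid n)$, and multiplying by the transition factors gives the asserted product after $H \to \infty$.

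\emph{Main obstacle.} The reduction above is essentially formal; all the content is in Theorem~\ref{thm:main_higher} (and in the construction of stable subfamilies in \cite{Smi22b}). Proving it requires first the algebraic and combinatorial analysis of Sections~\ref{sec:algcomb} and~\ref{ssec:TAR}, which identifies the ``governing matrices'' controlling the passage from $r_{2^{k-1}}$ to $r_{2^{k}}$ and establishes the Markov transition structure, and second a strong equidistribution statement for the relevant Legendre- and R\'edei-type symbols, uniform across all of the finitely many layers $r_4, \dots, r_{2^{K}}$ simultaneously, proved via the large-sieve / Chebotarev-type argument that is the technical heart of the project. This last input is also responsible for the weak saving term noted in Remark~\ref{rmk:about_two_logs}, and it is where essentially all the work goes.
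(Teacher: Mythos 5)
The paper does not prove Theorem~\ref{thm:main_Cl_quad} here; it is listed among the consequences of the overarching result \cite[Theorem 2.14]{Smi22b}, with the deduction deferred to \cite[Section 3]{Smi22b}. Your outline matches that plan at the top level --- pass to the twist family of Example~\ref{ex:class_Selmerable}, grid, apply Theorem~\ref{thm:main_higher} --- but one of your identifications is off, and it affects the logic rather than just the bookkeeping.

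You take the fixed point Selmer group to be the genus $2$-rank $r_2 = t-1$, and propose to produce the initial factor $P^{\text{Mat}}(r_4 \mid \infty)$ by averaging $P^{\text{Mat}}(r_4 \mid n)$ over fixed point Selmer dimensions $n = r_2 \to \infty$, as though $r_2 \to r_4$ were a transition furnished by Theorem~\ref{thm:main_higher}. That is not the setup. By Example~\ref{ex:class_Selmerable}, for this twistable module one has (outside a negligible set of twists) $\Sel\, N^{\chi} \cong \Cl K[\ell^{\infty}] / (\Cl K[\ell^{\infty}])^{\Gal(K/\QQ)}$; for imaginary quadratic $K$, where the nontrivial element of $\Gal(K/\QQ)$ acts by $-1$ on $\Cl K$, this quotient is $\Cl K[2^{\infty}]/\Cl K[2]$, so that $r_{\omega^{k}}(N^{\chi}) = r_{2^{k+1}}(K)$. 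In particular the fixed point Selmer rank is $r_4$, not $r_2$; the genus layer is divided out from the start precisely so that one works with a Selmer group of bounded typical size. Theorem~\ref{thm:main_higher} is stated for grid classes on which $r_{\omega}$ is \emph{constant}, so it supplies only the transitions $r_4 \to r_8 \to \cdots$ and is silent on the distribution of $r_4$ itself. The factor $P^{\text{Mat}}(r_4 \mid \infty)$ is the limiting distribution of the fixed point Selmer rank, and establishing it is a separate task of \cite{Smi22b}, carried out by analyzing the fixed point Selmer group directly as the kernel of a large matrix of symbols in the Fouvry--Kl\"uners style rather than by running a transition from $r_2$ through this paper's machinery. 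Your ``main obstacle'' paragraph is otherwise on target; the needed correction is to locate the base of the Markov chain at $r_4$ and to treat the computation of its law as a second, genuinely distinct ingredient.
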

The distribution of sequences of $2^k$-class ranks is again given by a Markov chain, and we give a representation of this Markov chain in the right part of Figure \ref{fig:Markov}. The distribution for $4$-class ranks was calculated by Fouvry and Kl{\"u}ners in \cite{Fouv07}. Theorem \ref{thm:main_Cl_quad} is consistent with what is predicted by Gerth's extension of the Cohen-Lenstra heuristic for the distribution of class groups \cite{Gert84, CoLe84}. It is the third major result towards proving this heuristic for imaginary quadratic fields, after the result of Davenport-Heilbronn on $3$-torsion \cite{DaHe71} and the result of Fouvry and Kl{\"u}ners on $4$-class ranks.

We note that $\Cl \QQ(\sqrt{-d})[2]$ has unbounded average size by Gauss's genus theory, which is why we remove it from consideration above. A similar consideration explains our notation below.

\begin{notat}
\label{notat:gen_class_result}
Take $F$ to be a number field, take $\ell$ to be a rational prime, and take $K$ to be a degree $\ell$ Galois extension of $F$. Then $\Cl K$ is a $\Gal(K/F)$ module, and we take $(\Cl K[\ell^{\infty}])^{\Gal(K/F)}$ to be the submodule of $\Cl K[\ell^{\infty}]$ fixed by $\Gal(K/F)$. 

Take $\xi$ to be the image of $x$ in the quotient ring
\[\Z_{\ell}[x]\big/ \left(1 + x + \dots +x^{\ell - 1}\right).\]
Take $\langle \xi \rangle$ to be the multiplicative group generated by $\xi$. This group has order $\ell$, so we may choose some isomorphism from $\langle \xi \rangle$ to $\Gal(K/F)$. Under this isomorphism,
\[\Cl K[\ell^{\infty}]/ (\Cl K[\ell^{\infty}])^{\Gal(K/F)}\]
is a $\Z_{\ell}[\xi]$ module. Writing $\omega = \zeta - 1$, there is a unique sequence of nonnegative integers 
\[r_{\omega}(K) \ge r_{\omega^2}(K) \ge \dots \]
with limit zero for which there is some isomorphism
\[\Cl K[\ell^{\infty}]/ (\Cl K[\ell^{\infty}])^{\Gal(K/F)} \,\cong\, (R/\omega R)^{r_{\omega}(K) - r_{\omega^2}(K)} \oplus  (R/\omega^2R)^{r_{\omega^2}(K) - r_{\omega^3}(K)}\oplus\dots,\]
where $R$ is taken to be $\Z_{\ell}[\xi]$. This defines the sequence of $\omega^k$-class ranks of our field extension.
\end{notat}

For this more general situation, we will need more general notation for our distribution.
\begin{defn}
\label{defn:PMat}
Choose an integer $u$ and a rational prime $\ell$. Given nonnegative integers $n \ge j \ge 0$, and also supposing $n \ge u$, take
\[P^{\text{Mat}}_{u, \ell}(j \,|\, n)\]
to be the probability that a uniformly selected $(n- u) \times n$ matrix with entries in $\mathbb{F}_\ell$ has kernel of rank exactly $j$. In the case that $n <u$, we take this probability to be $0$.  A formula for this probability appears in \cite[Section 2]{Smi22b}.

We also use the notation
\begin{align*}
&P^{\text{Mat}}_{u, \ell}(j \,| \, \infty) = \lim_{n \rightarrow \infty} P^{\text{Mat}}_{u, \ell}(j\, | \,n).
\end{align*}
\end{defn}
\begin{thm}
\label{thm:Cl}
Take $F$ to be a number field with $r_1$ real embeddings and $r_2$ conjugate pairs of complex embeddings. Take $\ell$ to be a rational prime such that
\[\mu_{2\ell} \not\subset F.\]
If $\ell = 2$, take $r_1'$ to be an integer satisfying $0 \le r_1' \le r_1$. If $\ell > 2$, take $r_1' = r_1$. We define
\[u = - r_2 - r_1'.\]

For $H > 0$, define
\[X_{F, \ell, r_1'}(H)\, = \,\Big\{ K/F \text{ Gal. of deg. } \ell\,:\,\, |\Delta_K| \le H,\,\, K/F \text{ splits at exactly } r_1' \text{ real places}\Big\},\]
where $\Delta_K$ denotes the discriminant of $K/\QQ$.

Then, given  any nonincreasing sequence of integers
\[r_{\omega} \ge r_{\omega^2} \ge \dots \ge r_{\omega^k} \ge \dots,\]
we have
\begin{align*}
&\lim_{H \rightarrow \infty} \frac{\#\left\{K \in X_{F, \ell, r_1'}(H)\,:\,\, r_{\omega^k}(K) = r_{\omega^k}\,\text{ for }\, k \ge 1\right\}}{\# X_{F, \ell, r_1'}(H)} \\
&\qquad = P^{\textup{Mat}}_{u, \ell}(r_{\omega}\, |\, \infty) \cdot\prod_{k=2}^{\infty}P^{\textup{Mat}}_{u, \ell}(r_{\omega^k}\, |\, r_{\omega^{k-1}}).
\end{align*}
\end{thm}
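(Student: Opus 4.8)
The plan is to deduce the theorem from the overarching theorem of this project, \cite[Theorem~2.14]{Smi22b}, via the reduction that is carried out in \cite[Section~3]{Smi22b}; the present paper (Theorem~\ref{thm:main_higher}, developed in Sections~\ref{sec:algcomb}--\ref{ssec:TAR}) supplies the higher-Selmer input to that overarching theorem, while the sequel supplies the analysis of fixed point Selmer groups.

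\textbf{Step 1: from class ranks to Selmer ranks.} For a degree~$\ell$ Galois extension $K/F$, fix a character $\chi_K \colon G_F \twoheadrightarrow \langle \xi\rangle$ cutting out $K$, and let $M_{\chi_K}$ be the Galois module over $F$ built from $\mu_{\ell^\infty}$ and $R_{\chi_K}$, where $R = \Z_\ell[\xi]$ is given the $G_F$-action $g \mapsto \xi^{\chi_K(g)}$; this is an $R$-module whose residual module $M_{\chi_K}[\omega]$ is one-dimensional over $\mathbb{F}_\ell$, and the twisting recipe respects the $R$-structure. Combining Shapiro's lemma for $K/F$ with Kummer theory and global class field theory over $K$, one identifies the $R$-module $\Cl K[\ell^\infty]/(\Cl K[\ell^\infty])^{\Gal(K/F)}$ with the $\omega^\infty$-Selmer group of $M_{\chi_K}$ over $F$ for a local condition system that is unramified away from $\ell$ and is pinned down at $\ell$ and at the archimedean places by the local behaviour of $\chi_K$. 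The constraint that $K/F$ splits at exactly $r_1'$ real places is exactly the specification of the archimedean local conditions, and the quantity $u = -r_2 - r_1'$ materialises as the global Euler-characteristic defect of this Selmer system, fed by the $\Z_\ell$-rank $r_1 + r_2 - 1$ of $\msO_F^\times$ together with the split-place correction; this defect is what makes the governing matrices $(n-u)\times n$ rather than square, and it is why the relevant Cassels--Tate-type pairing is not alternating, consistent with every class group being finite. The hypothesis $\mu_{2\ell}\not\subset F$ is the analogue here of the conditions on $A(\QQ)[2]$ in Assumption~\ref{ass:elliptic_main}: it guarantees that $M$ and its Cartier dual are twists of one another in the required way and that the residual module is rigid enough for the Selmer machinery to apply.

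\textbf{Step 2: fit the family into the overarching theorem and read off the distribution.} As $K$ ranges over $X_{F,\ell,r_1'}(H)$, the characters $\chi_K$ form a natural twist family of the type handled by \cite[Theorem~2.14]{Smi22b}; by the conductor--discriminant formula $|\Delta_K|$ is a fixed power of the conductor of $\chi_K$ times the fixed factor $|\Delta_F|^{\ell}$, so ordering by $|\Delta_K|$ agrees with the ordering used there, and the split-real-place restriction is one of the permitted archimedean constraints. One then checks that $M$ and the local conditions meet all the technical hypotheses of \cite[Theorem~2.14]{Smi22b}; the crucial one is controllability of the fixed point Selmer group across the family, which over a fixed $F$ is governed by $\Cl F$ and $\msO_F^\times$ and is therefore stable — precisely the regime in which the overarching theorem applies. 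Feeding this in, \cite[Theorem~2.14]{Smi22b} yields that the sequence $(r_{\omega}(K), r_{\omega^2}(K), \dots)$ — which by Step~1 is the sequence of $\omega^k$-Selmer ranks — converges in distribution to the time-homogeneous Markov chain with initial law $P^{\textup{Mat}}_{u,\ell}(\,\cdot\mid\infty)$ and transition kernel $P^{\textup{Mat}}_{u,\ell}(\,\cdot\mid\cdot\,)$, which is the asserted product formula; the Markov structure and the explicit transition probabilities reflect the equidistribution of the successive Cassels--Tate-type pairings among all $(n-u)\times n$ matrices over $\mathbb{F}_\ell$, established en route to \cite[Theorem~2.14]{Smi22b}.

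\textbf{Main obstacle.} The genuinely hard part is not this bookkeeping but the verification underlying Step~2 that the fixed point Selmer group really is controllable throughout the family and that the higher Cassels--Tate pairings equidistribute as claimed; this rests on the full strength of both papers — this paper's machinery for controlling higher Selmer groups once the base layer is stable (Theorem~\ref{thm:main_higher}) and the sequel's study of fixed point Selmer groups in natural twist families. A secondary but still delicate issue is the precise archimedean and $\ell$-adic local-condition bookkeeping in Step~1 needed to pin down $u = -r_2 - r_1'$ exactly and to match the split-real-place condition with the twisting formalism.
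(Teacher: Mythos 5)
Your high-level plan matches what the paper actually does: Theorem~\ref{thm:Cl} is stated here but its proof is explicitly deferred to \cite[Section~3]{Smi22b}, where it is deduced from the overarching result \cite[Theorem~2.14]{Smi22b} by the dictionary that translates $\omega^k$-class ranks into $\omega^k$-Selmer ranks of a twistable module. So the strategy of ``set up the twistable module, match the local conditions, then cite the Markov-chain theorem from the sequel'' is the intended route.

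However, some of the specifics in your Step~1 do not line up with the dictionary the paper actually supplies in Example~\ref{ex:class_Selmerable}, and this matters because the rest of the argument leans on those details. The twistable module is $N = \QQ_{\ell}[\xi]/\Z_{\ell}[\xi]$ equipped with the \emph{trivial} $G_F$-action; it does not involve $\mu_{\ell^{\infty}}$ (that appears only in the dual $N^{\vee}$). One then twists $N$ by the character $\chi_K$ cutting out $K$, rather than building a separate module $M_{\chi_K}$ from $\mu_{\ell^{\infty}}$. Likewise, the object that embeds into $\Sel\,N^{\chi_K}$ is the dual class group quotient $\left(\Cl^{*} K / (\Cl^{*} K)^{\Gal(K/F)}\right)[\ell^{\infty}]$, with the identification with the untwisted quotient of $\Cl K$ being only non-canonical; and the local conditions are not ``unramified away from $\ell$'' but rather: the full unramified cohomology $\text{inf}_{G_v}^{G_v/I_v}\left(H^1(G_v/I_v, N^{\chi_K})\right)$ at places where $K/F$ is unramified and the \emph{zero} subgroup at places where $K/F$ is ramified. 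These are precisely the automatic local conditions of Definition~\ref{defn:Sel} away from a fixed finite set $\Vplac_0$, which is what makes the twist family fit the framework. Your account of $u$ as the Euler-characteristic defect driven by $\msO_F^{\times}$ and the archimedean split condition, and of the ordering by $|\Delta_K|$ via the conductor--discriminant formula, is the right intuition. But as written, Step~1 would not produce the correct Selmer structure; you should follow Example~\ref{ex:class_Selmerable} and \cite[Proposition~7.4]{MS21b} verbatim for the injection, and you must additionally invoke the promised result from \cite{Smi22b} that the cokernel of \eqref{eq:MS2_inj} vanishes for $100\%$ of $K$, since without that the Selmer rank and the class rank need not agree.
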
 
This theorem verifies Gerth's supplement to the Cohen--Lenstra heuristic in the sense codified by Wittman in \cite{Witt05}. Conditionally on GRH, this result was previously known for Galois extensions of $\QQ$ due to work of Koymans and Pagano \cite{Koym18}. This work used the method of \cite{Smi17} and built off base-case work of Klys that was also conditional on GRH \cite{Klys16}.

\subsection{An overview of the method}

\subsubsection{$2$-Selmer groups in quadratic twist families}
Given an abelian variety $A$ over a number field $F$, and given $d$ in $F^{\times}$, we find that the $2$-torsion subgroups of $A$ and $A^d$ are isomorphic over $F$. More generally, given a module $N$ acted on continuously by the absolute Galois group $G_F$ of $F$, and given a continuous homomorphism $\chi: G_F \to \pm 1$, we can define the quadratic twist $N^{\chi}$ of $N$, and we find that there is an equivariant isomorphism
\begin{equation}
\label{eq:2_guy}
N^{\chi}[2] \cong N[2]
\end{equation}
of the $2$-torsion submodules of these Galois modules.

The isomorphism \eqref{eq:2_guy} may be exploited to study the distribution of $2$-Selmer groups in quadratic twist families of a given Galois module. This was first done by Heath-Brown, who found the distribution of $2$-Selmer ranks in the quadratic twist family of the congruent number curve $A\colon y^2 = x^3 -x$ over $\QQ$ \cite{Heat94}. This method was adapted to the study of $4$-class ranks of families of quadratic fields by Fouvry and Kl{\"u}ners \cite{Fouv07}. Heath--Brown's result was later generalized to quadratic twist families of elliptic curves obeying Assumption \ref{ass:elliptic_main} (3) by Kane \cite{Kane13}.

Other previous results on the distribution of $2$-Selmer groups apply to less natural families of twists.
This includes the work of Gerth \cite{Gert84}, Swinnerton--Dyer \cite{Swin08}, and Klagsbrun, Mazur, and Rubin \cite{KMR14}. The families appearing in these three papers are quite different, but they share one crucial element: in each of the families, twists are built up from the sets of primes where they are ramified. This idea reappears in our work as the notion of a \emph{grid of twists}. Over $\QQ$, a grid is defined from a collection of disjoint sets of rational primes $X_1, \dots, X_r$, and consists of the squarefree integers in the set
\[\big\{p_1\cdot p_2 \cdot \dots \cdot p_r \,:\,\, (p_1, \dots, p_r) \in X \big\},\]
where we have taken the notation $X = \prod_{i \le r} X_i$.
One key step in our work is gridding, where we carve the set of twists up to a given height into grids where the distribution of $2$-Selmer ranks can be calculated.

\begin{rmk}
Note that $N[2]$ is the submodule of $N$ fixed under the automorphism $-1$. More generally, given an equivariant automorphism $\zeta$ of $N$, we may consider the submodule $N[\zeta - 1]$ of points in $N$ fixed by $\zeta$. If we then consider a twist $N^{\chi}$ of $N$ corresponding to some homomorphism $\chi$ in $\Hom(G_F, \langle \zeta \rangle)$, there will be an equivariant isomorphism.
\[N^{\chi}[\zeta - 1]  \cong N[\zeta - 1]\]
between the submodules of fixed points of $\zeta$. The analogue of the $2$-Selmer group of $N$ in this more general context is the $\zeta -1$-Selmer group of $N$, which we will also refer to as the fixed point Selmer group since it consists of cocycle classes valued in the fixed points of $\zeta$ on $N$. If $\zeta$ has order a power of $\ell$, we find that the $\ell^{\infty}$-Selmer groups in this twist family can often be controlled by similar methods to the $2^{\infty}$-Selmer groups in the quadratic twist family, so we expand the scope of this paper to include such examples.
\end{rmk}

\subsubsection{Moving to higher Selmer groups}
For simplicity at this point, we will focus on the case of quadratic twist families.

The key innovation of our work is that the isomorphism \eqref{eq:2_guy} has analogues for higher powers of $2$, and that these can be used to study higher Selmer groups. To be explicit, choose a sequence of homomorphisms $\chi_1, \chi_2, \dots$ in $\Hom(G_F, \pm 1)$. Then we find that the $G_F$-module $N^{\chi_1 \cdot \chi_2}[4]$ can be constructed as an explicit subquotient of
\[N^{\chi_1}[4] \oplus N^{\chi_2}[4] \oplus N[4],\]
that $N^{\chi_1 \cdot \chi_2 \cdot \chi_3}[8]$ can be constructed as an explicit subquotient of 
\[N^{\chi_1 \cdot \chi_2}[8] \oplus  N^{\chi_1 \cdot \chi_3}[8]  \oplus N^{\chi_2 \cdot \chi_3}[8]  \oplus N^{\chi_1}[8] \oplus N^{\chi_2}[8] \oplus N^{\chi_3}[8] \oplus N[8],\]
and so on. The work needed to prove these results can be found in Section \ref{sec:algcomb}, the only section of this paper entirely devoid of number theory. In particular, see Example \ref{ex:four_subquo}.

It should be clear that these relations can be applied fruitfully to grids of twists. Choose an abelian variety $A/\QQ$, and take $X =  \prod_{i \le r} X_i$ to be the grid of twists considered above. Suppose that 
\[\Sel^2 A^{d} \cong \Sel^2 A^{d'}\]
under the standard isomorphism of $A^d[2]$ and $A^{d'}[2]$ for any $d, d'$ in the grid. Then, supposing $k \le r$, and supposing some other technical assumptions are satisfied, we find that the $2^k$-Selmer groups of $A^d$ for every $d$ in the grid can be reconstructed from the $2^k$-Selmer groups over a certain sparse subset $Y$ of $X$. This can be proved using the method of Lemma \ref{lem:closure_noram}.

\subsubsection{The three parts of forcing equidistribution}
This observation about $2^k$-Selmer groups in grids still leaves us far away from proving an equidistribution result like Theorem \ref{thm:elliptic_Selmer_13}. Without further work, if the $2^k$-Selmer ranks of the $A^d$ have a lopsided distribution over $Y$, the same can be true for the whole grid. To prove an equidistribution result, we need to arrange it so that, however the $2^k$-Selmer groups behave over $Y$, the result will still be an equidistributed set of $2^k$-Selmer groups. The process of forcing equidistribution breaks into an algebraic part, an analytic part, and a combinatorial part.

The algebraic part is given as Theorem \ref{thm:TAR}. In this result, we show that the behavior of higher Selmer groups in a grid partially depends on a symbol $\symb{\ovp_a}{\ovp_b}$ defined for certain primes $\ovp_a, \ovp_b$ of $\overline{\QQ}$. The symbol encodes information about how $\ovp_a$ behaves at $\ovp_b$ relative to a fixed number field; we will define this object formally in Section \ref{sec:symbols}.

The analytic part is the bilinear equidistribution result for symbols appearing in Section \ref{sec:bilinear}. This is a result we end up using three times: in addition to its application towards forcing equidistribution of higher Selmer ranks, it also appears in the regridding of Section \ref{sec:regrid} and in our calculations for the distribution of fixed point Selmer ranks in \cite{Smi22b}. However, it is only the application towards forcing equidistribution that requires the full strength of this result, in the form of Corollary \ref{cor:an}.

Combining these two ideas gives us substantial control on how the Selmer groups over $X$ behave relative to the Selmer groups over $Y$. We need to engineer this control so that, no matter how the Selmer groups over $Y$ behave, the result is an approximately equidistributed collection of Selmer groups over the whole grid $X$. This is the role that the combinatorial result Proposition \ref{prop:bye_Ramsey} fills.

\subsection{The organization of this paper}
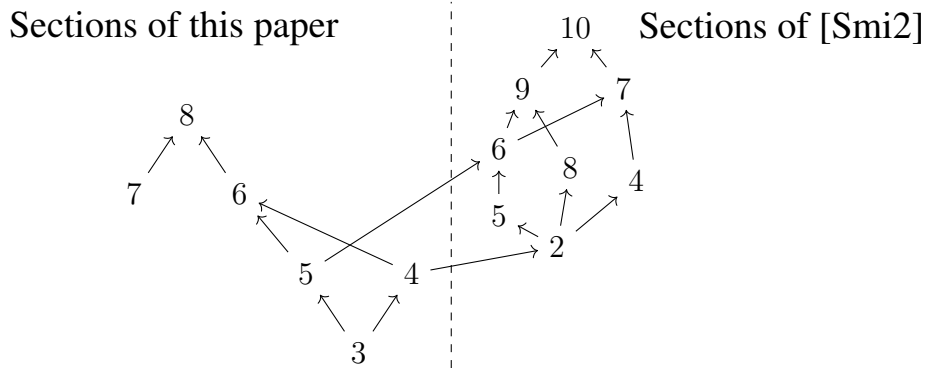
\begin{figure}
\begin{center}
\begin{tikzpicture}
 \tikzstyle{every path}=[->]      
\node  (A)    at (-10pt, 0pt)    {$3$};
\node  (B) at (10pt, 30pt) {$4$};
\node    (C) at (-30pt, 30pt) {$5$};
\node  (D) at (-55pt, 60pt) {$6$};
\node    (E) at (-95pt, 60pt) {$7$};
\node    (F) at (-75pt, 90pt) {$8$};
\node (G) at (65pt, 40pt) {$2$};
\node (H) at (95pt, 65pt) {$4$};
\node (I) at (43pt, 52pt) {$5$};
\node (J) at (43pt, 77pt) {$6$};
\node (K) at (90pt, 100pt) {$7$};
\node (L) at (70pt, 70pt) {$8$};
\node (M) at (52pt, 100pt){$9$};
\node (N) at (72pt, 123pt){$10$};

\node(O) at (25pt, -10pt){};
\node(P) at (25pt, 137pt){};

\node(Q) at (150pt, 123pt){\text{\large{Sections of \cite{Smi22b}}}};
\node(R) at (-80pt, 123pt){\text{\large{Sections of this paper}}};

\path

(A) edge (B)
(A) edge (C)
(B) edge (D)
(C) edge (D)
(D) edge (F)
(E) edge (F)
(B) edge (G)

(G) edge (H)
(G) edge (I)
(I) edge (J)
(H) edge (K)
(J) edge (K)
(G) edge (L)
(J) edge (M)
(C) edge (J)
(L) edge (M)
(M) edge (N)
(K) edge (N);
 \tikzstyle{every path}=[-]   
\draw[dashed] (O) -- (P);

\end{tikzpicture}
\end{center}
\caption{An approximate dependency diagram for the proof of \cite[Theorem 2.14]{Smi22b}. The dashed line separates the sections of this paper on the left from the sections of \cite{Smi22b} on the right.}
\label{fig:Leit}
\end{figure}

After going through some necessary notation in Section \ref{sec:notat}, we introduce the notions of classes, spins, and symbols of primes in Section \ref{sec:symbols}. These objects suffice to calculate the fixed point Selmer group of a twist coming from a grid, as we will show with Propositions \ref{prop:class_spin_symbol} and \ref{prop:class_spin_symbol_exact} after defining twistable modules, Selmer groups, and grids. Given a twist $\chi$ in the grid, we call the collection of twists in the grid whose classes, spins, and symbols all agree with those of $\chi$ the grid class of $\chi$.

We will control higher Selmer ranks by controlling certain pairings derived from the Cassels--Tate pairing on a specified grid class. We develop this theory in Section \ref{ssec:CT} and give a heuristic for how these pairings are distributed in Section \ref{ssec:heur}. This section ends with Theorem \ref{thm:main_higher}, the main result of this paper, which gives a sufficient set of conditions on the grid class for our heuristic to hold.

In Section \ref{sec:bilinear}, we prove an equidistribution result for symbols $\symb{\ovp}{\ovq}$ for pairs of primes $(\ovp, \ovq)$ taken from a large product $X_1 \times X_2$.

As mentioned above, our algebraic methods for manipulating higher Selmer groups requires a grid of twists with isomorphic fixed point Selmer groups. When proving Theorem \ref{thm:main_higher}, we run into the problem that the grid class of a twist is usually not a grid. Our solution to this is to carve the given grid class into smaller grids, a process we complete in Section \ref{sec:regrid}. This section also reduces the scope of Theorem \ref{thm:main_higher} to the behavior of a single matrix coefficient of the Cassels--Tate pairing.

In the self-contained Section \ref{sec:algcomb}, we introduce the algebra and combinatorics needed to mainpulate Selmer groups in grids of twists. We then apply this theory in Section \ref{sec:gov} to finish the proof of Theorem \ref{thm:main_higher}.

\subsection*{Acknowledgements}
This work was largely done while I was a graduate student at Harvard University. I would like to thank my advisors, Noam Elkies and Mark Kisin, for their support there. I also would like to thank  Dorian Goldfeld, Andrew Granville, Melanie Matchett Wood, and Shou-Wu Zhang for their encouragement and guidance from outside the Harvard mathematics department.

I want to acknowledge Dimitris Koukoulopoulos, Peter Koymans, Adam Morgan, Jesse Thorner, and David Yang for specific insights that had a major impact on this work.

I would like to thank the many people who have had comments on previous versions of this work, including Brandon Alberts, Robin Ammon, Alex Bartel, Magnus Carlson, Stephanie Chan, Brian Conrad, Andrea Gallese, Kenz Kallal, Seoyoung Kim, Zev Klagsbrun, Alison Miller, Djordjo Milovic, Sebastian Monnet, Evan O'Dorney, Carlo Pagano, Sun Woo Park, Ross Paterson, Manami Roy, Ye Tian, John Voight, Jiuya Wang, Ariel Weiss, Boya Wen, and Xiuwu Zhu.

I would like to thank George Boxer, Jordan Ellenberg, Wei Ho, Barry Mazur, Morgan Opie, Hector
Pasten, Bjorn Poonen, Karl Rubin, Peter Sarnak, Geoffrey Smith, Richard Taylor, Frank Thorne, Nicholas
Triantafillou, Ila Varma, Xinyi Yuan, and the participants in the virtual 2022 learning seminar for helpful discussions over the course of this project.

This research was partially conducted during the period the author served as a Clay Research Fellow. Previously, the author was supported in part by National Science Foundation grant DMS-2002011.

\section{Notation}
\label{sec:notat}
Fix an algebraic closure $\overline{\QQ}$ of $\QQ$. All the number fields appearing in this paper will be taken to lie inside this field.  For any integer $m \ge 1$, $\mu_m$ will denote the group of $m^{th}$ roots of unity inside $\overline{\QQ}$.

Given a characteristic-$0$ field $K$ with fixed algebraic closure $\overline{K}$, we take $G_K$ to be the absolute Galois group $\Gal(\overline{K}/K)$. In particular, for a number field $F$, we have $G_F = \Gal(\overline{\QQ}/F)$.

For us, the places of a given number field are its primes and its archimedean absolute value functions. For each place $v$ of a number field $F$, $F_v$ will denote the completion of $F$ at $v$.  We take $\mathcal{O}_F$ to be the ring of integers of $F$.

Given a prime $\ovp$ of $\overline{\QQ}$, we take $G_{F, \ovp}$ to be the associated decomposition group of $G_F$, $I_{F, \ovp}$ to be the associated inertia subgroup, and $I^{\text{w}}_{F, \ovp}$ to be the associated wild inertia subgroup. We similarly can define the decomposition group $G_{F, \overline{v}}$ for an archimedean place $\overline{v}$ of $\overline{F}$. 

Given a place $v$ of $F$, we will take $G_v$ to be equal to $G_{F, \overline{v}}$ for some place $\overline{v}$ of $\overline{F}$ over $v$, with $I_v$ defined similarly. We will use this notation in cases where the specific choice of $\overline{v}$ does not matter.

Given $F$ and a prime $\mfp$, we fix a Frobenius element $\FrobF{F}{\ovp}$ inside $G_{F, \ovp}$. This will be any element whose image in $G_{F, \ovp}/I_{F, \ovp}$ induces the Frobenius endomorphism on the residue field of $\overline{\QQ}_{\ovp}$. This definition determines the element up to an element of  $I_{F, \ovp}$.

Take $\widehat{\Z}(1)$ to be the inverse limit ${\varprojlim}_{m \ge 1} \,\mu_m$. Fix once and for all a topological generator $\overline{\zeta}$ of $\widehat{\Z}(1)$.

With this fixed, given $F$ and $\ovp$, we choose an element $\TineF{F}{\ovp}$ in $I_{F, \ovp}$ so that, for any  $\alpha \in \mathcal{O}_F$ divisible by $\mfp = \ovp \cap F$ but not by $\mfp^2$, and given any positive integer $m$ indivisible by $\mfp$, the ratio
\[\frac{(\TineF{F}{\ovp})( \alpha^{1/m})}{\alpha^{1/m}} \]
equals the image of $\overline{\zeta}$ in $\mu_m$.

With $\overline{\zeta}$ fixed, this definition determines $\Tine_F\, \ovp$ up to an element of $I^{\text{w}}_{F, \ovp}$. As suggested by the notation, $\Tine_F\, \ovp$ is a generator for the tame inertia group $I_{F, \ovp}/ I^{\text{w}}_{F,\ovp}$.

Given a finite discrete $G_F$-module $M$, we take $M(1)$ to be the Tate twist $M \otimes_{\widehat{\Z}} \widehat{\Z}(1)$ and $M(-1)$ to be the Tate twist
\[M(-1)  = \Hom_{\widehat{\Z}}\left(\widehat{\Z}(1),\, M\right).\]
We also define $M^{\vee}$ to be the twisted Pontryagin dual $\Hom_{\widehat{\Z}}(M, \QQ/\Z)(1)$.

Given a topological group $G$ and a discrete $G$-module $M$, we take $H^i(G, M)$ to be the continuous cochain cohomology group for $i \ge 0$, which we define as a quotient of the set of continuous $i$-cocycles $Z^i(G, M)$. The term $M^G$ will denote the set of invariants of $M$ under $G$, making it synonymous with $H^0(G, M)$, and the term $M_G$ will denote the set of coinvariants of $M$. Given $\sigma \in G$, we take $M^{\sigma}$ and $M_{\sigma}$  as alternative notation for $M^{\langle \sigma \rangle}$ and $M_{\langle \sigma \rangle}$.

\section{Ramification sections and symbols}
\label{sec:symbols}
Our first major result about $\omega$-Selmer groups is Proposition \ref{prop:class_spin_symbol}. This proposition generalizes the following result, which can be found in Monsky's appendix to \cite{Heat94}.
\begin{prop}
\label{prop:Monsky}
Take $A$ to be the elliptic curve over $\QQ$ with Weierstrass form $y^2 = x^3 - x$. Choose an integer $r \ge 1$ and two collections of $r$ distinct odd primes $p_1, \dots, p_r$ and $q_1, \dots, q_r$. Suppose
\begin{itemize}
\item The primes $p_i$ and $q_i$ are equal mod $8$ for all $i \le r$, and
\item We have an equality
\[\left(\frac{p_i}{p_j}\right) = \left(\frac{q_i}{q_j}\right)\]
of Legendre symbols for all $i < j \le r$.
\end{itemize}
Then, taking $d = p_1 \cdot \dots\cdot  p_r$ and $e = q_1 \cdot \dots\cdot  q_r$, the $2$-Selmer groups of the quadratic twists $A^d$ and $A^e$ are isomorphic.
\end{prop}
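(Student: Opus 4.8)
The plan is to run the classical complete $2$-descent on $A^d$ and $A^e$ and to observe that, once each $2$-Selmer group is written as the kernel of an explicit matrix over $\mathbb{F}_2$ in a distinguished basis, the two matrices literally coincide; the isomorphism is then induced by matching up the basis vectors $p_i\leftrightarrow q_i$. Write $A^d\colon y^2 = x(x-d)(x+d)$, with $2$-torsion points $0,d,-d$.

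First I would recall the shape of $\Sel^2(A^d/\QQ)$. Since $A[2]\cong(\Z/2\Z)^2$ as a trivial $G_\QQ$-module and $A^d[2]\cong A[2]$ canonically, Kummer theory identifies $H^1(G_\QQ, A^d[2])$ with $(\QmodSq)^2$, independently of $d$, and $\Sel^2(A^d/\QQ)$ becomes the subgroup of pairs $(b_1,b_2)$ for which the homogeneous space $b_1z_1^2 - b_2z_2^2 = d,\ b_1z_1^2 - b_1b_2z_3^2 = -d$ attached to $0,d,-d$ has a $\QQ_v$-point at every place $v$. At $\ell\nmid 2d\infty$ the local condition is the unramified one, so every Selmer class is unramified outside $\{\infty,2,p_1,\dots,p_r\}$; hence $\Sel^2(A^d/\QQ)$ lies in the $\mathbb{F}_2$-span of $\{-1,2,p_1,\dots,p_r\}$ in each of the two coordinates, a space with a distinguished basis.

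Next I would compute the remaining local conditions, at $v\in\{\infty,2,p_1,\dots,p_r\}$, and check that, once expressed in the distinguished basis via the restriction maps, each depends on the $p_i$ only through the data listed in the statement:
\begin{itemize}
\item at $v=\infty$ the condition depends only on $\mathrm{sign}(d)$, and $d,e>0$;
\item at $v=2$ it depends only on the class of $d$ in $\QQ_2^\times/(\QQ_2^\times)^2$, i.e.\ on $d\bmod 8$ (and the restriction map sends each $p_i$ to its class mod $8$); since $p_i\equiv q_i\pmod 8$ for all $i$ we get $d\equiv e\pmod 8$;
\item at $v=p_i$, the local condition together with the restriction map $\langle-1,2,p_1,\dots,p_r\rangle^2\to H^1(\QQ_{p_i},A[2])$ is a Boolean function of $\left(\tfrac{-1}{p_i}\right)$, $\left(\tfrac{2}{p_i}\right)$ and $\left(\tfrac{p_j}{p_i}\right)$ for $j\neq i$, because over $\QQ_{p_i}$ the curve $A^d$ depends only on the class of $d$ there, which is read off from these symbols.
\end{itemize}
Assembling these conditions exhibits $\Sel^2(A^d/\QQ)$ as the kernel of an explicit matrix $M(p_1,\dots,p_r)$ over $\mathbb{F}_2$ whose entries are exactly the symbols above, and likewise $\Sel^2(A^e/\QQ)=\ker M(q_1,\dots,q_r)$ in the basis $\{-1,2,q_1,\dots,q_r\}^2$. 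Now $\left(\tfrac{-1}{p_i}\right)=\left(\tfrac{-1}{q_i}\right)$ and $\left(\tfrac{2}{p_i}\right)=\left(\tfrac{2}{q_i}\right)$ since $p_i\equiv q_i\pmod 8$, and $\left(\tfrac{p_i}{p_j}\right)=\left(\tfrac{q_i}{q_j}\right)$ for every $i\neq j$: this is the hypothesis when $i<j$, and for $i>j$ it follows from the $i<j$ case by quadratic reciprocity using $p_i\equiv q_i$ and $p_j\equiv q_j\pmod 4$. Hence $M(p_1,\dots,p_r)=M(q_1,\dots,q_r)$, so the identification $p_i\leftrightarrow q_i$ of the distinguished bases carries $\Sel^2(A^d/\QQ)$ isomorphically onto $\Sel^2(A^e/\QQ)$.

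The main obstacle is the middle step: performing the local computations at the $p_i$ (and at $2$) carefully enough to see that truly no finer arithmetic of $p_i$ than the listed symbols enters the condition — a priori the solubility of a conic such as $b_1z_1^2-b_2z_2^2=d$ over $\QQ_{p_i}$ could see $p_i$ more closely, and at $v=2$ the conic conditions are genuinely more delicate than a single Hilbert symbol vanishing. This is exactly the content of Monsky's appendix to \cite{Heat94}; in the body of the present paper it is subsumed by the more general Propositions \ref{prop:class_spin_symbol} and \ref{prop:class_spin_symbol_exact}, of which this proposition is a special case.
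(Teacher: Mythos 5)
Your proposal correctly reconstructs the classical complete $2$-descent argument, and correctly identifies that the only real work is the local verification at $2$ and at the $p_i$ — which is precisely what Monsky's appendix to \cite{Heat94} carries out. Since the paper itself defers to that appendix rather than giving an independent proof (and only later subsumes this statement under the more general Propositions \ref{prop:class_spin_symbol} and \ref{prop:class_spin_symbol_exact}), your sketch is essentially the same approach the paper relies on.
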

The proof of Proposition \ref{prop:Monsky} relies on the fact that $A[2]$ is isomorphic to $(\Z/2\Z)^2$ as a $G_{\QQ}$-module. To handle more complicated Galois modules in Proposition \ref{prop:class_spin_symbol}, we need to find an appropriate replacement for the Legendre symbols appearing above. The goal of this section is to develop the theory of this replacement for the Legendre symbol, with the Legendre symbol itself reappearing in Example \ref{ex:Legendre}. We will turn to the fundamental definitions and results for Selmer groups in the next section.

\begin{defn}
Given a number field $F$, a discrete $G_F$ module $M$, and a set of places $\Vplac$ of $F$ and its subfields, we define the set of cocycle classes unramified away from $\Vplac$ as the kernel
\[\msS_{M/F}(\Vplac) = \ker\left(H^1(G_F, M) \to \prod_{v \,\nmid\, \Vplac} H^1(I_v, M)\right).\]
\end{defn}
\begin{defn}
\label{defn:starting_tuple}
Choose a tuple $(K/F, \Vplac_0, e_0)$  consisting of
\begin{enumerate}
\item A Galois extension of number fields $K/F$,
\item A finite set of places $\Vplac_0$ of $F$, and
\item An integer $e_0 \ge 1$.
\end{enumerate}

We assume that $K$ contains $\mu_{e_0}$ and that $\Vplac_0$ contains all archimedean places, all primes dividing $e_0$, and all places where $K/F$ ramifies. We then call $(K/F, \Vplac_0, e_0)$ a \emph{starting tuple}. 

\end{defn}

\begin{defn}
\label{defn:unpacked}
Choose a starting tuple $(K/F, \Vplac_0, e_0)$. 
We take $\Mod(K/F, e_0)$ to be the full subcategory of the category of finite $\Gal(K/F)$-modules whose objects are the modules of exponent dividing $e_0$.

Given a prime $\ovp$ of $\ovQQ$ not dividing any prime in $\Vplac_0$,  and given $M$ in $\Mod(K/F, e_0)$, we can define a natural \emph{ramification-measuring} homomorphism
\begin{equation}
\label{eq:mfR}
\mfR_{\ovp, M}: H^1(G_F, M) \xrightarrow{\quad} M(-1)^{G_{F, \ovp}}
\end{equation}
taking $\phi \in H^1(G_F, M)$ to the map in $\Hom\big(\widehat{\Z}(1), M\big)$ sending $\overline{\zeta}$ to $\phi(\TineF{F}{\ovp})$. This definition does not depend on the choice of $\overline{\zeta}$. We need to check that this homomorphism is fixed by the action of $G_{F, \ovp}$, but this follows from the identity
\begin{equation}
\label{eq:frob_tine}
\FrobF{F}{\ovp} \cdot \TineF{F}{\ovp}\cdot  (\FrobF{F}{\ovp})^{-1} = \TineF{F}{\ovp}^{a}\quad\text{mod } I_{F, \ovp}^{e_0}\cdot I^w_{F, \ovp},
\end{equation}
where $a \in (\Z/e_0\Z)^{\times}$ is chosen so $\FrobF{F}{\ovp} (\zeta) = \zeta^a$ for all $\zeta$ in $\mu_{e_0}$.

Taking $\mfp = \ovp \cap F$, the maps $\mfR_{\ovp}$ fit an exact sequence
\begin{equation}
\label{eq:pre_unpacked}
0 \to \msS_{M/F}(\Vplac_0) \to \msS_{M/F}(\Vplac_0 \cup \{\mfp\}) \xrightarrow{\,\,\,\mfR_{\ovp}\,\,\,} M(-1)^{G_{F, \ovp}}
\end{equation}
for any $M$ in $\Mod(K/F, e_0)$. If the final map in this sequence is surjective for every $M$ in $\Mod(K/F, e_0)$ and every $\ovp$ not over $\Vplac_0$, we call $(K/F, \Vplac_0, e_0)$ an \emph{unpacked starting tuple}.
\end{defn}

Our definition of an unpacked starting tuple invokes Galois cohomology, but we can check that a tuple is unpacked using more elementary algebraic number theory. This starts with the following definition.
\begin{notat}
\label{notat:approx_gen}
Given a starting tuple $(K/F, \Vplac_0, e_0)$, choose a subfield $E$ of $K$ containing $F$ so that $\Gal(K/E)$ is cyclic, and choose a prime $\mfp$ of $E$ that is inert in $K/E$ and that is not over any place in $\Vplac_0$. Given $\alpha_{\mfp}$ in $E^{\times}$, we say that $\alpha_{\mfp}$ is an approximate generator for $\mfp$ if there is an integral ideal $\mfa$ of $E$ divisible only by primes over $\Vplac_0$ and a fractional ideal $\mfb$ of $E$ so that
\[(\alpha_{\mfp}) = \mfp \mfa \mfb^{e_0}.\]
If such an $\alpha_{\mfp}$ can be found for all $E$ and $\mfp$, we say that the tuple $(K/F, \Vplac_0, e_0)$ \emph{has approximate generators}.
\end{notat}

\begin{defn}
\label{defn:class}
Fix a starting tuple $(K/F, \Vplac_0, e_0)$. Given any set of places $\Vplac$ of $F$, define $K(\Vplac)$ to be the maximal abelian extension of $K$ of exponent dividing $e_0$ ramified only over places in $\Vplac$. Choose a prime $\ovp$ of $\ovQQ$ not over $\Vplac_0$, and take $E$ to be the minimal extension of $F$ so that $K/E$ is inert at $\ovp \cap E$. Take $E(\Vplac_0)$ to be the maximal abelian extension of $E$ of exponent dividing $e_0$ ramified only over places in $\Vplac_0$.

Then we define the \emph{class of $\ovp$} with respect to $(K/F, \Vplac_0, e_0)$ to be the set of all primes $\ovp'$ of $\ovQQ$ not over any prime of $\Vplac_0$ for which
\[\FrobF{F}{\ovp'} \equiv \FrobF{F}{\ovp}\,\text{ mod } G_{E(\Vplac_0)}.\]
We denote the class of $\ovp$ by $\class{\ovp}$.

We define an equivalence relation $\sim$ on $\Gal(K(\Vplac_0)/F)$ by saying $\sigma \sim \tau$ if there are primes $\ovp, \ovq$ in the same class so $\FrobF{F}{\ovp}$ restricts to $\sigma$ and $\FrobF{F}{\ovq}$ restricts to $\tau$. We then define $\mathscr{C}$ to be the quotient $\Gal(K(\Vplac_0)/F)/\sim$.
\end{defn}
\begin{rmk}
\label{rmk:class}
Given $M$ in $\Mod(K/F, e_0)$ and $\phi$ in $\msS_{M/F}(\Vplac_0)$, and given $\sigma$ and $\tau$ in the same class of $\Gal(K(\Vplac_0)/F)/\sim$, we see that the coinvariant groups $M_{\sigma}$ and $M_{\tau}$ are equal and that $\phi(\sigma)$ equals $\phi(\tau)$ in this group of coinvariants.
\end{rmk}

\begin{notat}
\label{notat:approx_gen_assign}
\label{notat:Eovp}
Fix a starting tuple $(K/F, \Vplac_0, e_0)$. Given a prime $\ovp$ of $\ovQQ$ not over $\Vplac_0$, we take $E(\ovp)$ to be the minimal extension of $F$ so that $K/E(\ovp)$ is inert at $\ovp \cap E(\ovp)$.

Suppose this tuple has approximate generators. For each prime $\ovp$ of $\ovQQ$ not over $\Vplac_0$, choose some approximate generator $\alpha_{\ovp} \in E(\ovp)^{\times}$ for $\ovp \cap E(\ovp)$. We call this collection an \emph{assignment of approximate generators} if, for every pair of primes $\ovp, \ovp'$ of $\ovQQ$ lying in the same class with respect to the starting tuple, the ratio $\alpha_{\ovp} \cdot \alpha_{\ovp'}^{-1}$ lies in $(E(\ovp)_w)^{e_0}$ for every place $w$ over a place in $\Vplac_0$. By class field theory and the definition of a class, we see that an assignment of approximate generators can be found so long as the starting tuple has approximate generators.
\end{notat}

\begin{prop}
\label{prop:approx_gen}
A given starting tuple $(K/F, \Vplac_0, e_0)$ is unpacked if and only if it has approximate generators.
\end{prop}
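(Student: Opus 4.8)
The plan is to prove the two implications separately, translating each side of the equivalence into a concrete surjectivity statement. The link between the Galois-cohomological condition (surjectivity of $\mfR_{\ovp}$ in \eqref{eq:pre_unpacked} for all $M$ in $\Mod(K/F, e_0)$) and the arithmetic condition (existence of approximate generators for all cyclic layers $E$ and all inert primes $\mfp$) will be Poitou--Tate-style duality together with Kummer theory. The key observation is that it suffices to check surjectivity of $\mfR_{\ovp}$ on a single "universal" module: by the structure of $\Mod(K/F, e_0)$, every $M$ embeds into and is a quotient of direct sums of induced modules $\Ind_{E}^{F}(\mu_{e_0})$ (or their Tate twists) for $E$ ranging over subfields of $K$ with $\Gal(K/E)$ cyclic, and the functor $M \mapsto \msS_{M/F}(\Vplac)$ together with the ramification map $\mfR_{\ovp}$ behaves well enough under these operations that surjectivity for all $M$ is equivalent to surjectivity for these building blocks. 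For such an induced module, Shapiro's lemma identifies $H^1(G_F, \Ind_E^F \mu_{e_0})$ with $H^1(G_E, \mu_{e_0})$, which Kummer theory identifies with $E^{\times}/(E^{\times})^{e_0}$, and identifies $M(-1)^{G_{F,\ovp}}$ with a cyclic group of order dividing $e_0$ attached to $\mfp = \ovp \cap E$; under these identifications the map $\mfR_{\ovp}$ becomes (up to normalization) the $\mfp$-adic valuation map on $E^{\times}/(E^{\times})^{e_0}$, composed with the projection recording the value mod $e_0$.

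First I would set up this dictionary carefully: fix $\ovp$, let $E = E(\ovp)$ be the minimal subfield with $K/E$ inert at $\ovp \cap E$, and spell out via Shapiro that the relevant instance of \eqref{eq:pre_unpacked} for $M = \Ind_E^F \mu_{e_0}$ becomes the sequence
\[
0 \to \msS(\Vplac_0) \to \msS(\Vplac_0 \cup \{\mfp\}) \xrightarrow{\ v_{\mfp}\ } \tfrac{1}{e_0}\Z/\Z,
\]
where $\msS(\Vplac)$ is the group of elements of $E^{\times}/(E^{\times})^{e_0}$ that are $\Vplac$-units (up to $e_0$-th powers). The surjectivity of the last map is then \emph{exactly} the assertion that there exists $\alpha_{\mfp} \in E^{\times}$, an $e_0$-th power away from being a $\Vplac_0$-unit, with $v_{\mfp}(\alpha_{\mfp}) \equiv 1 \pmod{e_0}$ — and since $\mfp$ is inert of residue degree $[K:E]$... more precisely, since we only care about the valuation mod $e_0$ and can absorb $e_0$-th powers, this is precisely Notation~\ref{notat:approx_gen}: $(\alpha_{\mfp}) = \mfp\,\mfa\,\mfb^{e_0}$ with $\mfa$ supported on $\Vplac_0$. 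So for the "building block" modules the equivalence is essentially a tautology once Shapiro's lemma is invoked. I would then handle general $M$ by a dévissage: write $M$ as a subquotient of a direct sum of twisted induced modules, note that $\mfR_{\ovp}$ is functorial and that $\msS_{M/F}$ is left-exact in $M$, and check that surjectivity propagates — the one subtlety being that surjectivity of the ramification map can fail to be inherited by submodules in general, so I would instead argue that it is inherited by the specific quotients and extensions that arise, using that over the local field $F_{\ovp}$ the module $M(-1)^{G_{F,\ovp}}$ and the cohomology $H^1(I_{\ovp}, M)$ decompose compatibly.

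The main obstacle I anticipate is precisely this dévissage step — reducing surjectivity of $\mfR_{\ovp, M}$ for \emph{all} $M$ in $\Mod(K/F, e_0)$ to the induced modules. The functor $M \mapsto M(-1)^{G_{F,\ovp}}$ is only left-exact, so a short exact sequence $0 \to M' \to M \to M'' \to 0$ gives a six-term exact sequence relating the $\mfR_{\ovp}$'s but with an error term governed by $H^1$ of the decomposition group; I will need to show this error term does not obstruct surjectivity, which should follow from the fact that the full sequence \eqref{eq:pre_unpacked} is the beginning of the long exact sequence comparing $\Vplac_0$-ramified and $(\Vplac_0 \cup \{\mfp\})$-ramified cohomology, and that the cokernel of $\mfR_{\ovp}$ injects into $H^1(I_{\ovp}, M)^{G_{F,\ovp}}/(\text{unramified part})$, a group that is itself built functorially from $M(-1)^{G_{F,\ovp}}$ over a field with cyclic tame quotient. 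An alternative, possibly cleaner route for this step: observe that $\mfR_{\ovp, M}$ is surjective for all $M$ if and only if it is surjective for $M = \ovQQ$-module $\bigoplus_E \Ind_E^F \mu_{e_0}$ ranging over the finitely many relevant $E$, because every object of $\Mod(K/F,e_0)$ is a \emph{quotient} of a power of this module and, by a duality argument (Poitou--Tate, pairing $\msS_{M/F}$ against $\msS_{M^{\vee}/F}$), surjectivity of $\mfR_{\ovp}$ for $M$ is equivalent to injectivity of a dual localization map for $M^\vee$, a condition manifestly inherited by submodules — hence by all of $\Mod(K/F,e_0)$ once it holds for the injective/projective generators. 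I would pick whichever of these two formulations makes the bookkeeping lightest, and I expect the duality formulation to be the one that makes the "all $M$" quantifier collapse cleanly to a single check.
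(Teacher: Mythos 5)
You have located the right building block for a fixed $\ovp$ --- the induced module $M_0 = \Z[G_F] \otimes_{\Z[G_{E(\ovp)}]} \mu_{e_0}$ --- and the Shapiro/Kummer translation of its unpackedness into the approximate-generator condition is correctly identified as the heart of the matter. But the step you yourself flag as the main obstacle, namely reducing from arbitrary $M$ in $\Mod(K/F, e_0)$ to $M_0$, is genuinely unresolved in your proposal, and neither of your two proposed routes handles it. The paper's reduction requires no d\'evissage and no duality: the pair $(M_0, m_0)$, with $m_0 \in M_0(-1)^{G_{F,\ovp}}$ the homomorphism $\overline{\zeta} \mapsto [1] \otimes \zeta$, is \emph{universal} among pairs $(M, m)$ with $m \in M(-1)^{G_{F,\ovp}}$. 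This is Frobenius reciprocity: $\Hom_{G_F}(M_0, M) \cong \Hom_{G_{E(\ovp)}}(\mu_{e_0}, M) \cong M(-1)^{G_{E(\ovp)}} = M(-1)^{G_{F,\ovp}}$, with the isomorphism implemented by $\rho \mapsto \rho(-1)(m_0)$. Since $\mfR_{\ovp}$ is a natural transformation, finding $\phi_{\ovp} \in \msS_{M_0/F}(\Vplac_0 \cup \{\mfp\})$ with $\mfR_{\ovp}(\phi_{\ovp}) = m_0$ already forces surjectivity of $\mfR_{\ovp, M}$ for every $M$, by pushing $\phi_{\ovp}$ forward along the corresponding $\rho$. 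The ``all $M$'' quantifier collapses to hitting $m_0$ in one line.

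Your d\'evissage route has exactly the left-exactness problems you anticipate, and I do not see how to patch it without effectively rediscovering the universal property. Your duality alternative, as sketched, would reduce checking surjectivity to a projective generator such as $\Ind_K^F \mu_{e_0}$, whose dual receives all $M^{\vee}$ as submodules. That is a strictly larger module than $M_0$: after Shapiro it lands in $K^{\times}/(K^{\times})^{e_0}$, and its $M(-1)^{G_{F,\ovp}}$ has $\Z/e_0\Z$-rank $[E(\ovp):F]$ rather than $1$, so even if the dual statement were fully justified (it is not obvious that the Poitou--Tate reformulation is inherited by submodules in the way you assert, since it is an equality of two kernels rather than an injectivity), the resulting condition does not read off the approximate-generator condition in $E(\ovp)^{\times}$ without a further descent --- precisely what the universal property supplies for free. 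The paper does invoke Poitou--Tate duality in this cluster of results, but only to relate conditions (2) and (3) of Proposition~\ref{prop:unpacked}, not in the proof of Proposition~\ref{prop:approx_gen}.
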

\begin{proof}
Choose a prime $\ovp$ of $\ovQQ$ not over $\Vplac_0$, and take $E = E(\ovp)$ to be the field defined in Notation \ref{notat:Eovp}. Take $M_0 = \Z[G_F] \otimes_{\Z[G_E]} \mu_{e_0}$, and take $m_0 \in M_0(-1)^{G_{F, \ovp}}$ to be the homomorphism taking $\overline{\zeta}$ to $[1] \otimes \zeta$, where $\zeta$ is the image of $\overline{\zeta}$ in $\mu_{e_0}$. Given any $M$ in $\text{Mod}(K/F, e_0)$ and $m$ in $M(-1)^{G_{F, \ovp}}$, we see there is a unique $G_F$-equivariant homomorphism $\rho: M_0 \to M$ so that the Tate twisted morphism $\rho(-1)$ takes $m_0$ to $m$. So we find that the final map in \eqref{eq:pre_unpacked} is surjective for all $M$ in this category if and only if there is some 
\[\phi_{\ovp} \in \msS_{M_0/F}(\Vplac_0 \cup \{\ovp \cap F\})\]
 with $\mfR_{\ovp}(\phi_{\ovp}) = m_0$. 

By Shapiro's lemma and Hilbert 90, we have isomorphisms
\[H^1(G_F, M_0) \cong H^1(G_E, \mu_{e_0}) \cong E^{\times}/(E^{\times})^{e_0},\]
and we find that a given $\phi_{\ovp}$ satisfies the above conditions if and only if any/every lift of the corresponding element in $E^{\times}/(E^{\times})^{e_0}$ to $E^{\times}$ is an approximate generator for $\ovp \cap E$. This gives the proposition.
\end{proof}
The proof of Proposition \ref{prop:approx_gen} gives a method for constructing sections to $\mfR_{\ovp, M}$ from approximate generators. These particular sections turn out to have some nice properties that we isolate with the following definition.

\begin{defn}
\label{defn:ram_sec}
Given a starting tuple $(K/F, \Vplac_0, e_0)$, a \emph{ramification section} $\mfB$ is a collection of homomorphisms 
\[\mfB_{\ovp, M}:M(-1)^{G_{F, \ovp}} \xrightarrow{\quad} \msS_M(\Vplac_0 \cup \{\mfp\}) \]
indexed by primes $\ovp$ of $\ovQQ$ not over $\Vplac_0$ and objects $M$ in $\Mod(K/F, e_0)$ so that
\begin{enumerate}
\item The map $\mfB_{\ovp, M}$ is a section for $\mfR_{\ovp, M}$, i.e. $\mfR_{\ovp, M} \circ \mfB_{\ovp, M}$ is the identity on $M(-1)^{G_{F, \ovp}}$ for every $\ovp$ and $M$.
\item For every fixed $\ovp$, the collection of maps $\mfB_{\ovp, M}$ defines a natural transformation.
\item Given any primes $\ovp$ and $\ovp'$ in the same class, any object $M$, and any $m$ in $M(-1)^{G_{F, \ovp}}$, the difference
\[\mfB_{\ovp, M}(m) - \mfB_{\ovp', M}(m)\]
is trivial when restricted to $G_v$ for all $v$ in $\Vplac_0$.
\end{enumerate}
\end{defn}

Choose an unpacked starting tuple $(K/F, \Vplac_0, e_0)$, and choose an assignment of approximate generators $(\alpha_{\ovp})_{\ovp}$. Choose a prime $\ovp$ of $\ovQQ$ not over $\Vplac_0$ and take $E = E(\ovp)$ as in Notation \ref{notat:Eovp}. Take $M_0 =  \Z[G_F] \otimes_{\Z[G_E]} \mu_{e_0}$, and take $m_0 \in M_0(-1)$ to be the homomorphism taking $\overline{\zeta}$ to $[1] \otimes \zeta$, where $\zeta$ is the image of $\overline{\zeta}$ in $\mu_{e_0}$. Using the method of the proof of Proposition \ref{prop:approx_gen}, we may associate $\alpha_{\ovp}$ to an element $\phi_{\ovp}$ in $H^1(G_F, M_0)$ with $\mfR_{\ovp}(\phi_{\ovp}) = m_0$. If $\ovp'$ is in $\class{\ovp}$, we may also define $\phi_{\ovp'}$ in $H^1(G_F, M_0)$. We find that the difference $\phi_{\ovp} - \phi_{\ovp'}$ is trivial at every place in $\Vplac_0$; indeed, this is equivalent to the compatibility of $\alpha_{\ovp}$ and $\alpha_{\ovp'}$ given as part of Notation \ref{notat:Eovp}.

Now, given $M$ in $\text{Mod}(K/F, e_0)$ and $m$ in $M(-1)^{G_{F, \ovp}}$, we  define $\mfB_{\ovp, M}(m)$ to equal $\rho(\phi_{\ovp})$, where $\rho$ is the unique equivariant homomorphism from $M_0$ to $M$ for which the Tate twisted morphism $\rho(-1)$ takes $m_0$ to $m$. This is a natural transformation: given a morphism $\beta: M \to M'$ in $\text{Mod}(K/F, e_0)$, we see that $\beta \circ \rho$ is the unique morphism whose Tate twist takes $m_0$ to $\beta(m)$, so
\[\mfB_{\ovp, M'}(\beta(m)) = \beta\left(\mfB_{\ovp, M}(m)\right).\]
A similar argument shows that $\mfB_{\ovp, M}$ is a homomorphism, and it is now straightforward to check that the collection of maps $\mfB_{\ovp}$ define a ramification section. By using the naturality condition, we can also check that any ramification section may be constructed in this way from some assignment of approximate generators.

We summarize the above work with the following proposition.

\begin{prop}
\label{prop:unpacked}
Given a starting tuple $(K/F, \Vplac_0, e_0)$, the following are equivalent:
\begin{enumerate}
\item There is a ramification section $\mfB$ defined with respect to the starting tuple.
\item The starting tuple is unpacked.
\item For every $M$ in $\Mod(K/F, e_0)$, we have
\[\ker\left(\msS_{M/F}(\Vplac_0) \to \prod_{v \in \Vplac_0} H^1(G_v, M) \right) \,=\, \ker\left(H^1(G_F, M) \to \prod_{v \text{ of } F } H^1(G_v, M) \right).\]
\item The starting tuple has approximate generators.
\end{enumerate}
\end{prop}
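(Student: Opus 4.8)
Most of the work is already in place: (2)~$\Leftrightarrow$~(4) is Proposition~\ref{prop:approx_gen}, the construction carried out in the paragraphs preceding this statement produces a ramification section from any assignment of approximate generators (and recovers every ramification section that way), giving (4)~$\Leftrightarrow$~(1) once one notes that an assignment of approximate generators exists whenever the tuple has them; and (1)~$\Rightarrow$~(2) is immediate, since a ramification section $\mfB$ exhibits $\mfB_{\ovp,M}$ as a section of each $\mfR_{\ovp,M}$, forcing surjectivity. So the only genuinely new task is to weave condition (3) into this circle, and I plan to do this by proving the equivalence (2)~$\Leftrightarrow$~(3) directly, via Poitou--Tate global duality in the form of the comparison formula for Selmer groups with differing local conditions (Greenberg--Wiles).

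The first step is to read $\msS_{M/F}(\Vplac_0)$ and $\msS_{M/F}(\Vplac_0\cup\{\mfp\})$ as the Selmer groups of the two Selmer structures on $M$ that agree at every place except $\mfp$, where the former imposes the unramified condition and the latter imposes none. Since $\Vplac_0$ contains all primes dividing $e_0$ and $M$ has exponent dividing $e_0$, the prime $\mfp$ is tame for $M$, so $H^1(F_\mfp,M)/H^1_{\textup{ur}}(F_\mfp,M)\cong M(-1)^{G_{F,\ovp}}$; this identifies the local quotient governing the exact sequence \eqref{eq:pre_unpacked} with the target of $\mfR_{\ovp,M}$. Next I would apply the Greenberg--Wiles formula: enlarging the local condition at $\mfp$ from $H^1_{\textup{ur}}$ to all of $H^1$ scales the order of the Selmer group by $\#M(-1)^{G_{F,\ovp}}$ divided by the order of the image, under restriction to $\mfp$, of the dual Selmer group --- which here is the ``strict'' group $\ker\big(\msS_{M^\vee/F}(\Vplac_0)\to\prod_{v\in\Vplac_0}H^1(G_v,M^\vee)\big)$, $M^\vee$ the twisted Pontryagin dual. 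Combined with \eqref{eq:pre_unpacked}, which gives $[\msS_{M/F}(\Vplac_0\cup\{\mfp\}):\msS_{M/F}(\Vplac_0)]=\#\mathrm{im}\,\mfR_{\ovp,M}$, this yields the clean statement: $\mfR_{\ovp,M}$ is surjective if and only if every class in the strict Selmer group of $M^\vee$ restricts to zero at $\mfp$.

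To conclude, I would use that $\mu_{e_0}\subseteq K$, so $M\mapsto M^\vee$ is an involution of $\Mod(K/F,e_0)$. Hence the tuple is unpacked --- $\mfR_{\ovp,M}$ surjective for all $M$ and all $\ovp$ not over $\Vplac_0$ --- if and only if, for every $N$ in $\Mod(K/F,e_0)$, every class in $\ker\big(\msS_{N/F}(\Vplac_0)\to\prod_{v\in\Vplac_0}H^1(G_v,N)\big)$ restricts to zero at every prime not over $\Vplac_0$. Such a class is already unramified outside $\Vplac_0$ and trivial on $\Vplac_0$, so this says precisely that it lies in $\ker\big(H^1(G_F,N)\to\prod_{v}H^1(G_v,N)\big)$; since the reverse inclusion of these two kernels is automatic, this is exactly condition (3). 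The main obstacle is the duality bookkeeping in the middle step: one must verify that the structure ``no condition on $\Vplac_0$, unramified outside'' is Tate-dual to ``trivial on $\Vplac_0$, unramified outside,'' track the single Tate twist through $M^\vee=\Hom(M,\QQ/\Z)(1)$ correctly, and be careful at the archimedean places (and, when $\ell=2$, the real places) --- although, since $\Vplac_0$ already contains every archimedean place, both structures are ``full'' there and that subtlety evaporates.
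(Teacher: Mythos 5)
Your proposal is correct and matches the paper's own argument: both establish (1)$\Leftrightarrow$(2)$\Leftrightarrow$(4) by citing Proposition \ref{prop:approx_gen} and the ramification-section construction, and both obtain (2)$\Leftrightarrow$(3) via Poitou--Tate global duality, comparing the Selmer group with free local condition at $\mfp$ to the strict dual Selmer group and then using that $M\mapsto M^{\vee}$ is an involution of $\Mod(K/F,e_0)$. Your write-up simply spells out the duality bookkeeping that the paper compresses into the single clause ``from Poitou--Tate duality.''
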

\begin{proof}
We see from the above work that (1), (2), and (4) are equivalent, so we just need to prove (3) is equivalent to the others. But, from Poitou--Tate duality, we see that the equality of kernels of condition (3) holds  for a given $M$ and prime $\ovp$ if and only if the map $\mfR_{\ovp, M^{\vee}}$. This suffices to show that (2) and (3) are equivalent.
\end{proof}

Our methods generally require that we work with unpacked starting tuples. Fortunately, this is not much of a burden.
\begin{prop}
\label{prop:unpack_it}
Choose a starting tuple $(K/F, \Vplac_0, e_0)$, and take $\Cl\, K$ to be the class group of $K$. Then there is a set of places $\Vplac$ of $F$ of cardinality at most $[K: F] + \log_2\left(\# \Cl\, K\right)$ so that $(K/F, \Vplac_0 \cup \Vplac, e_0)$ is an unpacked starting tuple.
\end{prop}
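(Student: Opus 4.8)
The plan is to use the characterization of unpacked starting tuples given by condition (4) of Proposition \ref{prop:unpacked}: it suffices to add enough places to $\Vplac_0$ so that the enlarged tuple has approximate generators. Recall that having approximate generators means that for every subfield $E$ with $F \subseteq E \subseteq K$ and $\Gal(K/E)$ cyclic, and every prime $\mfp$ of $E$ inert in $K/E$ and not over $\Vplac_0$, there is $\alpha_\mfp \in E^\times$ with $(\alpha_\mfp) = \mfp\,\mfa\,\mfb^{e_0}$ for some ideal $\mfa$ supported on $\Vplac_0$ and some fractional ideal $\mfb$. So I want to choose $\Vplac$ so that, after enlarging, such $\alpha_\mfp$ always exist.

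The key observation is that the obstruction is controlled by the class group $\Cl\, K$ (equivalently, the class groups of the intermediate fields $E$, which are quotients or subgroups of $\Cl\, K$ up to bounded index — in fact for each such $E$ there is a norm/extension-of-ideals relationship, but the cleanest bound comes from working with $K$ itself). First I would reduce to $E = K$: for a prime $\mfp$ of $E$ inert in $K/E$, let $\mathfrak{P}$ be the prime of $K$ above it; if $\mathfrak{P}$ is principal generated by some $\beta$ with $(\beta) = \mathfrak{P}\cdot(\text{stuff over }\Vplac_0)^{?}\cdot(\ldots)^{e_0}$, then $N_{K/E}(\beta)$ (or rather the product over $\Gal(K/E)$, which is fine since $\mathfrak{P}$ is $\Gal(K/E)$-stable as $\mfp$ is inert) provides an approximate generator for $\mfp$ after absorbing the ramified contributions into $\mfa$. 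So it is enough to arrange that every prime $\mathfrak{P}$ of $K$ not over $\Vplac_0 \cup \Vplac$ satisfies: the class of $\mathfrak{P}$ in $\Cl\, K$ lies in the subgroup generated by the classes of primes over $\Vplac_0 \cup \Vplac$, modulo $e_0$-th powers — actually we want the class of $\mathfrak{P}$ to be trivial in $\Cl\, K / (\text{classes supported on } \Vplac_0 \cup \Vplac)$.

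Thus the core step is: choose a set $\Vplac'$ of primes of $F$ whose primes in $K$ generate $\Cl\, K$; then $(\alpha_\mfp)$-type relations exist for all inert $\mfp$. The size bound: $\Cl\, K$ is generated by $\log_2(\#\Cl\, K)$ elements (any finite abelian group of order $N$ is generated by at most $\log_2 N$ elements), and each generator can be taken to be the class of a prime of $K$; each such prime lies over a prime of $F$, contributing at most $\log_2(\#\Cl\, K)$ primes of $F$. However, to handle the intermediate fields $E$ uniformly and to ensure that a chosen prime of $F$ actually has a prime of $E$ inert over it with the right properties, one may need an extra $[K:F]$ places of slack — this accounts for choosing, for each needed generator, a prime of $F$ that splits appropriately in the relevant subfields, and for handling the finitely many $E$ at once. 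Assembling: $\#\Vplac \le [K:F] + \log_2(\#\Cl\, K)$.

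The main obstacle I anticipate is the bookkeeping across all intermediate cyclic subextensions $K/E$ simultaneously: a priori one might fear needing generators for $\Cl\, E$ for each $E$, which could blow up the count. The resolution is that every ideal class of every $E$ pushes forward (via extension of ideals) into $\Cl\, K$, and a relation $(\alpha_\mathfrak{P}) = \mathfrak{P}\,\mfa\,\mfb^{e_0}$ in $K$ descends by applying $\prod_{g \in \Gal(K/E)} g$ to give the needed relation in $E$ (using that $\mfp$ inert makes $\mathfrak{P}$ Galois-stable, so the product of conjugates of $\mathfrak{P}$ is $\mathfrak{P}^{[K:E]}$, and $[K:E] \equiv$ a unit or can be absorbed — here one must be slightly careful when $\gcd([K:E], e_0) \ne 1$, which is exactly where part of the $[K:F]$ slack is spent, e.g. by also throwing in primes to kill the resulting $\mathfrak{P}^{[K:E]-1}$ contribution). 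So the real content is verifying that generating $\Cl\, K$ by primes lying over few primes of $F$ is possible with the stated economy, together with this descent argument; both are standard but the constant-tracking requires care.
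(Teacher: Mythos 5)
Your proposal takes a genuinely different route from the paper, but it contains a gap that I don't think the hand-waving resolves.

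The paper verifies condition (3) of Proposition~\ref{prop:unpacked}, not condition (4). Its set $\Vplac$ has two pieces with two distinct jobs: one prime $\ovp_\sigma \cap F$ for each $\sigma \in \Gal(K/F)$ (so $[K:F]$ places), chosen so the Frobenii cover every conjugacy class; and one prime $\mfp_a \cap F$ for each element of a minimal generating set of $\Cl K / e_0 \Cl K$ (at most $\log_2 \#\Cl K$ places). The second batch kills the $H^1(G_K,\,\cdot\,)$-layer via Artin reciprocity; the first batch, together with inflation--restriction and the fact that all ramification of $K/F$ is already inside $\Vplac_0$, kills the $H^1(\Gal(K/F),\,\cdot\,)$-layer by forcing triviality on every cyclic subgroup and hence on every decomposition group. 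The two-term bound $[K:F] + \log_2\#\Cl K$ falls out immediately, with nothing left to absorb in ``slack.''

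Your route via condition (4) runs into a concrete problem at the descent from $K$ to an intermediate $E$. If $\mfp$ is inert in $K/E$ with $\mathfrak{P}$ the prime above it and $(\beta) = \mathfrak{P}\,\mfa'\,(\mfb')^{e_0}$ in $K$, then $(N_{K/E}\beta)$ contains $\mfp^{[K:E]}$, not $\mfp$. An approximate generator requires $\mfp$ to exactly the first power. When $\gcd([K:E], e_0) = 1$ you can fix this by taking a power and correcting with an $e_0$-th power, but when $\gcd([K:E], e_0) \ne 1$ you cannot --- and this is the typical case here, since $K \supseteq \mu_{e_0}$ and $e_0 = \ell^{k_0}$ means $[K:E]$ is frequently divisible by $\ell$. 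Throwing in extra primes of $F$ does not obviously repair the exponent of $\mfp$ in $(\alpha_\mfp)$; those extra primes can only adjust $\mfa$, and $\mfa$ is a separate factor. You acknowledge this issue and assign it to the $[K:F]$ slack, but I don't see a mechanism by which extra places fix a wrong exponent of $\mfp$, so as written the argument doesn't close. There is also a secondary concern you gloss over: generating $\Cl K / e_0 \Cl K$ by primes does not directly give you generation of $\Cl E / e_0 \Cl E$ by their images, since the extension-of-ideals and norm maps between class groups are neither injective nor surjective in general. Both concerns are sidestepped by the paper's decision to work with condition (3) and the two-layer inflation--restriction argument rather than with approximate generators field-by-field.
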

\begin{proof}
For $\sigma$ in $\Gal(K/F)$, choose a prime $\ovp_{\sigma}$ of $\ovQQ$ not over $\Vplac_0$ such that $\FrobF{F}{\ovp}$ projects to $\sigma$. Also choose a minimal set of generators $A$ for the abelian group $\Cl\,K/e_0 \Cl\, K$, and choose a prime $\mfp_a$ of $K$ whose ideal class projects to $a$ for each $a \in A$. Take
\[\Vplac = \big\{\ovp_{\sigma} \cap F\,:\,\, \sigma \in \Gal(K/F)\big\} \cup  \big\{\mfp_{a} \cap F\,:\,\, a \in A\big\}.\]
This set has cardinality bounded by $[K: F] + \log_2\left(\# \Cl\, K\right)$. To check that this choice gives an unpacked tuple, we use Condition (3) of Proposition \ref{prop:unpacked}. First, given an unramified element $\phi$ in $H^1(G_K,\frac{1}{e_0}\Z/\Z)$, if $\phi$ is trivial at $\FrobF{F}{(\mfp_a \cap F)}$ for each $a$ in $A$, we can use Artin reciprocity to conclude that $\phi$ is trivial. From the inflation-restriction exact sequence, we can conclude that
\begin{align*}
&\ker\left(\msS_{M/F}(\Vplac_0)  \to \prod_{v \in \Vplac_0 \cup \Vplac} H^1(G_v, M)\right)  \\
& \qquad =\,\,\ker\left(H^1(\Gal(K/F), M) \to \prod_{v \in \Vplac_0 \cup \Vplac} H^1(G_v, M)\right).
\end{align*}
From triviality at the $\ovp_{\sigma}$, we see that a cocycle class in the second kernel must be trivial on restriction to any cyclic subgroup of $\Gal(K/F)$. Since $\Vplac_0$ contains all places where $K/F$ is ramified, we thus find that any class in this second kernel has trivial restriction to $G_v$ for every place $v$ of $F$. So the conditions of Proposition \ref{prop:unpacked} hold for $(K/F, \Vplac_0 \cup \Vplac, e_0)$.
\end{proof}

For the rest of the section, we fix an unpacked starting tuple $(K/F, \Vplac_0, e_0)$ and an assignment of approximate generators $(\alpha_{\ovp})_{\ovp}$, and we take $\mfB$ to be the ramification section associated to $(\alpha_{\ovp})_{\ovp}$.
\begin{defn}
\label{defn:symbol}
Choose primes $\ovp, \ovq$ of $\ovQQ$ not over places of $\Vplac_0$. We define $E(\ovp)$ and $E(\ovq)$ as in Notation \ref{notat:Eovp}. For any $\tau$ in $G_F$, define the composite field
\[L_{\tau} = L_{\tau}(\ovp, \ovq)= E(\ovp) \cdot \tau E(\ovq),\]
take $\mfp_{\tau} = \ovp \cap L_{\tau}(\ovp, \ovq)$, and take $m_{\tau}$ to be the maximal divisor of $e_0$ such that $\mu_{m_{\tau}}$ is a subgroup of $L_{\tau}^{\times}$. Given any $\beta_1, \beta_2 \in L^{\times}_{\tau}$, we can consider the order $m_{\tau}$ Hilbert symbol $(\beta_1, \beta_2)_{m_{\tau}, \mfp_{\tau}}$ evaluated in the local field $(L_{\tau})_{\mfp_{\tau}}$, with our conventions as in \cite[Section V.3]{Neuk99}.

With this setup, we define the \emph{symbol} of $\ovp$ and $\ovq$ to be the function
\[\symb{\ovp}{\ovq}: G_F \to \mu_{e_0}\]
given by 
\[\symb{\ovp}{\ovq}(\tau) \, =\, \left(\alpha_{\ovp}, \,\tau \alpha_{\ovq}\right)_{m_{\tau}, \mfp_{\tau}}\,\,\text{ for } \,\tau \in G_F.\]

\end{defn}

\begin{ex}
\label{ex:Legendre}
Restricting to the case that $K = F = \QQ$ and $e_0 = 2$, we find that $(\QQ/\QQ, \{2, \infty\}, 2)$ is an unpacked starting tuple. One possible assignment of approximate generators takes a prime $\ovp$ of $\ovQQ$ not over $2$ to the unique positive generator for the ideal $\ovp \cap \QQ$. Given distinct odd rational primes $p$ and $q$ and any primes $\ovp$ and $\ovq$ in $\ovQQ$ dividing these primes, we find that
\[\symb{\ovp}{\ovq}(\tau) = \left(\frac{q}{p}\right)\quad\text{for all } \tau \in G_{\QQ},\]
 where the right term is a Legendre symbol.
\end{ex}

\begin{rmk}
\label{rmk:non_can}
The symbol $\symb{\ovp}{\ovq}$ depends on the choice of assignment of approximate generators, but this dependence is superficial. To explain this, take $\Vplac_0'$ to be a set of places of $F$ containing $\Vplac_0$, and take $\symb{\,\,}{}'$ to be a symbol defined with respect to some assignment of approximate generators for $(K/F, \Vplac_0', e_0)$. Define $\mathscr{C}$ as in Definition \ref{defn:class}. Then we find that there is some function
\[f: \mathscr{C} \times  \mathscr{C} \to \text{Map}(G_F,  \mu_{e_0})\]
such that, for any primes $\ovp, \ovq$ of $\ovQQ$ not over $\Vplac_0'$ with $\ovp \cap F \ne \ovq \cap F$, we have
\[\symb{\ovp}{\ovq} = \symb{\ovp}{\ovq}' \cdot f\left(\FrobF{F}{\ovp},\, \FrobF{F}{\ovq}\right).\]
That is, the ratio between these symbols is determined by the class of $\ovp$ and $\ovq$ over $(K/F, \Vplac_0', e_0)$. We can also determine the ratio between $\spin{\ovp}$ and $\spin{\ovp}'$ just from the class of $\ovp$.

It would be nice to package the same information in a more canonical invariant, but none of the alternatives the author has considered have been satisfactory. 
\end{rmk}

We list some basic properties of the symbol.
\begin{prop}
\label{prop:symbol_properties}
Take $\ovp, \ovq$ and $\ovp', \ovq'$ to be primes of $\ovQQ$ not over places in $\Vplac_0$ satisfying $\class{\ovp} = \class{\ovp'}$ and  $\class{\ovq} = \class{\ovq'}$. Define $E(\ovq)$ as before, and choose $\tau$ in $G_F$.
\begin{enumerate}
\item For $\sigma \in G_{E(\ovq)}$, we have $\symb{\ovp}{ \ovq}(\tau \sigma) = \symb{\ovp}{\ovq}(\tau)$.
\item For $\sigma \in G_{K}$, we have
\[\symb{\sigma \ovp}{\ovq}(\tau) = \symb{ \ovp}{\sigma \ovq}(\tau) = \symb{ \ovp}{\ovq}(\tau) .\]
\item Suppose $\ovp \cap K \ne \ovq \cap K$. Then
\[\tau \big(\symb{\ovp}{ \ovq}(1)\big) = \symb{\tau\ovp}{\ovq}(\tau). \]
\item We have 
\[ \symb{ \ovp}{\ovq}(\tau) \cdot \symb{ \ovp}{\tau\ovq}(1)^{-1} \,=\,   \symb{ \ovp}{\ovq'}(\tau) \cdot \symb{\ovp}{\tau \ovq'}(1)^{-1}.\]
\item Suppose $\ovp \cap K \ne  \ovq \cap K$ and $\ovp' \cap K \ne \ovq' \cap K$. Then
\[\symb{\ovp}{\ovq}(1)\cdot \symb{\ovq}{\ovp}(1)^{-1} \, =\, \symb{\ovp'}{\ovq'}(1)  \cdot \symb{\ovq'}{\ovp'}(1)^{-1}.\] 
\end{enumerate}
\end{prop}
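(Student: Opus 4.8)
The plan is to unwind $\symb{\ovp}{\ovq}(\tau)$ to its definition as an order-$m_\tau$ Hilbert symbol $(\alpha_{\ovp},\, \tau\alpha_{\ovq})_{m_\tau,\, \mfp_\tau}$ and argue with the standard formalism of Hilbert symbols. Four features do essentially all the work. First, since $\Vplac_0$ contains every prime dividing $e_0$, each prime $\mfp_\tau = \ovp \cap L_\tau$ is prime to $e_0$, so every symbol in sight is tame and is computed by the tame-symbol formula, which depends only on valuations and residues. Second, bilinearity, with the elementary consequence that $(\beta_1, \beta_2)_{m,\, w} = 1$ as soon as $v_w(\beta_1) \equiv v_w(\beta_2) \equiv 0 \pmod m$. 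Third, naturality under the Galois action: a field isomorphism $\tau$ carrying a place $w$ to $\tau w$ satisfies $\tau\big((\beta_1,\beta_2)_{m,\,w}\big) = (\tau\beta_1,\, \tau\beta_2)_{m,\,\tau w}$. Fourth, Hilbert reciprocity $\prod_w (\beta_1,\beta_2)_{m,\,w} = 1$ over a number field containing $\mu_m$. Alongside these I use the ideal-theoretic shape $(\alpha_{\ovp}) = \mfp\,\mfa\,\mfb^{e_0}$ of an approximate generator, and the following upgrade: an element $\gamma \in E^\times$, for an intermediate field $F \subseteq E \subseteq K$, whose divisor is supported on $\Vplac_0$ modulo $e_0$-th powers and which is a local $e_0$-th power at every place over $\Vplac_0$, is — by condition (3) of Proposition \ref{prop:unpacked}, applied to a suitable induced module as in the proof of Proposition \ref{prop:approx_gen} — a local $e_0$-th power at \emph{every} place of $E$, hence invisible to every Hilbert symbol. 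By Notation \ref{notat:approx_gen_assign} this applies in particular to $\gamma = \alpha_{\ovr}/\alpha_{\ovr'}$ whenever $\ovr,\ovr'$ lie in a common class.

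Part (1) is immediate: for $\sigma \in G_{E(\ovq)}$ one has $\sigma E(\ovq) = E(\ovq)$ and $\sigma\alpha_{\ovq} = \alpha_{\ovq}$, so $L_{\tau\sigma} = L_\tau$, $\mfp_{\tau\sigma} = \mfp_\tau$, $m_{\tau\sigma} = m_\tau$, and $\tau\sigma\alpha_{\ovq} = \tau\alpha_{\ovq}$; the two Hilbert symbols literally coincide. For part (2), note that for $\sigma \in G_K$ the decomposition group of $\sigma\ovp$ in $\Gal(K/F)$ equals that of $\ovp$ (since $\sigma$ acts trivially on $K$), so $E(\sigma\ovp) = E(\ovp)$ and $\sigma\ovp \cap E(\ovp) = \ovp \cap E(\ovp)$; hence $\sigma$ fixes $L_\tau \subseteq K$ pointwise, leaving $L_\tau$, $m_\tau$, $\mfp_\tau$ and $\tau\alpha_{\ovq}$ untouched, and only the ratio $\alpha_{\sigma\ovp}/\alpha_{\ovp}$ of approximate generators for $\ovp\cap E(\ovp)$ remains. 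A short computation — using that $\FrobF{F}{\ovp}|_K$ generates $\Gal(K/E(\ovp))$, so $\FrobF{F}{\ovp} \in G_{E(\ovp)}$, and that $\Gal(E(\ovp)(\Vplac_0)/E(\ovp))$ is abelian — shows that all primes of $\ovQQ$ above a fixed prime of $E(\ovp)$ lie in a common class, so $\sigma\ovp$ and $\ovp$ are class-equivalent and the upgrade applies. The case of $\sigma\ovq$ is identical.

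For part (3), naturality gives $\tau\big(\symb{\ovp}{\ovq}(1)\big) = (\tau\alpha_{\ovp},\, \tau\alpha_{\ovq})_{m_1,\, \tau\mfp_1}$, and the field $\tau L_1 = \tau E(\ovp)\cdot \tau E(\ovq) = E(\tau\ovp)\cdot \tau E(\ovq)$, the prime $\tau\mfp_1 = \tau\ovp \cap \tau L_1$, and the integer $m_1$ are exactly the data defining $\symb{\tau\ovp}{\ovq}(\tau)$. It remains to replace $\tau\alpha_{\ovp}$ by the designated generator $\alpha_{\tau\ovp}$: both are approximate generators for $\tau\ovp \cap E(\tau\ovp)$, so their ratio has valuation $\equiv 0 \pmod{m_1}$ at $\tau\mfp_1$; moreover $\ovp \cap K \ne \ovq \cap K$ forces $\ovp$ and $\ovq$, hence $\tau\ovp$ and $\tau\ovq$, to differ already on $L_1$ — since $L_1 \supseteq E(\ovp)$ and $K/E(\ovp)$ is inert at $\ovp\cap E(\ovp)$, agreement on $L_1$ would imply $\ovp\cap K = \ovq\cap K$ — so $\tau\alpha_{\ovq}$ too has valuation $\equiv 0 \pmod{m_1}$ at $\tau\mfp_1$, and the change of generator is killed by bilinearity. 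Part (4) is proved by first collapsing, via bilinearity, $\symb{\ovp}{\ovq}(\tau)\cdot \symb{\ovp}{\tau\ovq}(1)^{-1}$ into the single symbol $(\alpha_{\ovp},\, c)_{m_\tau,\, \mfp_\tau}$ with $c = \tau\alpha_{\ovq}/\alpha_{\tau\ovq}$ — legitimate because $E(\ovp)\cdot E(\tau\ovq) = E(\ovp)\cdot \tau E(\ovq) = L_\tau$, so the two symbols share local field and prime. If $\ovq$ is replaced by $\ovq' \in \class{\ovq}$ then also $\tau\ovq' \in \class{\tau\ovq}$ (conjugate the defining congruence by $\tau$, using that $\Vplac_0$ is $G_F$-stable, so $\tau\big(E(\ovq)(\Vplac_0)\big) = E(\tau\ovq)(\Vplac_0)$), and a short check shows that $c/c'$ has divisor supported on $\Vplac_0$ modulo $e_0$-th powers and is a local $e_0$-th power at every place over $\Vplac_0$; the upgrade then forces $(\alpha_{\ovp},\, c/c')_{m_\tau,\, \mfp_\tau} = 1$, which is the claim.

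Part (5) is the one I expect to be delicate. Bilinearity and the skew-symmetry $(\beta_2,\beta_1)_w = (\beta_1,\beta_2)_w^{-1}$ reduce it to showing that $(\alpha_{\ovp},\alpha_{\ovq})_{m_1,\, \ovp\cap L_1}\cdot (\alpha_{\ovp},\alpha_{\ovq})_{m_1,\, \ovq\cap L_1}$, a product of two local symbols over the common field $L_1 = E(\ovp)E(\ovq)$, is insensitive to moving $\ovp$ and $\ovq$ within their classes. Hilbert reciprocity over $L_1$ rewrites this as the product of the $(\alpha_{\ovp},\alpha_{\ovq})_{m_1,\,w}$ over all remaining places $w$; most of these vanish by bilinearity, and the survivors are the terms with $w$ over $\Vplac_0$ — unchanged by the class condition, since the relevant ratios are local $e_0$-th powers there — together with the ``conjugate-prime'' terms at the places of $L_1$ above $\ovp\cap E(\ovp)$ and above $\ovq\cap E(\ovq)$ other than $\ovp\cap L_1$, $\ovq\cap L_1$ themselves. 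The crux is to show these last contributions cancel or combine in a class-insensitive way; this requires a norm/reciprocity computation along the (abelian) extensions $L_1/E(\ovp)$ and $L_1/E(\ovq)$, tracking the several primes above each rational prime and the coincidences that arise when $\ovp\cap F = \ovq\cap F$. Everything else in the proposition is formal, so I expect the bulk of the write-up to be exactly this bookkeeping in part (5), with the analogous but much milder conjugate-prime point in part (3) disposed of by the inertness hypothesis.
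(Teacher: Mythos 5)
Your treatment of parts (1)--(4) is correct and follows the same route as the paper (which proves them in two lines by invoking the Galois naturality and bilinearity of Hilbert symbols, the compatibility of an assignment of approximate generators across a class, and --- for (4) --- the same argument as (2)). Your write-up is substantially more detailed, the ``upgrade'' you extract from Proposition \ref{prop:unpacked}(3) is a clean reformulation of what is needed, and the inertness argument you give inside part (3) to kill the contribution of $\tau\alpha_{\ovq}$ at $\tau\mfp_1$ is exactly right. So far, correct and essentially the paper's approach.

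Part (5), however, is a genuine gap in what you have written: you correctly reduce, via skew-symmetry and reciprocity over $L_1$, to analyzing $\prod_{w\ne\mfp_1,\mfq_1}(\alpha_{\ovp},\alpha_{\ovq})_{m_1,w}$, but then you declare the ``conjugate-prime'' contributions --- the terms at places of $L_1$ above $\ovp\cap E(\ovp)$ or above $\ovq\cap E(\ovq)$ other than $\mfp_1, \mfq_1$ --- to be a delicate problem requiring a norm/reciprocity computation you do not carry out. There are no such places. Since $E(\ovp)\subseteq L_1\subseteq K$ and $K/E(\ovp)$ is inert at $\ovp\cap E(\ovp)$ (this is the defining property of $E(\ovp)$ --- the very fact you invoke in part (3)), the prime $\ovp\cap E(\ovp)$ is inert in $L_1/E(\ovp)$, so $\mfp_1$ is the \emph{unique} place of $L_1$ above it; likewise $\mfq_1$ is the unique place of $L_1$ above $\ovq\cap E(\ovq)$. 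Consequently, for every $w\notin\{\mfp_1,\mfq_1\}$ not lying over $\Vplac_0$, both $v_w(\alpha_{\ovp})$ and $v_w(\alpha_{\ovq})$ are $\equiv 0 \pmod{e_0}$, so $(\alpha_{\ovp},\alpha_{\ovq})_{m_1,w}=1$ by bilinearity. The only surviving off-diagonal terms are over $\Vplac_0$, and those are class-insensitive by the assignment compatibility (precisely your ``upgrade''). With that observation the argument closes immediately, and you should delete the speculative remarks about tracking ``coincidences that arise when $\ovp\cap F=\ovq\cap F$''; the inertness argument handles that case uniformly.
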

\begin{proof}
Part (1) is immediate, and part (2) follows from the fact that $\sigma \ovp$ and $\sigma\ovq$ are in the classes of $\ovp$ and $\ovq$, respectively. For part (3), we note that $\tau$ gives an isomorphism between the local fields $(L_1(\ovp, \ovq))_{\mfp_1}$ and $(\tau L_1(\ovp, \ovq))_{\tau \mfp_1}$, so we have an identity of Hilbert symbols
\[\tau \left( \alpha_{\ovp},\, \alpha_{\ovq}\right)_{m_1, \mfp_1} = \left( \tau \alpha_{\ovp},\, \tau \alpha_{\ovq}\right)_{m_1,\tau  \mfp_1}.\]
The part follows since $\tau \alpha_{\ovq}$ and $\alpha_{\tau \ovp}^{-1} \tau \alpha_{\ovp}$ are both unramified at $\tau\mfp_1$. Part (4) is similar to (2), and part (5) follows from Hilbert reciprocity.
\end{proof}

In particular, we see that a symbol $\symb{\ovp}{\ovq}$ can be determined from the classes $\class{\ovp}$ and $\class{\ovq}$ together with the value of the symbol on any set of representatives for the collection of double cosets $G_{E(\ovp)} \backslash G_F / G_{E(\ovq)}$. However, this is as far as we can push it, as we have the following proposition.

\begin{prop}
\label{prop:chinese}
Choose primes $\ovp_0$ and $\ovq$ of $\ovQQ$ not over places in $\Vplac_0$. Choose a set of representatives $B $ in $G_F$ for $G_{E(\ovp_0)} \backslash G_F / G_{E(\ovq)}$, and choose any $\zeta_{\tau} \in \mu_{m_{\tau}}$ for each $\tau \in B$, where $m_{\tau}$ is defined as in Definition \ref{defn:symbol}. 

Then there is a prime $\ovp$ in the class of $\ovp_0$ not dividing $\ovq \cap F$ so
\[[\ovp, \ovq](\tau) = \zeta_{\tau} \quad\text{for all }\, \tau \in B.\]
\end{prop}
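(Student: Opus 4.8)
The plan is to prove Proposition \ref{prop:chinese} by realizing the prescribed Hilbert-symbol values at the finitely many local places $\mfp_\tau$ via an appropriate congruence condition on the Frobenius of $\ovp$, and then invoking Chebotarev. The key point is that for $\tau \in B$, the field $E(\ovp_0) = E(\ovp)$ only depends on the class $\class{\ovp_0}$, so $L_\tau = L_\tau(\ovp,\ovq) = E(\ovp)\cdot \tau E(\ovq)$ and the integer $m_\tau$ are already determined; what varies with the choice of $\ovp$ in the class is only the approximate generator $\alpha_{\ovp}$, or rather the prime $\mfp_\tau = \ovp \cap L_\tau$ and the element $\alpha_{\ovp} \in E(\ovp)^\times$ sitting inside $(L_\tau)_{\mfp_\tau}^\times$. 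So I want to choose $\ovp$ so that, simultaneously for all $\tau \in B$, the local factor $(\alpha_{\ovp}, \tau\alpha_{\ovq})_{m_\tau, \mfp_\tau}$ equals the prescribed $\zeta_\tau$.

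First I would set up the relevant auxiliary extension. For each $\tau \in B$, since $\mu_{m_\tau} \subset L_\tau^\times$ and $\tau\alpha_{\ovq}$ is a fixed element of $L_\tau^\times$ (a unit at $\mfp_\tau$ away from places over $\Vplac_0$, since $\ovq$ is not over $\Vplac_0$ and $\tau$ permutes things compatibly), Kummer theory and local class field theory tell us that the map $x \mapsto (x, \tau\alpha_{\ovq})_{m_\tau, \mfp_\tau}$, as a function of the prime $\mfp_\tau$ (equivalently of the Frobenius of $\mfp_\tau$ in $L_\tau(\mu_{m_\tau}, (\tau\alpha_{\ovq})^{1/m_\tau})/L_\tau = L_\tau((\tau\alpha_{\ovq})^{1/m_\tau})/L_\tau$), runs over all of $\mu_{m_\tau}$. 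Concretely: if $\mfp_\tau$ is a degree-one prime of $L_\tau$ unramified in $L_\tau((\tau\alpha_{\ovq})^{1/m_\tau})$ with residue characteristic not dividing $e_0$, then $(\alpha_{\ovp}, \tau\alpha_{\ovq})_{m_\tau,\mfp_\tau}$ is, up to a predictable power depending on $v_{\mfp_\tau}(\alpha_{\ovp})$ — which equals $1$ by the approximate-generator property — the value $(\tau\alpha_{\ovq})^{(N\mfp_\tau - 1)/m_\tau} \bmod \mfp_\tau$, i.e. it records the splitting behavior of $\mfp_\tau$ in the Kummer extension. Getting a prime $\mfp_\tau$ realizing a prescribed such behavior is exactly a Chebotarev condition on $G_{L_\tau}$, hence (pulling back) on $G_F$.

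Next I would assemble these into a single condition on $\ovp$. The primes $\mfp_\tau$ for distinct $\tau \in B$ lie over distinct primes $\ovp \cap E(\ovp)$... no — they all lie over the same prime $\ovp \cap F$; the distinction is that each $\mfp_\tau$ is a prime of a different field $L_\tau$. The right way to package this: take the compositum $L = \prod_{\tau \in B} L_\tau((\tau\alpha_{\ovq})^{1/m_\tau})$ over $F$, a finite extension. A conjugacy class of Frobenius in $\Gal(L/F)$ that, for each $\tau$, (i) fixes $L_\tau$ (so that $\ovp$ splits completely in $L_\tau$, making $\mfp_\tau$ degree one over $F$ — this is automatic for the part of $\Gal(L/F)$ over $E(\ovp)$ once we insist $\ovp$ is in the class $\class{\ovp_0}$, which pins down the Frobenius in $\Gal(E(\Vplac_0)/F)$) and (ii) acts on $(\tau\alpha_{\ovq})^{1/m_\tau}$ by the root of unity corresponding to the desired $\zeta_\tau$, will produce the desired $\ovp$. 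One must check these conditions are mutually consistent — i.e. that the relevant elements of the various $\Gal(L_\tau(\ldots)/L_\tau)$ can be prescribed independently. This is where the hypothesis $\tau \in B$ being double-coset representatives, together with the assumption $\ovp \cap K \ne \ovq \cap K$ handled via choosing $\ovp$ not dividing $\ovq \cap F$, matters: the Kummer extensions $L_\tau((\tau\alpha_{\ovq})^{1/m_\tau})/L_\tau$ for different $\tau$, restricted appropriately, are "independent enough," because $\tau\alpha_{\ovq}$ for different cosets $\tau G_{E(\ovq)}$ are genuinely different Kummer generators not related by $m_\tau$-th powers in a way that would force a dependency — and the constraint already imposed (being in $\class{\ovp_0}$) is a condition over $E(\Vplac_0)$, which only governs the abelian-of-exponent-$e_0$ part over the relevant fields and does not over-determine the Kummer data. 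Then Chebotarev's density theorem over $F$ gives infinitely many $\ovp$ with the prescribed Frobenius in $\Gal(L/F)$; discarding the finitely many lying over $\Vplac_0$ or over $\ovq \cap F$ leaves the prime we want.

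The main obstacle I expect is the independence/consistency check in the last step: verifying that imposing membership in the class $\class{\ovp_0}$ does not conflict with, or partially determine, the Kummer-theoretic conditions $(ii)$, and that the conditions $(ii)$ for different $\tau \in B$ are jointly satisfiable. Concretely one needs that $\Gal(L/F)$ surjects onto the product $\Gal(E(\Vplac_0)/F) \times \prod_{\tau\in B} \mu_{m_\tau}$ — or at least onto a subgroup large enough to hit the prescribed target in each factor — which amounts to a linear-disjointness statement among the extension $E(\Vplac_0)/E(\ovp)$ and the Kummer extensions. I would handle this by noting that $E(\Vplac_0)/E(\ovp)$ is abelian of exponent dividing $e_0$ and unramified outside $\Vplac_0$, whereas each Kummer extension $L_\tau((\tau\alpha_{\ovq})^{1/m_\tau})/L_\tau$ is ramified at $\mfp_\tau$ (a place over $\ovp \cap F \notin \Vplac_0$) precisely because $\tau\alpha_{\ovq}$ has valuation prime to $m_\tau$ there — wait, that's not quite right since we want $\ovp$ to vary; rather, one fixes an auxiliary prime witnessing the ramification, or more cleanly one observes that $\tau\alpha_{\ovq}$ is not an $\ell$-th power in $L_\tau$ for any $\ell \mid m_\tau$ (as it's an approximate generator for $\ovq$, which does not lie over $\Vplac_0$), so $L_\tau((\tau\alpha_{\ovq})^{1/m_\tau})$ is a genuine degree-$m_\tau$ extension, and its only possible overlap with the abelian-exponent-$e_0$ extension $L_\tau E(\Vplac_0)/L_\tau$ is controlled, small, and can be quotiented out without affecting our ability to prescribe the symbol value (changing $\ovp$ within a finer class only multiplies $\alpha_{\ovp}$ by an $e_0$-th power locally, which does not change the Hilbert symbol). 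Once that disjointness bookkeeping is in place, Chebotarev finishes it immediately.
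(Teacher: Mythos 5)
Your strategy — realize the prescribed Hilbert-symbol values by imposing Frobenius conditions in an auxiliary extension, then invoke Chebotarev — is the same one the paper uses, but the paper routes through Proposition \ref{prop:same_symbs_old} rather than through explicit Kummer extensions. That proposition already establishes that, for $\ovp$ ranging in a fixed class, the symbol $\symb{\ovp}{\ovq}$ is determined by, and conversely determines, the image of $\FrobF{F}{\ovp}$ in $\Gal(E(\Vplac_0 \cup \{\mfq\})/F)$ modulo $G_{E(\Vplac_0 \cup \{\mfq\})}$. So all the paper needs is a degree count: $[E(\Vplac_0 \cup \{\mfq\}) : E(\Vplac_0)] = \prod_{\tau} m_{\tau}$, after which Chebotarev surjects onto every coset and hence onto every possible tuple of symbol values.

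The genuine gap in your argument is exactly the step you flag as the ``main obstacle,'' and it is not resolvable by the ramification and nontriviality observations you offer. You correctly identify that you need the compositum of your Kummer extensions (together with the class constraint over $E(\Vplac_0)$) to have Galois group surjecting onto $\prod_{\tau} \mu_{m_{\tau}}$, i.e. a linear-disjointness statement. But knowing that each $L_\tau((\tau\alpha_{\ovq})^{1/m_\tau})/L_\tau$ has full degree $m_\tau$, or that it is ramified over $\ovq \cap F$, does not rule out nontrivial overlap among the extensions for different $\tau$, nor overlap with $E(\Vplac_0)/E$. The hypothesis that actually delivers this independence is the \emph{unpacked} hypothesis on the starting tuple, which you never invoke. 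The paper's proof uses it in the form: $(K/E, \Vplac_0', e_0)$ is unpacked (by Shapiro's lemma), so the ramification-measuring map for the module $C = \tfrac{1}{e_0}\Z/\Z$ at the primes $\{\tau\ovq \cap E : \tau \in B\}$ is \emph{surjective}, and that surjectivity is precisely the degree identity and hence precisely the disjointness you need. Without invoking unpackedness (equivalently, approximate generators), the degree $[E(\Vplac_0 \cup \{\mfq\}) : E(\Vplac_0)]$ could be strictly smaller than $\prod_\tau m_\tau$, and not every target tuple $(\zeta_\tau)_\tau$ would be achievable. Your heuristics about ramification (which you yourself note are ``not quite right'') and about $\tau\alpha_{\ovq}$ not being a power in $L_\tau$ establish at best that each factor is individually nontrivial; they do not establish joint independence, and a counterexample to joint independence is exactly what fails when the tuple is not unpacked. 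To repair the argument you should replace the ad hoc disjointness discussion with the cohomological surjectivity guaranteed by unpackedness (Proposition \ref{prop:unpacked}), or, more economically, cite Proposition \ref{prop:same_symbs_old} and reduce to the degree count, as the paper does.
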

We will prove this in Section \ref{ssec:symb_proofs}.

\begin{notat}
\label{notat:symb_set}
Given classes $\class{\ovp_0}$ and $\class{\ovq_0}$, we take $\symb{\class{\ovp_0}}{\class{\ovq_0}}$ to be the set of all functions of the form $\symb{\ovp}{\ovq}$ for some $\ovp$ in $\class{\ovp_0}$ and $\ovq$ in $\class{\ovq_0}$ with $\ovp \cap F \ne \ovq \cap F$. If we take $B$ and $m_{\tau}$ as in Proposition \ref{prop:chinese}, we see that this set has size
\[\# \symb{\class{\ovp_0}}{\class{\ovq_0}} = \prod_{\tau \in B} m_{\tau}.\]
\end{notat}

Our main interest in symbols lies in the fact that they may be used to encode local behavior of ramification sections, as per the following proposition.

\begin{prop}
\label{prop:same_symbs}
Choose primes $\ovp, \ovq$ of $\ovQQ$ not over $\Vplac_0$, and choose primes $\ovp'$ in $\class{\ovp}$ and $\ovq'$ in $\class{\ovq'}$. Suppose $\ovp \cap F \ne \ovq \cap F$ and $\ovp' \cap F \ne \ovq \cap F$. Then the following conditions are equivalent:
\begin{enumerate}
\item There is an identity of symbols
\[\symb{\ovp}{\ovq} = \symb{\ovp'}{\ovq'}.\]
\item  For every $M$ in $\Mod(K/F, e_0)$ and $m$ in $M(-1)^{G_{F, \ovq}}$, 
\[\mfB_{\ovq, M}(m)\left(\FrobF{F}{\ovp}\right) \, =\, \mfB_{\ovq', M}(m)\left(\FrobF{F}{\ovp'}\right)\,\,\text{ in }\,\, M_{G_{F, \ovp}}.\]
\end{enumerate}
\end{prop}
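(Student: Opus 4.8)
Write $E=E(\ovq)$; since $\ovq'\in\class{\ovq}$ we have $E(\ovq')=E$, so the same module $M_0=\Z[G_F]\otimes_{\Z[G_E]}\mu_{e_0}$, with distinguished element $m_0\in M_0(-1)$ sending $\overline{\zeta}$ to $[1]\otimes\zeta$, is used to build both $\mfB_{\ovq,-}$ and $\mfB_{\ovq',-}$. Because $K\supseteq\mu_{e_0}$, the subgroup $G_K$ acts trivially on $M_0$, so $M_0\in\Mod(K/F,e_0)$ and in fact $M_0$ is the inflation of $\Ind_{\Gal(K/E)}^{\Gal(K/F)}\mu_{e_0}$; moreover $G_{F,\ovq}\subseteq G_E$, so $m_0\in M_0(-1)^{G_{F,\ovq}}$. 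By construction of the ramification section, $\mfB_{\ovq,M_0}(m_0)=\phi_{\ovq}$ and $\mfB_{\ovq',M_0}(m_0)=\phi_{\ovq'}$, and for a general $M$ with $m\in M(-1)^{G_{F,\ovq}}$ the unique equivariant $\rho\colon M_0\to M$ with $\rho(-1)(m_0)=m$ satisfies $\mfB_{\ovq,M}(m)=\rho(\phi_{\ovq})$ and likewise for $\ovq'$. Since $\ovp,\ovp'$ lie in one class, $M_{G_{F,\ovp}}=M_{G_{F,\ovp'}}$ as subquotients of $M$ (Remark \ref{rmk:class}), naturally in $M$, and each evaluation in (2) is well defined there because $\mfB_{\ovq,M}(m)$ is unramified at $\ovp$ and $\mfB_{\ovq',M}(m)$ at $\ovp'$. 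Pushing forward along $\rho$ then shows that condition (2) holds for all admissible $(M,m)$ if and only if it holds for the universal pair $(M_0,m_0)$; so the plan is to prove that
\[
\phi_{\ovq}\big(\FrobF{F}{\ovp}\big)=\phi_{\ovq'}\big(\FrobF{F}{\ovp'}\big)\quad\text{in }(M_0)_{G_{F,\ovp}}
\]
is equivalent to condition (1).

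For this, recall from the proof of Proposition \ref{prop:approx_gen} the chain $H^1(G_F,M_0)\cong H^1(G_E,\mu_{e_0})\cong E^\times/(E^\times)^{e_0}$, under which $\phi_{\ovq}$ corresponds to the Kummer class of the chosen approximate generator $\alpha_{\ovq}$, and similarly $\phi_{\ovq'}$ to that of $\alpha_{\ovq'}$. Now $(M_0)_{G_{F,\ovp}}=(M_0)_{\Gal(K/E(\ovp))}$ since $G_{F,\ovp}$ acts on $M_0$ through $\Gal(K/E(\ovp))$, and Mackey's formula applied to $\Ind_{\Gal(K/E)}^{\Gal(K/F)}\mu_{e_0}$ restricted to $\Gal(K/E(\ovp))$ gives
\[
(M_0)_{G_{F,\ovp}}\;\cong\;\bigoplus_{\tau}(\mu_{e_0})_{\Gal(K/L_\tau)}\;\cong\;\bigoplus_{\tau}\mu_{m_\tau},
\]
where $\tau$ runs over representatives of $G_{E(\ovp)}\backslash G_F/G_{E(\ovq)}$ (as in Proposition \ref{prop:chinese}) and $L_\tau,m_\tau$ are as in Definition \ref{defn:symbol}. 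Feeding the Kummer cocycle of $\alpha_{\ovq}$ through an explicit cocycle for the Shapiro isomorphism and projecting $\phi_{\ovq}(\FrobF{F}{\ovp})$ onto the $\tau$-summand, the usual identity between induced Kummer classes and Hilbert symbols — via the local reciprocity map at $\mfp_\tau$, using that the approximate generator $\alpha_{\ovp}$ of $\ovp\cap E(\ovp)$ is a $\mfp_\tau$-uniformizer up to an $e_0$-th power — identifies that summand with $(\alpha_{\ovp},\tau\alpha_{\ovq})_{m_\tau,\mfp_\tau}=\symb{\ovp}{\ovq}(\tau)$; the parallel computation for $(\ovp',\ovq')$ gives $\symb{\ovp'}{\ovq'}(\tau)$. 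Hence the displayed identity above is equivalent to $\symb{\ovp}{\ovq}(\tau)=\symb{\ovp'}{\ovq'}(\tau)$ for all $\tau$ in a set of representatives of $G_{E(\ovp)}\backslash G_F/G_{E(\ovq)}$.

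Finally, by Proposition \ref{prop:symbol_properties} and the remark after its proof, a symbol is determined by the classes of its two arguments together with its values on any such set of double coset representatives. As $\class{\ovp}=\class{\ovp'}$ and $\class{\ovq}=\class{\ovq'}$ — so in particular $E(\ovp)=E(\ovp')$, $E(\ovq)=E(\ovq')$, and the double coset space is unchanged — agreement of $\symb{\ovp}{\ovq}$ and $\symb{\ovp'}{\ovq'}$ on representatives is equivalent to the full equality $\symb{\ovp}{\ovq}=\symb{\ovp'}{\ovq'}$, i.e.\ to condition (1). Chaining the equivalences of the three paragraphs proves the proposition.

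The step I expect to cause the most trouble is the computation in the second paragraph: producing an explicit Shapiro cocycle for $M_0$, evaluating it at $\FrobF{F}{\ovp}$ in the coinvariants, and verifying — consistently with the normalizations fixed in Section \ref{sec:notat} (the generator $\overline{\zeta}$, the elements $\TineF{F}{\ovp}$ and $\FrobF{F}{\ovp}$) and the Hilbert-symbol conventions of \cite[Section V.3]{Neuk99} — that the $\tau$-component is precisely $(\alpha_{\ovp},\tau\alpha_{\ovq})_{m_\tau,\mfp_\tau}$ and not, say, its inverse, a Galois conjugate, or the symbol with its two arguments transposed. Once this local identification is in place, the surrounding reductions are formal.
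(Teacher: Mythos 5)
Your proof is correct and follows essentially the same route as the paper, which factors the argument through the explicit double coset formula of Proposition~\ref{prop:eval_at_Frob} and then observes that Proposition~\ref{prop:same_symbs} is an immediate consequence. Your reduction to the universal pair $(M_0, m_0)$, the Mackey decomposition of $(M_0)_{G_{F,\ovp}}$ over $G_{E(\ovp)}\backslash G_F / G_{E(\ovq)}$, and the conclusion via Proposition~\ref{prop:symbol_properties} are all exactly the steps the paper performs inside the proof of Proposition~\ref{prop:eval_at_Frob}. The local computation you flag as the delicate step — that the $\tau$-component of $\phi_{\ovq}(\FrobF{F}{\ovp})$ is the Hilbert symbol $(\alpha_{\ovp},\tau\alpha_{\ovq})_{m_\tau,\mfp_\tau}$, with the sign and normalization coming out right — is precisely what Proposition~\ref{prop:eval_at_Frob} proves via the res/cores double coset formula \cite[Proposition I.5.6]{Neuk08} and the explicit description of corestriction for cyclic quotients, combined with the translation between the alternative symbol $\symb{\cdot}{\cdot}'$ and the symbol $\symb{\cdot}{\cdot}(1)$ from Definition~\ref{defn:alt_symb} and Proposition~\ref{prop:symbol_properties}(3). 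So your argument is not a gap but a correct skeleton; to make it a complete standalone proof you would need to fill in that computation, which is exactly the work done in Section~\ref{ssec:symb_proofs}.
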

\begin{proof}
This is an immediate consequence of Proposition \ref{prop:eval_at_Frob}, which we will prove in Section \ref{ssec:symb_proofs}.
\end{proof}

The symbol $\symb{\ovp}{\ovq}$ can be interpreted as a description of how $\ovp \cap F$ behaves at the prime $\ovq \cap F$ over the extension $K/F$. Given this interpretation, it is perhaps counterintuitive that the symbol $\symb{\ovp}{\ovp}$ should encode anything at all.  But while it is true that $\symb{\ovp}{\ovp}(1)$ is fully determined by the class of $\ovp$, as can be shown by an application of Hilbert reciprocity, this is not necessarily true for $\symb{\ovp}{\ovp}(\tau)$ for $\tau$ outside $G_{E(\ovp)}$. Intuitively, this can be explained by saying this symbol describes how $\ovp \cap K$ behaves at $\tau  \ovp \cap K$, which is a more reasonable thing to describe than how $\ovp \cap F$ behaves at itself.

The symbol $\symb{\ovp}{\ovp}$ is a generalization of the notion of the spin of a prime ideal, a definition that was first introduced in \cite{FIMR13} and studied in more depth in \cite{KoMi21}. Following this work, we call $\symb{\ovp}{\ovp}$ the \emph{spin} of $\ovp$. As per Remark \ref{rmk:non_can}, the dependence of this definition on a choice of assignment of approximate generators, while irritating, is ultimately superficial.

The analogue of Proposition \ref{prop:same_symbs} for spins is the following:
\begin{prop}
\label{prop:same_spins}
Choose primes $\ovp$ and $\ovp'$ of $\ovQQ$ not over any place in $\Vplac_0$. We assume that $\class{\ovp} = \class{\ovp'}$. Then the following conditions are equivalent:
\begin{enumerate}
\item There is an identity of spins
\[ \spin{\ovp} = \spin{\ovp'}.\]
\item For all $M$ in $\Mod(K/F, e_0)$ and $m$ in $M(-1)^{G_{F, \ovp}}$,
\[ \mfB_{\ovp, M}(m)\left(\FrobF{F}{\ovp}\right) \,=\,\mfB_{\ovp', M}(m)\left(\FrobF{F}{\ovp'}\right) \,\,\text{ in }\, (M/\textup{im } m)_{G_{F, \ovp}}.\]
Here, $\textup{im } m$ denotes the image of $m \in M(-1)$ in $M$.
\end{enumerate}
\end{prop}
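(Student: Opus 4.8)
The plan is to deduce the proposition from the local evaluation formula of Proposition~\ref{prop:eval_at_Frob}, in the same way Proposition~\ref{prop:same_symbs} was deduced from it; the one genuinely new point is that the target group is now the quotient $(M/\textup{im}\,m)_{G_{F,\ovp}}$ rather than $M_{G_{F,\ovp}}$, which is forced because here the ramification section is being evaluated at a Frobenius element \emph{of the very prime where it ramifies}.

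First I would record why that quotient is the correct receptacle. Since $\ovp$ lies over no place of $\Vplac_0$, the module $M$ is unramified at $\ovp$, so $I_{F,\ovp}$ acts trivially on $M$, the group $G_{F,\ovp}$ is topologically generated by $\FrobF{F}{\ovp}$ together with $I_{F,\ovp}$, and hence $M_{G_{F,\ovp}} = M/(\FrobF{F}{\ovp}-1)M$. The class $\phi := \mfB_{\ovp,M}(m)$ lies in $\msS_M(\Vplac_0\cup\{\mfp\})$ with $\mfR_{\ovp,M}(\phi) = m$, so on $I_{F,\ovp}$ it is the homomorphism sending $\TineF{F}{\ovp}$ into $\textup{im}\,m$ and killing wild inertia. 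Thus passing to a cocycle representative introduces the ambiguity $(\FrobF{F}{\ovp}-1)M$, and replacing $\FrobF{F}{\ovp}$ by another Frobenius $\FrobF{F}{\ovp}\cdot t$ with $t\in I_{F,\ovp}$ changes $\phi(\FrobF{F}{\ovp})$ by $\FrobF{F}{\ovp}\cdot\phi(t)\in\textup{im}\,m$, using the cocycle relation, \eqref{eq:frob_tine}, and the $G_{F,\ovp}$-stability of $\textup{im}\,m$. So $\phi(\FrobF{F}{\ovp})$ is well-defined exactly in $(M/\textup{im}\,m)_{G_{F,\ovp}}$; in the off-diagonal situation of Proposition~\ref{prop:same_symbs} the hypothesis $\ovq\cap F\ne\ovp\cap F$ makes $\mfB_{\ovq,M}(m)$ unramified at $\ovp$, which is why the $\textup{im}\,m$-quotient does not appear there.

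Next I would reduce both implications to the universal case, exactly as in the construction of ramification sections preceding Proposition~\ref{prop:unpacked}: by the naturality condition of Definition~\ref{defn:ram_sec}(2), every pair $(M,m)$ with $m\in M(-1)^{G_{F,\ovp}}$ receives a unique map $\rho$ from the universal pair $(M_0,m_0)$ with $M_0 = \Z[G_F]\otimes_{\Z[G_{E(\ovp)}]}\mu_{e_0}$, under which $\mfB_{\ovp,M}(m) = \rho_*\mfB_{\ovp,M_0}(m_0)$, and likewise for $\ovp'$. Since $\class{\ovp} = \class{\ovp'}$, Remark~\ref{rmk:class} identifies the coinvariant groups and the relevant fixed-point groups attached to $\ovp$ and $\ovp'$, so the universal pair for $\ovp'$ may be taken equal to that for $\ovp$; condition~(2) for all $(M,m)$ then reduces to the single identity $\mfB_{\ovp,M_0}(m_0)(\FrobF{F}{\ovp}) = \mfB_{\ovp',M_0}(m_0)(\FrobF{F}{\ovp'})$ in $(M_0/\textup{im}\,m_0)_{G_{F,\ovp}}$.

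Finally I would invoke Proposition~\ref{prop:eval_at_Frob}. Tracing through the Shapiro and Hilbert~90 isomorphism $H^1(G_F,M_0)\cong E(\ovp)^{\times}/(E(\ovp)^{\times})^{e_0}$ of the proof of Proposition~\ref{prop:approx_gen}, under which $\mfB_{\ovp,M_0}(m_0)$ corresponds to the class of the approximate generator $\alpha_{\ovp}$, the evaluation at $\FrobF{F}{\ovp}$ is computed locally over the primes of $E(\ovp)$ above $\ovp\cap F$ by precisely the Hilbert symbols $(\alpha_{\ovp},\,\tau\alpha_{\ovp})_{m_\tau,\mfp_\tau}$ defining $\symb{\ovp}{\ovp}(\tau)$ in Definition~\ref{defn:symbol}, with the composite fields $L_\tau = E(\ovp)\cdot\tau E(\ovp)$ indexing the double cosets $G_{E(\ovp)}\backslash G_F/G_{E(\ovp)}$ and the quotient by $\textup{im}\,m_0$ absorbing the diagonal contribution from $\tau\in G_{E(\ovp)}$. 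Hence $\mfB_{\ovp,M_0}(m_0)(\FrobF{F}{\ovp})$, read in $(M_0/\textup{im}\,m_0)_{G_{F,\ovp}}$, is a faithful encoding of the function $\symb{\ovp}{\ovp}$, and the displayed identity holds if and only if $\spin{\ovp} = \spin{\ovp'}$, proving the proposition. The main obstacle is thus entirely contained in Proposition~\ref{prop:eval_at_Frob}, proved in Section~\ref{ssec:symb_proofs}: the work is to match the abstract cocycle evaluation with the concrete Hilbert-symbol formula, keeping careful track of the local fields $(L_\tau)_{\mfp_\tau}$, the levels $m_\tau$, and the $\textup{im}\,m$-indeterminacy that is exactly what separates the spin from an ordinary symbol.
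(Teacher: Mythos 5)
Your proposal is correct and follows the same route as the paper, which simply cites Proposition~\ref{prop:eval_at_Frob}; the substance you add — that the diagonal double-coset term $\tau = 1$ (guaranteed to appear by the convention $1 \in B$ in the diagonal case $\ovp = \ovq$) lands in $\textup{im}\,m$ and is exactly what forces the quotient, together with the naturality reduction to the universal pair $(M_0, m_0)$ — is an accurate unpacking of what "immediate consequence" means here.
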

\begin{proof}
This is also an immediate consequence of Proposition \ref{prop:eval_at_Frob}.
\end{proof}

\subsection{The double coset formula}
\label{ssec:symb_proofs}
This subsection gives an explicit method for computing terms of the form $\mfB_{\ovq, M}(m)\left(\FrobF{F}{\ovp}\right)$. This will be used heavily in \cite{Smi22b}, but is only used in this first part to prove Propositions \ref{prop:same_symbs}, \ref{prop:same_spins}, and \ref{prop:chinese}, so the reader might consider continuing to Section \ref{sec:selmer} at this point.

It is convenient to introduce a slight variant of the symbols defined above.
\begin{defn}
\label{defn:alt_symb}
Fix an unpacked starting tuple $(K/F, \Vplac_0, e_0)$ and an assignment of approximate generators $(\alpha_{\ovp})_{\ovp}$ for this tuple, and take $\mfB$ to be the ramification section associated to $(\alpha_{\ovp})_{\ovp}$.

Given a prime $\ovq$ of $\ovQQ$, take $E(\ovq)$ as in Notation \ref{notat:Eovp}, and take $\psi_{\ovq}$ to be the image of $\alpha_{\ovq}$ under the natural isomorphism
\[E(\ovq)^{\times}/(E(\ovq)^{\times})^{e_0} \isoarrow H^1(G_{E(\ovq)},\, \mu_{e_0}).\]
We may evaluate $\psi_{\ovq}$ at an element $\sigma$ of $G_{E(\ovq)}$; after accounting for coboundaries, this will be a well-defined element in $(\mu_{e_0})_{\sigma}$. With this in mind, given primes $\ovp, \ovq$ of $\ovQQ$ not over $\Vplac_0$, we may define an \emph{alternative symbol} by
\[\symb{\ovp}{\ovq}' = \psi_{\ovq}\left(\FrobF{E(\ovq)}{\ovp}\right) \in (\mu_{e_0})_{G_{E(\ovp) \cdot E(\ovq)}}.\]

Take $m$ to be the maximal divisor of $e_0$ so $\mu_m$ lies in the composite field $E(\ovp)\cdot E(\ovq)$. Note that there is an isomorphism
\[(\mu_{e_0})_{G_{E(\ovp) \cdot E(\ovq)}} \isoarrow \mu_m\]
given by raising to the $e_0/m$ power. Using this isomorphism, the identity
\[\symb{\ovp}{\ovq}'  =  \symb{\ovp}{\ovq}(1)^{m/e_0} \quad\text{if }\, \ovp \cap K \ne \ovq \cap K\]
between the alternative symbol and the original symbol follows from the definitions.

If $\ovp \cap K$ equals $\ovq \cap K$, the alternative symbol $\symb{\ovp}{\ovq}'$ depends on the choice of $\Frob_{E(\ovq)}{\ovp}$. As such, it encodes no information about the primes $\ovp$ and $\ovq$. Such symbols appear only transiently in our calculations.
\end{defn}

Choose primes $\ovp, \ovq$ of $\ovQQ$ not over $\Vplac_0$. For convenience, take $E = E(\ovp)$ and $L = E(\ovq)$, and take $B$ to be a set of representatives for the collection of double cosets $G_{L}\backslash G_F / G_E$. If $\ovp \cap F = \ovq \cap F$, we will assume that $\ovp = \ovq$ and that $B$ contains $1$.

\begin{prop}
\label{prop:eval_at_Frob}
Choose $M$ in $\textup{Mod}(K/F, e_0)$, and choose $m \in M(-1)^{G_L}$. Then, in the group of coinvariants $M_{G_E}$, we have
\begin{equation}
\label{eq:eval_at_Frob}
\mfB_{\ovq, M}(m)(\FrobF{F}{\ovp}) = \sum_{\tau \in B} \tau^{-1} \left(m\left(\symb{\tau \ovp}{\ovq}'\right)\right).
\end{equation}
Here, we evaluate $m$ at $\symb{\tau\ovq}{\ovp}'$ by evaluating at any element in $\widehat{\Z}(1)$ whose image in $\mu_{e_0}$ maps to $\symb{\tau \ovp}{\ovq}'$ in $(\mu_{e_0})_{G_{\tau E + L}}$.
\end{prop}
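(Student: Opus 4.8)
The strategy is to make the ramification section explicit and then compute a cocycle. Recall from the discussion following Proposition~\ref{prop:approx_gen} that, with $L = E(\ovq)$ and $M_0 = \Z[G_F]\otimes_{\Z[G_L]}\mu_{e_0}$, one has $\mfB_{\ovq,M}(m) = \rho_{*}\phi_{\ovq}$, where $\phi_{\ovq}\in H^1(G_F,M_0)$ is the image under the Shapiro isomorphism of the class $\psi_{\ovq}\in H^1(G_L,\mu_{e_0})$ attached to $\alpha_{\ovq}$ by Hilbert 90, and $\rho\colon M_0\to M$ is the unique $G_F$-equivariant homomorphism sending $g\otimes\zeta$ to $g\cdot m(\widetilde\zeta)$ for any lift $\widetilde\zeta\in\widehat{\Z}(1)$ of $\zeta\in\mu_{e_0}$ (well defined since $m\in M(-1)^{G_L}$). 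So computing $\mfB_{\ovq,M}(m)(\FrobF{F}{\ovp})$ in $M_{G_E}$ amounts to representing $\phi_{\ovq}$ by the standard induced cocycle, evaluating at $\FrobF{F}{\ovp}$, and applying $\rho$ followed by the projection to $M_{G_E}$. Before anything else I would record that the right-hand side of \eqref{eq:eval_at_Frob} is independent of the choice of representatives $B$ and of the lifts, using parts~(1)--(3) of Proposition~\ref{prop:symbol_properties}; this also confirms that both sides genuinely lie in $M_{G_E}$.

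Next, the main computation. Fix a set of left coset representatives for $G_F/G_L$, giving the usual $\Z$-basis of $M_0$; then $\phi_{\ovq}$ is represented by the classical induced cocycle whose value on $g\in G_F$ is the sum, over the chosen representatives $s$, of the terms $s'\otimes\psi_{\ovq}(h)$ determined by $gs = s'h$ with $h\in G_L$. Now plug in $g = \FrobF{F}{\ovp}$. This element lies in $G_E$ because the image of $G_{F,\ovp}$ in $\Gal(K/F)$ is the decomposition group $D_{\ovp} = \Gal(K/E)$; moreover $G_E = G_{F,\ovp}G_K$ with $G_K\subset G_L$, and $\ovp$ is unramified in $L/F$ while $D_{\ovp}$ is cyclic generated by Frobenius. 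Consequently each $G_E$-orbit in $G_F/G_L$ is a single $\langle\FrobF{F}{\ovp}\rangle$-cycle, so the orbits are indexed exactly by the double cosets $G_L\backslash G_F/G_E$, i.e.\ by $B$. On the orbit attached to $\tau\in B$, of length $n_{\tau}$, the one-cycle reduction---the same elementary bookkeeping underlying the double coset formula for corestriction---collapses the contribution of the cocycle, after projection to coinvariants, to the class of $\tau^{-1}\otimes\psi_{\ovq}\big(\tau(\FrobF{F}{\ovp})^{n_{\tau}}\tau^{-1}\big)$. Here $n_{\tau}$ is the relevant residue degree and $\tau(\FrobF{F}{\ovp})^{n_{\tau}}\tau^{-1}$ is a Frobenius element for $\tau\ovp$ over $L = E(\ovq)$, so evaluating $\psi_{\ovq}$ on it and reading the answer in $(\mu_{e_0})_{G_{\tau E + L}}$ produces precisely $\symb{\tau\ovp}{\ovq}'$. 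Applying $\rho$ turns $\tau^{-1}\otimes\zeta$ into $\tau^{-1}\cdot m(\widetilde\zeta)$, and summing over $\tau\in B$ yields \eqref{eq:eval_at_Frob}. The degenerate case $\ovp\cap F = \ovq\cap F$ (so $\ovp = \ovq$ and $1\in B$) needs no separate argument: the $\tau = 1$ orbit outputs $\psi_{\ovq}$ evaluated at the fixed element $\FrobF{F}{\ovp}$, which is what Definition~\ref{defn:alt_symb} means by $\symb{\ovp}{\ovp}'$.

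The step I expect to be delicate is the middle one---keeping the conventions straight. One has to fix the left/right side conventions in the induced cocycle, verify that $\FrobF{F}{\ovp}$ acts on each $G_E$-orbit as a full cycle (and identify its length with the residue degree that pins down the normalization of $\FrobF{E(\ovq)}{\tau\ovp}$), and confirm that the group element $\tau(\FrobF{F}{\ovp})^{n_{\tau}}\tau^{-1}$ produced by the collapse is conjugate to the Frobenius used in Definition~\ref{defn:alt_symb}, all compatibly with how the $M(-1)$-valued datum $m$ is being evaluated. Once these conventions are pinned down the rest is formal: the only inputs are finiteness of $M$, Shapiro's lemma and Hilbert 90 as used in the proof of Proposition~\ref{prop:approx_gen}, and Proposition~\ref{prop:symbol_properties}.
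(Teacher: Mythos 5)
Your proposal is correct and follows essentially the same path as the paper: identify $\mfB_{\ovq,M}(m)$ with the Shapiro image of $\psi_{\ovq}$, pass it across the double coset decomposition $G_L\backslash G_F/G_E$, and collapse each cyclic piece via the corestriction formula, after which each term reads off an alternative symbol. The only presentational differences are that the paper first reduces to the universal module $M_0$ by naturality and then cites the double coset formula from Neukirch, whereas you keep a general $M$ and carry $\rho$ through while re-deriving the orbit/cycle bookkeeping by hand; these choices are equivalent.
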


\begin{proof}
Since ramification sections are natural transformations, it suffices to consider the case where $M =  \Z[G_F] \otimes_{\Z[G_L]} \mu_{e_0}$ and $m$ is the map taking $\overline{\zeta}$ to $[1] \otimes (\overline{\zeta})_{e_0}$, where $(\overline{\zeta})_{e_0}$ is the image of $\overline{\zeta}$ in $\mu_{e_0}$. Shapiro's isomorphism gives an isomorphism between $H^1(G_E, \mu_{e_0})$ and $H^1(G_F, M)$, and we defined $\mfB_{\ovq, M}(m)$ to correspond to the map $\psi_{\ovq}$ under this isomorphism, where $\psi_{\ovq}$ is defined as in Definition \ref{defn:alt_symb}.

We can apply the double coset formula  \cite[Proposition I.5.6]{Neuk08} to calculate
\[\res_{G_E} \mfB_{\ovq, M}(m) =  \sum_{\tau \in B} \cores_{G_E}^{G_{\tau^{-1} L} \cap G_E}\circ \tau^{-1} \circ \res^{G_L}_{G_{L} \cap G_{\tau E}}([1] \otimes \psi_{\ovq}),\]
with $\res$ and $\cores$ denoting restriction and corestriction. The group $G_{\tau^{-1} E \cdot L}$ is a normal subgroup in $G_L$, and the quotient $G_L/G_{\tau^{-1} E \cdot L}$ is cyclic. Corestriction behaves well for such extensions: if $G$ is a profinite subgroup and $H$ is an open subgroup so $G/H$ is cyclic, and if $\phi$ lies in $H^1(H, N)$ for a $G$ module $N$, we have
\[\cores_G^H \phi(\sigma) = \phi\left(\sigma^{[G: H]}\right) \quad\text{in }\, N_{\sigma}.\]
Applying this and the previous identity at $\FrobF{F}{\ovp}$ gives
\[ \mfB_{\ovq, M}(m)  (\FrobF{F}{\ovp}) \equiv \sum_{\tau \in B}  [\tau^{-1}] \otimes \psi_{\ovq}\left(\tau \cdot \FrobF{\tau^{-1} L}{\ovp} \cdot \tau^{-1}\right)\, \text{ mod }\, \left(\FrobF{F}{\ovp} -1\right) M,\]
which gives \eqref{eq:eval_at_Frob}.
\end{proof}

To prove Proposition \ref{prop:chinese}, the following result extending a special case of Proposition \ref{prop:same_symbs} will be useful.

\begin{prop}
\label{prop:same_symbs_old}
Choose primes $\ovp$, $\ovp'$, and $\ovq$ of $\ovQQ$. We assume none of these primes are over places in $\Vplac_0$, that neither $\ovp \cap F$ nor $\ovp' \cap F$ equals $\ovq \cap F$, and that $\class{\ovp} = \class{\ovp'}$. Take $E = E(\ovp)$ as in Notation \ref{notat:Eovp}.

Then the following conditions are equivalent:
\begin{enumerate}
\item There is an identity of symbols
\[\symb{\ovp}{\ovq} = \symb{\ovp'}{\ovq}.\]
\item For all $M$ in $\Mod(K/F, e_0)$ and $m$ in $M(-1)^{G_{F, \ovp}}$, the difference 
\[\mfB_{\ovp, M}(m) - \mfB_{\ovp', M}(m)\]
has trivial restriction to $G_{F, \ovq}$.
\item For all $M$ in $\Mod(K/F, e_0)$ and $m$ in $M(-1)^{G_{F, \ovq}}$, 
\[\mfB_{\ovq, M}(m)\left(\FrobF{F}{\ovp}\right) \, =\, \mfB_{\ovq, M}(m)\left(\FrobF{F}{\ovp'}\right)\,\,\text{ in }\,\, M_{G_{F, \ovp}}.\]
\item Take $\mfq$ to be $\ovq \,\cap K$, and take $E(\Vplac_0 \cup \{\mfq\})$ to be the maximal abelian extension of $K$ of exponent dividing $e_0$ that is ramified only over places in $\Vplac_0 \cup \{\mfq\}$. Then
\[\FrobF{F}{\ovp} \equiv \FrobF{F}{\ovp'} \quad\textup{mod }\, G_{E(\Vplac_0 \cup \{\mfq\})}.\]
\end{enumerate}

\end{prop}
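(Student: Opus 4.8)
The plan is to derive all four equivalences from the double coset formula (Proposition \ref{prop:eval_at_Frob}) together with class field theory, viewing the conditions as four incarnations of the single statement that, from the vantage point of $\ovq$, the primes $\ovp$ and $\ovp'$ are indistinguishable. I would begin with the bookkeeping reductions that make Proposition \ref{prop:eval_at_Frob} applicable. Since $\ovp$ lies over no place of $\Vplac_0$, inertia at $\ovp$ acts trivially on every $M$ in $\Mod(K/F,e_0)$ and hence on $M(-1)$, and since $\mu_{e_0}\subset K$ the group $G_K$ acts trivially on $M(-1)$ as well; because $\Gal(K/E(\ovp))$ is topologically generated by the image of $\FrobF{F}{\ovp}$, this forces $M(-1)^{G_{E(\ovp)}}=M(-1)^{G_{F,\ovp}}$ and $M_{G_{E(\ovp)}}=M_{G_{F,\ovp}}$, and the same for $\ovp'$ and for $\ovq$. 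Moreover $\class{\ovp}=\class{\ovp'}$ makes all of $M(-1)^{G_{F,\ovp}}$, $M(-1)^{G_{F,\ovp'}}$, $M(-1)^{G_{E(\ovp)}}$, $M(-1)^{G_{E(\ovp')}}$ coincide. I would also note that any class in $\msS_{M/F}(\Vplac_0\cup\{\mfp\})$ is unramified at $\ovq$, so that being trivial on $G_{F,\ovq}$ is equivalent to vanishing at $\FrobF{F}{\ovq}$ in $M_{G_{F,\ovq}}$, and symmetrically with the roles of $\ovp$ and $\ovq$ swapped.

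Granting these, the equivalence (1) $\Leftrightarrow$ (3) is exactly the case $\ovq'=\ovq$ of Proposition \ref{prop:same_symbs}, and is reproved verbatim from Proposition \ref{prop:eval_at_Frob}: fixing representatives $B$ for $G_{E(\ovq)}\backslash G_F/G_{E(\ovp)}$ and running the formula on the induced module $\Z[G_F]\otimes_{\Z[G_{E(\ovq)}]}\mu_{e_0}$, where $M_{G_{E(\ovp)}}$ splits as a direct sum indexed by $B$ so that the $\tau$-contributions separate, condition (3) becomes the assertion that $\symb{\tau\ovp}{\ovq}'=\symb{\tau\ovp'}{\ovq}'$ in $(\mu_{e_0})_{G_{\tau E(\ovp)\cdot E(\ovq)}}$ for every $\tau\in B$. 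By the identification of Definition \ref{defn:alt_symb} between $\symb{\,}{}'$ and values of $\symb{\,}{}$, together with the fact (Proposition \ref{prop:symbol_properties}) that a symbol is pinned down by its values on a set of double coset representatives, this is in turn equivalent to (1).

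For (2) $\Leftrightarrow$ (3) I would apply Proposition \ref{prop:eval_at_Frob} a second time, now with the roles of $\ovp$ and $\ovq$ interchanged; using the reductions above to match domains and targets, this turns (2) into the assertion $\symb{\sigma\ovq}{\ovp}'=\symb{\sigma\ovq}{\ovp'}'$ for $\sigma$ running over representatives for the transposed double cosets $G_{E(\ovp)}\backslash G_F/G_{E(\ovq)}$. The content of (2) $\Leftrightarrow$ (3) is then that replacing ``$\ovp$ seen by $\ovq$'' with ``$\ovq$ seen by $\ovp$'' does not change the truth of the equality, and I would extract this from the symbol identities of Proposition \ref{prop:symbol_properties}: part (5) says the discrepancy between $\symb{\ovr}{\ovs}(1)$ and $\symb{\ovs}{\ovr}(1)$ is a reciprocity defect depending only on $\class{\ovr}$ and $\class{\ovs}$, so for $\sigma\ovq$ against $\ovp$ versus $\ovp'$ these defects are equal and cancel, while parts (2)--(3) let me rewrite $\symb{\ovp}{\sigma\ovq}'$ as a $\sigma$-translate of $\symb{\ovp}{\ovq}'$; after these moves, (2) reads off the same system of equations on alternative symbols as (3).

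Finally, (3) $\Leftrightarrow$ (4) is the class field theory step. The alternative symbol $\symb{\ovp}{\ovq}'$ is by construction $\psi_{\ovq}(\FrobF{E(\ovq)}{\ovp})$, where $\psi_{\ovq}$ is the order-dividing-$e_0$ character of $G_{E(\ovq)}$ cut out by the approximate generator $\alpha_{\ovq}$; since $\alpha_{\ovq}$ generates $\ovq\cap E(\ovq)$ up to primes over $\Vplac_0$ and up to $e_0$-th powers, the characters $\psi_{\ovq}$ and their $G_F$-conjugates appearing through $B$ jointly cut out precisely the maximal abelian exponent-$e_0$ extension of $K$ ramified only over $\Vplac_0\cup\{\mfq\}$ — here I would invoke the unpacked hypothesis through Proposition \ref{prop:unpacked} to see that the family $M\in\Mod(K/F,e_0)$ captures all such characters and loses no information. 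Hence the system $\{\symb{\tau\ovp}{\ovq}'\}_{\tau\in B}$ is determined by and determines the coset of $\FrobF{F}{\ovp}$ modulo $G_{E(\Vplac_0\cup\{\mfq\})}$; combined with the reformulation of (3) above and the observation that $\class{\ovp}=\class{\ovp'}$ already places $\FrobF{F}{\ovp}\,(\FrobF{F}{\ovp'})^{-1}$ in $G_K$, this gives (4). The main obstacle I anticipate is the symmetrization in (2) $\Leftrightarrow$ (3): keeping the double coset bookkeeping, the Tate twists, and the coinvariant groups $(\mu_{e_0})_{G_{\tau E(\ovp)\cdot E(\ovq)}}$ mutually consistent while verifying that the reciprocity defect and the $\sigma$-conjugation genuinely cancel the asymmetry between ``$\ovp$ at $\ovq$'' and ``$\ovq$ at $\ovp$''. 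A secondary point requiring care throughout is that contributions at places of $\Vplac_0$ never intervene, which rests on property (3) of ramification sections and the $\Vplac_0$-compatibility built into the assignment of approximate generators.
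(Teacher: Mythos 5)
Your outline reproduces the paper's overall architecture, but one of the four bridges is a genuinely different route from the paper's, and that bridge has a real gap.

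For $(1)\Leftrightarrow(3)$ you invoke Proposition \ref{prop:same_symbs} with $\ovq'=\ovq$; this is exactly what the paper does, so no issues there. For $(3)\Leftrightarrow(4)$ your class-field-theoretic sketch is in the same spirit as the paper, though the paper is more concrete: for $(3)\Rightarrow(4)$ it plugs the specific Galois module $M=\Gal(E(\Vplac_0\cup\{\mfq\})/E(\Vplac_0))$ with conjugation action (and the tame-inertia element as $m$) into condition $(3)$ and reads off $(4)$ from the cocycle identity, while for $(4)\Rightarrow(1)$ it rechooses the approximate generators $\alpha_{\ovp},\alpha_{\ovp'}$ using Artin reciprocity applied to $E(\Vplac_0\cup\{\mfq\})/E$. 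Your argument that the symbols ``cut out precisely'' the extension $E(\Vplac_0\cup\{\mfq\})$ is essentially the dimension count in the proof of Proposition \ref{prop:chinese}, so this direction can be made to work, but you should be explicit about which module captures the statement.

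The divergence that matters is $(2)\Leftrightarrow(3)$. The paper dispatches this in one line by Poitou--Tate duality: condition $(2)$ holds for $M$ if and only if condition $(3)$ holds for $M^{\vee}$, because the global class $\mfB_{\ovp,M}(m)-\mfB_{\ovp',M}(m)$ is trivial at $\Vplac_0$ (property $(3)$ of a ramification section) and unramified away from $\{\ovp,\ovp',\ovq\}$, so the reciprocity sum localizes to three places and exchanges the value at $\ovq$ against the Frobenius values at $\ovp,\ovp'$ of dual classes. Your replacement is an explicit symbol-reciprocity calculation, and the step you flag as the ``main obstacle'' is in fact a genuine gap. After applying Proposition \ref{prop:eval_at_Frob} with the roles of $\ovp,\ovq$ interchanged, you need $\symb{\sigma\ovq}{\ovp}'=\symb{\sigma\ovq}{\ovp'}'$ to be equivalent to $\symb{\sigma^{-1}\ovp}{\ovq}'=\symb{\sigma^{-1}\ovp'}{\ovq}'$, and you propose to get this from Proposition \ref{prop:symbol_properties}~(5) together with a ``$\sigma$-translate'' identity extracted from parts (2)--(3). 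But parts (2)--(3) only move one argument by $\sigma\in G_K$ or compare $\symb{\ovp}{\ovq}(1)$ with $\symb{\tau\ovp}{\ovq}(\tau)$; they do \emph{not} give $\symb{\ovp}{\sigma\ovq}'=\sigma\text{-translate of }\symb{\ovp}{\ovq}'$ for a general double-coset representative $\sigma$, because the assignment of approximate generators is not required to be $G_F$-equivariant. Concretely, the defect you would need to cancel is the Hilbert symbol $(\alpha_{\ovp},\,\alpha_{\sigma\ovq}/\sigma\alpha_{\ovq})_{\mfp_\sigma}$ versus its $\ovp'$-counterpart: the unit $\alpha_{\sigma\ovq}/\sigma\alpha_{\ovq}$ cuts out a degree-$e_0$ Kummer extension that is unramified outside $\Vplac_0$ and hence lives in the Tate--Shafarevich-type kernel of Proposition \ref{prop:unpacked}~(3), and nothing in the cited symbol identities forces $\Frob\,\ovp$ and $\Frob\,\ovp'$ to agree on that extension. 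This is precisely the information that the Poitou--Tate step supplies for free and that a purely local reciprocity manipulation does not. To salvage your route you would need to either establish a stronger equivariance of the assignment (which is not available) or reinsert a global reciprocity input, at which point you have reproduced the paper's argument.
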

\begin{proof}
The equivalence of (1) and (2) follows from Proposition \ref{prop:same_symbs}.

The equivalence of condition (2) and condition (3) follows from Poitou--Tate duality. Specifically, we find that the condition of (2) holds for a given $M$ if and only if the condition of (3) holds for $M^{\vee}$.

To show that  (3) implies (4), take $M = \Gal(E(\Vplac_0 \cup \{\mfq\})/E(\Vplac_0))$. Considered with the conjugation action, this is an object in $\Mod(K/F, e_0)$. Take $m \in M(-1)$ to be the element taking $\overline{\zeta}$ to the image of $\TineF{F}{\ovq}$ in this Galois group. This element is invariant under $G_{F, \ovq}$, as can be seen from \eqref{eq:frob_tine}. Then $\mfB_{\ovq, M}(m)$ is a class of cocycles
\[\mfB_{\ovq, M}(m):\Gal(K(\Vplac_0 \cup\{\mfq\})/F) \,\xrightarrow{\quad}\, \Gal(K(\Vplac_0 \cup \{\mfq\})/K(\Vplac_0))\]
that all take $\TineF{F}{\ovq}$ to itself. The cocycle condition forces $\tau \TineF{F}{\ovq} \tau^{-1}$ to map to itself for all $\tau \in G_F$; since these elements generate $M$, we see that any cocycle in this class restricts to the identity on $M$. Then condition (3) and the cocycle condition imply that $\FrobF{F}{\ovp} \left(\FrobF{F}{\ovp'}\right)^{-1}$ maps to $0$ in $M_{G_{F, \ovp}} = M$, which is equivalent to (4).

Now suppose (4) holds and we wish to show (1). From Artin reciprocity applied to $E(\Vplac_0 \cup \{\mfq\})/E$, we find that  the approximate generators $\alpha_{\ovp}$ and $\alpha_{\ovp'}$ can be rechosen so their ratio is locally an $e_0^{th}$ power at every place over $\Vplac_0 \cup \{\mfq\}$. From condition (3) of Proposition \ref{prop:unpacked}, we thus have that $\alpha_{\ovp}/\alpha_{\ovp'}$ is locally an $e_0^{th}$ power at every place dividing $\mfq$, and this condition implies (1).
\end{proof}

\begin{proof}[Proof of Proposition \ref{prop:chinese}]
Take $E = E(\ovp_0)$, take $\mfq = \ovq \cap F$, and define $K(\Vplac_0)$, $E(\Vplac_0)$, and $E(\Vplac_0 \cup \{\mfq\})$ as in Definition  \ref{defn:class}. From Shapiro's lemma, we know that $(K/E, \Vplac_0', e_0)$ is unpacked, where $\Vplac_0'$ is the set of places above $\Vplac_0$. Note that the map taking $\tau$ to $\tau \ovq \cap E$ is a bijection between $B$ and the primes of $E$ dividing $\ovq \cap F$.  Taking $C = \tfrac{1}{e_0}\Z/\Z$, we find that the ramificaiton-measuring homomorphism defines an isomorphism
\[\msS_{C /E}\left(\Vplac_0' \cup \{\tau \ovq \cap E\,:\,\, \tau \in B\}\right)/ \msS_{C /E}\left(\Vplac_0'\right) \isoarrow \prod_{\tau \in B} C(-1)^{G_{E, \tau \ovq}}.\]
From this isomorphism, we get the identity
\[\big[E(\Vplac_0 \cup \{\mfq\})\,:\,\, E(\Vplac_0)\big] = \prod_{\tau \in B} m_{\tau},\]
where the $m_{\tau}$ are defined as in Definition \ref{defn:symbol}. The result then follows from Proposition \ref{prop:same_symbs_old} and the Chebotarev density theorem.
\end{proof}

\section{Selmer groups in twist families}
\label{sec:selmer}

\begin{defn}
\label{defn:twistable}
Fix a rational prime $\ell$ and a positive integer $k_0$. Taking $\Z_{\ell}[x]$ to be the polynomial ring over $\Z_{\ell}$, we take $\xi$ to be the image of $x$ in the quotient ring
\[\Z_{\ell}[x]\Big/\left(1 + x^{\ell^{k_0 - 1}} + x^{2 \cdot \ell^{k_0 - 1}} + \dots + x^{(\ell - 1) \cdot \ell^{k_0 - 1}}\right).\]
We will write this quotient ring as $\Z_{\ell}[\xi]$ and will endow it with the trivial $G_{\QQ}$ action. Take $\omega = \xi - 1$, and note that $\omega$ is a uniformizer for the discrete valuation ring $\Z_{\ell}[\xi]$. Finally, write $\FFF = \langle \xi\rangle$ for the multiplicative group generated by $\xi$. This is a cyclic group of order $\ell^{k_0}$.

Now fix a number field $F$. A \emph{twistable module} consists of
\begin{itemize}
\item A discrete topological $\Z_{\ell}[\xi]$-module $N$ isomorphic as a topological group to some positive power of $\QQ_{\ell}/\Z_{\ell}$ and
\item A continuous action of $G_F$ on $N$ commuting with $\xi$ whose restriction to $I_v$ is trivial for all but finitely many places $v$ of $F$.
\end{itemize}

Given $\chi$ in $\Hom_{\text{cont}}(G_F, \langle \xi\rangle )$ and a twistable module $N$, we define the twist $N^{\chi}$ as a twistable module isomorphic to $N$ under a (typically non-equivariant) isomorphism
\[\beta_{\chi}: N^{\chi} \to N\]
of topological $\Z_{\ell}[\xi]$ modules, with the action of $G_F$ on $N^{\chi}$ defined so
\[\beta_{\chi}(\sigma n) = \chi(\sigma) \sigma \beta_{\chi}(n)\]
for all $\sigma \in G_F$ and $n \in N^{\chi}$.
\end{defn}

\begin{defn}
Take $N$ to be a twistable module defined over $F$ with respect to $\xi $. Given a finite Galois extension $K/F$ and a finite set of places $\Vplac_0$ of $F$, we say that $N$ is unpacked by $(K/F, \Vplac_0)$ if $(K/F, \Vplac_0, \#\FFF)$ is an unpacked starting tuple, if $G_K$ acts trivially on $N[\omega]$, and if $I_v$ acts trivially on $N$ for $v$ outside $\Vplac_0$.
\end{defn}

\begin{defn}
\label{defn:Sel}
Choose a twistable module $N$ over $F$, and choose $(K/F, \Vplac_0)$ unpacking $N$. For $v \in \Vplac_0$ and $\chi \in \Hom_{\text{cont}}(G_v,\FFF)$, fix an $\ell$-divisible subgroup $W_v(\chi)$  of $H^1(G_v, N^{\chi})$ that is fixed under the automorphism $\xi$. We call the collection of groups $\left(W_v(\chi)\right)_{v, \chi}$ a \emph{set of local conditions} for $N$ at the places in $\Vplac_0$.

For $v$ a place of $F$ outside $\Vplac_0$ and $\chi \in \Hom_{\text{cont}}(G_v, \,\FFF)$, we define a subgroup 
\[W_v(\chi) \subseteq H^1(G_v, N^{\chi})\]
to equal $H^1_{\textup{ur}}(G_v, N^{\chi})$ in the case that $\chi$ is trivial on $I_v$, and to equal $0$ in the case that $\chi$ is nontrivial on $I_v$. We call these the \emph{automatic local conditions}.

With this set, given $\chi \in \Hom_{\text{cont}}(G_F,\, \FFF)$, we define the \emph{Selmer group} of $N^{\chi}$ by
\[\Sel\left(N^{\chi}, (W_v)_{v \in \Vplac_0}\right) = \ker\left(H^1(G_F, N^{\chi}) \to \prod_{v \text{ of } F} H^1(G_v, N^{\chi})\Big/W_v\left(\res_{G_v\,} \chi\right)\right).\]
We will write this group as $\Sel\,N^{\chi}$ if the local conditions at the places in $\Vplac_0$ are clear.

For $k \ge 1$, we take $\Sel^{\omega^k} N^{\chi}$ to be the preimage of $\Sel\, N^{\chi}$ in $H^1\left(G_F, N^{\chi}\left[\omega^k\right]\right)$. Given $j \le k$, we take $\omega^j \Sel^{\omega^k} N^{\chi}$ to be the image of $\Sel^{\omega^k} N^{\chi}$ in $\Sel^{\omega^{k-j}} N^{\chi}$ under the projection
\[\omega^j\colon N^{\chi}\left[\omega^k\right] \to N^{\chi}\left[\omega^{k-j}\right].\]
We note that $\omega^{k-1} \Sel^{\omega^k} N^{\chi}$ always contains the image of $H^0(G_F, N^{\chi}[\omega])$ under the connecting map corresponding to
\[0 \to N^{\chi}[\omega] \to N^{\chi}\left[\omega^2\right] \xrightarrow{\,\,\cdot \omega\,\,} N^{\chi}[\omega] \to 0.\]
We call this the \emph{rational torsion portion} of the Selmer group of $N^{\chi}$. For each $k \ge 1$, we define the $\omega^k$-Selmer rank of $N^{\chi}$ by
\[r_{\omega^k}(N^{\chi}) = \dim \omega^{k-1}\Sel^{\omega^k}N^{\chi}\Big/\text{im} \,H^0(G_F, N^{\chi}[\omega]).\]
\end{defn}

With these definitions set, we now show that Theorems \ref{thm:elliptic_Selmer_13} and \ref{thm:Cl} reduce to theorems about the distribution of Selmer groups in twist families of twistable modules.
\begin{ex}
\label{ex:abelian_Selmerable}
Take $A/F$ to be an abelian variety of positive dimension over a number field. Given a continuous homomorphism $\chi \colon G_F \to \pm 1$, take $A^{\chi}/F$ to be the corresponding quadratic twist. The torsion submodule
\[N = A[2^{\infty}]\]
is then a twistable module, and we see that $N^{\chi} = A^{\chi}[2^{\infty}]$. This twistable module is decorated with the local conditions
\[W_v(\chi) = \ker\big(H^1(G_v, N) \to H^1(G_v, A)\big)\]
for each place $v$ of $F$. So long as $\Vplac_0$ contains the primes of bad reduction of $A$, the archimedean primes, and the primes dividing $2$, these local conditions agree with the automatic local conditions for $v$ outside $\Vplac_0$.

The usual $2^{\infty}$-Selmer group on $A^{\chi}$ is then equal to the $2^{\infty}$-Selmer group of $N^{\chi}$, and $r_{2^k}(N^{\chi})$ equals the $2^k$-Selmer rank of $A^{\chi}$ over $F$ as defined in Section \ref{ssec:elliptic_curves}.
\end{ex}

\begin{ex}
\label{ex:class_Selmerable}
Choose a rational prime $\ell$, and define $\Z_{\ell}[\xi]$ as in Definition \ref{defn:twistable} with $k_0 = 1$. Consider
\[N = \QQ_{\ell}[\xi]/\Z_{\ell}[\xi]\]
endowed with the trivial $G_F$-action. This is a $\Z_{\ell}[\xi]$-module and hence a twistable module.

Choose  a cyclic degree $\ell$ extension $L/F$ and take $\chi: \Gal(L/F) \to \langle \xi \rangle$ to be a nontrivial character. We endow $N^{\chi}$ with local conditions
\[W_v(\chi) = \begin{cases} \text{inf}_{\,G_v}^{\,G_v/I_v}\left(H^1(G_v/I_v, N^{\chi})\right) &\text{ if } L/F \text{ is unramified at } v \\ 0 &\text{ otherwise.}\end{cases}\]
Then, from \cite[Proposition 7.4]{MS21b}, we find there is an injection
\begin{equation}
\label{eq:MS2_inj}
 \left(\Cl^* L\big/ (\Cl^* L)^{\Gal(L/F)}\right)[\ell^{\infty}] \hookrightarrow \Sel\, N^{\chi}
\end{equation}
of $\Z_{\ell}[\xi]$ modules, where $\Cl^* L = \Hom(\Cl\, L, \QQ/\Z)$ is the dual class group of $L$, and where $\xi$ acts on $\Cl\, L$ by $\chi^{-1}(\xi)$. The map \eqref{eq:MS2_inj} is defined by using the Shapiro isomorphism to pass from $\Cl^* L[\ell^{\infty}]$ to the Selmer group of $(\QQ_{\ell}/\Z_{\ell})[\Gal(L/F)]$ and then mapping to $\Sel \,N$ using the natural surjection from $ (\QQ_{\ell}/\Z_{\ell})[\Gal(L/F)]$ to $N$. Note that we have an equivariant perfect pairing
\[\Cl^* L\big/ (\Cl^* L)^{\Gal(L/F)} \times \Cl\, L\big/ (\Cl\, L)^{\Gal(L/F)} \to \QQ/\Z\]
given by $(x, y) \mapsto x \cdot (1 - \sigma) y$, where $\cdot$ denotes the evaluation pairing, so we find that $\Cl^* L\big/ (\Cl^* L)^{\Gal(L/F)}$ and $\Cl\, L\big/ (\Cl\, L)^{\Gal(L/F)}$ are (typically non-canonically) isomorphic as $\Z_{\ell}[\xi]$ modules.

We will show in \cite{Smi22b} that the cokernel of \eqref{eq:MS2_inj} vanishes for all but a negligible portion of fields $L$ in the set $X_{F, \ell, r_1'}(H)$ defined in Theorem \ref{thm:Cl}, so we may study the distribution of these Selmer groups to understand the distribution of the quotients $\Cl L[\ell^{\infty}]/ (\Cl L[\ell^{\infty}])^{\Gal(L/F)}$.
\end{ex}

Returning to the general situation of Definition \ref{defn:twistable}, it is clear that $N[\omega]$ is the subset of $N$ fixed by the automorphism $\xi$. For this reason, we sometimes refer to $\Sel^{\omega}\, N^{\chi}$ as the \emph{fixed point Selmer group} of the twist $N^{\chi}$. Since this subset is fixed by $\xi$, we see that the restriction
\[\beta_{\chi}: N^{\chi}[\omega] \to N[\omega]\] 
is $G_F$-equivariant. As a result, we may think of the fixed point Selmer group of any twist $N^{\chi}$ as a subgroup of $H^1(G_F, N[\omega])$ via $\beta_{\chi}$. More specifically, taking $\Vplac$ to be the set of places where $\chi$ ramifies, we see that this Selmer group maps to a subgroup of $\msS_{N[\omega]/F}(\Vplac_0 \cup \Vplac)$ cut out by certain local conditions. This allows us to control the fixed point Selmer group using the methods of Section \ref{sec:symbols}. One result is the following.

\begin{prop}
\label{prop:class_spin_symbol}
Fix a twistable module $N$ defined over $F$ with respect to $\FFF$, fix $(K/F, \Vplac_0)$ unpacking $N$, fix a set of local conditions at $\Vplac_0$, and fix a ramification section $\mfB$. Fix a positive integer $r$ and fix $h_i \in \FFF(-1)^{\times}$ for $i \le r$, where $\FFF(-1)^{\times}$ is the set of generators for $\FFF(-1)$. Also fix a homomorphism $\chi_0 \in \Hom(G_F, \FFF)$ that is unramified away from the places in $\Vplac_0$.

Choose primes $\ovp_1, \dots, \ovp_{r}$ and $\ovp_1', \dots, \ovp'_r$ of $\ovQQ$ not over $\Vplac_0$. We assume that the primes $\ovp_1 \cap F, \dots \ovp_{r} \cap F$ are distinct and that $\FrobF{F}{\ovp_i}$ acts trivially on $\FFF(-1)$ for each $i \le r$, and we make the analogous assumptions for $\ovp_1', \dots, \ovp_r'$.

Taking
\[\chi = \chi_0 + \sum_{i \le r} \mfB_{\ovp_i, \FFF}(h_i)\quad\text{and}\quad \chi' = \chi_0 + \sum_{i \le r} \mfB_{\ovp'_{i}, \FFF}(h_i),\]
we find that $\Sel^{\omega} N^{\chi}$ is isomorphic to $\Sel^{\omega} N^{\chi'}$ if
\begin{enumerate}
\item The class $\class{\ovp_i}$ equals $\class{\ovp_{i}'}$ for all $i \le r$,
\item The spin $\spin{\ovp_i}$  equals $\spin{\ovp_{i}'}$ for all $i \le r$, and
\item The symbol $\symb{\ovp_i}{\ovp_j}$ equals $\symb{\ovp_{i}'}{\ovp_{j}'}$ for all $i < j \le r$.
\end{enumerate}
\end{prop}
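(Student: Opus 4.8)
Since $\Z_{\ell}[\xi]/\omega \cong \mathbb{F}_{\ell}$, the module $N[\omega]$ is an $\mathbb{F}_{\ell}$-vector space, so each of $\Sel^{\omega}N^{\chi}$ and $\Sel^{\omega}N^{\chi'}$ is a finite-dimensional $\mathbb{F}_{\ell}$-vector space and it suffices to identify them after fixing compatible ambient groups. Write $S=\Vplac_0$, $\mfp_i=\ovp_i\cap F$, $\mfp_i'=\ovp_i'\cap F$, $T=\{\mfp_1,\dots,\mfp_r\}$, $T'=\{\mfp_1',\dots,\mfp_r'\}$. Because $\xi$ acts trivially on $N[\omega]$, the maps $\beta_{\chi}\colon N^{\chi}[\omega]\to N[\omega]$ and $\beta_{\chi'}\colon N^{\chi'}[\omega]\to N[\omega]$ are $G_F$-equivariant, and I would use them to view both Selmer groups as subspaces of $H^1(G_F,N[\omega])$. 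Since $\chi$ is ramified exactly on $S$ together with the $\mfp_i$ (and $\chi'$ exactly on $S$ together with the $\mfp_i'$), the automatic local conditions force $\Sel^{\omega}N^{\chi}\subseteq \msS_{N[\omega]/F}(S\cup T)$ and $\Sel^{\omega}N^{\chi'}\subseteq \msS_{N[\omega]/F}(S\cup T')$; inside these groups, the Selmer condition is a local condition at each $v\in S$ (image lies in $W_v(\res_{G_v}\chi)$), together with vanishing of the image in $H^1(G_{\mfp_i},N^{\chi})$ for each $i$ (and analogously for $\chi'$).

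The local conditions at $S$ already coincide: from the definition of $\chi,\chi'$ one has $\chi-\chi'=\sum_{i\le r}\big(\mfB_{\ovp_i,\FFF}(h_i)-\mfB_{\ovp_i',\FFF}(h_i)\big)$, and part (3) of Definition \ref{defn:ram_sec} together with hypothesis (1) shows each summand restricts trivially to $G_v$ for $v\in S$, so $\res_{G_v}\chi=\res_{G_v}\chi'$ and hence $W_v(\res_{G_v}\chi)=W_v(\res_{G_v}\chi')$ for all $v\in S$. Next I would use the ramification section to strip off the new primes: iterating the exact sequence \eqref{eq:pre_unpacked} over the primes of $T$ and using unpackedness (which is preserved under enlarging $\Vplac_0$), $\mfB$ provides a direct sum decomposition
\[
\msS_{N[\omega]/F}(S\cup T)\;=\;\msS_{N[\omega]/F}(S)\;\oplus\;\bigoplus_{i\le r}\mfB_{\ovp_i,N[\omega]}\big(N[\omega](-1)^{G_{F,\ovp_i}}\big),
\]
and the analogous decomposition for $T'$. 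Since $\FrobF{F}{\ovp_i}$ acts trivially on $\FFF(-1)$, it acts trivially on $\mu_{\ell}$, so $N[\omega](-1)^{G_{F,\ovp_i}}=N[\omega]^{G_{F,\ovp_i}}$; hypothesis (1) together with Remark \ref{rmk:class} then matches this datum (and the whole $\ovp_i$-summand) with the one for $\ovp_i'$, giving a canonical identification of the ambient group $\msS_{N[\omega]/F}(S\cup T)$ with $\msS_{N[\omega]/F}(S\cup T')$ that is the identity on $\msS_{N[\omega]/F}(S)$. What remains is to check this identification carries $\Sel^{\omega}N^{\chi}$ onto $\Sel^{\omega}N^{\chi'}$, i.e. that the vanishing conditions at the $\mfp_j$ correspond to those at the $\mfp_j'$.

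This is the crux of the argument and the step I expect to be hardest. The plan is to rewrite ``the image of $\phi$ in $H^1(G_{\mfp_j},N^{\chi})$ vanishes'' using the connecting map of $0\to N^{\chi}[\omega]\to N^{\chi}\xrightarrow{\,\omega\,}N^{\chi}\to 0$ as a statement comparing the evaluation $\phi(\FrobF{F}{\ovp_j})$, taken in a suitable quotient of $(N[\omega])_{G_{F,\ovp_j}}$, with the rational-torsion subgroup coming from $H^0(G_{\mfp_j},N^{\chi})$. Writing $\phi=\phi_0+\sum_i\mfB_{\ovp_i,N[\omega]}(m_i)$ in the decomposition above, one evaluates the pieces at $\FrobF{F}{\ovp_j}$: the contribution of $\phi_0\in\msS_{N[\omega]/F}(S)$ depends only on $\class{\ovp_j}$ by Remark \ref{rmk:class}; the diagonal contribution $\mfB_{\ovp_j,N[\omega]}(m_j)(\FrobF{F}{\ovp_j})$ is governed by the spin $\spin{\ovp_j}$ via Proposition \ref{prop:same_spins}; and the off-diagonal contributions $\mfB_{\ovp_i,N[\omega]}(m_i)(\FrobF{F}{\ovp_j})$ for $i\ne j$ are governed by the symbols $\symb{\ovp_i}{\ovp_j}$ via Proposition \ref{prop:same_symbs}, with the double coset formula of Proposition \ref{prop:eval_at_Frob} providing the explicit evaluation. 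Under hypotheses (1)--(3) every one of these quantities agrees with its primed counterpart, so the vanishing conditions transport correctly and the two Selmer groups are isomorphic. The delicate points will be (a) the local cohomological bookkeeping at each $\mfp_j$ --- pinning down exactly which subquotients of $H^1(G_{\mfp_j},N^{\chi}[\omega])$ are involved and checking the rational-torsion pieces match under the identification --- and (b) confirming that Propositions \ref{prop:same_symbs} and \ref{prop:same_spins} as stated suffice, in particular treating the $i=j$ term separately with the spin and invoking part (5) of Proposition \ref{prop:symbol_properties} to control the asymmetry of the off-diagonal interaction between the $\ovp_i$'s.
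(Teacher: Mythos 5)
Your plan follows the paper's own route: the author proves the more precise Proposition \ref{prop:class_spin_symbol_exact}, which fixes the isomorphism $\Psi_x$ onto the cohomology unramified outside $\Vplac_0$ and the new primes (your ``canonical identification of the ambient groups'') and shows that the preimage of $\Sel^\omega N^{\chi(x)}$ is constant across a grid class; that proof matches the local conditions at $\Vplac_0$ via part (3) of Definition \ref{defn:ram_sec}, then handles the new primes by the connecting-map computation, Remark \ref{rmk:class}, Proposition \ref{prop:same_symbs}, and Proposition \ref{prop:same_spins}, exactly the ingredients you identify. One small direction slip worth fixing: Proposition \ref{prop:same_symbs} controls $\mfB_{\ovp_i, N[\omega]}(m_i)\left(\FrobF{F}{\ovp_j}\right)$ via $\symb{\ovp_j}{\ovp_i}$, not $\symb{\ovp_i}{\ovp_j}$, so the hypothesis supplies only one orientation, and as you flag, Proposition \ref{prop:symbol_properties} together with the class hypothesis must be invoked to recover the other.
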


Proposition \ref{prop:class_spin_symbol} is a consequence of the next proposition, which lets us directly compare the fixed point Selmer groups of different twists. This proposition relies on the concept of a grid of twists, which we introduce now.

\begin{defn}
\label{defn:grid}
Take $N$ to be a twistable module defined over $F$ with respect to $\FFF$. Using Proposition \ref{prop:unpack_it}, we may choose $(K/F, \Vplac_0)$ unpacking $N$, and we fix a ramification section associated to the associated starting tuple.

Choose a nonempty finite set $S$, and take $X_s$ to be a finite nonempty set of primes of $\ovQQ$ for each $s \in S$. Taking $X_{F, s}$ to be the set of primes of $F$ divisible by a prime in $X_s$, we assume that the $X_{F, s}$ are disjoint sets of primes, and that these sets are disjoint from $\Vplac_0$.

For each $s \in S$, we assume that the primes in $X_s$ all have the same class, and we call this class $[X_s]$. That is to say, we assume there is some
\[\FrobF{F}{X_s} \in \Gal(K(\Vplac_0)/F)/\sim\]
so $\FrobF{F}{\ovp_s}$ projects to $\FrobF{F}{X_s}$ for all $\ovp_s \in X_s$, where $\Gal(K(\Vplac_0)/F)/\sim$ is defined as in Definition \ref{defn:class}. We will assume that $\FrobF{F}{X_s}$ acts trivially on $\FFF(-1)$ for all $s \in S$.

We then call the product space
\[X = \prod_{s \in S} X_s\]
a \emph{grid of ideals}. 

Given $M$ in $\Mod(K/F, |\FFF|)$ and $x \in X$, we then define a natural homomorphism
\[\mfB_{x, M}: \bigoplus_{s \in S} M(-1)^{\FrobF{F}{X_s}} \to H^1(G_F, M)\]
by
\[\mfB_{x, M}(m) = \sum_{s \in S} \mfB_{\pi_s(x), M}\left(\pi_s(m)\right),\]
where $\pi_s$ denotes projection onto the $s^{\text{th}}$ coordinate.

So choose $h \in \prod_{s \in S}\FFF(-1)^{\times}$, where $\FFF(-1)^{\times}$ is the subset of generators of $\FFF(-1)$, and choose a homomorphism $\chi_0 \in \Hom(G_F, \FFF)$ that is ramified only at places of $\Vplac_0$. Note that the assumptions on $\FrobF{F}{X_s}$ mean $\mfB_{x, \FFF}(h)$ is well defined. We define the \emph{grid of twists} corresponding to $(\chi_0, h, X)$ to be the set of twists
\[\{N^{\chi(x)}\,:\,\, x\in X\},\]
where $\chi(x)$ is shorthand for $\chi_0 + \mfB_{x, \FFF}(h)$.

Given $x_0$ in $X$, we define the \emph{grid class} $\class{x_0}$ of $x_0$ to be the set of $x \in X$ satisfying
\[\symb{\pi_s(x_0)}{\pi_t(x_0)} = \symb{\pi_s(x)}{\pi_t(x)}\]
for all $s, t \in S$.

Define
\begin{equation}
\label{eq:param_space_def}
\mathscr{M}(N) \,=\, \msS_{N[\omega]/F}(\Vplac_0) \,\oplus\, \bigoplus_{s \in S} N[\omega](-1)^{\FrobF{F}{X_s}}.
\end{equation}
From the definition of an unpacked starting tuple, we can define an isomorphism
\begin{equation}
\label{eq:param_Selmer}
\Psi_{x, N} :\mathscr{M}(N) \isoarrow \msS_{N^{\chi(x)}[\omega]/F}\big( \Vplac_0 \,\cup\, \{\pi_s(x) \cap F\,:\,\, s \in S\}\big) 
\end{equation}
by 
\[\Psi_{x, N}(\phi_0, n) =\beta_{\chi(x)}^{-1}\left(\phi_0 +  \mfB_{x, N[\omega]}(n)\right)\]
for any $x \in X$. We write $\mathscr{M}$ and $\Psi_x$ for these objects if $N$ is clear.

It can be verified from the naturality of the ramification section that the preimage of the rational torsion portion of $\Sel^{\omega} N^{\chi(x)}$ under $\Psi_x$ is fixed. We call this preimage $V_{\textup{tor}, N}$.
\end{defn}

The form of the automatic local conditions force the codomain of $\Psi_x$ to contain the $\omega$-Selmer group of $N^{\chi(x)}$, so we may use the maps $\Psi_x$ to parameterize Selmer elements. Our next proposition shows that the portion of $\mathscr{M}$ that maps into $\Sel^{\omega}N^{\chi(x)}$ is fixed as $x$ varies in a grid class.

\begin{prop}
\label{prop:class_spin_symbol_exact}
Take $N$ to be a twistable module over $F$ with respect to $\FFF$, choose $(K/F, \Vplac_0)$ unpacking $N$ and an associated ramification section $\mfB$, and consider the grid of twists of $N$ associated to some tuple $\left(\chi_0, h, X = \prod_{s \in S} X_s\right)$. Define $\mathscr{M}$ and $(\psi_x)_{x \in X}$ as in Definition \ref{defn:grid}.

Then, given $x \in X$ and $x'$ in the grid class of $x$, we have an equality of subgroups
\[ \Psi_x^{-1}\left(\Sel^{\omega} N^{\chi(x)}\right)\, = \,\Psi_{x'}^{-1}\left(\Sel^{\omega} N^{\chi(x')}\right)\quad\text{in }\, \mathscr{M}.\]
\end{prop}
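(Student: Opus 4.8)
The plan is to reduce the claimed equality of subgroups of $\mathscr{M}$ to a purely local statement at the places in $\Vplac_0$, and then invoke the symbol machinery of Section \ref{sec:symbols} --- specifically Proposition \ref{prop:same_symbs} --- to conclude. The key point is that both $\Psi_x$ and $\Psi_{x'}$ are isomorphisms onto the cocycle classes unramified outside $\Vplac_0 \cup \{\pi_s(x)\cap F\}$ (resp. $\{\pi_s(x')\cap F\}$), so the Selmer conditions that carve out $\Sel^{\omega}N^{\chi(x)}$ inside that group are of two kinds: (i) the automatic conditions at the primes $\pi_s(x)\cap F$, which by the construction of $\Psi_x$ via the ramification section $\mfB$ are satisfied identically on the image (this is why the codomain of $\Psi_x$ contains the $\omega$-Selmer group, as remarked after Definition \ref{defn:grid}); and (ii) the prescribed local conditions $W_v(\res_{G_v}\chi(x))$ at the finitely many $v \in \Vplac_0$. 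So it suffices to show that, for each $v \in \Vplac_0$, the composite
\[
\mathscr{M} \xrightarrow{\,\Psi_x\,} \msS_{N^{\chi(x)}[\omega]/F}\big(\dots\big) \xrightarrow{\,\res_{G_v}\,} H^1(G_v, N^{\chi(x)}[\omega])\big/W_v(\res_{G_v}\chi(x))
\]
agrees with the analogous composite built from $\Psi_{x'}$, after identifying $N^{\chi(x)}[\omega]$ and $N^{\chi(x')}[\omega]$ with $N[\omega]$ via the equivariant restrictions $\beta_{\chi(x)}$, $\beta_{\chi(x')}$. Note that $\chi(x)$ and $\chi(x')$ have the same ramification behavior at each $v\in\Vplac_0$ (both equal $\chi_0$ there up to the automatic contributions, and in fact $\res_{G_v}\chi(x) = \res_{G_v}\chi(x')$ once one checks the $\mfB_{\ovp_s,\FFF}(h_s)$ are unramified at $v$), so $W_v(\res_{G_v}\chi(x)) = W_v(\res_{G_v}\chi(x'))$ and the comparison makes sense.

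Unwinding the definition $\Psi_{x,N}(\phi_0, n) = \beta_{\chi(x)}^{-1}(\phi_0 + \mfB_{x,N[\omega]}(n))$, the $\phi_0$-component contributes $\res_{G_v}\phi_0$ independently of $x$, so the whole question reduces to showing that
\[
\res_{G_v}\Big(\sum_{s\in S}\mfB_{\pi_s(x), N[\omega]}(\pi_s(n))\Big) \quad\text{and}\quad \res_{G_v}\Big(\sum_{s\in S}\mfB_{\pi_s(x'), N[\omega]}(\pi_s(n))\Big)
\]
agree in $H^1(G_v, N[\omega])$ for every $n$ and every $v\in\Vplac_0$. By the third axiom in Definition \ref{defn:ram_sec}, for a single $s$ the difference $\mfB_{\pi_s(x),M}(m) - \mfB_{\pi_s(x'),M}(m)$ restricts trivially to $G_v$ already because $\pi_s(x)$ and $\pi_s(x')$ lie in the same class $[X_s]$ --- this handles \emph{everything at the level of classes}. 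Wait: that would make the grid-class hypothesis unnecessary. The resolution is that the ramification section produces, for $M = \FFF$, a twisting character, and the module $N^{\chi(x)}[\omega]$ itself depends on $x$ through $\chi(x)$; the naturality of $\mfB$ is being applied not directly to $N[\omega]$ but to auxiliary modules (like $\FFF$, or tensor/induced modules) whose twisted pullbacks recombine $N[\omega]$-cohomology in a way that sees the \emph{symbols} $\symb{\pi_s(x)}{\pi_t(x)}$. So the real content is: the restriction to $G_v$ of $\Psi_x^{-1}$ applied to a Selmer class involves cross-terms indexed by pairs $(s,t)$, and Proposition \ref{prop:same_symbs} says exactly that these cross-terms at $v$ are controlled by the symbols.

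Concretely, I would argue as follows. Fix $v\in\Vplac_0$. For a class to lie in $\Sel^\omega N^{\chi(x)}$, its restriction to $G_v$ must land in $W_v$. Using Proposition \ref{prop:same_symbs}, condition (2) there states that for every $M\in\Mod(K/F,e_0)$ and $m\in M(-1)^{G_{F,\ovq}}$, $\mfB_{\ovq,M}(m)(\FrobF{F}{\ovp})$ is unchanged when $(\ovp,\ovq)$ is replaced by $(\ovp',\ovq')$ with equal symbols --- and by Proposition \ref{prop:same_symbs_old} this is equivalent to the difference $\mfB_{\ovp,M}(m)-\mfB_{\ovp',M}(m)$ being trivial on $G_{F,\ovq}$. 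Applying this with $\ovq$ running over the primes $\pi_t(x)\cap F$ for $t\ne s$ and with $M = N[\omega]$, one gets that the image of $\Psi_x^{-1}(\Sel^\omega N^{\chi(x)})$ inside $\mathscr{M}$ is cut out by (a) the $\Vplac_0$-local conditions, which depend only on $\chi_0$ and the shared class data, and (b) the condition that certain pairings of the form ``$\mfB_{\pi_s(x),N[\omega]}(\cdot)$ evaluated at $\FrobF{F}{\pi_t(x)}$'' vanish appropriately --- and by the equality of symbols in the grid class, these pairings are literally the same function of $n$ for $x$ and $x'$. The main obstacle, and where I would spend the most care, is bookkeeping the bilinear ``cross-term'' structure: one must show that the only way $\Psi_x^{-1}(\Sel^\omega)$ depends on $x$ within a class $\prod[X_s]$ is through the pairwise symbols and the spins $\symb{\pi_s(x)}{\pi_s(x)}$, and in the grid-of-twists setup of Definition \ref{defn:grid} the spins are subsumed because $\FrobF{F}{X_s}$ is assumed to act trivially on $\FFF(-1)$, forcing the relevant module structure to be split; equivalently, I expect the proof to mirror the argument for Proposition \ref{prop:class_spin_symbol} (of which this is the ``exact'' strengthening), replacing the qualitative ``isomorphic'' with a precise identification of subgroups, and the hard part is verifying that no finer invariant than the listed symbols enters. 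This last verification is essentially the observation that $\msS_{N^{\chi(x)}[\omega]/F}(\Vplac_0\cup\{\dots\})/\msS_{N^{\chi(x)}[\omega]/F}(\Vplac_0)$ is controlled, via the ramification-measuring maps of \eqref{eq:pre_unpacked}, by local data at the $\pi_s(x)\cap F$, together with Poitou--Tate duality to convert local conditions at one prime into Frobenius evaluations at the others --- exactly the input to Proposition \ref{prop:same_symbs_old}.
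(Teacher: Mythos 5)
Your proposal has a genuine gap, and it is in the very first structural move. You claim in part (i) that the automatic local conditions at the primes $\pi_s(x)\cap F$ ``are satisfied identically on the image'' of $\Psi_x$, citing the remark after Definition \ref{defn:grid}. That remark actually says the opposite of what you use: the codomain of $\Psi_x$ \emph{contains} $\Sel^\omega N^{\chi(x)}$, meaning it is large enough to parameterize Selmer elements, not that everything in it is a Selmer element. Since $\chi(x)$ is ramified at $\pi_s(x)\cap F$, the local condition $W_v$ there is $0$, and being in the codomain of $\Psi_x$ (i.e.\ unramified outside $\Vplac_0\cup\{\pi_s(x)\cap F\}$) says nothing about whether a class maps to $0$ in $H^1(G_v, N^{\chi(x)})$. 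So your reduction to checking agreement ``for each $v\in\Vplac_0$'' is unjustified, and the places $\pi_s(x)\cap F$ are where nearly all the content of the proposition lives.

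The paper's proof reflects this. The $\Vplac_0$-conditions are dispatched in one line by axiom (3) of Definition \ref{defn:ram_sec}, using only that $\pi_s(x)$ and $\pi_s(x')$ share a class. The substantive work is at $v=\pi_s(x)\cap F$: the local condition there is that $\phi$ lies in the image of the connecting map for $0\to N^{\chi}[\omega]\to N^{\chi(x)}\xrightarrow{\omega} N^{\chi(x)}\to 0$, which the paper makes explicit via the cup-product formula $\beta_{\chi(x)}\bigl(\delta_{\chi(x)}(\beta_{\chi(x)}^{-1}(m))\bigr)=\delta(m)+\chi(x)\cup m$. This reduces to an equality of values at $\FrobF{F}{\ovp_s}$ and $\FrobF{F}{\ovp'_s}$ in $N[\omega]_{G_v}$ of a specific combination, which splits into a cross-term part governed by the symbols $\symb{\pi_s(x)}{\pi_t(x)}$ for $t\neq s$ (handled by Proposition \ref{prop:same_symbs} plus Remark \ref{rmk:class}) and a self-term part governed by the spin $\spin{\pi_s(x)}$ (handled by Proposition \ref{prop:same_spins}). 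You do gesture at the cross-term structure in your final paragraph, which is a step toward the right picture, but you never identify the actual condition (it is an equality of two expressions, not a vanishing), you never use the connecting-map/cup-product computation that produces it, and you explicitly assert the spins ``are subsumed'' because $\FrobF{F}{X_s}$ acts trivially on $\FFF(-1)$. That last assertion is false: the spin hypothesis in Definition \ref{defn:grid} is a genuine part of the grid class, and Proposition \ref{prop:same_spins} is invoked essentially in the paper's proof to handle the self-interaction at $\ovp_s$. Without it, the argument does not close.
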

 
\begin{proof}
Choose $(\phi_0, n)$ in $\mathscr{M}$, and take
\[\phi = \Psi_{x}(\phi_0, n)\quad\text{and}\quad \phi' = \Psi_{x'}(\phi_0, n).\]
We need to show that $\phi$ lies in $\Sel^{\omega} N^{\chi(x)}$ if and only if $\phi'$ lies in $\Sel^{\omega} N^{\chi(x')}$. 

We see that $\chi(x) - \chi(x')$ and $\beta_{\chi(x)}(\phi) - \beta_{\chi(x')}(\phi')$ have zero restriction to $G_v$ for $v \in \Vplac_0$. From this, we conclude that $\phi$ obeys the local condition at $v$ if and only if $\phi'$ obeys the local condition at $v$.

Next, for $s \in S$, we see that $\phi$ obeys the local condition at $v = \ovp_s \cap F$ if it lies in the kernel of the map $H^1(G_v, N^{\chi}[\omega]) \to H^1(G_v, N^{\chi(x)})$. This is equivalent to it lying in the image of the connecting map 
\[H^0(G_v, N^{\chi}[\omega]) = H^0(G_v, N^{\chi(x)}) \to H^1(G_v, N[\omega])\]
corresponding to the exact sequence
\[0 \to N^{\chi}[\omega] \to N^{\chi(x)} \to N^{\chi(x)} \to 0.\] 
Calling this boundary map $\delta_{\chi(x)}$, we find that
\[\beta_{\chi(x)}\left(\delta_{\chi(x)}\left(\beta_{\chi(x)}^{-1}(m)\right)\right) = \delta(m) + \chi(x) \cup m\]
for $m \in H^0(G_v, N[\omega])$, where $\delta$ is the boundary map associated to
\[0 \to N[\omega] \to N[\omega^2] \to N[\omega] \to 0\]
 and where the cup product is defined with respect to the bilinear pairing $\FFF \otimes N[\omega] \to N[\omega]$ given by $\xi \otimes m' \mapsto m'$.

There is a unique $m \in N[\omega]^{G_v}$ for which
\[\pi_s(n)\left(\overline{\zeta}\right) = \pi_s(h)\left(\overline{\zeta}\right) \cdot m.\]
We see this is the unique choice of $m$ for which $\phi - \delta(m) - \chi(x) \cup m$ is unramified at $v$. Equivalently, it is the unique choice of $m$ so $\phi' - \delta(m) - \chi'(x) \cup m$ is unramified at $\ovp'_s \cap F$.  The proposition then reduces to the claimed identity
\begin{align*}
&\phi'\left(\FrobF{F}{\ovp'_s}\right)  - \chi'(x)\left(\FrobF{F}{\ovp'_s}\right) \cdot m \\
&\qquad = \phi\left(\FrobF{F}{\ovp_s}\right)  - \chi(x)\left(\FrobF{F}{\ovp_s}\right) \cdot m \quad\text{in }\, N[\omega]_{G_v}.
\end{align*}
From the equality of classes and symbols, this reduces by Proposition \ref{prop:same_symbs} and Remark \ref{rmk:class} to the claim
\begin{align*}
&\mfB_{\ovp'_s, N[\omega]}(\pi_s(n)) \left(\FrobF{F}{\ovp'_s}\right) - \mfB_{\ovp'_s, \FFF}(\pi_s(h)) \left(\FrobF{F}{\ovp'_s}\right) \cdot m \\
&\qquad=  \mfB_{\ovp_s, N[\omega]}(\pi_s(n)) \left(\FrobF{F}{\ovp_s}\right) - \mfB_{\ovp_s, \FFF}(\pi_s(h)) \left(\FrobF{F}{\ovp_s}\right) \cdot m \quad\text{in }\, N[\omega]_{G_v}.
\end{align*}
But this follows from $\spin{\ovp} = \spin{\ovp'}$, as can be seen by applying Proposition \ref{prop:same_spins} to the difference
\[\mfB_{\ovp_s, N[\omega] \oplus \FFF}((\pi_s(n), \pi_s(h)))\left(\FrobF{F}{\ovp_s}\right) \,-\,  \mfB_{\ovp'_s, N[\omega] \oplus \FFF}((\pi_s(n), \pi_s(h)))\left(\FrobF{F}{\ovp'_s}\right)\] 
of cocycle classes in the module $N[\omega] \oplus \FFF$.
\end{proof}
This proposition may also be proved more explicitly using the double coset formula for corestriction and restriction. See \cite[Section 5.2]{Smi22b} for details.

\subsection{Dual Selmer groups and the Cassels--Tate pairing}
\label{ssec:CT}
\begin{defn}
Given a twistable module $N$, we define an action of $\Z_{\ell}[\xi]$ on $\Hom_{\Z_{\ell}}\left(N, \mu_{{\ell}^{\infty}}\right)$ so
\[\xi \phi(n) = \phi(\xi^{-1}n) \quad\text{for all } n \in N \,\text{ and }\, \phi \in \Hom_{\Z_{\ell}}\left(N, \mu_{{\ell}^{\infty}}\right). \]
On each such $\phi$, we define an action of $\sigma \in G_F$ by
\[(\sigma \phi)(n) = \sigma \big(\phi(\sigma^{-1} n)\big)\quad\text{for all } n \in N.\]
Using this, we can give the module
\[N^{\vee} = \Hom_{\Z_{\ell}}\left(N, \mu_{{\ell}^{\infty}}\right) \otimes_{\Z_{\ell}[\xi]} \QQ_{\ell}[\xi] \Big/ \Hom_{\Z_{\ell}}\left(N, \mu_{{\ell}^{\infty}}\right)\]
the structure of a twistable module. We call it the \emph{dual twistable module} to $N$.

Given a homomorphism $\chi$ in  $\Hom(G_v, \FFF)$ for some place $v$ or in $\Hom(G_F, \FFF)$, we see that the twistable modules $(N^{\chi})^{\vee}$ and $(N^{\vee})^{\chi}$ are canonically isomorhpic. We also have a non-degenerate pairing
\begin{equation}
\label{eq:dual_torsion}
N[\omega^k] \times N^{\vee}[\omega^k] \to \mu_{\ell^{\infty}}
\end{equation}
defined by
\[\left(x, \,\,\phi \otimes \omega^{-k}\right) \mapsto \phi\left(x\right).\]
This defines an isomorphism between $N^{\vee}[\omega^k]$ and $(N[\omega^k])^{\vee}$.

We write $V(N^{\chi})$ for the  inverse limit of $N^{\chi}$ indexed by positive integers with transitions given by multiplication by $\ell$. That is, we take elements in $V( N^{\chi})$ to be sequences $(n_1, n_2, \dots)$ in $N^{\chi}$ such that $\ell n_{i+1} = n_i$ for $i \ge 1$. This is a $\QQ_{\ell}$ vector space. 

The map taking $(n_1, \dots)$ to $n_1$ is a surjection from $V(N^{\chi})$ to $N^{\chi}$, and we write its kernel as $T(N^{\chi})$.

We note that the map taking  $n$ to $(n, \ell^{-1}n, \ell^{-2}n, \dots)$ defines an isomorphism
\[\Hom_{\Z_{\ell}}\left(N^{\chi}, \mu_{{\ell}^{\infty}}\right) \otimes_{\Z_{\ell}[\xi]} \QQ_{\ell}[\xi] \cong V\left((N^{\vee})^{\chi}\right).\]
We also have an isomorphism
\[\iota: \Hom_{\Z_{\ell}}\left(N^{\chi}, \mu_{{\ell}^{\infty}}\right) \otimes_{\Z_{\ell}[\xi]} \QQ_{\ell}[\xi] \isoarrow \Hom(V(N^{\chi}), \,\mu_{\ell^{\infty}})\]
given by
\[\iota(\phi \otimes \ell^{-k})\left(n_1, n_2, \dots\right) = \phi(n_{k+1}).\]
From these isomorphisms, we have a perfect pairing
\begin{equation}
\label{eq:V_pairing}
V(N^{\chi}) \otimes V\left((N^{\vee})^{\chi}\right) \to \mu_{\ell^{\infty}},
\end{equation}
and we find that $T(N^{\chi})$ and $T((N^{\vee})^{\chi})$ are orthogonal complements under this pairing.
\end{defn}

There is a canonical method for going from a set of local conditions for $N$ to a set of local conditions for $N^{\vee}$. This proceeds as follows:

\begin{defn}
Choose any place $v$ of $F$, and choose $\chi$ in $\Hom(G_v, \FFF)$.
 
By \cite[Lemma 4.9]{MS21}, we have an isomorphism
\[H^1(G_v, V(N^{\chi})) \cong \varprojlim H^1(G_v, N^{\chi}).\]
As a consequence, for each place $v$, there is a unique $\QQ_{\ell}$-vector space of $H^1(G_v, V(N^{\chi}))$ whose image in $H^1(G_v, N^{\chi})$ equals the local conditions for $N^{\chi}$ at $v$. We endow $V(N^{\chi})$ with these local conditions.

By \cite[Proposition 4.8]{MS21}, local Tate duality gives a nondegenerate pairing
\[H^1(G_v, \, V(N^{\chi})) \otimes H^1(G_v, V\left((N^{\vee})^{\chi}\right) \to \QQ/\Z\]
for each place $v$ of $F$. We endow $V((N^{\vee})^{\chi})$ with the orthogonal complement of the local conditions for $N^{\chi}$, and we endow $(N^{\vee})^{\chi}$ with the image of these local conditions under the projection from $V\left((N^{\vee})^{\chi}\right)$ to $(N^{\vee})^{\chi}$.

In the case that $v$ is outside $\Vplac_0$, we see that the local conditions this associates to $V(N^{\chi})$ is the group
\[ H^1_{\text{ur}}(G_v, V(N^{\chi})).\]
 By \cite[Proposition 1.4.2]{Rubin00}, we also have
\[H^1_{\text{ur}}(G_v, V(N^{\chi}))^{\perp} = H^1_{\text{ur}}(G_v, V((N^{\vee})^{\chi}))\]
under the local Tate duality pairing. As a consequence, we find that the local conditions this process associates to $(N^{\vee})^{\chi}$ are the automatic local conditions for $v$ outside $\Vplac_0$.

We call the set of local conditions this associates to $N^{\vee}$ the \emph{dual set of local conditions} to $N$.
\end{defn}

\begin{defn}
Choose $\chi \in \Hom(G_F, \FFF)$. With local conditions set as above, we find that the constructions in \cite[Section 4]{MS21} let us define a pairing
\[\langle\,\,,\,\,\rangle :\Sel\, N^{\chi} \times \Sel\, (N^{\vee})^{\chi} \to \QQ_{\ell}/\Z_{\ell}\]
whose kernel on each side is the subgroup of divisible Selmer elements. This is specifically the pairing associated to the exact sequence
\[0 \to T(N^{\chi}) \to V(N^{\chi}) \to N^{\chi} \to 0.\]
Given $\alpha \in \Z_{\ell}[\xi]$, naturality of the Cassels--Tate pairing  gives that this pairing satisfies
\[\langle \alpha \phi, \, \psi \rangle = \langle \phi,\, \overline{\alpha} \psi \rangle\,\,\text{ for all }\, \phi \in \Sel\,N^{\chi},\,\, \psi \in \Sel (N^{\vee})^{\chi},\]
where $\overline{\alpha}$ is the image of $\alpha$ under the continuous ring automorphism taking $\xi$ to $\xi^{-1}$.

Given nonnnegative integers $k$ and $j$, we can consider the restriction of this pairing
\begin{equation}
\label{eq:CTP_kj}
\Sel^{\omega^k} N^{\chi} \times \Sel^{\omega^j} (N^{\vee})^{\chi} \to \QQ_{\ell}/\Z_{\ell}.
\end{equation}
This is the pairing constructed in \cite{MS21} corresponding to the exact sequence
\[0 \to N^{\chi}\left[\omega^j\right] \hookrightarrow N^{\chi}\left[\omega^{k+j}\right] \xrightarrow{\,\,\omega^{j}\,\,}  N^{\chi}\left[\omega^{k}\right] \to 0,\]
where the local conditions for $N^{\chi}[\omega^{k + j}]$ at any place $v$ equals the preimage of $W_v(\chi)$ under the natural map $H^1(G_v, N^{\chi}[\omega^{k+j}]) \to H^1(G_v, N^{\chi})$.  By \cite[Theorem 1.3]{MS21}, the left kernel of this pairing is $\omega^j \Sel^{\omega^{k+j}} N^{\chi}$, and its right kernel is $\omega^k \Sel^{\omega^{k+j}} (N^{\vee})^{\chi}$.

We can then check that the pairing
\[\langle\,\,,\,\,\rangle_k: \omega^{k-1} \Sel^{\omega^k} N^{\chi} \times \omega^{k-1} \Sel^{\omega^k} (N^{\vee})^{\chi} \to \tfrac{1}{\ell}\Z/\Z\]
defined for $k \ge 1$ by
\[\langle\omega^{k-1}\phi,\, \omega^{k-1}\psi \rangle_k = \langle \phi,\, \omega^{k-1}\psi\rangle\]
is well defined and has left and right kernels $\omega^k \Sel^{\omega^{k+1}}N^{\chi}$ and $\omega^k \Sel^{\omega^{k+1}} (N^{\vee})^{\chi}$, respectively.

\end{defn}

\begin{defn}
We say that $N$ with an associated set of local conditions has \emph{alternating structure} if there is some equivariant isomorphism $\nu: N \to N^{\vee}$ that identifies the set of local conditions for $N$ with the dual set of local conditions to $N$ and for which the composition of the pairing \eqref{eq:dual_torsion} with $\nu$ gives an alternating pairing on $N[\omega^k]$ for $k \ge 1$.
\end{defn}
\begin{ex}
Taking $A/F$ to be an abelian variety and $N = A[2^{\infty}]$, we have a canonical identification between $A^{\vee}[2^{\infty}]$ and $N^{\vee}$, where $A^{\vee}$ denotes the dual abelian variety to $A$. Any polarization $\lambda \colon A \to A^{\vee}$ over $F$ of odd degree then can be used to define an alternating structure on $N$ with the local conditions of Example \ref{ex:abelian_Selmerable}. See \cite[Section 1.6.D]{Rubin00}.

We note that the twistable modules defined for class groups in Example \ref{ex:class_Selmerable} cannot have alternating structure, as $\QQ_{\ell}/\Z_{\ell}$ and its dual are not isomorphic over any number field.
\end{ex}
\begin{rmk}
\label{rmk:no2_noalt}
Suppose $N$ has alternating structure, and suppose either that $\ell \ne 2$ or that the Poonen--Stoll class defined as in \cite{MS21} for $N$ with $\nu$ is trivial. Then the Cassels--Tate pairing is an alternating pairing \cite[Theorem 5.21]{MS21}. Under these circumstances, the pairing $\langle\,\,,\,\,\rangle_k$ is an alternating pairing for odd $k$. However, if $k$ is even, this pairing is symmetric, so
\[\langle\phi, \,\psi\rangle_k = \langle\psi, \,\phi\rangle_k \,\text{ for } \phi, \psi \in \omega^{k-1} \Sel^{\omega^k} N^{\chi}.\]
If $\ell = 2$, this is equivalent to saying that $\langle\,\,,\,\,\rangle_k$ is antisymmetric for even $k$. If $|\FFF| = 2$, we have the stronger result that these pairings are alternating for even $k$ \cite[Theorem 1.7]{MS21}. But if $|\FFF|$ is a higher power of $2$, it is unclear what to expect from the diagonal entries of these pairings.

To avoid needing to control diagonal terms of the pairings $\langle\,\,,\,\,\rangle_k$, we will generally assume that $|\FFF| = 2$ when considering modules with alternating structure.
\end{rmk}

\subsection{The distribution of higher Selmer groups}
\label{ssec:heur}
The Cassels--Tate pairing gives us a method to understand the \emph{higher Selmer groups} of the twists of $N$, which are the Selmer groups of the form $\Sel^{\omega^k} N^{\chi}$ with $k$ larger than $1$. The Cohen--Lenstra--Gerth  heuristic \cite{Gert84} and Bhargava--Kane--Lenstra--Poonen-Rains heuristic \cite{BKLPR15} both predict how higher Selmer groups behave in twist families. The author's preferred way of interpreting and strengthening these heuristics is as equidistribution statements for the Cassels--Tate pairings $\langle\,\,,\,\,\rangle_k$ over the twist family. A priori, the pairings defined for distinct twists have distinct domains, complicating the form an equidistribution statement would take. Fortunately, we have a method for considering collections of twists whose Cassels--Tate pairings share a domain.
\begin{defn}
\label{defn:higher_grid}
Fix a twistable module $N$ defined over $F$ with respect to $\FFF$ and a tuple $(K/F, \Vplac_0)$ unpacking $N$. Take $X = \prod_{s \in S} X_s$ to be a grid of ideals with respect to this information. Fixing $\chi_0$ and $h \in \prod_{s \in S} \FFF(-1)^{\times}$, we can consider the twists of $N$ corresponding to ideals in $X$. 

Given $x$ in $X$ and a positive integer $j$, define
\[V_{\omega^j}(x) = \Psi_{x, N}^{-1}\left(\omega^{j-1} \Sel^{\omega^j} N^{\chi(x)}\right)\quad\text{and}\quad V^{\vee}_{\omega}(x) = \Psi_{x, N^{\vee}}^{-1}\left(\omega^{j-1}\Sel^{\omega^j} (N^{\vee})^{\chi(x)}\right),\]
where $\mathscr{M}$ and $\Psi$ are defined as in Definition \ref{defn:grid}. Fixing $x_0$ in $X$, Proposition \ref{prop:class_spin_symbol_exact} gives that
\[V_{\omega}(x)  = V_{\omega}(x_0) \quad\text{and}\quad V^{\vee}_{\omega^j}(x)  = V^{\vee}_{\omega}(x_0)\]
for $x$ in the grid class of $x_0$. We also note that
\[V_{\omega}(x_0) \supseteq V_{\omega^2}(x_0)\supseteq \dots \supseteq V_{\textup{tor}, N} \quad\text{and}\quad V^{\vee}_{\omega}(x_0) \supseteq V^{\vee}_{\omega^2}(x_0)\supseteq \dots \supseteq V_{\textup{tor}, N^{\vee}}.\]

Given an integer $k \ge 1$, we take $\class{x_0}_k$ to be the subset of $x$ in $\class{x_0}$ so
\begin{itemize}
\item For all positive integers $j \le k$, we have
\[V_{\omega^j}(x) = V_{\omega^j}(x_0) \quad\text{and}\quad V_{\omega^j}^{\vee}(x) = V_{\omega^j}^{\vee}(x_0) ; \,\text{ and }\]
\item For all positive integers $j$ strictly less than $k$, we have an identity
\[\left\langle \Psi_{x, N}(v), \,\Psi_{x, N^{\vee}}(v')\right\rangle_j = \left\langle \Psi_{x_0, N}(v), \,\Psi_{x_0, N^{\vee}}(v')\right\rangle_j \]
 of Cassels--Tate pairings for all $v$ in $V_{\omega^j}(x_0)$ and $v'$ in $V_{\omega^j}^{\vee}(x_0)$.
\end{itemize}
We call the set $\class{x_0}_k$ the \emph{higher grid class} of $x_0$ of level $k$. We note that the second condition on $x$ in $\class{x_0}_k$ implies the first, but we include both conditions to avoid concerns on the well-foundedness of this definition.
\end{defn}

With higher grid classes defined, we can give our heuristic.
\begin{heur}
\label{heur:random_matrix}
Take $\class{x_0}_k$ to be a higher grid class of $X$. Subject to assumptions on $N$, $X$ and $\class{x_0}_k$, the pairing
\[(v, v') \mapsto \left\langle \Psi_{x, N}(v), \,\Psi^{\vee}_{x, N^{\vee}}(v')\right\rangle_k\]
is approximately uniformly distributed among the reasonable possibilities for pairings
\[V_{\omega^k}(x_0) \times  V^{\vee}_{\omega^k}(x_0)  \to \tfrac{1}{\ell}\Z/\Z\]
as $x$ varies through $\class{x_0}_k$.
\end{heur}
Here, a pairing can only be reasonable if its left and right kernels contain the groups $V_{\textup{tor}, N}$ and $V_{\textup{tor}, N^{\vee}}$, respectively. Depending on the structure of $N$, there may be more restrictions on the form of these pairings. For example, if $N$ has alternating structure and either $k$ is odd or $|\FFF| = 2$, we know this pairing must be alternating. If $N$ has alternating structure but $k$ is even and $|\FFF| > 2$, we instead know that the pairing must be symmetric. In all the examples for which we have found the complete distribution of Selmer ranks, these are the only restrictions on the reasonable possibilities for these pairings.

The  main theorem of this paper, Theorem \ref{thm:main_higher}, gives conditions under which this heuristic holds. The following notation will be necessary.
\begin{defn}
\label{defn:grid_height}
Fix an unpacked starting tuple $(K/F, \Vplac_0, \# \FFF)$ and a twistable module $N$ as in Definition \ref{defn:higher_grid}. Consider a grid of twists associated to the info $(\chi_0, h)$ and the grid of ideals $X = \prod_{s \in S} X_s$. Given $s \in S$, we define the height of $X_s$ as
\[H_s = \max_{\ovp \in X_s} N_{F/\QQ}(\ovp \cap F),\]
and we define the height of $X$ to be the product $H = \prod_{s \in S} H_s$.

We next define the probabilities we will use to give the distribution of higher Selmer ranks. First, suppose $N$ has alternating structure and $\#\FFF = 2$. Then, for $n \ge j \ge 0$, we define $P(j\, |\,n)$ to be the probability that a uniformly selected alternating $n \times n$ matrix with coefficients in $\FFF_{2}$ has kernel of rank exactly $j$. 

If $N$ does not have alternating structure, we start by taking
\begin{equation}
\label{eq:dual_discrepancy}
u = r_{\omega}(N^{\chi(x_0)}) - r_{\omega}\left((N^{\vee})^{\chi(x_0)}\right).
\end{equation}
This integer does not depend on the choice of $x_0$ in $X$ by \cite[Theorem 8.7.9]{Neuk08}; see \cite[Remark 2.11]{Smi22b} for a fuller discussion.

Then, given $n \ge j \ge 0$, we take $P(j \,|\, n)$ to be the probability that a uniformly selected $(n - u) \times n$ matrix with coefficients in $\FFF_{\ell}$ has kernel of rank exactly $j$ if $n \ge u$, and we take $P(j\,|\,n) = 0$ otherwise. 
\end{defn}

The final notion we need to make Theorem \ref{thm:main_higher} precise is the notion of when a given grid class is \emph{ready for higher work}, a term we will define in Definition \ref{defn:ready}. If a grid class satisfies these conditions, we can find the distribution of Selmer groups over its twists.

\begin{thm}
\label{thm:main_higher}
Take $(K/F, \Vplac_0, \#\FFF)$ and $N$ as in Definition \ref{defn:grid_height}.
Then there are postive numbers $c, C > 0$ so, given any grid of twists of height $H > C$, and given a grid class $\class{x_0}$ that is ready for higher work in the sense of Definition \ref{defn:ready}, if we take
\[r_{\omega} = r_{\omega}(N^{\chi(x_0)}),\]
we have
\[\sum_{r_{\omega} \ge r_{\omega^2} \ge \dots} \left| \frac{\#\left \{x \in \class{x_0}\,:\,\, r_{\omega^k}\left(N^{\chi(x)}\right) = r_{\omega^k} \,\text{ for all } k \ge 1\right\}}{\#\class{x_0}} \,- \, \prod_{k \ge 1} P\left(r_{\omega^{k+1}} \,|\, r_{\omega^k}\right)\right|\]
\[ \le\, \exp\left(-c \cdot \left(\log \log\log H\right)^{1/2}\right),\]
where $P(j\,|\,n)$ is defined from $N$ and $X$ as in Definition \ref{defn:grid_height} and where the sum is over all nonincreasing sequences $r_{\omega^2} \ge r_{\omega^3} \ge \dots$ such that $r_{\omega} \ge r_{\omega^2}$.
\end{thm}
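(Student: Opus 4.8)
The plan is to prove the stronger assertion that, over $\class{x_0}$, the entire sequence of Cassels--Tate pairings $\langle\,\,,\,\,\rangle_k$ is approximately equidistributed among the reasonable possibilities, i.e.\ to establish Heuristic \ref{heur:random_matrix} at every level simultaneously, and then to read off the product formula. The point is that $\omega^{k}\Sel^{\omega^{k+1}}N^{\chi(x)}$ is exactly the left kernel of $\langle\,\,,\,\,\rangle_k$, transported by $\Psi_{x,N}$ to a pairing on the fixed space $V_{\omega^k}(x_0)\times V^{\vee}_{\omega^k}(x_0)$; so equidistribution of that pairing among the reasonable possibilities --- alternating $n\times n$ matrices when $N$ has alternating structure (where $\#\FFF=2$), and otherwise $(n-u)\times n$ matrices with $u$ the dual-rank discrepancy of \eqref{eq:dual_discrepancy} --- produces the transition $r_{\omega^k}\mapsto r_{\omega^{k+1}}$ with probability $P(r_{\omega^{k+1}}\mid r_{\omega^k})$, and the sequence of ranks becomes the Markov chain whose law is $\prod_k P(r_{\omega^{k+1}}\mid r_{\omega^k})$. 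Since only ranks of size $O(\log H)$ occur in a grid, only finitely many levels $k$ contribute nontrivially for each $H$, which is what makes an inductive argument on $k$ viable.

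The first step is regridding, following Section \ref{sec:regrid}. The grid class $\class{x_0}$ cut out by the symbol conditions of Definition \ref{defn:grid} is typically not itself a product, so I would partition it, up to a negligible part, into an honest family of sub-grids $\prod_s Y_s$ on each of which the fixed point Selmer groups $\Sel^{\omega}N^{\chi(x)}$ and $\Sel^{\omega}(N^{\vee})^{\chi(x)}$ are constant by Proposition \ref{prop:class_spin_symbol_exact}, and on each of which the bilinear equidistribution of Section \ref{sec:bilinear} keeps the sub-grids large and balanced. As emphasized in Section \ref{sec:regrid}, this also reduces the problem to controlling one matrix coefficient of each pairing $\langle\,\,,\,\,\rangle_k$ at a time, relative to a fixed sparse reference subset $Y\subset X$: once a single coefficient is shown to be equidistributed conditionally on all the others, iterating over coefficients equidistributes the full matrix.

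The algebraic heart is then to express, over a point $x$ of a sub-grid, the chosen coefficient of $\langle\,\,,\,\,\rangle_k$ as an explicit governing expression. Using the subquotient relations of Section \ref{sec:algcomb} --- that $N^{\chi_{1}\cdots\chi_{k}}[\omega^{k}]$ is an explicit $G_F$-subquotient of a direct sum of modules $N^{\chi_{i_1}\cdots\chi_{i_{k-1}}}[\omega^{k}]$ --- together with Theorem \ref{thm:TAR}, the work of Section \ref{sec:gov} rewrites this coefficient, up to the (fixed) base-layer Selmer data over $Y$, as a sum over the varying coordinates of local terms each given by a value of a symbol $\symb{\ovp_a}{\ovp_b}$ for primes drawn from the sub-grid. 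I would then induct on $k$: granting that the level-$(k-1)$ pairings are equidistributed, so that the level-$k$ higher grid class $\class{x_0}_k$ of Definition \ref{defn:higher_grid} has the expected relative size, I would apply Corollary \ref{cor:an} to show that the governing symbol sum attains each value of $\tfrac1\ell\Z/\Z$ with frequency $\approx\ell^{-1}$ even after conditioning on all previously spent symbol data, with the stated iterated-logarithm saving, and then Proposition \ref{prop:bye_Ramsey} to show that, whatever the base-layer Selmer data over $Y$ looks like, averaging the resulting pairings against this near-uniform input yields an approximately uniform pairing on $V_{\omega^k}(x_0)\times V^{\vee}_{\omega^k}(x_0)$ among the reasonable possibilities (alternating or symmetric according to Remark \ref{rmk:no2_noalt}). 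Summing the per-sub-grid distributions, each within the analytic error of the Markov prediction, weighted by sub-grid sizes, recovers the distribution over $\class{x_0}$; the errors from regridding, from iterating over matrix coefficients, from the finitely many inductive levels, and from the discarded part all combine to something dominated by $\exp(-c(\log\log\log H)^{1/2})$ after shrinking $c$.

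I expect the main obstacle to be the induction of Section \ref{sec:gov}: one must track level by level precisely which symbol values the lower pairings have already consumed, use Theorem \ref{thm:TAR} to guarantee that the level-$k$ governing expression sees only fresh symbol data (disjoint from what determined $\class{x_0}_k$), and keep the conditional equidistribution of Corollary \ref{cor:an} strong enough at every level that the accumulated error never swamps the single analytic saving --- all while the combinatorial input of Proposition \ref{prop:bye_Ramsey} must remain robust against completely arbitrary behavior of the fixed point Selmer groups over the sparse subset $Y$.
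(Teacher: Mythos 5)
Your proposal follows essentially the same architecture as the paper's proof: regridding into smaller sub-grids, reducing to control of a single Cassels--Tate matrix coefficient via a test-vector/Fourier argument, applying Theorem \ref{thm:TAR} to turn that coefficient into a governing symbol expression, and combining Corollary \ref{cor:an} with Proposition \ref{prop:bye_Ramsey} to force near-uniformity whatever the base-layer data over the sparse reference set looks like. The only substantive deviation is cosmetic: the paper covers $\class{x_0}$ by overlapping suitable prefixes with controlled multiplicity (via Proposition \ref{prop:rz_est} and a second-moment/Cauchy--Schwarz argument) rather than partitioning into honest sub-grids, and handles all matrix entries simultaneously by a Fourier sum over the space of possible pairings rather than iterating coefficient-by-coefficient --- but both of these are implementations of the strategy you describe, not a different route.
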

The proof of this theorem spans the entirety of Sections 6, 7, and 8.

\subsection{Grid classes that are ready for higher work}
Theorem \ref{thm:main_higher} gives the distribution of higher Selmer groups in specialized collections of twists, but we may apply it to prove distributional results about the Selmer groups in more natural families. Specifically, we may carve a natural family of twists into grids and split these grids into grid classes. After this process, some twists in the natural family will either not be in a grid or will not be in a grid class that is ready for higher work. However, so long as a negligible fraction of the natural family is left behind in this process, we can use the above theorem to find the distribution of Selmer groups in the natural family. This is the basic strategy for the proof of \cite[Theorem 2.14]{Smi22b}.

Our definition of a grid class that is ready for higher work needs to be restrictive enough that our methods for controlling higher Selmer groups work on such grid classes, but flexible enough that the twists outside any such class form a negligible portion of the natural family. In particular, our definition of these grid classes needs to complement the specific method we will use to carve natural families into grids, which appears in \cite[Section 8]{Smi22b}.

As a result, our definition of grid classes that are ready for higher work is complicated. We turn to it now.
\begin{defn}
\label{defn:Spotpre}
\label{defn:ready}
Consider a grid of twists associated to the pair $(\chi_0, h)$ and the grid of ideals $X = \prod_{s \in S} X_s$. Take $H$ to be the height of this grid of ideals, and take $H_s$ to be the height of $X_s$ for each $s \in S$. We assume $H > 20$.

We then define the set of potential active indices to be the set of elements $s \in S$ for which $\FrobF{F}{X_s}$ is the identity in $\Gal(K(\Vplac_0)/F)/\sim$ and for which $X_s$ is not a singleton. We define the set of \emph{potential prefix indices} $S_{\text{pot-pre}}$ to be the set of potential active indices $s$ satisfying
\begin{equation}
\label{eq:pre_disc_small}
H_s \le \exp^{(3)}\left(\tfrac{2}{3} \log^{(3)} H^2\right),
\end{equation}
and we define the set of \emph{potential a/b indices} $S_{\text{pot-a/b}}$ to be the set of potential active indices $s$ satisfying
\[H_s \ge \exp^{(3)}\left(\tfrac{3}{4} \log^{(3)} H\right).\]
Given $x_0 \in X$, a nonempty subset $S_{\text{pre}}$ of $S_{\text{pot-pre}}$ is called a set of \emph{prefix indices} if the kernels $\text{k}V_{\omega}$ and $\text{k}V^{\vee}_{\omega}$ of the standard projection maps
\[V_{\omega}(x_0) \to \bigoplus_{s \in S_{\text{pre}}} N[\omega](-1)\quad \text{and}\quad V^{\vee}_{\omega}(x_0) \to \bigoplus_{s \in S_{\text{pre}}} N^{\vee}[\omega](-1)\]
satisfy
\[\dim \text{k}V_{\omega} = r_{\omega}\left(N^{\chi(x_0)}\right) \quad\text{and}\quad\dim \text{k}V_{\omega}^{\vee} = r_{\omega}\left((N^{\vee})^{\chi(x_0)}\right).\]
Note that these identities are equivalent to the conditions
\[V_{\omega}(x_0)  = \text{k}V_{\omega} + V_{\textup{tor}, N}\quad\text{and}\quad V^{\vee}_{\omega}(x_0)  = \text{k}V_{\omega} + V_{\textup{tor}, N^{\vee}}.\]

We call the grid class $\class{x_0}$ \emph{ready for higher work} if, for every positive integer $k \le \log^{(3)} H$,
there is a choice of set of prefix indices $S_{\text{pre}}$ of cardinality $k$ such that we have the following:

\begin{enumerate}
\item We have $|S| \le\left(\log^{(2)} H^2 \right)^2$.
\item For each $s \in S$, either $X_s$ is a singleton, or
\[\#X_s\big/H_s \ge \exp^{(3)}\left(\tfrac{2}{7} \log^{(3)} H\right)^{-1} \quad\text{and}\quad H_s \ge \exp^{(3)}\left(\tfrac{1}{3} \log^{(3)} H\right).\]
\item We have
\begin{equation}
\label{eq:gridclass_big_enough}
\# \class{x_0}\big/ \# X \ge \exp^{(3)}\left(\tfrac{1}{4} \log^{(3)} H\right)^{-1}\,\,\text{ and}\quad r_{\omega}(N^{\chi(x_0)}) \le \left(\log^{(3)} H^2\right)^{1/4}.
\end{equation}
\item Define  $\text{k}V_{\omega}$ and $\text{k}V_{\omega}^{\vee}$ from the prefix indices $S_{\text{pre}}$ as above. Suppose that $N$ does not have alternating structure. Then, given any nonzero vectors $v_a \in \text{k}V_{\omega}$ and $v_b \in \text{k}V_{\omega}^{\vee}$ and codimension $1$ subspaces 
\[\text{k}_0V_{\omega} \subset \text{k}V_{\omega} \quad\text{and}\quad \text{k}_0V^{\vee}_{\omega} \subset \text{k}V^{\vee}_{\omega}\]
satisfying
\begin{equation}
\label{eq:ram_vector}
\text{k}V_{\omega} = \text{k}_0V_{\omega} + \langle v_a\rangle \quad\text{and}\quad \text{k}V^{\vee}_{\omega} = \text{k}_0V^{\vee}_{\omega} + \langle v_b\rangle,
\end{equation}
there are potential a/b indices $\saaa, \sbbb$ such that
\[\pi_{\saaa}(\text{k}_0V_{\omega}) = \pi_{\sbbb}(\text{k}_0V_{\omega}) = \pi_{\saaa}(\text{k}_0V^{\vee}_{\omega}) = \pi_{\sbbb}(\text{k}_0V^{\vee}_{\omega}) = \pi_{\saaa}(v_b) = \pi_{\sbbb}(v_a) = 0\]
and so, with respect to the natural pairing 
\[N[\omega](-1) \times N^{\vee}[\omega](-1) \xrightarrow{\quad} \tfrac{1}{\ell}\Z/\Z(-1),\]
we have
\[\pi_{\saaa}(v_a) \cdot \pi_{\sbbb}(v_b) \ne 0.\]
\item Taking $\text{k}V_{\omega}$ as before, suppose $N$ has alternating structure. Then, given any subspace $\langle v_a, v_b \rangle$ of $\text{k}V_{\omega}$ of dimension exactly $2$, and given a complement $V_0$ of $\langle v_a, v_b \rangle$ in $\text{k}V_{\omega}$, if we take
\[\text{k}_0V_{\omega} = V_0 + \langle v_b \rangle \quad\text{and}\quad \text{k}_0V^{\vee}_{\omega} = V_0 + \langle v_a \rangle ,\]
there are potential a/b indices $\saaa, \sbbb$ so
\[\pi_{\saaa}(\text{k}_0V_{\omega}) = \pi_{\sbbb}(\text{k}_0V^{\vee}_{\omega})  =0\]
and so
\[\pi_{\saaa}(v_a) \cdot \pi_{\sbbb}(v_b) \ne 0.\]
\item If $N$ has alternating structure, we have $\#\FFF = 2$, and the Poonen--Stoll class associated to $\nu$ \cite[Definition 1.6]{MS21} is $0$.
\end{enumerate}

\end{defn}

\section{Bilinear character sums}
\label{sec:bilinear}
In this section, we give a general bilinear character sum estimate. Our starting point is the following result of Jutila \cite[Lemma 3]{Juti75}: there is some absolute constant $C > 0$ so that, for any $N_1, N_2 \ge 3$, and for any sequence of complex coefficients $a_d$ indexed by integers of magnitude at most one, we have
\begin{equation}
\label{eq:Juti}
 \sum_{\substack{0 < e < N_1\\e \text{ odd, squarefree}}} \left|  \sum_{\substack{|d| < N_2\\ d \text{ squarefree}}} a_d \left(\frac{d}{e}\right)\right|\,\, \le \,\,C \cdot \left(N_1 \cdot N_2^{\frac{1}{2}} \,+\, N_1^{\frac{3}{4}} \cdot N_2 \cdot \left(\log N_2\right)^3\right),
\end{equation}
where $\left(\frac{d}{e}\right)$ denotes the Jacobi symbol, the bimultiplicative generalization of the standard Legendre symbol. This bilinear estimate fits into the theory of large sieve inequalities, with the standard reference being \cite[Chapter 7]{Iwan04}. 

Jutila's result has been sharpened and generalized substantially since it first appeared. Thanks to work of Heath-Brown \cite{HB95}, the exponents on $N_1$ and $N_2$ on the right side of \eqref{eq:Juti} can be lowered to within $\epsilon$ of the optimal values. Bilinear estimates of more general characters have also been found. Goldmakher and Louvel \cite{Gold13} directly extended Heath-Brown's work to quadratic Hecke families, which are certain collections of order-two Hecke characters defined over a general number field $F$. Follow-up work generalized this to higher-order characters \cite{Blom14}.

In parallel, Friedlander et al. found that estimates of bilinear sums of characters over number fields could be combined with estimates of short character sums to find the spin distribution of prime ideals in that number field  \cite[Proposition 5.1]{FIMR13}. As part of this program, several bilinear character sum estimates have been derived over the last few years, with a particularly streamlined form of the argument given in \cite[Proposition 3.6]{KoMi18}.

The result we give in this section is an adaptation of the argument in \cite{FIMR13} to our more general framework. Usually, such estimates do not make the dependence on the underlying fields (in our case, the field extension $K/F$) explicit. If we only needed this result for the tuple fixed in Definition \ref{defn:Sel}, we could have followed this trend, avoiding the work in Section \ref{ssec:Weiss}. But a key step in our proof of Theorem \ref{thm:higher_smallgrid} is Corollary \ref{cor:an}, a corollary of our bilinear character sum results that requires us to change the starting tuple.

This result may have applications outside its uses between this paper and its sequel \cite{Smi22b}, so we have included a more self-contained version of this result in Section \ref{ssec:Jutila}.

\begin{notat}
\label{notat:deg}
Given a number field $L$, we will denote the degree of $L$ by $n_L$ and the magnitude of its discriminant by $\Delta_L$. Given an ideal $\mfa$ of $L$, we will denote the rational norm of $\mfa$ by $N_L(\mfa)$.

We will use $n_{K/F}$ as shorthand for $n_K/n_F$.
\end{notat}

\begin{thm}
\label{thm:bilin_symb}
There is an absolute $C > 0$ so we have the following

Choose an unpacked starting tuple $(K/F, \Vplac_0, e_0)$ and some associated ramification section, and choose classes $\class{\ovp_0}$ and $\class{\ovq_0}$. We take $X_1$ to be a finite subset of $\class{\ovp_0}$ and $X_2$ to be a finite subset of $\class{\ovq_0}$. We assume that no two primes $\ovp, \ovp'$ in $X_{1} \cup X_2$ are over the same prime of $K(\Vplac_0)$.

For $i = 1, 2$, take 
\[N_i = \max_{\ovp \in X_i} N_F(\ovp \cap F).\]
Then, for any $s$ in $\symb{\class{\ovp_0}}{\class{\ovq_0}}$,
\begin{equation}
\label{eq:bilin_symb}
\sum_{\ovp \in X_1} \left| \#\{\ovq \in X_2\,:\,\, \symb{\ovp}{\ovq} = s\} \,-\, \frac{\#X_2}{\#\symb{\class{\ovp_0}}{\class{\ovq_0}}}\right| \le  A \cdot N_1 \cdot N_2 \cdot \left(N_1^{-\alpha} + N_2 ^{-\alpha}\right),
\end{equation}
where we have taken
\[A =  \left( e_0^{|\Vplac_0|} \cdot \log 2 \Delta_K\right)^{C \cdot n_K} \cdot \Delta_K^C\]
and
\[\alpha = \begin{cases}  1/(3n_{K/F} + 2) &\text{ if } \,n_{K/F} \text{ is even, and} \\ 1/(3n_{K/F} + 3) &\text{ if } \,n_{K/F} \text{ is odd.}\end{cases}\]
\end{thm}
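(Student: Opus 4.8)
The plan is to adapt the bilinear character sum method of Friedlander--Iwaniec--Mazur--Rudnick \cite{FIMR13}, in the streamlined form of \cite{KoMi18}, while keeping all dependence on the starting tuple $(K/F, \Vplac_0, e_0)$ explicit. First I would reduce \eqref{eq:bilin_symb} to a single-character sum. By Proposition~\ref{prop:chinese} and Notation~\ref{notat:symb_set}, the function $\symb{\ovp}{\ovq}$ is determined by the tuple $\big(\symb{\ovp}{\ovq}(\tau)\big)_{\tau \in B} \in \prod_{\tau \in B} \mu_{m_\tau}$, a finite abelian group of order $\#\symb{\class{\ovp_0}}{\class{\ovq_0}} = \prod_\tau m_\tau \le e_0^{\, n_{K/F}}$. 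Detecting the condition $\symb{\ovp}{\ovq} = s$ with the characters of this group and peeling off the contribution of the trivial character, which produces exactly the main term $\#X_2/\#\symb{\class{\ovp_0}}{\class{\ovq_0}}$, the left side of \eqref{eq:bilin_symb} is at most $\big(\#\symb{\class{\ovp_0}}{\class{\ovq_0}}\big) \cdot \max_{\psi \ne 1} \sum_{\ovp \in X_1}\big|\sum_{\ovq \in X_2} \psi(\symb{\ovp}{\ovq})\big|$, with the leading factor absorbed into $A$.

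Next I would linearize each inner sum. Since $\ovp \cap F \ne \ovq \cap F$ and $\Vplac_0$ contains the primes over $e_0$, each factor $\symb{\ovp}{\ovq}(\tau) = (\alpha_\ovp, \tau\alpha_\ovq)_{m_\tau, \mfp_\tau}$ is a tame Hilbert symbol at $\mfp_\tau = \ovp \cap L_\tau$, at which $\tau\alpha_\ovq$ is a unit modulo $e_0$-th powers, so it is the $m_\tau$-th power residue symbol of $\tau\alpha_\ovq$ modulo $\mfp_\tau$ up to the conventions of \cite[Section V.3]{Neuk99}. Applying the product formula (Hilbert reciprocity) in $L_\tau$, and observing that the only other places that contribute are the place over $\tau\ovq$, the places over $\Vplac_0$, and the archimedean places, I would rewrite this as the power residue symbol of the fixed element $\alpha_\ovp$ modulo the prime $\tau\ovq \cap L_\tau$, times a factor supported on $\Vplac_0$ and $\infty$. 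The compatibility built into an assignment of approximate generators (Notation~\ref{notat:approx_gen_assign}) shows that factor is constant as $\ovq$ runs over $X_2 \subseteq \class{\ovq_0}$ and $\ovp$ over $X_1 \subseteq \class{\ovp_0}$, so it may be pulled out of the sum. This yields an identity $\psi(\symb{\ovp}{\ovq}) = c(\psi) \cdot \eta_\ovp(\ovq)$ in which $\eta_\ovp$ is a Hecke character (a product of such power residue symbols over $\tau \in B$) whose conductor divides $(\alpha_\ovp)$ times a fixed ideal supported on $\Vplac_0$ and on primes over $e_0$, hence of norm $\ll e_0^{O(|\Vplac_0|)} \cdot N_F(\ovp \cap F)^{n_{K/F}}$; by symmetry $\psi(\symb{\ovp}{\ovq}) = c'(\psi)\cdot\eta'_\ovq(\ovp)$ for a Hecke character $\eta'_\ovq$ of conductor norm $\ll e_0^{O(|\Vplac_0|)} N_F(\ovq\cap F)^{n_{K/F}}$.

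Now comes the analytic core. Treating the case $N_1 \ge N_2$ (the case $N_2 > N_1$ being handled symmetrically: move the absolute values outward using $\sum_{\ovp}|\sum_\ovq c_{\ovp\ovq}| = \sum_\ovq \sum_\ovp \varepsilon_\ovp c_{\ovp\ovq}$ for unimodular $\varepsilon_\ovp$, then Cauchy--Schwarz in $\ovq$), I would bound
\[
\Big(\sum_{\ovp \in X_1}\Big|\sum_{\ovq \in X_2}\psi(\symb{\ovp}{\ovq})\Big|\Big)^2 \le \#X_1 \cdot \sum_{N_F(\ovp \cap F) \le N_1}\Big|\sum_{\ovq \in X_2}\eta_\ovp(\ovq)\Big|^2,
\]
expand the square, and split according to whether $\ovq_1 = \ovq_2$. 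The hypothesis that no two primes of $X_1 \cup X_2$ lie over the same prime of $K(\Vplac_0)$ guarantees that for $\ovq_1 \ne \ovq_2$ the function $\ovp \mapsto \eta_\ovp(\ovq_1)\overline{\eta_\ovp(\ovq_2)}$, relinearized in $\ovp$ by the reciprocity of the previous step, is a nontrivial Hecke character of one of the fields $L_\tau \subseteq K$ of conductor norm $\ll e_0^{O(|\Vplac_0|)} N_2^{O(n_{K/F})}$, while the diagonal term contributes at most $\#X_2 \cdot \#\{\ovp : N_F(\ovp \cap F) \le N_1\} \ll N_1 N_2$. Bounding the off-diagonal inner sums requires an effective P\'olya--Vinogradov estimate for Hecke characters over a number field of bounded degree, with polynomial control on the discriminant: this is the content of Section~\ref{ssec:Weiss}. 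Feeding this in, summing over $\ovq_1, \ovq_2$, and optimizing the cutoff yields \eqref{eq:bilin_symb}; the split between even and odd $n_{K/F}$ in the exponent $\alpha$ enters only at the final step, in the fundamental-domain / lattice-point count converting a bound over ideals of $L_\tau$ into one over primes of $F$ of bounded norm, which can be made slightly sharper when $n_{K/F}$ is even.

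The main obstacle is the uniformity in the third step. One needs the effective P\'olya--Vinogradov bound (equivalently, a suitable convexity or zero-free-region input for Hecke $L$-functions) over number fields of degree at most $n_K$ with \emph{fully explicit} polynomial dependence on the discriminant, and one must carefully track how the conductors and the varying compositum fields $L_\tau$ depend on $(K/F, \Vplac_0, e_0)$ --- bounding $n_{L_\tau}$ and $\Delta_{L_\tau}$ in terms of $n_K$ and $\Delta_K$, and absorbing the count of characters together with the class-constant fudge factors into $A = \big(e_0^{|\Vplac_0|}\log 2\Delta_K\big)^{C n_K}\Delta_K^C$. I expect the reciprocity bookkeeping of the second step and the explicit analytic input of Section~\ref{ssec:Weiss} to be where essentially all the effort lies; the combinatorial skeleton in the first and third steps is routine once those are available.
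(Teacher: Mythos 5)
Your skeleton---detecting the symbol value by characters of $\prod_\tau \mu_{m_\tau}$, linearizing via Hilbert reciprocity and the modified Dirichlet characters of Example~\ref{ex:dirich}, then bounding the resulting bilinear sum using Weiss's explicit smoothed character-sum estimates with polynomial discriminant dependence---is the same as the paper's, and your emphasis on tracking $n_{L_\tau}$, $\Delta_{L_\tau}$, conductors, and class counts through the fudge factor $A$ is exactly the right thing to worry about. Sections~\ref{ssec:Weiss} and~\ref{ssec:bilin_proof} do essentially what you describe. There is, however, one genuine gap that kills the argument as written.

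The gap is the exponent in the H\"older step: you use Cauchy--Schwarz, which is $t=2$, but the paper must take a $t$-th moment with $t = \big\lceil \tfrac{3}{2} n_{K/F} + 1 \big\rceil$. Concretely, after Cauchy--Schwarz in $\ovp$, the off-diagonal term contributes (via the Weiss bound, whose saving is only a $3/4$-power of the conductor) roughly $N_2^{\,2+\frac{3}{2}n_{K/F}}\Delta^{3/4}$, and the final bound has shape
\[
N_1 N_2\Big(N_1^{-1/4} + N_1^{\,\frac{3}{8}n_{K/F}}\, N_2^{-1/2}\Big).
\]
In the balanced range $N_1 \approx N_2$, the second term is $N^{\frac{3}{8}n_{K/F}-\frac{1}{2}}$, which already fails to be a power saving as soon as $n_{K/F} \ge 2$, and even for $n_{K/F}=1$ yields $\alpha = 1/8$, which is strictly worse than the claimed $\alpha = 1/6$. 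Splitting into $N_1 \ge N_2$ and $N_2 > N_1$ does not rescue this: the problem occurs precisely in the near-diagonal range where the two parameters are comparable, which neither case-split avoids. The fix is the one the paper uses: raise the $\ovp$-sum to the $t$-th power by H\"older, getting $S_1 \le (nN_1)^{(t-1)/t} S_t^{1/t}$, expand the $t$-th moment as a $t$-fold sum over $(\ovq_1,\dots,\ovq_t)$, fold the product of symbols into a single Dirichlet character evaluated at $\ovq_1\cdots\ovq_t \cap L$, apply Cauchy--Schwarz once in the $t$-tuple, and then feed the result into Proposition~\ref{prop:Thorn}. Optimizing $t$ produces the stated $\alpha$. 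This also corrects your explanation of the parity split in $\alpha$: it does not come from a lattice-point or fundamental-domain count, but simply from the ceiling $t = \lceil \tfrac{3}{2}n_{K/F}+1 \rceil$, which is $\tfrac{3}{2}n_{K/F}+1$ for even $n_{K/F}$ (giving $\alpha = 1/(2t) = 1/(3n_{K/F}+2)$) and $\tfrac{3n_{K/F}+3}{2}$ for odd $n_{K/F}$ (giving $\alpha = 1/(3n_{K/F}+3)$).

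Two smaller points. First, your displayed Cauchy--Schwarz step completes the $\ovp$-sum over all primes of norm at most $N_1$, but $\eta_\ovp$ is only defined via the approximate generator $\alpha_\ovp$ when $\ovp$ lies in $\class{\ovp_0}$; you should either keep the restriction $\ovp \in \class{\ovp_0}$ or extend $\eta_\ovp$ arbitrarily to other primes without affecting positivity. Second, the off-diagonal nondegeneracy you invoke (that $\ovq_1 \ne \ovq_2$ forces a nontrivial character) is exactly the observation at the end of Example~\ref{ex:dirich}, that $\chi_{\ovq}\overline{\chi_{\ovq'}}$ is a modified Dirichlet character modulo $\tau(\ovq\ovq') \cap E$; that is what the hypothesis about primes of $K(\Vplac_0)$ buys, so your instinct there is right.
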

Sections \ref{ssec:Weiss} and \ref{ssec:bilin_proof} constitute the proof of this result.

The exponent $\alpha$ in this result is  unoptimized, with Jutila's work showing the $\alpha = 1/6$ in the case $K = F = \QQ$ is improvable to any real number less than $1/4$. We would be interested in further optimizations to this theorem, but we have no need for them in our work.

\subsection{An application of Weiss's work}
\label{ssec:Weiss}
The proof of Theorem \ref{thm:bilin_symb} begins with the smoothed character sum estimates proved by Weiss in \cite{Weiss83}. We would like to thank  Jesse Thorner for pointing out the relevance of this work.

 We begin by recounting some of Weiss's notation.
\begin{defn}[\cite{Weiss83}]
Given a number field $L$ and a nonzero integral ideal $\mfb$ of $L$, take $I(\mfb)$ to be the group of fractional ideals of $L$ coprime to $\mfb$ and take $P(\mfb)$ to be the set of principal ideals generated by an element in $L^{\times}$ that is positive at all real places and equal to $1$ mod $\mfb$. A Dirichlet character mod $\mfb$ will then be a homomorphism from $I(\mfb)/P(\mfb)$ to $\mathbb{C}^{\times}$. 

Given a Dirichlet character $\chi$ mod $\mfb$, we extend $\chi$ to a homomorphism from $I(1)$ by setting $\chi(\mfp) = 1$ for each $\mfp$ dividing $\mfb$. This contrasts with the normal convention, so we call these maps modified Dirichlet characters.
\end{defn}

\begin{ex}
\label{ex:dirich}
Choose an assignment of approximate generators $(\alpha_{\ovp})_{\ovp}$ for $(K/F, \Vplac_0, \FFF)$. Choose a prime $\ovq$ of $\ovQQ$ not over a place in $\Vplac_0$, and choose $\tau  \in G_F$. Choose some class $\class{\ovp_0}$ of primes in $\ovQQ$.

Take $E = E(\ovp_0)$ and $E(\ovq)$ as in Notation \ref{notat:approx_gen_assign}, and take $L_{\tau} = E(\ovp_0) \cdot \tau E(\ovq)$. Then there is a Dirichlet character $\chi_{0, \ovq}$ mod $\mfb \cdot (\ovq \cap L_{\tau})$  over $L_{\tau}$ so
\[\symb{\ovp}{\ovq}(\tau) = \chi_{0, \ovq}(\ovp \cap L_{\tau})\quad\text{for all }\, \ovp \in \class{\ovp_0}\text{ satisfying }\ovp \cap K \ne \tau \ovq \cap K,\]
where $\mfb$ is a product of primes over $\Vplac_0$. Applying the transfer map from $G_E$ to $G_{L_{\tau}}^{\text{ab}}$, we find that there is a modified Dirichlet character $\chi_{\ovq}$ mod $\mfb' \cdot (\tau\ovq \cap E)$ over $E$ so
\[\symb{\ovp}{\ovq}(\tau) = \chi_{0, \ovq}(\ovp \cap L_{\tau})\quad\text{for all }\, \ovp \in \class{\ovp_0}\text{ satisfying }\ovp \cap K \ne \tau \ovq \cap K,\]
where $\mfb'$ is a product of primes over $\Vplac_0$.

Furthermore, given $\ovq'$ in $\class{\ovq}$, if we construct $\chi_{\ovq'}$ the same way, we find that $\chi_{\ovq}\cdot \overline{\chi_{\ovq'}}$ is a modified Dirichlet character mod $\tau(\ovq \ovq') \cap E$. This amounts to the observation that the character $\chi_{\ovq}\cdot\overline{\chi_{\ovq'}}$ corresponds to a Galois extension of $E$ that is unramified over the primes in $\Vplac_0$.
\end{ex}

Theorem \ref{thm:bilin} will follow as a consequence of the following proposition.
\begin{prop}
\label{prop:Thorn}
There is some absolute $C > 0$ so we have the following:

Take $L$ to be a number field of degree $n_L$ and of discriminant with magnitude $\Delta_L$. Take $F$ to be a subfield of $L$ of degree $n_F$. For each prime $\mfp$ of $F$ , choose a modified Dirichlet character $\chi_{\mfp}$ on $L$ and a complex constant $c_{\mfp}$ of magnitude at most $1$. Given any pair of distinct primes $\mfp, \mfp'$ of $F$ for which $c_{\mfp}$ and $c_{\mfp'}$ are both nonzero, we assume $\chi_{\mfp} \cdot \overline{\chi_{\mfp'}}$ is a nontrivial modified Dirichlet character mod $\mfp \mfp' \cdot \mathcal{O}_L$ over $L$.

 Then, for $N_1, N_2 > 1$, we have
\[\sum_{N_1 \le N_L(\mfa)  \le 2N_1} \left| \sum_{N_2 \le N_F(\mfp) \le 2N_2} c_{\mfp} \cdot \chi_{\mfp}(\mfa)\right|^2 \le  N_1 \cdot N_2 \cdot (C \cdot \log 2\Delta_L)^{n_L}  \,\,+\,\, C^{\,n_L} \cdot \Delta_L^{3/4} \cdot N_2^{2 + \frac{3n_L}{2n_F}}.\]
Here, the inner sum is over primes of $F$, and the outer sum is over integral ideals of $L$.
\end{prop}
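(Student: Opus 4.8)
## Proof proposal for Proposition \ref{prop:Thorn}

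The plan is to expand the square, swap the order of summation, and reduce the problem to a character-sum estimate over the field $L$ to which one can apply Weiss's work. Expanding $|\sum_{\mfp} c_{\mfp}\chi_{\mfp}(\mfa)|^2$ gives a double sum over pairs $(\mfp, \mfp')$ of primes of $F$ in the dyadic range $[N_2, 2N_2]$; after interchanging with the sum over $\mfa$, the off-diagonal terms contain an inner sum $\sum_{N_1 \le N_L(\mfa) \le 2N_1} (\chi_{\mfp}\overline{\chi_{\mfp'}})(\mfa)$ of the modified Dirichlet character $\psi_{\mfp,\mfp'} := \chi_{\mfp}\overline{\chi_{\mfp'}}$ over $L$. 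By hypothesis this character is nontrivial with conductor dividing $\mfp\mfp'\mathcal{O}_L$, so one expects strong cancellation. The diagonal $\mfp = \mfp'$ contributes at most $\#\{\mfa: N_L(\mfa)\le 2N_1\}\cdot \#\{\mfp: N_F(\mfp)\le 2N_2\}$, which by the Landau prime ideal theorem and a standard bound on the ideal-counting function is $\ll N_1 \cdot N_2 \cdot (C\log 2\Delta_L)^{n_L}$ — this is exactly the first term on the right-hand side, so no saving is needed there.

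For the off-diagonal terms, I would invoke Weiss's effective bound on character sums (the smoothed prime-ideal / character sum estimates of \cite{Weiss83}) applied to the nontrivial character $\psi_{\mfp,\mfp'}$ over $L$: this provides a power-saving in $N_1$ relative to the trivial bound, with the dependence on the conductor and on $\Delta_L$ made fully explicit. The conductor of $\psi_{\mfp,\mfp'}$ divides $\mfp\mfp'\mathcal{O}_L$, hence has norm $\ll N_2^{2n_L/n_F}$; feeding this into Weiss's estimate, summing the resulting bound over the $\ll N_2^{2}$ pairs $(\mfp,\mfp')$, and optimizing, produces a term of the shape $C^{n_L}\Delta_L^{3/4} N_2^{2 + 3n_L/(2n_F)}$, matching the second term in the statement. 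Concretely, the trivial estimate gives $N_1$ per pair, but Weiss's bound replaces $N_1$ by something like $\Delta_L^{3/4}\cdot (\text{conductor norm})^{1/2}\cdot N_1^{0}$ up to polynomial-in-$\log$ factors once $N_1$ is large compared to a power of the conductor times $\Delta_L$; combining with the $\ll N_2^2$ count of pairs and the conductor bound gives the claimed power of $N_2$, while a trivial bound handles the complementary range of small $N_1$.

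The main obstacle — and the reason Section \ref{ssec:Weiss} exists — is making Weiss's estimate genuinely uniform and explicit in both the field $L$ (its degree $n_L$ and discriminant $\Delta_L$) and the modulus of the character, rather than with implied constants depending on $L$. Weiss's paper is designed exactly for this kind of uniformity, but one must carefully track how the error terms degrade as the conductor grows relative to $N_1$ and $\Delta_L$, and verify that the zero-free region / explicit-formula input behaves well; a possible Siegel-type exceptional zero would ordinarily be the sticking point, but since we only need an $L^2$ (second-moment) bound summed over many characters, the contribution of any single exceptional character is absorbed into the diagonal-type term, so no GRH or Siegel-zero hypothesis is needed. The remaining work is bookkeeping: choosing the cutoff between the "Weiss range" and the "trivial range" for $N_1$, handling the primes $\mfp$ dividing the modulus (where the modified convention sets $\chi_\mfp=1$, contributing negligibly many terms), and collecting the constants into the stated form $C^{n_L}\Delta_L^{3/4}$.
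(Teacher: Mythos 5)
Your proposal takes essentially the same route as the paper: open the square, separate the diagonal from the off-diagonal, estimate the diagonal trivially via the ideal-counting function, and beat the trivial bound on the off-diagonal with Weiss's uniform character-sum estimates, finally summing over the $\ll N_2^2$ pairs. Two points are worth flagging. First, your claimed shape for Weiss's bound, $\Delta_L^{3/4}\cdot(\mathrm{conductor\ norm})^{1/2}$, does not reproduce the stated term: with conductor norm $\ll N_2^{2n_L/n_F}$ and $\ll N_2^2$ pairs you would get $\Delta_L^{3/4}N_2^{2+n_L/n_F}$, not $\Delta_L^{3/4}N_2^{2+3n_L/(2n_F)}$. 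The correct input (\cite[Lemma 3.4]{Weiss83}) has exponent $3/4$ on the conductor norm, i.e.\ $C_0^{n_L}\cdot 2^{\omega(\mfb)}\cdot\Delta_L^{3/4}\cdot N_L(\mfb)^{3/4}$, which is exactly what the power $3n_L/(2n_F)$ requires. Second, your paragraph about zero-free regions and Siegel zeros is a red herring: Weiss's Lemma 3.4 is a smoothed P\'olya--Vinogradov-type inequality for Hecke characters, proved by Poisson summation / the functional equation, and it needs no zero-density or exceptional-zero input at all. The paper's proof passes to a smoothed sum via a bump function $h_L$ from \cite[Lemma 3.4]{Weiss83} that is bounded below on $[1/2,1]$, applies Weiss's nontrivial-character bound to the part of the $\mfa$-sum coprime to $\mfp_1\mfp_2\mathcal{O}_L$, and handles the non-coprime remainder and the diagonal with the trivial-character estimate together with Louboutin's bound on $\kappa(L)$. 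You should also explicitly address the terms where $\mfa$ fails to be coprime to $\mfp_1\mfp_2\mathcal{O}_L$, since the Dirichlet characters (as opposed to the modified ones) vanish there; the paper bounds that contribution by rescaling $y \mapsto y/N_2$ in the trivial-character estimate.
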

\begin{proof}
Using the central limit theorem, we find that the smoothing function $H_{2n}$ appearing in \cite[Lemma 3.4]{Weiss83} is at least $c \cdot n^{-1/2}$ on the interval $[1/2, 1]$, where $c > 0$ is some constant. Applying this lemma, we find that there is some absolute $C_0 > 0$ and some function
\[h_L: \mathbb{R}^{\ge 0} \rightarrow \mathbb{R}^{\ge 0}\]
that is at least one on $[1/2, 1]$ so that, for any $y \ge 1$, any squarefree integral ideal $\mfb$ of $L$, and any nontrivial Dirichlet character $\chi$ defined mod $\mfb$, we have
\begin{equation}
\label{eq:Weiss_nontrivial}
\sum_{\mfa}  h_L\left(\frac{y}{N_L(\mfa)}\right) \chi(\mfa)\, \le\, C_0^{\,n_L} \cdot 2^{\omega(\mfb)}\cdot \Delta_L^{3/4} \cdot N_L(\mfb)^{3/4},
\end{equation}
where the sum is taken over all integral ideals of $L$, and where $\omega(\mfb)$ is defined as the number of distinct primes of $L$ dividing $\mfb$. Here, the Dirichlet characters are unmodified, so they equal $0$ on integral ideals that are not coprime to $\mfb$.

For the trivial character, \cite[Lemma 3.4]{Weiss83} gives the estimate
\begin{equation}
\label{eq:Weiss_trivial}
\sum_{\mfa} h_L\left(\frac{y}{N_L(\mfa)}\right) \,\le\, C_0^{\,n_L} \cdot \kappa(L) \cdot y \,+\, C_0^{\,n_L} \cdot \Delta_L^{3/4},
\end{equation}
where $\kappa(L)$ is defined to be the residue of the Dedekind zeta function for $L$ at $s = 1$. From \cite{Loub00}, we have the bound 
\[\kappa(L) < (C_1 \cdot \log 2\Delta_L)^{n_L}\]
for some absolute $C_1 > 0$. Now, we can bound the expression of the proposition by
\begin{align}
&\sum_{\mfa} h_L\left(\frac{N_1}{N_L(\mfa)}\right) \left| \sum_{N_2 \le N_F(\mfp) \le 2N_2} c_{\mfp} \cdot \chi_{\mfp}(\mfa)\right|^2  \label{eq:Weiss_smoothed_quad}\\
&\qquad\le \sum_{\substack{N_2 \le N_F(\mfp_1), N_F(\mfp_2) \le 2N_2 \\ c_{\mfp} \cdot c_{\mfp'} \ne 0}} \left|\sum_{\mfa} h_L\left(\frac{N_1}{N_L(\mfa)}\right)  \chi_{\mfp_1}(\mfa) \cdot \overline{\chi_{\mfp_2}(\mfa)}\right| \nonumber
\end{align}
We handle the diagonal terms of this sum using \eqref{eq:Weiss_trivial}. Given an off diagonal term with corresponding primes $\mfp_1 \ne \mfp_2$, we split the sum over $\mfa$ based on whether $\mfa$ is coprime to $\mfp_1\mfp_2 \cdot \mathcal{O}_L$. For the sum over $\mfa$ coprime to this ideal, we apply \eqref{eq:Weiss_nontrivial}. For the remainder, we use the identity
\[\sum_{\mfa \text{ not coprime to } \mfp_1\mfp_2}  h_L\left(\frac{N_1}{N_L(\mfa)}\right)  \, \le\, 2n_L/n_F \cdot \sum_{\mfa} h_L\left(\frac{N_1/N_2}{N_L(\mfa)}\right)\]
and \eqref{eq:Weiss_trivial}. Summing these terms, we find that \eqref{eq:Weiss_smoothed_quad} is at most
\[ N_2 \cdot N_1 \cdot (C \cdot \log 2\Delta_L)^{n_L}  \,\,+\,\, C^{\,n_L} \cdot \Delta_L^{3/4} \cdot N_2^{2 + \frac{3n_L}{2n_F}},\]
where $C > 0$ is some absolute constant. This gives the proposition.
\end{proof}

\subsection{The proof of Theorem \ref{thm:bilin_symb}}
\label{ssec:bilin_proof}
We will need the following simple degree estimate for extensions of local fields.

\begin{prop}
\label{prop:ram_exp_m}
Take $p$ to be a prime number and take $K_p$ to be a finite extension of $\QQ_p$. Take $q$ to be the cardinality of the residue field of $K_p$. Take $e_1$ to be a positive integer indivisible by $p$, take $s$ to be a nonnegative integer, and define $e_0$ to be $e_1 p^s$. Then, if $L_p/K_p$ is an abelian extension of exponent dividing $e_0$, the inertia subgroup of $\textup{Gal}(L_p/K_p)$ has order dividing
\[ e_1 \cdot q^{3s}.\]
\end{prop}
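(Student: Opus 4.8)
The plan is to bound the order of the inertia subgroup of $\Gal(L_p/K_p)$ by separating the tame and wild parts. Write $I \le \Gal(L_p/K_p)$ for the inertia subgroup. Since $\Gal(L_p/K_p)$ is abelian of exponent dividing $e_0 = e_1 p^s$ with $p \nmid e_1$, we may decompose it as a product of its $p$-part and its prime-to-$p$ part, and correspondingly $I = I^{\mathrm{w}} \times I^{t}$ where $I^{\mathrm{w}}$ is the (pro-$p$) wild inertia and $I^{t} \cong I/I^{\mathrm{w}}$ embeds into the tame quotient. The tame inertia quotient of $G_{K_p}$ is procyclic, and the piece of exponent dividing $e_0$ that is prime to $p$ has order dividing $e_1$ (in fact it is cyclic of order dividing $q - 1$ intersected with the exponent condition, but $e_1$ is the bound we need). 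So $|I^{t}|$ divides $e_1$.

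The main work is bounding $|I^{\mathrm{w}}|$, the wild part, which has exponent dividing $p^s$. Here I would use the standard structure of the wild inertia group of a local field: the maximal abelian extension of $K_p$ of exponent dividing $p^s$ corresponds via local class field theory to $K_p^\times/(K_p^\times)^{p^s}$, and its inertia subgroup corresponds to the image of $\mathcal{O}_{K_p}^\times$ in this quotient. So it suffices to bound $\#\bigl(\mathcal{O}_{K_p}^\times/(\mathcal{O}_{K_p}^\times)^{p^s}\bigr)$. Writing $\mathcal{O}_{K_p}^\times \cong \mu \times \Z_p^{\,d}$ where $d = [K_p:\QQ_p]$ and $\mu$ is the finite prime-to-$p$ roots of unity times a finite $p$-group $\mu_{p^\infty}(K_p)$, the quotient by $p^s$-th powers has order $p^{sd} \cdot \#\bigl(\mu_{p^\infty}(K_p)/(\mu_{p^\infty}(K_p))^{p^s}\bigr)$. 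The first factor is $p^{sd}$; since $d \le $ something controlled by $q$ — indeed $p^d \le q$ when $K_p/\QQ_p$ is totally ramified, and more generally $p^{[K_p:\QQ_p]}$ relates to $q$ only through the residue degree — I need to be a little careful. The cleanest route: if $q = p^f$ then $[K_p : \QQ_p] = ef$ where $e$ is the absolute ramification index, and $p^{ef} \le (p^f)^e \cdot (\text{stuff})$; but the bound we want is $q^{3s} = p^{3fs}$, so I want $p^{sd} \cdot p^{s \cdot v_p(\#\mu_{p^\infty}(K_p))}$ to divide $p^{3fs}$. Since $d = ef$ and $e \le $ (absolute ram. index, bounded by...) — this is where the factor of $3$ comes from, and I suspect the intended argument instead passes directly to $q$: one shows $\#\bigl(\mathcal{O}_{K_p}^\times/(\mathcal{O}_{K_p}^\times)^{p^s}\bigr)$ divides $q^{3s}$ by bounding $[K_p:\QQ_p] \le$ a multiple of $f$ together with the $p$-torsion in $\mu$.

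The key steps, in order: (1) split $I$ into prime-to-$p$ and $p$-parts using abelianness and $p \nmid e_1$, bounding the former by $e_1$; (2) identify $I^{\mathrm{w}}$ with (a subquotient of) the image of $\mathcal{O}_{K_p}^\times$ in $K_p^\times/(K_p^\times)^{p^s}$ via local class field theory; (3) bound $\#\bigl(\mathcal{O}_{K_p}^\times/(\mathcal{O}_{K_p}^\times)^{p^s}\bigr)$ using the structure $\mathcal{O}_{K_p}^\times \cong (\text{finite}) \times \Z_p^{[K_p:\QQ_p]}$, getting $p^{s[K_p:\QQ_p]} \cdot \#\mu_{p^\infty}(K_p)[p^s]$; (4) convert $[K_p:\QQ_p]$ and the $p$-power roots of unity into powers of $q$, yielding the exponent $3$. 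I expect step (4) — getting the clean exponent $q^{3s}$ rather than a messier bound — to be the main obstacle, and it will hinge on inequalities like $[K_p:\QQ_p] \le$ (something) $\cdot f$ together with Krasner/ramification bounds on $\#\mu_{p^\infty}(K_p)$; one likely uses $\#\mu_{p^\infty}(K_p)$ divides $q \cdot p^{e}$-type quantities. Everything else is routine local class field theory.
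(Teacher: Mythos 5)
Your outline matches the paper's strategy: local class field theory identifies the inertia subgroup with a quotient of $\mathcal{O}_{K_p}^{\times}/(\mathcal{O}_{K_p}^{\times})^{e_0}$, the tame quotient $k_p^{\times}/(k_p^{\times})^{e_0}$ is cyclic of order dividing $e_1$, and the work is in the wild part. But you are right that your step (4) is a genuine gap, not a routine conversion, and it helps to see precisely why it fails. Your step (3) produces the factor $p^{s[K_p:\QQ_p]}$; writing $[K_p:\QQ_p]=ef$ with $e$ the absolute ramification index and $q=p^f$, this is $q^{se}$, and $e$ is not controlled by $f$ in any way. So there is no uniform inequality of the form $p^{s[K_p:\QQ_p]}\mid q^{Cs}$ for a fixed constant $C$, and the hoped-for conversion to powers of $q$ cannot be made from the structure theorem alone.

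The paper does not attempt that conversion. It replaces your step (3) with Hensel's lemma applied to $f(x)=x^{e_0}-a$ with trial root $x=1$, for $a$ a one-unit. Since $f(1)=1-a$ and $f'(1)=e_0=e_1p^s$ with $e_1$ a unit, Hensel's criterion $f(1)\in (f'(1))^2\,\mathfrak{m}$ reads $1-a\in p^{2s}\mathfrak{m}$, which is met as soon as $a\equiv 1\pmod{p^{3s}}$. Hence $1+p^{3s}\mathcal{O}_{K_p}\subseteq(\mathcal{O}_{K_p}^{\times})^{e_0}$, and the wild part of $\mathcal{O}_{K_p}^{\times}/(\mathcal{O}_{K_p}^{\times})^{e_0}$ is a quotient of $(1+\mathfrak{m})/(1+p^{3s}\mathcal{O}_{K_p})$. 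That is the source of the exponent $3$: two factors of $s$ from the square of the derivative in the Hensel criterion, plus one more from rounding $\mathfrak{m}$ up to $p\mathcal{O}_{K_p}$. It is not a Krasner-type bound on $\#\mu_{p^\infty}(K_p)$, as you speculated.

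A caution so you do not overcorrect: the group $(1+\mathfrak{m})/(1+p^{3s}\mathcal{O}_{K_p})$ has order $q^{3se-1}$, so the absolute ramification index $e$ still appears in the exponent even with the paper's argument; the literal bound $q^{3s}$, with $q$ the residue field cardinality, is recovered only when $p$ is unramified in $K_p$. Both your computation and the paper's Hensel argument really give a divisor of the shape $e_1\cdot\#\bigl(\mathcal{O}_{K_p}/p^{3s}\mathcal{O}_{K_p}\bigr)^{\times}$, which is governed by $[K_p:\QQ_p]$ rather than by $f$. So your instinct in step (4) that the passage to $q$ is the real obstacle was correct, and is independent of which route one takes to the wild part.
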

\begin{proof}
From local class field theory, we can  write the inertia subgroup of $L_p/K_p$ as a quotient of 
\[\mathcal{O}_{K_p}^{\times}\big/ (\mathcal{O}_{K_p}^{\times})^{e_0},\]
so we need only prove that $e_1q^{3s}$ is an upper bound on the size of this group. Taking $k_p$ to be the residue field of $K_p$, we know that $k_p^{\times}/(k_p^{\times})^{e_0}$has order dividing $e_1$  since $k_p^{\times}$ is cyclic and of order coprime to $p$. Now suppose $a$ is an element of $\mathcal{O}_{K_p}^{\times}$ that maps to one in $k_p$. Hensel's lemma implies that
\[x^{e_0} - a = 0\]
has a solution for $x$ in $\mathcal{O}_{K_p}$ if
\[a \equiv 1 \,\text{ mod } p^{3s};\]
see \cite[Theorem 7.3]{Eisen95}, for example. This gives the proposition.
\end{proof}
From this result, we find that
\[[K(\Vplac_0): K(\emptyset)] \,\le\, e_0^{n_{K/F} (3n_F +  |\Vplac_0|)}.\]
The degree of $K(\emptyset)$ over $K$ is bounded by the size of the class group of $K$, which is bounded by
\[C \cdot \Delta_K^{1/2} \cdot (\log 2 \Delta_K)^{n_K}\]
for some absolute $C$. This can be found by combining the upper bound of \cite{Loub00} with the lower bounds on regulators in \cite{Fried89}. We then get that
\begin{equation}
\label{eq:KV0_deg}
[K(\Vplac_0): F] \,\le\, e_0^{C n_K}\cdot  \Delta_K^{1/2} \cdot (\log 2 \Delta_K)^{n_K} \cdot e_0^{|\Vplac_0|\cdot n_{K/F}}
\end{equation}
for some absolute $C > 0$.

At this point, take  $(K/F, \Vplac_0, e_0)$ and $[\ovp_0], [\ovq_0]$ as in the theorem statement, and take $B$ as in Proposition \ref{prop:chinese}. We take $S(N_1, N_2)$ to be the maximal value of the left hand side of \eqref{eq:bilin_symb} attained for some pair of subsets $X_1 \subset \class{\ovp_0}$, $X_2 \subset \class{\ovq_0}$ satisfying  
\[ \max_{\ovp \in X_i} N_F(\ovp \cap F) \le N_i\quad\text{and}\quad \min_{\ovp \in X_i} N_F(\ovp \cap F) \ge \tfrac{1}{2}N_i\quad\text{for } i = 1, 2.\]

Given a function $\kappa: B \to \Z$ and $\zeta_0 \in \mu_{e_0}$, we can define a function $a: X_1 \times X_2 \to \mu_{e_0}$ by
\[a(\ovp, \ovq) =  \prod_{\tau \in B} \symb{\ovp}{\ovq}(\tau)^{\kappa(\tau)}.\]
We call this a non-trivial symbol product if $m_{\tau}$ does not divide $\kappa(\tau)$ for some $\tau$ in $B$.

For a positive integer $t$, take  $S_{ t}(N_1, N_2)$ to be the maximal value of an expression
\begin{equation}
\label{eq:S1t_exp}
\sum_{\ovp \in X_1} \left| \sum_{\ovq \in X_2}d_{\ovq} \cdot a(\ovp, \ovq)\right|^t
\end{equation}
attained for some choice of $X_1, X_2$ restricted as before, some choice of coefficients $d_{\ovq}$ indexed by primes in $X_2$ of magnitude $1$, and some choice of a non-trivial symbol product $a$. From basic Fourier analysis, we see that
\[S(N_1, N_2) \le  S_{1}(N_1, N_2).\]
Taking $n$ to be the degree of $K(\Vplac_0)$ over $\QQ$, we see that H{\"o}lder's inequality gives
\begin{equation}
\label{eq:S2S2t}
S_{1}(N_1, N_2)\, \le \,(n N_1)^{\frac{t-1}{t}} \cdot S_{ t}(N_1, N_2)^{\frac{1}{t}}.
\end{equation}
Now, given a positive integer $t$, and given $a, X_1, X_2$ maximizing \eqref{eq:S1t_exp}, we can find coefficients $c_{\ovp}$ indexed by $\ovp$ in $X_1$ of magnitude $1$ so
\[\sum_{\ovp \in X_1} \left| \sum_{\ovq \in X_2}  a(\ovp, \ovq)\right|^t = \sum_{\ovq_1, \dots, \ovq_t \in X_2}\, \sum_{\ovp \in X_1} c_{\ovp} \cdot a(\ovp, \ovq_1) \cdot \dots\cdot a(\ovp, \ovq_t).\]

Take $L = E(\ovq_0)$ as in Notation \ref{notat:approx_gen_assign}. Given $\ovp$ in $X_1$, Example \ref{ex:dirich} shows for each $\ovp \in X_1$ that there is some modified Dirichlet character $\chi_{\ovp}$ over $L$ and some root of unity $c$ so that
\[\chi_{\ovp}(\ovq \cap L) = c \cdot a(\ovp, \ovq)\]
for all $\ovq \in X_2$ so long as $\ovq$ does not divide $\ovp \cap F$. These characters are defined so that, for any $\ovp'$ in $X_1$, the modified Dirichlet character $\chi_{\ovp}\overline{\chi_{\ovp'}}$ is defined mod $(\ovp \ovp' \cap F) \cdot \mathcal{O}_L$.

 With these characters set, we find there are coefficients $c'_{\ovp}$ of magnitude $1$ so that
\[S_{t}(N_1, N_2) - \sum_{\ovq_1, \dots, \ovq_t \in X_2}\, \sum_{\ovp \in X_1} c'_{\ovp} \cdot \chi_{\ovp}(\ovq_1 \cdot \dots \cdot \ovq_t \cap L)\]
has magnitude bounded by $2nt (nN_2)^{t}$, as this expression bounds the number of choices of $(\ovp, \ovq_1, \dots, \ovq_t)$ in $X_1 \times X_2^t$ for which $\ovp \cap F$ equals some $\ovq_i \cap F$.

 Applying H{\"o}lder's inequality again gives
\[S_{t}(N_1, N_2) \le 2nt(nN_2)^{t}  + (nN_2)^{t/2} \cdot \left(\sum_{\ovq_1, \dots, \ovq_t \in X_2} \left|\sum_{\ovp \in X_1} c'_{\ovp}\cdot  \chi_{\ovp}\left(\ovq_1 \cdot \dots \cdot \ovq_t \cap L\right)\right|^2\right)^{1/2}.\]
Note that the number of primes in $X_2$ dividing a given ideal $\ovq_1 \dots \ovq_t \cap L$ is at most $nt$. Using this and Proposition \ref{prop:Thorn} then gives
\[S_{ t}(N_1, N_2) \le (tn)^{C_0t} \cdot \left( (C_0 \log \Delta_K)^{n_K/2}  N_1^{1/2} N_2^t + C_0^{\,n_K} \Delta_K^{3/8} N_1^{1 + \frac{3}{4}n_{K/F}} N_2^{t/2}\right)\]
for some absolute $C_0 > 0$.
With \eqref{eq:S2S2t}, we now see that $S_{1}(N_1, N_2)$ has an upper bound on the order of 
\[N_1N_2\left(N_1^{-\frac{1}{2t}} + N_1^{\frac{3}{4t}n_{K/F}}N_2^{-\frac{1}{2}}\right),\]
for each positive integer $t$. In the case that $N_2 \ge \tfrac{1}{2}N_1$, this inequality is optimized for
\[t = \left\lceil \tfrac{3}{2}n_{K/F} + 1\right\rceil.\]
To handle other cases, we note that our definition of the function $S_{t}$ uses a choice of $(\ovp_0, \ovq_0)$. If we take $S^{\circ}_t$ to be the analogous function defined for $(\ovq_0, \ovp_0)$, we see that reciprocity gives
\[S_1(N_1, N_2) = S^{\circ}_1(N_2, N_1)\quad\text{if } \left[\tfrac{1}{2}N_1,\, N_1\right]\cap \left[\tfrac{1}{2}N_2,\, N_2\right] = \emptyset.\]
In particular, if $N_2 < \tfrac{1}{2}N_1$, the upper bound we have already found for $S_1(N_2, N_1)$ is also an  upper bound for $S_1(N_1, N_2)$. We may finish the proof of the theorem by summing dyadically.
\qed

\subsection{A self-contained result for bilinear character sums}
\label{ssec:Jutila}

With a view to possible applications outside our work, we have decided to include a variant of Theorem \ref{thm:bilin_symb} that avoids the jargon of Section \ref{sec:symbols}. In this version, symbols are replaced by tempered functions, which we define now.

\begin{defn}
\label{defn:bilin}
Choose a Galois extension $K/F$ of number fields, a positive integer $e_0$ so $\mu_{e_0}$ lies in $K$, and a set of places $\Vplac_0$ of $F$ containing all places where $K/F$ ramifies, all primes dividing $e_0$ or the degree of $K$ over $F$, and all archimedean places. For any set of places $\Vplac$ of $F$, take $K(\Vplac)$ to be maximal abelian extension of $K$ of exponent dividing $e_0$ ramified only at places over $\Vplac$. 

Given a prime $\mfp$ of $F$ outside $\Vplac_0$, a \emph{tempered function} at $\mfp$ will be a real-valued class function
\[\phi: \Gal\left(K(\Vplac_0 \cup \{\mfp\})\big/F\right) \rightarrow [-1, 1]\]
so that $\phi$ has zero mean value on every coset of
\[\Gal\left(K(\Vplac_0 \cup \{\mfp\})\big/K(\Vplac_0)\right).\]

Given a tempered function $\phi_{\mfq}$ at $\mfq$ and another prime $\mfp$ of $F$, we define
\[\phi_{\mfq}(\mfp) = \begin{cases}0 &\text{if } \mfq \in \Vplac_0 \cup \{\mfp\} \\ \phi_{\mfq}(\Frob \,\mfp) &\text{otherwise.}\end{cases}\]
\end{defn}

We will keep all notation as in Notation \ref{notat:deg}.

\begin{thm}
\label{thm:bilin}
There is some absolute constant $C > 0$ so we have the following:

Choose $(K/F,\Vplac_0, e_0)$ as in Definition \ref{defn:bilin}. Choose a tempered function $\phi_{\mfp}$ at each prime of $F$ outside $\Vplac_0$.

Then, given $N_1, N_2 > 1$, we have
\begin{equation}
\label{eq:bilin}
\sum_{\substack{ N_F(\mfp) \le N_1}} \left| \sum_{\substack{\mfq \not \in \Vplac_0 \\  N_F(\mfq) \le N_2}} \phi_{\mfq}(\mfp)\right| \le  A \cdot N_1 \cdot N_2 \cdot \left(N_1^{-\alpha} + N_2 ^{-\alpha}\right)
\end{equation}
with
\[A =  \left( e^{|\Vplac_0|} \cdot \Delta_K \right)^{C \cdot n_K \cdot \log e_0}.\]
and
\[\alpha = \begin{cases}  1/(3n_{K/F} + 2) &\text{ if } \,n_{K/F} \text{ is even, and} \\ 1/(3n_{K/F} + 3) &\text{ if } \,n_{K/F} \text{ is odd.}\end{cases}\]
\end{thm}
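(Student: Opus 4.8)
The plan is to deduce Theorem~\ref{thm:bilin} from the proof of Theorem~\ref{thm:bilin_symb} by translating the language of tempered functions into the language of symbols. First I would reduce to a single pair of classes: partition the primes $\mfp$ of $F$ with $N_F(\mfp)\le N_1$ according to their class $\class{\ovp_0}$ with respect to $(K/F,\Vplac_0,e_0)$, and likewise partition the primes $\mfq\notin\Vplac_0$ with $N_F(\mfq)\le N_2$ by their class $\class{\ovq_0}$; since the number of classes is at most $[K(\Vplac_0):F]$, which is bounded by \eqref{eq:KV0_deg}, it suffices to bound each of the $O([K(\Vplac_0):F]^2)$ pieces and absorb the count into $A$. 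Choosing one prime $\ovp$ of $\ovQQ$ above each $\mfp$ and one $\ovq$ above each $\mfq$, and discarding the diagonal $\mfp=\mfq$ (on which $\phi_{\mfq}(\mfp)=0$), the remaining primes automatically lie over distinct primes of $K(\Vplac_0)$, as Theorem~\ref{thm:bilin_symb} requires.

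The second step is the dictionary. Since $\phi_{\mfq}$ is a class function on $\Gal(K(\Vplac_0\cup\{\mfq\})/F)$, the equivalence of conditions (1) and (4) in Proposition~\ref{prop:same_symbs_old} shows that $\phi_{\mfq}(\Frob\,\mfp)$ depends on $\mfp$ in a fixed class $\class{\ovp_0}$ only through the symbol $\symb{\ovp}{\ovq}$; write $\phi_{\mfq}(\mfp)=\widetilde\phi_{\mfq}(\symb{\ovp}{\ovq})$ with $\widetilde\phi_{\mfq}\colon\symb{\class{\ovp_0}}{\class{\ovq_0}}\to[-1,1]$. Using the identification of $\symb{\class{\ovp_0}}{\class{\ovq_0}}$ with the finite abelian group $\prod_{\tau\in B}\mu_{m_\tau}$ coming from Notation~\ref{notat:symb_set} and Proposition~\ref{prop:chinese}, I would Fourier-expand $\widetilde\phi_{\mfq}=\sum_{\chi}\widehat{\phi}_{\mfq}(\chi)\,\chi$, so that each character $\chi$ corresponds to a symbol product $a_\chi(\ovp,\ovq)=\prod_{\tau\in B}\symb{\ovp}{\ovq}(\tau)^{\kappa_\chi(\tau)}$ in the sense of Section~\ref{ssec:bilin_proof}, with $|\widehat{\phi}_{\mfq}(\chi)|\le 1$. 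The key point is that the defining property of a tempered function --- vanishing mean on every coset of $\Gal(K(\Vplac_0\cup\{\mfq\})/K(\Vplac_0))$ --- is exactly the statement that $\widehat\phi_{\mfq}(\chi)=0$ for every $\chi$ corresponding to a \emph{trivial} symbol product (one with $m_\tau\mid\kappa_\chi(\tau)$ for all $\tau$); hence only non-trivial symbol products occur in the expansion.

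The third step is to invoke the bilinear estimate already in place. For each class pair and each non-trivial symbol product $a_\chi$, the argument of Section~\ref{ssec:bilin_proof} (H\"older's inequality together with Proposition~\ref{prop:Thorn}) bounds
\[
\sum_{\mfp}\Big|\sum_{\mfq}\widehat\phi_{\mfq}(\chi)\,a_\chi(\ovp,\ovq)\Big|\le S_1(N_1,N_2),
\]
the quantity $S_1$ being defined there with arbitrary magnitude-$\le 1$ weights $d_{\mfq}$ and an arbitrary non-trivial symbol product. Summing over the boundedly many characters $\chi$ (at most $\#\symb{\class{\ovp_0}}{\class{\ovq_0}}=\prod_{\tau}m_\tau\le e_0^{\,n_{K/F}}$ of them, estimated via Proposition~\ref{prop:ram_exp_m}), over the boundedly many class pairs (via \eqref{eq:KV0_deg}), and over dyadic ranges for $N_1$ and $N_2$ --- the dyadic sums converging geometrically and so losing no logarithm --- yields a bound of the shape $A\cdot N_1N_2(N_1^{-\alpha}+N_2^{-\alpha})$, with all accumulated factors (including the $\Delta_K$-, $|\Vplac_0|$-, and $\kappa$-dependence of the $S_1$ bound) swallowed into $A=(e^{|\Vplac_0|}\Delta_K)^{Cn_K\log e_0}$.

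I expect the main obstacle to be making the dictionary of the second step genuinely precise: one must verify that the symbol is a \emph{complete} invariant for the value of a class function on $\Gal(K(\Vplac_0\cup\{\mfq\})/F)$ restricted to a fixed class (so that $\widetilde\phi_{\mfq}$ is well defined), and that the coset-mean-zero condition corresponds exactly to the vanishing of the trivial-symbol-product coefficients. This hinges on the fact that adjoining ramification at the single prime $\mfq$ enlarges $K(\Vplac_0)$ only by the bounded abelian amount controlled by Proposition~\ref{prop:ram_exp_m}, so that the whole $\Gal(K(\Vplac_0\cup\{\mfq\})/K(\Vplac_0))$-worth of information is detected by the finite symbol group; and the most bookkeeping-heavy point is confirming that the constant produced by assembling all these factors really does fit inside the stated $A$.
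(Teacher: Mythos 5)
Your approach is essentially the paper's: reduce to fixed classes, translate tempered functions into functions of symbols via Proposition~\ref{prop:same_symbs_old}, Fourier-expand into symbol products, and feed these into the bilinear machinery of Section~\ref{ssec:bilin_proof}. The paper's own proof is a compressed version of exactly this plan.

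There is, however, a gap at the very start that your proposal never addresses: the tuple $(K/F,\Vplac_0,e_0)$ in Definition~\ref{defn:bilin} is \emph{not} assumed to be unpacked, and neither symbols nor ramification sections are defined without unpacking (Definition~\ref{defn:symbol} is preceded by ``For the rest of the section, we fix an unpacked starting tuple \dots''). Proposition~\ref{prop:same_symbs_old}, on which your dictionary rests, lives entirely in the unpacked setting. So your second step, ``$\phi_{\mfq}(\Frob\,\mfp)$ depends on $\mfp$ in a fixed class only through the symbol $\symb{\ovp}{\ovq}$,'' is not a statement you can make at this point. The paper's first move is to invoke Proposition~\ref{prop:unpack_it} to enlarge $\Vplac_0$ to an unpacked $\Vplac_0'$. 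This is not merely cosmetic: you must (a) check that inflating each $\phi_{\mfq}$ to a class function on $\Gal(K(\Vplac_0'\cup\{\mfq\})/F)$ still has mean zero on cosets of $\Gal(K(\Vplac_0'\cup\{\mfq\})/K(\Vplac_0'))$ (this works because $K(\Vplac_0')\cap K(\Vplac_0\cup\{\mfq\})=K(\Vplac_0)$, so restriction maps the new cosets onto the old ones with equal fibers); (b) account for the boundedly many $\mfp$ and $\mfq$ newly excluded from the sums; and (c) track how the enlargement --- which adds roughly $n_K + \log_2(\#\Cl K) = O(n_K\log\Delta_K)$ places --- inflates the factor $e_0^{|\Vplac_0'|}$ appearing in the constant of Theorem~\ref{thm:bilin_symb}. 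That last point is precisely why the constant in Theorem~\ref{thm:bilin} carries the extra $\log e_0$ in its exponent, $A = (e^{|\Vplac_0|}\Delta_K)^{Cn_K\log e_0}$, whereas the constant in Theorem~\ref{thm:bilin_symb} does not; your proposal treats the bookkeeping as mere ``assembling of factors'' and never confronts this. Once the unpacking reduction is inserted and its cost is traced into $A$, the rest of your argument (partition by classes via \eqref{eq:KV0_deg}, Fourier-expansion into non-trivial symbol products using the vanishing of coset means, dyadic summation) matches the paper.
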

\begin{proof}
Using Proposition \ref{prop:unpack_it}, we find that it suffices to consider the case where $(K/F, \Vplac_0, e_0)$ is unpacked. Under this assumption, the result follows from the  equivalence of (1) and (4) in Proposition \ref{prop:same_symbs_old}. The restriction of \eqref{eq:bilin_symb} to specific classes of primes is not a problem by \eqref{eq:KV0_deg}, and the restriction of the summand to the function identifying a certain symbol is not a problem by the simple estimate
\[ \#\symb{\class{\ovp_0}}{\class{\ovq_0}} \le e_0^{n_{K/F}}.\]
\end{proof}

\section{Regridding}
\label{sec:regrid}
Theorem \ref{thm:bilin_symb} is the final result from this paper that will be needed for our work on fixed point Selmer groups in \cite{Smi22b}. The remainder of this paper will be devoted to higher Selmer groups, with our main goal being to prove Theorem \ref{thm:main_higher}. We will return to the fixed point Selmer group and the streamlined consequences of Theorem \ref{thm:main_higher} in \cite{Smi22b}.

Before we can prove Theorem \ref{thm:main_higher}, we need to reduce its scope substantially. Our first reduction takes the theorem from a statement about the probability of seeing a sequence of Selmer ranks $r_{\omega^2}, r_{\omega^3}, \dots$ to an equidistribution statement for the Cassels--Tate pairing more in line with Heuristic \ref{heur:random_matrix}.

Throughout this section, we fix a twistable module $N$ defined over $F$ with respect to $\FFF$, and we fix $(K/F, \Vplac_0)$ unpacking $N$.

\begin{defn}
Choose a grid of twists corresponding to a grid of ideals $X$, and choose a grid class $\class{x_0}$ that is ready for higher work. 
Choose $k \le \log^{(3)} H$, and choose a set of prefix indices $S_{\text{pre}}$ of cardinality $k$ satisfying the conditions of Definition \ref{defn:ready}. Take $\text{k}V_{\omega}$ and $\text{k}V_{\omega}^{\vee}$ as in Definition \ref{defn:ready}. For $j \ge 1$ and $x \in X$, we write $r_{\omega^j}(x)$ for $r_{\omega^j}(N^{\chi(x)})$.

If $N$ does not have alternating structure, a \emph{Cassels--Tate test element} at level $\omega^k$ will be any nonzero element $w$ in
\[\big(\text{k}V_{\omega} \cap V_{\omega^k}(x_0)\big) \,\otimes\, \big(\text{k}V^{\vee}_{\omega} \cap V^{\vee}_{\omega^k}(x_0)\big).\]
If $N$ has alternating structure, a Cassels--Tate test element at level $\omega^k$ will be any nonzero element $w$ in
\[\bigwedge^2 \big(\text{k}V_{\omega} \cap V_{\omega^k}(x_0)\big).\]

Given $x$ in the higher grid class $\class{x_0}_k$, we can evaluate the Cassels--Tate pairing $\langle\,\,,\,\,\rangle_k$ at the image of $w$ under $\Psi_{x, N} \otimes \Psi_{x, N^{\vee}}$ or, in the case of alternating structure, $\wedge^2 \Psi_{x, N}$ (see Remark \ref{rmk:no2_noalt}). We take $\text{ct}_x(w)$ to be the result of this evaluation.

Given a subset $Y$ of $X$, we define the \emph{test mean} for $w$ on $Y$ to be the expression
\begin{equation}
\label{eq:amplified}
\frac{1}{\# Y} \cdot \left|\sum_{x \in \class{x_0}_k \cap Y} \exp\big(2\pi i \cdot  \textup{ct}_x(w)\big) \right|.
\end{equation}
\end{defn}

The following theorem implies Theorem \ref{thm:main_higher}, as we will prove in Section \ref{ssec:higher_fullgrid}.
\begin{thm}
\label{thm:higher_fullgrid}
There is $C> 0$ depending just on $N$, $(K/F, \Vplac_0)$ so we have the following:

Choose a grid of twists corresponding to a grid of ideals $X$ of height $H > C$, choose a grid class $\class{x_0}$ that is ready for higher work, and choose $k \ge 1$. Suppose we have the inequality
\begin{equation}
\label{eq:not_bad}
(r_{\omega}(x_0) + k) \cdot \left(r_{\omega}(x_0) + r_{\omega^2}(x_0) + \dots  + r_{\omega^k}(x_0)\right) \,\le\, \frac{\log^{(3)} H}{30\cdot (\dim N[\omega])^2\cdot \log \ell}.
\end{equation} 
Then, for any Cassels--Tate test vector $w$ of level $\omega^k$, the test mean for $w$ on $\class{x_0}$ is at most $(\log \log H)^{-1/4}$.
\end{thm}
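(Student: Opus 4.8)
## Proof proposal for Theorem~\ref{thm:higher_fullgrid}

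The plan is to estimate the test mean \eqref{eq:amplified} by a large-sieve argument, using the bilinear character sum bound of Corollary~\ref{cor:an} (the corollary of Theorem~\ref{thm:bilin_symb}) after a reduction that expresses $\textup{ct}_x(w)$ as a sum of symbols. First I would use the algebraic and combinatorial reductions of Sections~\ref{sec:algcomb} and~\ref{ssec:TAR}: by Theorem~\ref{thm:TAR}, the value $\textup{ct}_x(w)$ of the Cassels--Tate pairing on the test vector $w$, as $x$ ranges over the higher grid class, depends on the data $x$ only through a bounded collection of symbols $\symb{\ovp_a}{\ovp_b}$ attached to pairs of coordinates $(s_{\mathbf a}, s_{\mathbf b})$ of $x$, plus pieces that are fixed on $\class{x_0}_k$. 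Concretely, the readiness hypotheses (conditions (4), (5) of Definition~\ref{defn:ready}) guarantee that for every way of presenting $w$ via vectors $v_a, v_b$ we may choose potential a/b indices $s_{\mathbf a}, s_{\mathbf b}$ so that the relevant matrix coefficient of the pairing is governed by a genuinely bilinear expression in the coordinates $\pi_{s_{\mathbf a}}(x)$ and $\pi_{s_{\mathbf b}}(x)$. This is the step that ``reduces the scope of Theorem~\ref{thm:main_higher} to the behavior of a single matrix coefficient,'' promised in the introduction.

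Next I would bound the test mean for a single Cassels--Tate test vector. Having isolated the a/b indices, the sum $\sum_{x \in \class{x_0}_k \cap \class{x_0}} \exp(2\pi i\,\textup{ct}_x(w))$ factors, after summing over the remaining coordinates, into a product of a combinatorial factor (the count of $x$ with prescribed symbols at the non-a/b coordinates, which is close to uniform on the grid class by Theorem~\ref{thm:bilin_symb} applied to those coordinates, using conditions (1)--(3) of Definition~\ref{defn:ready} to control $\#S$, $\#X_s/H_s$, and $\#\class{x_0}/\#X$) and an inner double sum over $\ovp_a \in X_{s_{\mathbf a}}$, $\ovp_b \in X_{s_{\mathbf b}}$ of a nontrivial symbol product. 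That inner double sum is exactly what Corollary~\ref{cor:an} controls: since $s_{\mathbf a}$ and $s_{\mathbf b}$ are potential a/b indices, we have the lower bounds $H_{s_{\mathbf a}}, H_{s_{\mathbf b}} \ge \exp^{(3)}\big(\tfrac34 \log^{(3)} H\big)$, which are large enough that the saving $N_1^{-\alpha} + N_2^{-\alpha}$ in \eqref{eq:bilin_symb} beats the combinatorial losses; here it is essential that $\alpha$ depends only on $n_{K/F}$ and that the constant $A$ in Theorem~\ref{thm:bilin_symb} is a fixed function of $K/F$, hence absorbed into $C$. The hypothesis \eqref{eq:not_bad}, bounding $(r_\omega+k)(r_\omega+\dots+r_{\omega^k})$ by $\log^{(3)}H / (30(\dim N[\omega])^2 \log\ell)$, is precisely what is needed so that the number of presentations of $w$ (equivalently the number of (v_a,v_b) pairs one must union-bound over, which is polynomial in $\ell^{r_\omega + \dots + r_{\omega^k}}$) stays small compared to the exponential-in-$\log^{(3)}H$ saving, leaving a final bound of the shape $\exp(-c(\log^{(3)}H)^{1/2})$, which is comfortably below $(\log\log H)^{-1/4}$.

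Assembling: fix the test vector $w$; expand $\textup{ct}_x(w)$ bilinearly via Theorem~\ref{thm:TAR}; for each admissible choice of a/b indices apply Corollary~\ref{cor:an} to the resulting bilinear symbol sum to get a power-saving over $\#X_{s_{\mathbf a}} \cdot \#X_{s_{\mathbf b}}$; use Theorem~\ref{thm:bilin_symb} again to show that the symbols at the remaining coordinates equidistribute, so that restricting to $\class{x_0}$ and to $\class{x_0}_k$ only costs a factor controlled by \eqref{eq:gridclass_big_enough}; union-bound over the boundedly many presentations of $w$, the bound on their number coming from \eqref{eq:not_bad}; and finally compare the resulting saving $\exp\!\big(-c(\log^{(3)}H)^{1/2}\big)$ with $(\log\log H)^{-1/4}$ for $H$ larger than a constant $C = C(N, K/F, \Vplac_0)$.

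The main obstacle, I expect, is the bookkeeping in the first step: showing that $\textup{ct}_x(w)$ really is governed by symbols at a/b indices in a way that is both uniform over all presentations $(v_a, v_b)$ of $w$ and compatible with the specific (and intricate) structure of a grid class that is ready for higher work. This is where conditions (4) and (5) of Definition~\ref{defn:ready}, together with the algebra of Section~\ref{sec:algcomb} (the subquotient constructions, e.g.\ Example~\ref{ex:four_subquo}) and the governing-expression machinery of Section~\ref{sec:gov}, all have to be brought to bear simultaneously; the analytic input (Corollary~\ref{cor:an}) is comparatively black-box once the reduction is in place. A secondary technical point is making sure the constant $C$ genuinely depends only on $N$ and $(K/F, \Vplac_0)$ and not on the grid — this is fine because all grid-dependent quantities ($\#S$, the $H_s$, $\#\class{x_0}/\#X$) are controlled by Definition~\ref{defn:ready} in terms of $H$ alone, and the constant $A$ of Theorem~\ref{thm:bilin_symb} depends only on $K$.
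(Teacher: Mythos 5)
Your proposal essentially skips the argument the paper actually gives for this theorem, and substitutes a sketch of how one might hope to prove the \emph{deeper} result Theorem~\ref{thm:higher_smallgrid} — and even that sketch has a structural misconception.

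The paper's proof of Theorem~\ref{thm:higher_fullgrid} is a \emph{covering} argument: it first fixes $S_{\text{pre}}$, $\saaa$, $\sbbb$, shows (via Proposition~\ref{prop:rz_est} and a second-moment computation) that almost every $x \in \class{x_0}$ is contained in roughly $B$ of the small subgrids $X_{\saaa}(Z_{\text{pre}}) \times X_{\sbbb}(Z_{\text{pre}}) \times Z_{\text{pre}}$ coming from suitable prefixes, and then sums the bound from Theorem~\ref{thm:higher_smallgrid} over those subgrids, finishing with Cauchy--Schwarz. You don't mention the prefix machinery, the suitability condition, or Proposition~\ref{prop:rz_est} at all; without them there is no way to pass from a small-grid estimate to a statement about the full grid class, which is the entire content of this reduction.

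More seriously, your first step misrepresents Theorem~\ref{thm:TAR}. You write that $\textup{ct}_x(w)$ ``depends on $x$ only through a bounded collection of symbols $\symb{\ovp_a}{\ovp_b}$,'' and then try to factor $\sum_x \exp(2\pi i\, \textup{ct}_x(w))$ into a combinatorial factor times a bilinear symbol sum, to which you would apply Corollary~\ref{cor:an}. That is not what Theorem~\ref{thm:TAR} says, and the factoring would fail. Theorem~\ref{thm:TAR} partitions the preimage of $\class{x_0}_k$ in $X^\circ$ into closed sets $Y_1, \dots, Y_M$ (in the zero-sums-in-lines sense of Definition~\ref{defn:closed}), and asserts only that on each $Y_i$ the function $\textup{ct}$ matches $\nu g$ \emph{modulo} $\zs(Y_i)$, where $g$ is an associated scalar function of a governing expansion. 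The number, shape, and contents of the $Y_i$ are \emph{unknowns} determined by the $\omega^k$-Selmer structure of the individual twists; they cannot be averaged away by an analytic estimate. What forces equidistribution of $\textup{ct}$ is the combinatorial Proposition~\ref{prop:bye_Ramsey} — the existence of a single function $g$ such that \emph{any} partition into few closed sets, with $\textup{ct}$ matching $\nu g$ modulo each $\zs(Y_i)$, implies near-uniform fibres. The role of Corollary~\ref{cor:an} (via Propositions~\ref{prop:variety} and~\ref{prop:force_random}) is to show that the governing expansion can be arranged so its scalar function agrees with the $g$ supplied by Proposition~\ref{prop:bye_Ramsey}; it is not used to estimate a bilinear sum of $\exp(2\pi i\,\textup{ct}_x(w))$ directly. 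Your proposal omits Proposition~\ref{prop:bye_Ramsey} entirely. A symptom of the mismatch: you claim a saving of size $\exp(-c(\log^{(3)} H)^{1/2})$, but the actual bound is $(\log \log H)^{-1/4}$; the gap is exactly the loss introduced by Hoeffding's inequality in Proposition~\ref{prop:bye_Ramsey}, which your sketch never confronts.
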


We prove this theorem by giving estimates for the test mean on certain small subgrids of $\class{x_0}_k$ before patching to give the result for the whole grid. 

\begin{defn}
\label{defn:to_smallgrid}
Choose a grid of twists corresponding to a grid of ideals $X$ and choose a grid class $\class{x_0}$ that is ready for higher work. Choose a positive integer $k \le \log^{(3)} H$, and choose a Cassels--Tate test element $w$ of level $\omega^k$.

Take $S_{\text{pre}}$ to be a cardinality $k$ set of prefix indices. If $N$ does not have alternating structure, we choose decompositions \eqref{eq:ram_vector} of $\text{k}V_{\omega}$ and $\text{k}V^{\vee}_{\omega}$ so $w$ has nonzero image in 
\[\big(\text{k}V_{\omega}/\text{k}_0V_{\omega}\big) \otimes \big(\text{k}V^{\vee}_{\omega}/\text{k}_0V^{\vee}_{\omega}\big).\]
If $N$ has alternating structure, we choose $V_0$ of codimension $2$ in $\text{k}V_{\omega}$ and $v_a$ and $v_b$ as in Definition \ref{defn:ready} so $w$ has nonzero image in
\[\bigwedge^2 \big(\text{k}V_{\omega}/ V_0\big).\]
In both cases, we choose the vectors $v_a, v_b$ so $v_a$ is in $V_{\omega^k}(x_0)$ and $v_b$ is in $V_{\omega^k}^{\vee}(x_0)$, and we choose a/b indices $\saaa$ and $\sbbb$ obeying the conditions given in Definition \ref{defn:ready} with respect to these decompositions of $\text{k}V_{\omega}$ and $\text{k}V_{\omega}^{\vee}$. Such a choice is always possible for any given Cassels--Tate test vector.

With $S_{\text{pre}}$, $\saaa$, and $\sbbb$ selected, we take the \emph{active indices} to be the set
\[S_{\text{act}} = S_{\text{pre}} \cup \{\saaa,\sbbb\}.\]
We take the set of \emph{constant indices} $S_{\text{con}}$ to be $S \backslash S_{\text{act}}$.

A \emph{prefix} is a product of the form 
\[Z_{\text{pre}} = \left\{\left(\ovp_{0s}\right)_{s \in S_{\text{con}}}\right\}\times \prod_{s \in S_{\text{pre}}} X_{\text{pre}, s}\]
so that
\begin{enumerate}
\item The prime $\ovp_{0s}$ is in $X_s$ for $s \in S_{\textup{con}}$,
\item For $s \in S_{\text{pre}}$, the set $X_{\text{pre}, s}$ is a subset of $X_s$ of cardinality
\[E_{\text{pre-size}} := \left\lfloor \left(\log \log H\right)^{2/3}\right\rfloor\]
so that no two distinct elements of $X_{\text{pre}, s}$ are over the same prime in $F$, and
\item For all  $s, t$ in $S \backslash \{\saaa, \sbbb\}$ and $x$ in $Z_{\text{pre}}$, we have
\[\symb{\pi_s(x)}{\pi_t(x)} = \symb{\pi_s(x_0)}{\pi_t(x_0)}.\]
\end{enumerate}
We take $X_{\text{pre}}$ to be the product $\prod_{s \in S_{\text{pre}}} X_{\text{pre}, s}$. We refer to this as the \emph{prefix grid}.

We take $X_{\saaa}(Z_{\text{pre}})$ to be the subset of $ \ovp \in X_{\saaa}$ satisfying
\[\symb{\ovp}{\ovp} = \symb{\pi_{\saaa}(x_0)}{\pi_{\saaa}(x_0)}\]
and
\[\symb{\ovp}{\pi_s(x)} = \symb{\pi_{\saaa}(x_0)}{\pi_s(x_0)} \]
for all $x \in Z_{\text{pre}}$ and $s \in S \backslash \{\saaa, \sbbb\}$. We similarly define $X_{\sbbb}(Z_{\text{pre}})$.

We call this prefix \emph{suitable} if 
\[\frac{\# X_{\saaa}(Z_{\text{pre}})}{\# X_{\saaa}},\, \frac{\# X_{\sbbb}(Z_{\text{pre}})}{\# X_{\sbbb}} >  \exp^{(3)}\left(\tfrac{2}{7} \log^{(3)} H\right)^{-2}.\]
\end{defn}

The following result implies Theorem \ref{thm:higher_fullgrid}, as we will prove in Section \ref{ssec:higher_smallgrid}. We will then prove this result in Section \ref{sec:gov}.
\begin{thm}
\label{thm:higher_smallgrid}
There is $C> 0$ depending just on $N$ and $(K/F, \Vplac_0)$ so we have the following:

Choose a grid of twists corresponding to a grid of ideals $X$ of height $H > C$, and choose a grid class $\class{x_0}$ that is ready for higher work. Choose $k \ge 1$ so \eqref{eq:not_bad} is satisfied.

Choose any Cassels--Tate test vector $w$ of level $\omega^k$ and associated active indices $S_{\textup{act}} = S_{\textup{pre}} \cup \{\saaa, \sbbb\}$, and take $Z_{\textup{pre}}$ to be a suitable prefix. Take
\[Y = \class{x_0} \cap \big(X_{\saaa}(Z_{\textup{pre}}) \times X_{\sbbb}(Z_{\textup{pre}}) \times Z_{\textup{pre}}\big).\]
We assume that $Y$ is nonempty. 

Then the test mean for $w$ on $Y$ is at most $\tfrac{1}{2} (\log \log H)^{-1/4}$.
\end{thm}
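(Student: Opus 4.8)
The plan is to reduce the test mean on $Y$ to a sum of bilinear character sums in symbols — one such sum for each choice of prefix primes — and to bound each of these with the bilinear estimate of Section~\ref{sec:bilinear}, in the changed-tuple form of Corollary~\ref{cor:an}. First I would parametrize $Y$, writing a general element as $x=(\ovp_a,\ovp_b,z)$ with $\ovp_a\in X_{\saaa}(Z_{\textup{pre}})$, $\ovp_b\in X_{\sbbb}(Z_{\textup{pre}})$, and $z\in Z_{\textup{pre}}$, so that the test mean equals
\[\frac{1}{\#Y}\left|\,\sum_{z\in Z_{\textup{pre}}}\ \sum_{\ovp_a,\,\ovp_b}\ [\,(\ovp_a,\ovp_b,z)\in\class{x_0}_k\,]\cdot\exp\big(2\pi i\cdot\textup{ct}_{(\ovp_a,\ovp_b,z)}(w)\big)\right|,\]
where $[\,\cdot\,]$ is the indicator of the displayed condition. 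Fixing $z$ pins down all the constant and prefix data, and the task is to see how $\textup{ct}_x(w)$ and the condition $x\in\class{x_0}_k$ depend on the remaining pair $(\ovp_a,\ovp_b)$. On $Y$ the symbols $\symb{\pi_s(x)}{\pi_t(x)}$ relative to $(K/F,\Vplac_0)$ are all fixed, so by Proposition~\ref{prop:class_spin_symbol_exact} the fixed point Selmer groups and the lower spaces $V_{\omega^j}(x)$ of Definition~\ref{defn:higher_grid} are constant; what is not constant is $\textup{ct}_x(w)$, which is governed by local behaviour at $\ovp_a,\ovp_b$ invisible to the original symbols.

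The key algebraic input is the manipulation of higher Selmer groups in grids from Section~\ref{sec:algcomb}, applied through Theorem~\ref{thm:TAR}: after passing to a suitably enlarged starting tuple $(K'/F,\Vplac_0')$ that records the prefix primes of $z$ and enough of the level-$k$ Selmer data — an enlargement kept small by the bounds \eqref{eq:pre_disc_small}, \eqref{eq:gridclass_big_enough} and \eqref{eq:not_bad} — one gets an expansion
\[\textup{ct}_{(\ovp_a,\ovp_b,z)}(w)\ =\ c_0(z)\ +\ c_a(\ovp_a,z)\ +\ c_b(\ovp_b,z)\ +\ \sum_{\tau\in B}\kappa_\tau\cdot\beta_\tau(\ovp_a,\ovp_b),\]
where the $\beta_\tau(\ovp_a,\ovp_b)$ are components of $\symb{\ovp_a}{\ovp_b}$ relative to the enlarged tuple, $B$ is a fixed set of double-coset representatives, the terms $c_a,c_b$ depend on $\ovp_a$ resp.\ $\ovp_b$ only through data fixed on $Y$ (their classes, spins, and symbols against the constant and prefix primes, and the constraints cutting out $X_{\saaa}(Z_{\textup{pre}})$ and $X_{\sbbb}(Z_{\textup{pre}})$), and, crucially, the $\kappa_\tau$ do not all annihilate the corresponding moduli, because $\pi_{\saaa}(v_a)\cdot\pi_{\sbbb}(v_b)\ne 0$ by conditions (4) and (5) of Definition~\ref{defn:ready}. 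After the same enlargement, the indicator $[\,x\in\class{x_0}_k\,]$ is a product of conditions of the same bilinear-in-symbols shape, so it too expands into additive characters in the $\beta_\tau$. Absorbing $c_a$, $c_b$, $\exp(2\pi i c_0(z))$, and this character expansion of the indicator into coefficients of modulus at most one attached to $\ovp_a$ and to $\ovp_b$, the inner double sum over $(\ovp_a,\ovp_b)$ becomes a non-trivial symbol-product sum of exactly the type bounded by Corollary~\ref{cor:an}.

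Applying Corollary~\ref{cor:an} for each $z$ bounds $\sum_{\ovp_a}\big|\sum_{\ovp_b}(\cdots)\big|$ by $A'\cdot N_1N_2\cdot(N_1^{-\alpha}+N_2^{-\alpha})$, where $N_1,N_2$ are the height parameters of $X_{\saaa},X_{\sbbb}$, the exponent $\alpha>0$ depends only on $n_{K'/F}$, and $A'$ is the discriminant-type factor for the enlarged tuple. Summing over the $\#Z_{\textup{pre}}=E_{\text{pre-size}}^{|S_{\textup{pre}}|}$ choices of $z$ and dividing by $\#Y$, the factor $\#Z_{\textup{pre}}$ cancels against the corresponding factor in $\#Y$, while the ratios $N_1/\#X_{\saaa}(Z_{\textup{pre}})$ and $N_2/\#X_{\sbbb}(Z_{\textup{pre}})$ stay controlled by the suitability of $Z_{\textup{pre}}$ and condition (2) of Definition~\ref{defn:ready}. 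One is then left with $A'$ times a bounded power of the small quantities in those conditions times $N_1^{-\alpha}+N_2^{-\alpha}$; since $N_1,N_2\ge\exp^{(3)}\big(\tfrac{3}{4}\log^{(3)}H\big)$ for a/b indices while \eqref{eq:pre_disc_small} and the rank bounds force $A'$ together with all accumulated losses to stay below $\exp^{(3)}\big(\tfrac{2}{3}\log^{(3)}H^2+o(1)\big)$ — far smaller than $N_1^{\alpha}$ — the bound $\tfrac{1}{2}(\log\log H)^{-1/4}$ follows for $H$ large, after enlarging $C$ if necessary.

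I expect the main obstacle to be the algebraic step: establishing the displayed expansion of $\textup{ct}_x(w)$, and in particular showing that its genuinely bilinear part is a non-degenerate symbol product precisely when $\pi_{\saaa}(v_a)\cdot\pi_{\sbbb}(v_b)\ne 0$. This is where the grid algebra and combinatorics of Section~\ref{sec:algcomb} are used and where the notion of a grid class \emph{ready for higher work} is exactly calibrated — conditions (1)--(6) of Definition~\ref{defn:ready} feed directly into this reduction. A secondary difficulty is the bookkeeping of how the enlarged tuple $(K'/F,\Vplac_0')$ and the condition $x\in\class{x_0}_k$ depend on the prefix primes $z$, so that the coefficients handed to Corollary~\ref{cor:an} really do have modulus at most one and the symbol product stays non-trivial.
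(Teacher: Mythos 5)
There is a genuine gap, and it is the crux of the whole paper: your proposed expansion
\[
\textup{ct}_{(\ovp_a,\ovp_b,z)}(w)\ =\ c_0(z)\ +\ c_a(\ovp_a,z)\ +\ c_b(\ovp_b,z)\ +\ \sum_{\tau}\kappa_\tau\cdot\beta_\tau(\ovp_a,\ovp_b)
\]
is not available, and the paper never proves (and in fact implicitly disclaims) anything of this form. What the algebraic work of Section~\ref{sec:gov} actually yields is far weaker. Lemma~\ref{lem:CTP_control} and Theorem~\ref{thm:TAR} give control only of the \emph{zero-sum-in-lines combinations} $a\cdot\textup{ct}$ for $a\in\zs(Y_i)$ on each of the closed sets $Y_i$; the values of $\textup{ct}_x(w)$ on a basis of $Y_i$ — equivalently, $\textup{ct}$ modulo the relations imposed by $\zs(Y_i)$ — are \emph{not} determined by classes, spins, or symbols relative to any fixed enlargement of the starting tuple. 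If such a bilinear symbol expression existed, the level-$k$ Cassels--Tate pairing would be governed by Frobenius conditions in a fixed finite extension, which is precisely the ``governing field'' phenomenon that the remark after Definition~\ref{defn:Mgov} says is expected to fail for $k>3$ (citing \cite{KoMi21}). Your proposal, if it worked, would incidentally prove the governing-field conjecture in this setting.

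Because the base-point values of $\textup{ct}$ are uncontrolled, one cannot reduce the test mean on $Y$ to a single bilinear character sum and apply Theorem~\ref{thm:bilin_symb} directly. The role played in the paper by the missing piece of information is taken over by the combinatorial result Proposition~\ref{prop:bye_Ramsey}: one chooses a ``magic'' target function $g$ in advance, using a Hoeffding-type union bound, so that \emph{any} $\textup{ct}$ satisfying $a\cdot\textup{ct}=\nu a\cdot g$ on each $Y_i$ is automatically near-equidistributed, regardless of what its undetermined basis values are. The analytic input (Corollary~\ref{cor:an}, via Propositions~\ref{prop:variety} and \ref{prop:force_random}) is then used not to estimate the test mean directly, but to construct a governing expansion $(\Phi_{\ovp})$ whose values $\Phi_{\ovp}(\FrobF{F}{\ovp_{0b}})$ realize the prescribed $g$ on a family of small subgrids of $X_{\saaa}(Z_{\textup{pre}})$. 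The chain is Theorem~\ref{thm:TAR} (algebraic constraint) $\to$ Proposition~\ref{prop:bye_Ramsey} (forced equidistribution given that constraint) $\to$ Corollary~\ref{cor:TAR_comb} (combining the two on a small subgrid) $\to$ Corollary~\ref{cor:an} and Proposition~\ref{prop:force_random} (covering the large grid by such subgrids). Your scheme uses only the first and last links and skips the Hoeffding step, which is the one that makes the argument work when $\textup{ct}$ is not a Chebotarev-type function. To repair the proof you would need to either establish the bilinear expansion (which the literature suggests is false) or incorporate Proposition~\ref{prop:bye_Ramsey} as in the paper.
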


\subsection{Theorem \ref{thm:higher_fullgrid} implies Theorem \ref{thm:main_higher}}
\label{ssec:higher_fullgrid}
The reduction of Theorem \ref{thm:main_higher} to Theorem \ref{thm:higher_fullgrid} proceeds through two intermediate results. The final stepping stone is the following.
\begin{intres}
\label{intres:full_sequence}
There is $C> 0$ depending just on $N$, $(K/F, \Vplac_0)$ so we have the following: 

Take $r_{\omega}$, $\class{x_0}$, $P$, and $X$ as in Theorem \ref{thm:main_higher}, with $X$ of height $H > C$ . Choose integers $k \ge 1$ and an integer sequence $r_{\omega^2} \ge \dots \ge r_{\omega^k}$ with $r_{\omega} \ge r_{\omega^2}$ and $r_{\omega^k} \ge 0$. Suppose that
\begin{equation}
\label{eq:not_bad_var}
(r_{\omega} + k ) \cdot (r_{\omega} + \dots + r_{\omega^k}) \,\le\, \frac{\log^{(3)} H}{30 \cdot (\dim N[\omega])^2\cdot \log \ell}.
\end{equation}
Then
\[\left|\frac{\#\left \{x \in \class{x_0}\,:\,\, r_{\omega^j}\left(N^{\chi(x)}\right) = r_{\omega^j} \text{ for } j \le k\right\}}{\#\class{x_0}} - \prod_{j = 1}^{k-1} P\left( r_{\omega^{j+1}} \,|\,r_{\omega^j}\right)\right|\]
is at most  $(\log \log H)^{-1/8}$.
\end{intres}

\begin{proof}[Proof that Intermediate Result \ref{intres:full_sequence} implies Theorem \ref{thm:main_higher}]
Given a nonnegative integer $r$, call $r$ \emph{terminal} if it is at most $1$ and $N$ has alternating structure, or if it equals $\max(0, u)$ with $u$ defined as in \eqref{eq:dual_discrepancy} if $N$ does not have alternating structure. Given $x \in \class{x_0}$, we see that $r_{\omega^j}(x)$ being terminal implies that
\[r_{\omega^j}(x) = r_{\omega^{j+1}}(x) = r_{\omega^{j+2}}(x)=\dots.\]

Call a sequence  $r_{\omega} \ge r_{\omega^2} \ge \dots \ge 0$  \emph{slow} if there is some choice of $k \ge 1$ for which $r_{\omega^{k-1}}$ is not terminal and \eqref{eq:not_bad_var} is not satisfied . If $k$ is chosen to be minimal so \eqref{eq:not_bad_var} is not satisfied, we call the sequence $r_{\omega} \ge \dots \ge r_{\omega^{k-1}}$ the \emph{valid fragment}.

Take $h = \log^{(3)} H$. Since $r_{\omega}$ is at most $h^{1/4}$, we find that any valid fragment will satisfy
\[c_0 h^{1/2}\le r_{\omega} + \dots + r_{\omega^{k-1}} \le h^{5/8}.\]
for $H$ sufficiently large, where $c_0 > 0$ does not depend on $H$. The upper bound here is proved by separately considering the cases where $k -1 \ge h^{3/8}$ and $k-1 < h^{3/8}$.

There is then $c_1 > 0$ not depending on $H$ so
\[\prod_{j \le k-2} P\left( r_{\omega^{j+1}} \,|\,r_{\omega^j}\right) \le e^{-c_1 h ^{1/2}}.\]
if $H$ is sufficiently large. From Intermediate Result \ref{intres:full_sequence}, we then have
\[\#\left\{x \in \class{x_0}\,:\,\, r_{\omega^j}\left(N^{\chi(x)}\right) = r_{\omega^j} \text{ for } j \le k - 1\right\}  \]
\[\le \left(e^{-c_1 h^{1/2}}+ (\log \log H)^{-1/8}\right) \cdot \#\class{x_0}.\]

Next, from the theory of partitions, we can bound the number of valid fragments by $e^{C_1 h^{5/16}}$ for sufficiently large $H$, where $C_1 > 0$ does not depend on $H$. In particular, the sum over valid fragments
\[\sum_{r_{\omega} \ge \dots \ge r_{\omega^{k-1}}} \left(\frac{\#\left\{x \in \class{x_0}\,:\,\, r_{\omega^j}\left(N^{\chi(x)}\right) = r_{\omega^j} \text{ for } j \le k - 1\right\}}{\# \class{x_0}} +  \prod_{j \le k-2} P\left( r_{\omega^{j+1}} \,|\,r_{\omega^j}\right)\right)\]
is bounded by  $\exp\left(- c_2 h^{1/2}\right)$ for sufficiently large $H$, where $c_2 > 0$ does not depend on $H$. This is within the error of Theorem \ref{thm:main_higher}

We may similarly bound the number of non-slow sequences $r_{\omega} \ge r_{\omega^2} \ge \dots$ by $e^{C_2 h^{1/2}}$ for some $C_2 >0$ not depending on $H$. The theorem follows by using Intermediate Result \ref{intres:full_sequence} for each of the non-slow sequences and taking a sum.
\end{proof}

The next intermediate result brings the focus to a single transition.
\begin{intres}
\label{intres:one_transition}
There is $C> 0$ depending just on $N$, $(K/F, \Vplac_0)$ so we have the following: 

Choose a grid of twists corresponding to a grid of ideals $X$ of height $H > C$, and choose a grid class $\class{x_0}$ that is ready for higher work. Choose $k \ge 1$ so \eqref{eq:not_bad} is satisfied.

Then, for any $r_{\omega^{k+1}} \le r_{\omega^k}(x_0)$, we have
\begin{align}
&\Big| \# \big \{ x \in \class{x_0}_k\,:\,\, r_{\omega^{k+1}}(x) = r_{\omega^{k+1}}\big\} \,-\, P\left(r_{\omega^{k+1}}\,|\, r_{\omega^k}\right) \cdot \# \class{x_0}_k\Big| \label{eq:one_transition}\\
&\qquad\qquad \qquad\le (\log \log H)^{-3/16} \cdot \#\class{x_0} \nonumber
\end{align}
\end{intres}

\begin{proof}[Proof that Intermediate Result \ref{intres:one_transition} implies Intermediate Result \ref{intres:full_sequence}]
Take $r_{\omega} \ge \dots \ge r_{\omega^k}$ as in Intermediate Result \ref{intres:full_sequence}. Given $j \le k$, take $n_j$ to be the number of higher grid classes contained in $\class{x_0}$ of level $j$. There is $C_0, C_1 > 0$ not depending on $H$ so we can bound $n_j$ by
\[\exp\left(C_0 \sum_{i \le j} r_{\omega^i}^2\right) \le \exp\left(C_0 r_{\omega}(r_{\omega} + \dots + r_{\omega^j})\right) \le \exp\left(C_1 \left(\log^{(3)} H\right)^{7/8}\right). \]
This function grows more slowly than $(\log \log H)^{1/32}$, so Intermediate Result \ref{intres:one_transition} lets us bound
\begin{align*}&\Big|\#\big\{ x \in \class{x_0} \,:\,\, r_{\omega^i}(x) = r_{\omega^i} \text{ for } i \le j + 1\big\} \\
&\qquad -\, P\left(r_{\omega^{j+1}}\,|r_{\omega^j}\,\right)\cdot \#\big\{ x \in \class{x_0} \,:\,\, r_{\omega^i}(x) = r_{\omega^i} \text{ for } i \le j \big\}\Big|
\end{align*}
by $(\log \log H)^{-5/32} \cdot \# \class{x_0}$. Summing over $j < k$ then gives  Intermediate Result \ref{intres:full_sequence}.
\end{proof}

\begin{proof}[Proof that Theorem \ref{thm:higher_fullgrid} implies Intermediate Result \ref{intres:one_transition}]
 With all notation fixed as in Intermediate Result \ref{intres:one_transition}, we take
\[W =\begin{cases}\bigwedge^2 \big(\text{k}V_{\omega} \cap V_{\omega^k}(x_0)\big) &\text{ if } N \text{ has alternating structure}\\  \big(\text{k}V_{\omega} \cap V_{\omega^k}(x_0)\big) \,\otimes\, \big(\text{k}V^{\vee}_{\omega} \cap V^{\vee}_{\omega^k}(x_0)\big)& \text{ otherwise}\end{cases}.\]
Given a homomorphism $\kappa: W \to \tfrac{1}{\ell}\Z/\Z$, the number of twists in $\class{x_0}_k$ with level $k$ Cassels--Tate pairing equal to $\kappa$ is
\[\frac{1}{\#W} \sum_{w \in W} \sum_{x \in \class{x_0}_k}  \exp\big(2\pi i \cdot  \textup{ct}_x(w) \cdot \kappa(w)^{-1}\big).\]
The summand corresponding to a given nonzero $w$ has magnitude at most $\#W^{-1} \cdot (\log \log H)^{-1/4} \cdot\#\class{x_0}$ by Theorem \ref{thm:higher_fullgrid}. The summand at $w = 0$ equals $\#\class{x_0}_k/\#W$.

The set of all possible pairings $\kappa$ has size $\#W$. Summing over this set, we find that \eqref{eq:one_transition} is at most
\[\# W\cdot  (\log \log H)^{-1/4} \cdot\#\class{x_0},\]
which is within the required bound for sufficiently large $H$.
\end{proof}

\subsection{Theorem \ref{thm:higher_smallgrid} implies Theorem \ref{thm:higher_fullgrid}}
\label{ssec:higher_smallgrid}
Theorem \ref{thm:higher_smallgrid} and Theorem \ref{thm:higher_fullgrid} are distinguished by the size of the grid to which they apply, with the former applying to smaller grids. To show that Theorem \ref{thm:higher_smallgrid} implies Theorem \ref{thm:higher_fullgrid}, we need to show that these smaller grids evenly cover the larger grids. To do this, we need a good estimate for the number of tuples in a large grid that satisfy a network of symbol conditions. The following setup will be useful.

\begin{notat}
Fix an unpacked starting tuple $(K/F, \Vplac_0, e_0)$ and a finite directed acyclic graph $G = (V, E)$ with vertex set $V$ and edge set $E$. For $v \in V$, take $X_v$ to be a nonempty set of primes of $\ovQQ$ lying in the same class, which we denote by $\class{X_v}$. For $(v, w) \in E$, choose $s(v, w)$ in $\symb{\class{X_v}}{\class{X_w}}$. For $v \in V$, take
\[H_v = \max_{\ovp \in X_v} N_{F/\QQ}(\ovp \cap F).\]
For each $v \in V$, we assume that no two primes in $X_v$ have the same intersection in $K(\Vplac_0)$. Take $X = \prod_{v \in V} X_v$.
\end{notat}

We then define the \emph{set of realizations} of $s$ to be the set of $x \in X$ satisfying
\[\symb{\pi_v(x)}{\pi_w(x)} = s(v, w)\quad\text{for all }\, (v, w) \in E.\]
We write this set as $\text{Rz}(X, G, s)$.
\begin{prop}
\label{prop:rz_est}
With all information fixed as above, there are constants $C, c > 0$ depending just on the tuple $(K/F, \Vplac_0, e_0)$ so 
\begin{equation}
\label{eq:rz_est}
\left|\# \textup{Rz}(X, G, s) \,-\, \# X\cdot \prod_{(v, w) \in E} \left(\# \symb{\class{X_v}}{\class{X_w}}\right)^{-1}\right| \,\le\, C \cdot H^{-c} \cdot \Delta^{-2} \cdot \# E \cdot  \#X,
\end{equation}
where $H$ is taken to be $\min_{v \in V} H_v$  and $\Delta$ is taken to be $\min_{v \in V} \#X_v/H_v$.
\end{prop}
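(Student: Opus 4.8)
The plan is to prove Proposition~\ref{prop:rz_est} by induction on the number of edges $\#E$, using the bilinear character sum estimate Theorem~\ref{thm:bilin_symb} as the engine for the inductive step. The base case $\#E = 0$ is trivial since $\textup{Rz}(X,G,\emptyset) = X$ and the product over edges is empty. For the inductive step, pick a vertex $v_0 \in V$ that is a sink in the directed acyclic graph $G$ (such a vertex exists since $G$ is acyclic); let $e_1 = (w_1, v_0), \dots, e_m = (w_m, v_0)$ be the edges into $v_0$. The idea is to peel off $v_0$ together with all the edges incident to it, express the count $\#\textup{Rz}(X,G,s)$ in terms of inner sums over $X_{v_0}$, and apply the bilinear estimate repeatedly.

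The key computation: writing $G'$ for the graph with $v_0$ and the edges $e_1,\dots,e_m$ deleted and $X' = \prod_{v \neq v_0} X_v$, we have
\[
\#\textup{Rz}(X, G, s) = \sum_{x' \in \textup{Rz}(X', G', s)} \#\Big\{\ovq \in X_{v_0} : \symb{\pi_{w_i}(x')}{\ovq} = s(e_i) \text{ for } i = 1, \dots, m\Big\}.
\]
The plan is to estimate the inner count using Fourier expansion over the finite group of symbols and reduce to bilinear sums of the form appearing in Theorem~\ref{thm:bilin_symb}, applied with $X_1$ a set of primes from $\class{X_{w_i}}$ and $X_2 = X_{v_0}$. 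More carefully, I would prove an auxiliary lemma handling a single target vertex: that for fixed "incoming" primes, the number of $\ovq \in X_{v_0}$ realizing a prescribed tuple of symbols against them is $\#X_{v_0} \cdot \prod_i (\#\symb{\class{X_{w_i}}}{\class{X_{v_0}}})^{-1}$ up to an error controlled by $\#X_{v_0}^{1-\alpha}$ (summed over the incoming primes), using Theorem~\ref{thm:bilin_symb} once per incoming edge and the fact that distinct incoming primes give independent conditions by Proposition~\ref{prop:chinese}. The powers of $\Delta_K$ and the factor $\Delta^{-2} = (\min \#X_v/H_v)^{-2}$ absorb the constant $A$ from Theorem~\ref{thm:bilin_symb} and convert the $N_i^{1-\alpha}$ savings into the claimed $H^{-c}$ savings after dividing through by $\#X_{v_0}$.

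**The main obstacle** I expect is bookkeeping the error propagation through the induction so that it remains linear in $\#E$ rather than exploding, and matching the exact shape $C \cdot H^{-c} \cdot \Delta^{-2} \cdot \#E \cdot \#X$. Theorem~\ref{thm:bilin_symb} gives an $\ell^1$-type estimate $\sum_{\ovp \in X_1}|\cdots| \le A N_1 N_2(N_1^{-\alpha} + N_2^{-\alpha})$, so when I use the inductive hypothesis to control $\#\textup{Rz}(X',G',s)$ and then replace it inside the sum over $X_{v_0}$, I need to be careful that the "main term $\times$ error" and "error $\times$ error" cross terms are each bounded by the target expression. The linearity in $\#E$ should come out because at each stage I introduce at most $\deg^-(v_0)$ new edges and a matching additive error; the constants $C, c$ depending only on the tuple $(K/F, \Vplac_0, e_0)$ come from tracking the powers of $\Delta_K$, $n_K$, and $|\Vplac_0|$ in $A$ and from the uniform lower bound $H \le H_v$ for all $v$. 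A secondary technical point is ensuring the hypothesis of Theorem~\ref{thm:bilin_symb} — that no two primes in $X_1 \cup X_2$ lie over the same prime of $K(\Vplac_0)$ — is inherited; this follows from the standing assumption that no two primes in any $X_v$ share an intersection in $K(\Vplac_0)$, though one must check across the union when $w_i$ ranges and restrict to subsets if necessary, which is harmless since such collisions involve at most $O(\#E)$ primes and are absorbed into the error term.
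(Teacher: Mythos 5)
Your approach genuinely differs from the paper's, and the divergence matters. The paper proves \eqref{eq:rz_est} by a one-edge-at-a-time telescoping: fix any edge $e_0 = (v,w)$, replace the indicator $\delta_{e_0}(x) = 1_{\symb{\pi_v(x)}{\pi_w(x)}=s(e_0)}$ by its mean value inside the product over edges, and bound the resulting discrepancy by Theorem~\ref{thm:bilin_symb}. The reason a single edge suffices is structural: once $e_0$ is isolated, every other factor $\delta_e$ depends on at most one of $x_v, x_w$ (given the remaining coordinates $x'$, and using that a DAG has no $2$-cycles), so the other factors enter only as $0$/$1$ restrictions of $X_v$ and $X_w$, which the bilinear theorem handles directly by passing to subsets $X_1 \subseteq X_v$, $X_2 \subseteq X_w$. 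Notably the paper's removal step does not actually use acyclicity; your reliance on finding a sink is a weaker ingredient than it looks. Your sink-based peeling, by contrast, leaves a genuinely multilinear count $N(x') = \#\{\ovq \in X_{v_0}: \symb{\pi_{w_i}(x')}{\ovq}=s(e_i),\, i\le m\}$ to estimate on average over $x'$.

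Your proposed Fourier reduction of $N(x')$ to bilinear sums has a concrete gap. After expanding each indicator $1_{\symb{\ovp_i}{\ovq}=s(e_i)}$ in characters, the off-main terms you must bound take the shape $\sum_{\ovp_{i_0}}\bigl|\sum_{\ovq \in X_{v_0}} d_\ovq\, \chi_{i_0}(\symb{\ovp_{i_0}}{\ovq})\bigr|$, where $d_\ovq = \prod_{i\neq i_0}\chi_i(\symb{\ovp_i}{\ovq})$ is a unit-modulus coefficient depending on the frozen primes $\ovp_i$, $i \neq i_0$. Theorem~\ref{thm:bilin_symb} as stated has no such coefficients --- it bounds only the deviation of $\#\{\ovq:\symb{\ovp}{\ovq}=s\}$ from its mean. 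The coefficient version is present in the proof of that theorem (see the $d_\ovq$ in \eqref{eq:S1t_exp}) but not exposed in the statement, so you would have to restate it, or else partition $X_{v_0}$ into level sets of $d_\ovq$ and apply the stated bound piecewise; either is fillable but it is the key missing step in your sketch. Separately, citing Proposition~\ref{prop:chinese} for ``distinct incoming primes give independent conditions'' misreads that result: it is a realization statement guaranteeing that a prime with prescribed symbol values exists, not a quantitative joint-equidistribution estimate, so it cannot by itself justify the factorization of the main term into $\#X_{v_0}\prod_i n(e_i)^{-1}$; that factorization is exactly what your Fourier argument (or the paper's iteration) must establish.
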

\begin{proof}
Given $e = (v, w)$ in $E$ and $x$ in $X$, take $\delta_{e}(x)$ to be $1$ if $\symb{\pi_v(x)}{\pi_w(x)}$ equals $s(v, w)$ and $0$ otherwise. Then
\[\# \textup{Rz}(X, G, s)  = \sum_{x \in X} \prod_{e \in E}  \delta_e(x).\]
Choose any edge $e_0 = (v, w)$ in $E$. Take 
\[X' = \prod_{v \in V \backslash \{v, w\}} X_v.\]
We will use the identification of $X$ with $X' \times X_v \times X_w$. 

Take $E_1$ to be a subset of $E$ of edges not incident to $v$, and take $E_2$ to be a subset of $E\backslash E_1$ of edges not incident to $w$. For $e$ in $E_1$ and $(x', x_v, x_w)$ in $X' \times X_v \times X_w$, we see that $\delta_e((x', x_v, x_w))$ does not depend on $x_v$, and we rewrite it as $\delta_e(x', x_w)$. Similarly, if $e$ is in $E_2$, we write $\delta_e(x', x_v)$ for $\delta_e((x', x_v, x_w))$.

Take $E_0$ to be the complement of $\{e_0\} \cup E_1 \cup E_2$ in $E$.  Given $e = (v', w')$  in $E$, we take $n(e)$ to be $\# \symb{\class{X_{v'}}}{\class{X_{w'}}}$. We take the notation
\[a_1(x', x_w) = \prod_{e \in E_1} \delta_e(x', x_w)\quad\text{and}\quad a_2(x', x_v) = \prod_{e \in E_2} \delta_e(x', x_v). \]
Then there are constants $c_0, C_0 > 0$ depending just on the starting tuple so that
\begin{align*}
 &\left|\sum_{x \in X} \left( \delta_{e_0}(x) - n(e_0)^{-1}\right) \cdot \prod_{e \in E_0} n(e)^{-1}  \cdot \prod_{e \in E \backslash E_0\cup \{e_0\}} \delta_e(x)\right|\\
&\quad\le \sum_{x' \in X'} \prod_{e \in E_0} n(e)^{-1} \left| \sum_{x_v \in X_v} \sum_{x_w \in X_w} a_1(x', x_w) a_2(x', x_v) \left( \delta_{e_0}(x', x_v, x_w) - n(e_0)^{-1}\right)\right| \\
&\quad\le C_0 \cdot |X'| \cdot \prod_{e \in E_0} n(e)^{-1}\cdot H_vH_w(H_v^{-c_0} + H_w^{-c_0}),
\end{align*}
with the last inequality following from Theorem \ref{thm:bilin_symb}. Iterating this inequality gives \eqref{eq:rz_est}.
\end{proof}

\begin{proof}[Proof of Theorem \ref{thm:higher_fullgrid} given Theorem \ref{thm:higher_smallgrid}]
Take all notation as in Theorem \ref{thm:higher_fullgrid}, and fix $S_{\text{pre}}$ and $\saaa, \sbbb$. Take $S_{\text{sm}}$ to be the set of $s \in S$ for which $|X_s| = 1$, and take $S_{\text{lg}} = S \backslash S_{\text{sm}}$.

For $s \in S_{\text{lg}}$, take $X'_s$ to be the set of $\ovp \in X_s$ satisfying
\[\symb{\ovp}{\pi_t(x_0)} = \symb{\pi_s(x_0)}{\pi_t(x_0)}\quad\text{for all }\, t \in S_{\text{sm}}\]
and also satisfying $\spin{\ovp} = \spin{\pi_s(x_0)}$. Note that we can bound the ratio $\# X'_s/\# X_s$ using \eqref{eq:gridclass_big_enough}. 

For a given $x$ in $\class{x_0}$, take $n(x)$ to be the number of prefixes $Z_{\textup{pre}}$ so $x$ is in $X_{\saaa}(Z_{\textup{pre}}) \times X_{\sbbb}(Z_{\textup{pre}}) \times Z_{\textup{pre}}$. For $s, t\in S$, take $m_{st}$ to be $\# \symb{\class{X_s}}{\class{X_t}}$

Choose some total ordering on $S$, and take 
\[A = \prod_{s \in S_{\text{lg}}} \# X'_s \bigg/ \prod_{\substack{s, t \in S_{\text{lg}} \\ s > t}} m_{st}\]
and
\[B = \prod_{s \in S_{\text{pre}}} \frac{(\# X'_s)^{E_{\text{pre-size} - 1}}}{(E_{\text{pre-size}} - 1)!} \bigg/ \prod_{\substack{s, t \in S_{\text{pre}} \\  t <s}} m_{st}^{E_{\text{pre-size}}^2 - 1} \cdot \prod_{\substack{s \in S_{\text{pre}}\\ t \in S_{\text{lg}}\backslash S_{\text{pre}}}} m_{st} ^{E_{\text{pre-size}}-1}.\]
Given a nonnegative integer $j$, take $G_j$ to be the graph with vertex set
\[S_{\text{lg}}\, \cup \,\big(S_{\textup{pre}} \times \{2, \dots, E_{\textup{pre-size}}\} \times \{1, \dots, j\}\big),\]
and with an edge between any two vertices $v$ and $w$ unless $v = (s_1, i_1, k_1)$ and $w = (s_2, i_2, k_2)$, where either $s_1 = s_2$ or $k_1 \ne k_2$. By applying Proposition \ref{prop:rz_est} to $G_j$, we then see that there is some $c > 0$ not depending on $H$ so, for $j \in \{0, 1, 2\}$ and $H$ sufficiently large,
\[\left|\sum_{x \in \class{x_0}} n(x)^j - AB^j\right| \le AB^j \cdot \exp^{(3)}\left(\tfrac{1}{3} \log^{(3)} H\right)^{-c}\]
Take $n_0(x)$ to be the number of non-suitable prefixes containing $x$. By the definition of suitability, we find that
\[\sum_{x \in \class{x_0}} n_0(x)^j \le AB^j \cdot \exp^{(3)}\left(\tfrac{2}{7} \log^{(3)} H\right)^{-1/2}\]
for $j = 1, 2$. So, for sufficiently large $H$, we always have
\[\sum_{x \in \class{x_0}} \left(\frac{n(x) - n_0(x)}{B} - 1\right)^2 \le A\exp^{(3)}\left(\tfrac{2}{7} \log^{(3)} H\right)^{-1/4}.\]
Define $f: \class{x_0} \to \C$ by
\[f(x) = \begin{cases} \exp\big(2\pi i \cdot  \textup{ct}_x(w)\big)  &\text{ if } x \in \class{x_0}_k \\ 0 &\text{ otherwise.}\end{cases}.\]
Then, taking $Y(Z_{\text{pre}}) = \class{x_0} \cap Z_{\text{pre}} \times X_{\saaa} \times X_{\sbbb}$, we have
\[\left|\sum_{x \in \class{x_0}} (n(x) - n_0(x)) f(x)\right| = \left|\sum_{Z_{\text{pre}}} \sum_{x \in Y(Z_{\text{pre}})}  f(x)\right| \le \tfrac{3}{4}AB (\log \log H)^{-1/4},\]
for sufficiently large $H$  by Theorem \ref{thm:higher_smallgrid}, where the sum indexed by $Z_{\text{pre}}$ is over all suitable prefixes. The result then follows by Cauchy's theorem.
\end{proof}

\section{Algebra and combinatorics for grids}
\label{sec:algcomb}
The goal of the next two sections is to prove Theorem \ref{thm:higher_smallgrid}. This theorem is complicated, but the algebra and combinatorics that make its proof possible are comparatively simple. The fundamental combinatorial idea reduces to Hoeffding's inequality, which is a basic tail estimate for a sequence of Bernoulli trials. As already stated, the fundamental algebraic idea is to exploit the isomorphism $N[\omega] \cong N^{\chi}[\omega]$ as much as possible.

Because these underlying ideas are simple, we will begin by presenting them outside the context of Theorem \ref{thm:higher_smallgrid}. The $X$ and $S$ appearing in this section are different from the $X$ and $S$ appearing above, but their basic form is the same; $X$ represents a product space of twists, with $S$ giving the indices for the product.

\begin{notat}
\label{notat:gen_tar}
Take $\Z_{\ell}[\xi]$ to be the ring defined in Definition \ref{defn:twistable}, and take $\omega = \xi - 1$ as in that definition. Choose any topological group $G$. We consider the category $\textbf{Mod}_{G, \xi}$ of discrete $\Z_{\ell}[\xi]$-modules $M$ endowed with a continuous $G$ action commuting with the action of $\xi$.

Given a continuous homomorphism $\chi: G \to \langle \xi \rangle$ and $M$ in $\textbf{Mod}_{G, \xi}$, we can define a twist $M^{\chi}$ in $\textbf{Mod}_{G, \xi}$ and an isomorphism $\beta_{\chi}: M^{\chi} \to M$ of discrete $\Z_{\ell}[\xi]$ modules so
\[\beta_{\chi}(\sigma m) = \chi(\sigma) \sigma \beta_{\chi}(m)\quad\text{for all } m \in M,\,\, \sigma \in G.\]
This defines a twisting functor $\chi$ from $\textbf{Mod}_{G, \xi}$ to itself.

Given $M$ in $\textbf{Mod}_{G, \xi}$ and any finite set $X$, we take $M^X$ to be the set of maps from $X$ to $M$ viewed as an object in $\textbf{Mod}_{G, \xi}$. We define a multiplication between the function sets $\Z_{\ell}[\xi]^X$ and $M^X$ by
\[a \cdot m = \sum_{x \in X} a(x) m(x) \quad\text{for } a \in \Z_{\ell}[\xi]^X,\,\, m \in M^X.\]

We focus on the case of a set $X = \prod_{s \in S} X_s$, where $S$ is a nonempty finite set and the $X_s$ are disjoint nonempty finite sets. Given $x$ in $X$, we write the $s$-component of $x$ as $\pi_s(x)$. Fix $\chi_0$ in $\Hom_{\text{cont}}(G, \langle \xi\rangle)$ and
\[\chi_s:X_s \to \Hom_{\text{cont}}(G, \langle \xi\rangle)\]
for each $s \in S$. We then define $\chi: X \to \Hom_{\text{cont}}(G, \langle \xi\rangle)$ by
\begin{equation}
\label{eq:chi_families}
\chi(x) = \chi_0 \cdot \prod_{s \in S} \chi_s(\pi_s(x)) \quad\text{for } x\in X.
\end{equation}
We will use $\beta_x$ as shorthand for $\beta_{\chi(x)}$.

Given $Y \subseteq X$, we define the twist family for $M$ over $Y$ by
\[M(Y) = \bigoplus_{x \in Y} M^{\chi(x)},\]
and we define $\beta: M(Y) \to M^Y$ to be the map defined by
\[\beta\big((m_x)_{x \in Y}\big) = (\beta_x(m_x))_{x \in Y}\quad\text{for all}\quad (m_x)_{x \in Y} \in M(Y).\]
\end{notat}
\subsection{The zero-sums-in-lines condition}
\label{ssec:zsil}
\begin{defn}
\label{defn:closed}
Given $s \in S$, an $s$-line of $X$ is any subset $L$ of $X$ of the form
\[L = X_s \times \prod_{t \ne s} \{x_{0t}\}\quad\text{for some }\, (x_{0t})_t \in \prod_{t \ne s} X_t.\]
For $U$ a subset  of $S$ and $Y$ a subset of $X$, take $\zs(U, Y)$ to be the submodule of $a \in \Z_{\ell}[\xi]^Y$ satisfying
\[\sum_{x \in L \cap Y} a(x) = 0\quad\text{for every }s\text{-line }L\text{ with } s \in U.\]
We take $\zs(Y)$ to be shorthand for $\zs(S, Y)$.

Given $Y \subseteq Y' \subseteq X$, we have a natural injection $\Zlz^Y \to \Zlz^{Y'}$ given by assigning a map the value zero on $Y' \backslash Y$. This restricts to a map $\zs(U, Y) \to \zs(U, Y')$ for all subsets $U$ of $S$.

Given $Y \subseteq X$, we define the \emph{closure} of $Y$ to be the maximal set $\overline{Y}$ satisfying $Y \subseteq \overline{Y} \subseteq X$ for which the injection
\begin{equation}
\label{eq:closure_def}
\Zlz^Y\big/\zs(Y) \xhookrightarrow{\quad} \Zlz^{\overline{Y}}\big/\zs\left(\overline{Y}\right)
\end{equation}
is surjective. We call $Y$ closed if $\overline{Y} = Y$. If $Y$ is closed, a \emph{basis} for $Y$ is any minimal subset $Y_0$ of $Y$ with closure equal to $Y$. Note that the $x$ in $\overline{Y}\backslash Y$ are characterized by the fact that there is $a \in \zs(Y \cup \{x\})$ so $a(x) = 1$.
\end{defn}
\begin{rmk}
Adam Morgan has constructed explicit examples showing that the definition of closure depends on $\ell$. However, the definition does not otherwise depend on the choice of the ring $\Z_{\ell}[\xi]$.
\end{rmk}

\begin{prop}
Given a closed subset $Y$ of $X$ and a subset $Y_0$ of $Y$, the set $Y_0$ is a basis of $Y$ if and only if $\overline{Y_0} = Y$ and $\zs(Y_0) = 0$.
\end{prop}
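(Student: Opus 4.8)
\emph{Proof plan.} Throughout write $R=\Z_{\ell}[\xi]$, and for a finite $Z\subseteq X$ let $\hat\ell_Z\colon R^{Z}\to\bigoplus_{L}R$ be the line–sum map $a\mapsto\big(\sum_{x\in L\cap Z}a(x)\big)_{L}$, the sum over all lines $L$ of $X$, so that $\zs(Z)=\ker\hat\ell_Z$. I will use throughout the two trivial observations that (i) extending a function on $Z$ by zero to a larger set does not change any line–sum, so the natural map $R^{Z}/\zs(Z)\to R^{Z'}/\zs(Z')$ is injective for $Z\subseteq Z'$, and (ii) the characterization recorded in the Remark: for $W\subseteq X$ and $x\notin W$, one has $x\in\overline{W}$ precisely when there is $a\in\zs(W\cup\{x\})$ with $a(x)=1$.

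The key preliminary — and the step I expect to be the real obstacle — is that $\zs(Z)$ is a \emph{saturated} submodule of $R^{Z}$. Indeed $\hat\ell_Z$ is represented by a $\{0,1\}$–matrix, so if $r\cdot a\in\zs(Z)$ with $r\in R$ nonzero then $r\cdot\hat\ell_Z(a)=0$ in the torsion–free module $\bigoplus_L R$, whence $\hat\ell_Z(a)=0$ since $R$ is a domain. Consequently, as soon as $\zs(Z)\neq 0$ it contains a \emph{primitive} vector, i.e.\ one not in $\omega R^{Z}$: starting from any nonzero $a\in\zs(Z)$, divide by $\omega^{m}$ where $m$ is the minimum over $x\in Z$ of the $\omega$–valuation of $a(x)$; the result lies in $R^{Z}$, has a unit coordinate, and is again in $\zs(Z)$ by saturation. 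This is exactly what closes the gap between ``$\zs(Y_0)\neq 0$'' (which a priori only gives a line–relation with possibly non-unit coefficients) and ``some element of $Y_0$ is redundant'' (for which the Remark demands a coefficient normalizable to $1$).

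I would also record that $Y\mapsto\overline Y$ is a closure operator: it is monotone (extend a witnessing relation by zero and apply (ii)), it satisfies $\overline W\subseteq Y$ whenever $W\subseteq Y$ with $Y$ closed (same argument, noting $\overline Y=Y$), and it is idempotent. For idempotence, set $B=\overline A$; by the maximality in the definition of $\overline A$ the natural map $R^{A}/\zs(A)\to R^{B}/\zs(B)$ is surjective, hence an isomorphism by (i); if $x\in\overline B\setminus B$ then the natural map $R^{B}/\zs(B)\to R^{B\cup\{x\}}/\zs(B\cup\{x\})$ hits $\mathbf{1}_{x}$ (by (ii)) and is therefore surjective, so composing with the isomorphism shows $R^{A}/\zs(A)\to R^{B\cup\{x\}}/\zs(B\cup\{x\})$ is surjective, forcing $B\cup\{x\}\subseteq\overline A=B$ by maximality — a contradiction.

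With these in hand the two implications are short. For the ``if'' direction, assume $\overline{Y_0}=Y$ and $\zs(Y_0)=0$, and suppose some $Y_1\subsetneq Y_0$ had $\overline{Y_1}=Y$; pick $x_0\in Y_0\setminus Y_1$. Since $x_0\in Y=\overline{Y_1}$, (ii) gives $a\in\zs(Y_1\cup\{x_0\})$ with $a(x_0)=1$; extending $a$ by zero keeps it in $\zs(Y_0)$ while $a(x_0)=1\neq 0$, contradicting $\zs(Y_0)=0$. Hence $Y_0$ is minimal among subsets of $Y$ with closure $Y$, i.e.\ a basis. For the ``only if'' direction, let $Y_0$ be a basis, so $\overline{Y_0}=Y$ by definition; if $\zs(Y_0)\neq 0$, the saturation step supplies a primitive $b\in\zs(Y_0)$ with $b(x_0)\in R^{\times}$ for some $x_0\in Y_0$. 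Put $W=Y_0\setminus\{x_0\}$; then $b/b(x_0)\in\zs(W\cup\{x_0\})$ takes the value $1$ at $x_0$, so $x_0\in\overline W$ by (ii), hence $Y_0\subseteq\overline W$. Monotonicity and idempotence of the closure give $Y=\overline{Y_0}\subseteq\overline{\overline W}=\overline W$, while $W\subseteq Y$ closed gives $\overline W\subseteq Y$; thus $\overline W=Y$ with $W\subsetneq Y_0$, contradicting minimality of $Y_0$. Therefore $\zs(Y_0)=0$, which finishes the argument. $\qquad\square$
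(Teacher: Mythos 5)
Your proof is correct and follows essentially the same strategy as the paper's: the key step in both is the converse direction, where a nonzero element of $\zs(Y_0)$ is scaled by dividing out the coordinate of minimal $\omega$-valuation to produce a relation with a unit (indeed $=1$) coordinate, which then exhibits some $x_0$ as lying in $\overline{Y_0\setminus\{x_0\}}$. The paper states this directly via "$\Zlz$ is a local DVR, pick the coordinate of minimal valuation, divide"; you package the same fact as a saturation statement for $\ker\hat\ell_Z$ inside the torsion-free $\Zlz^Z$, which is a clean way to justify why the division stays integral and is a genuine elaboration of a point the paper leaves implicit. The only real stylistic divergence is in the forward direction: the paper argues by rank, noting that $\Zlz^{Y_1}\to\Zlz^{Y_0}$ cannot be surjective when $Y_1\subsetneq Y_0$ and $\zs(Y_0)=0$, while you instead use the Remark's characterization to manufacture a nonzero element of $\zs(Y_0)$ directly from $\overline{Y_1}=Y$; these are dual phrasings of the same obstruction. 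You also spell out monotonicity and idempotence of $Y\mapsto\overline Y$, which the paper uses silently when concluding $\overline{Y_0\setminus\{x_0\}}=Y$; that is a worthwhile gap to fill, though it is not a new idea.
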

\begin{proof}
If $Y_1$ is a strict subset of $Y_0$, the map $\Zlz^{Y_1} \to \Zlz^{Y_0}$ cannot be surjective. The conditions $\overline{Y_0} = Y$ and $\zs(Y_0) = 0$ then imply that $Y_0$ is a basis of $Y$.

Conversely, suppose $Y_0$ is a basis. We need to check that $\zs(Y_0) = 0$. Suppose otherwise, so there is nonzero $a$ in $\zs(Y_0)$. Note that $\Zlz$ is a local ring with maximal ideal $(\omega)$. We can find $x_0 \in Y_0$ for which the valuation of $a(x_0)$ at $(\omega)$ is minimized. Then 
\[(a(x_0))^{-1} a \in \zs(Y_0),\]
 so $\overline{Y_0 \backslash \{x_0\}} = Y$, contradicting the minimality of $Y_0$.
\end{proof}
\begin{ex}
\label{ex:2_squared}
Suppose $S = \{0, 1\}$, $X_0 = \{x_{00}, x_{10}\}$, and $X_1 = \{x_{01}, x_{11}\}$. The $\Zlz$ module $\zs(X)$ is then generated by the function $\Delta$ given by
\begin{alignat*}{2}
&\Delta(x_{00}, x_{01}) = 1 \qquad && \Delta(x_{00}, x_{11}) = -1\\
&\Delta(x_{10}, x_{01}) = -1 \qquad && \Delta(x_{10}, x_{11}) = 1.
\end{alignat*}
A subset $Y$ of $X$ is then closed if and only if $|Y| \ne 3$. If $|Y| = 3$, then $Y$ is a basis for $X$.
\end{ex}

For more general $S$ and $X$, we can generalize this example as follows.
\begin{defn}
Choose $x_0 = (x_{0s})_s$ and $x_1 = (x_{1s})_s$ in $X$. We then define $\Delta(x_0, x_1)$ in $\Zlz^X$ by
\[\Delta(x_0, x_1)\big((x_s)_s\big) = \prod_{s \in S} \Delta_s(x_s)\]
for $(x_s)_s \in X$, where $\Delta_s \colon X_s \to \{-1, 0, 1\}$ is defined by
\[\Delta_s(x_s) = \begin{cases} 1 &\text{ if } x_s = x_{0s} \\ -1 & \text{ if } x_s = x_{1s} \ne x_{0s} \\ 0 &\text{ otherwise.}\end{cases}\] 
If $x_{0s}\ne x_{1s}$ holds for all $s$ in $S$, this element lies in $\zs(X)$. 
\end{defn}

\begin{prop}
\label{prop:basis_X}
Choose $x_0 = (x_{0s})_s$ in $X$. Take $Y$ to be the set of $(x_s)_s$ in $X$ satisfying $x_s = x_{0s}$ for some $s$. Then $Y$ is a basis for $X$.
\end{prop}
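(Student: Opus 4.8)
The plan is to prove the two claims that characterize a basis of a closed set: that $\overline{Y} = X$ and that $\zs(Y) = 0$, where $Y$ is the ``coordinate hyperplane union'' through $x_0$. For the second claim, suppose $a \in \zs(Y)$ is nonzero. Since $Y$ is the union over $s \in S$ of the sets $Y_s := \{(x_t)_t : x_s = x_{0s}\}$, and an $s$-line $L$ meets $Y_s$ in exactly the single point obtained from its defining tuple by setting the $s$-coordinate to $x_{0s}$ (assuming that point is in $Y$), I would run an induction on $|S|$. Fix any $s$; restricting the zero-sum conditions for $s$-lines that lie entirely inside $Y$ (which forces the relevant line to already be in $Y$ via another coordinate) lets one peel off the ``slice'' $Y_s \setminus \bigcup_{t \ne s} Y_t$ and reduce to the analogous statement over $\prod_{t \ne s} X_t$ with fewer factors; the base case $|S| = 1$ is immediate since then $Y = \{x_0\}$ and $\zs(\{x_0\})$ consists of $a$ with $a(x_0) = 0$. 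Alternatively, and perhaps more cleanly, one can argue directly: any $s$-line through a point of $Y$ with all other coordinates fixed either lies in $Y$ or meets $Y$ in exactly one point, and by descending on the number of coordinates of a point in which it differs from $x_0$ one shows $a$ must vanish identically. I would present whichever is shorter once the bookkeeping is done.

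For the claim $\overline{Y} = X$, take any $x = (x_s)_s \in X$; I must exhibit $a \in \zs(Y \cup \{x\})$ with $a(x) = 1$, per the characterization recorded at the end of Definition \ref{defn:closed}. The natural candidate is $a = \Delta(x, x_0)$ (or a suitable modification landing in $\zs$ even when $x$ and $x_0$ share coordinates). Indeed $\Delta(x, x_0)$ has value $1$ at $x$ and is supported on the ``box'' $\prod_s \{x_s, x_{0s}\}$; every point of that box other than $x$ itself agrees with $x_0$ in at least one coordinate, hence lies in $Y$, so $\Delta(x, x_0)$ is supported on $Y \cup \{x\}$. It remains to check $\Delta(x, x_0) \in \zs(Y \cup \{x\})$, i.e. that its restriction to every $s$-line meeting $Y \cup \{x\}$ sums to zero; this follows from the product formula for $\Delta$ exactly as in Example \ref{ex:2_squared}, since on any $s$-line the factor $\Delta_s$ contributes $+1$ at $x_s$, $-1$ at $x_{0s}$ (when distinct), and $0$ elsewhere, so the sum telescopes to zero. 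When $x_s = x_{0s}$ for some $s$ the point $x$ already lies in $Y$ and there is nothing to prove.

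The main obstacle I anticipate is the induction/bookkeeping in the $\zs(Y) = 0$ direction: one has to be careful about which $s$-lines are actually contained in $Y$ versus meeting it in one point, and to make sure the reduction to a product over $\prod_{t\ne s} X_t$ is set up so that the inductive hypothesis applies to the image of $x_0$ there. Everything else — the $\overline{Y} = X$ direction and the final invocation of the preceding proposition to conclude ``basis'' — is routine once $\Delta(x,x_0)$ is in hand. I would organize the write-up as: (1) recall $\zs$ and $\Delta$; (2) show $\Delta(x,x_0) \in \zs(Y\cup\{x\})$ with value $1$ at $x$, giving $\overline Y = X$; (3) the induction showing $\zs(Y) = 0$; (4) cite the characterization of bases to finish.
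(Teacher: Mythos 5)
Your proposal is correct and essentially reproduces the paper's proof: the $\overline Y = X$ direction via $\Delta(x,x_0)$ (observing it is supported on $Y\cup\{x\}$ and lies in $\zs(Y\cup\{x\})$ with value $\pm 1$ at $x$) is exactly what the paper does, and your ``alternative'' descent for $\zs(Y)=0$ — choosing a nonzero entry of $a$ at a point minimizing the number of coordinates agreeing with $x_0$, then using the zero-sum-in-lines condition in a coordinate $s$ where it agrees to force a nonzero entry at a point with strictly fewer agreements (or no point at all in $Y$ on that line, forcing $a(x)=0$), contradiction — is precisely the paper's argument. The induction-on-$|S|$ variant you floated first is unnecessary and, as you anticipated, messier; the direct descent closes the bookkeeping cleanly.
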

\begin{proof}
Given $x_1$ outside $Y$, the element $\Delta(x_0, x_1)$ lies in $\zs(Y \cup \{x_1\})$, so $\overline{Y}$ contains $x_1$. So the closure of $Y$ is $X$.

Given $a$ in $\zs(Y)$, we need to show that $a$ is $0$. So take $Z$ to be the set of $x \in Y$ for which $a(x)$ is nonzero. Suppose this set is nonempty, and choose $x$ in $Z$ so that
\[\# \{s \in S\,:\,\, \pi_s(x) = x_{0s}\}\]
is minimized. Choose $s \in S$ so $\pi_s(x) = x_{0s}$.

From the zero-sum-in-lines condition, there is some other $x'$ on the $s$-line through $x$ so $a(x')$ is nonzero. But then
\[\# \{t \in S\,:\,\, \pi_t(x') = x_{0t}\}\, =\, \# \{t \in S\,:\,\, \pi_t(x) = x_{0t}\} - 1,\]
contradicting our choice of $x$. So $Z$ is empty and $\zs(Y)$ is $0$.
\end{proof}

\begin{prop}
\label{prop:basis_bound}
Any basis $Y$ of a closed subset  of $X$ satisfies
\[|Y|\, \le\, |X| - \prod_{s \in S} \big(|X_s| - 1 \big)\, \le\, |X| \cdot \sum_{s \in S} 1/|X_s|.\]
\end{prop}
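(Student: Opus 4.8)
The plan is to translate the zero-sums-in-lines condition into linear algebra over a field and then read off the bound from dimensions. The second inequality is purely arithmetic: writing $n_s = |X_s|$ and dividing through by $|X| = \prod_{s} n_s$, it becomes $1 - \prod_{s}(1 - n_s^{-1}) \le \sum_s n_s^{-1}$, which is the elementary estimate $\prod_s(1 - t_s) \ge 1 - \sum_s t_s$ for $t_s \in [0,1]$ (a one-line induction on $|S|$, splitting off one factor; when some $t_s = 1$ the left side is $0$ and $\sum_s t_s \ge 1$). So the substance is the first inequality.

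For that, let $Y$ be a basis of a closed subset of $X$; by the preceding proposition characterizing bases, $\zs(Y) = 0$. Observe that $\zs(Y)$ is the kernel of the $\Zlz$-linear "line-sum" map $\Zlz^Y \to \prod_{L} \Zlz$ sending $a$ to $\big(\sum_{x \in L \cap Y} a(x)\big)_L$, the product running over all lines $L$ of $X$. Let $Q$ denote the fraction field of $\Zlz$; since $\Zlz$ is a discrete valuation ring (Definition \ref{defn:twistable}), $Q$ is a localization of $\Zlz$ and hence flat over it, so flat base change identifies $\zs(Y) \otimes_{\Zlz} Q$ with the kernel $\zs_Q(Y)$ of the same line-sum map over $Q$ (equivalently, the space of functions $Y \to Q$ summing to zero on every $L \cap Y$). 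Thus $\zs_Q(Y) = 0$. This is the only place the coefficient ring enters, consistent with the bound being independent of $\ell$ although closure is not.

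It remains to compute over $Q$. Identify $Q^X$ with $\bigotimes_{s \in S} Q^{X_s}$ using $X = \prod_s X_s$; then the map "sum along $s$-lines" is $\Sigma_s = (\Sigma \colon Q^{X_s} \to Q) \otimes \mathrm{id}$, whose kernel is $V_s^0 \otimes \bigotimes_{t \ne s} Q^{X_t}$, where $V_s^0 := \ker(\Sigma \colon Q^{X_s} \to Q)$ has dimension $n_s - 1$. Fixing a one-dimensional complement $W_s$ of $V_s^0$ in $Q^{X_s}$ and expanding $\bigotimes_s (V_s^0 \oplus W_s)$ into its $2^{|S|}$ summands indexed by subsets of $S$, one reads off $\zs_Q(X) = \bigcap_s \ker \Sigma_s = \bigotimes_s V_s^0$, so $\dim_Q \zs_Q(X) = \prod_s (n_s - 1)$. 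Writing $Q^Y \subseteq Q^X$ for the coordinate subspace of functions vanishing off $Y$ (of dimension $|Y|$), extension by zero identifies $\zs_Q(Y)$ with $\zs_Q(X) \cap Q^Y$, so $\dim_Q \zs_Q(Y) \ge \dim_Q \zs_Q(X) + |Y| - |X| = \prod_s(n_s - 1) + |Y| - |X|$. Combined with $\zs_Q(Y) = 0$, this yields $|Y| \le |X| - \prod_s(n_s - 1)$.

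The only step that is not pure bookkeeping is the identity $\bigcap_s \ker \Sigma_s = \bigotimes_s V_s^0$ together with its dimension; the one point that calls for care is verifying that the line-sum condition is genuinely $\Zlz$-linear, so that it commutes with $-\otimes_{\Zlz} Q$ and passing to the fraction field faithfully transports the vanishing $\zs(Y) = 0$. The arithmetic inequality, the tensor-product bookkeeping, and the dimension estimate for an intersection with a coordinate subspace are all routine.
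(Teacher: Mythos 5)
Your proof is correct and arrives at the same bound via the same essential mechanism as the paper — tensor with the fraction field and count dimensions against $\zs(Y) = 0$ — but the route to the key dimension is genuinely different. The paper's proof is a one-liner that cites Proposition \ref{prop:basis_X}: it embeds $\Zlz^Y \hookrightarrow \Zlz^X/\zs(X) \cong \Zlz^B$, where $B$ is the explicitly constructed basis of $X$ with $|B| = |X| - \prod_s(|X_s| - 1)$. Your argument bypasses Proposition \ref{prop:basis_X} entirely and instead computes $\dim_Q \zs_Q(X) = \prod_s(|X_s| - 1)$ directly from the identification $Q^X \cong \bigotimes_s Q^{X_s}$, reading off $\bigcap_s \ker \Sigma_s = \bigotimes_s V_s^0$ from the decomposition into the $2^{|S|}$ summands. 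You then close with the intersection-dimension estimate $\dim(\zs_Q(X) \cap Q^Y) \ge \dim \zs_Q(X) + |Y| - |X|$, which is the dual formulation of the paper's ``inject into the quotient.'' What you gain: the tensor computation is a self-contained, linear-algebraic derivation of the critical dimension, independent of the combinatorial basis construction, and it makes transparent that the bound depends only on working over a field (hence is $\ell$-independent), which the paper signals only obliquely via Remark 7.3's caveat. What the paper's version gains: brevity, and direct reuse of the explicit basis $B$, which is the object actually needed later in the combinatorial argument (Proposition \ref{prop:bye_Ramsey}). Both are valid; yours trades one line citing a prior result for a short, verifiable calculation.
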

\begin{proof}
Take $B$ to be the basis for $X$ constructed in Proposition \ref{prop:basis_X}. We then have an embedding
\[\Zlz^{Y} \xhookrightarrow{\quad} \Zlz^{X}/\zs(X) \cong \Zlz^B.\]
Tensoring with $\QQ_{\ell}$ and dimension counting gives $|Y| \le |B|$, giving the proposition.
\end{proof}

We can now state the main combinatorial ingredient in the proof of Theorem \ref{thm:higher_smallgrid}. The result concerns the $\Zlz$ module $\tfrac{1}{\ell}\Z/\Z$, where $\xi$ acts as the identity.

\begin{prop}
\label{prop:bye_Ramsey}
Given $\Zlz$ and any $X = \prod_{s \in S} X_s$ as in Notation \ref{notat:gen_tar}, and given any positive integer $M$, there is a choice of $g: X \to \tfrac{1}{\ell}\Z/\Z$ so the following holds:

For any sequence $Y_1, Y_2, \dots, Y_M$ of disjoint closed sets in $X$, and given any function 
\[f: Y\to \tfrac{1}{\ell}\Z/\Z\quad\text{with}\quad Y = \bigcup_{i=1}^m Y_i\]
satisfying
\begin{equation}
\label{eq:fg_zs}
a \cdot f = a \cdot g \quad\text{for all }\, i \le M \,\text{ and }\, a\in \zs(Y_i),
\end{equation}
we have
\begin{equation}
\label{eq:bye_Ramsey}
\Big|\left|f^{-1}(c)\right| \,-\, \tfrac{1}{\ell}|Y|\Big| \,\le\, \left(M \cdot \log(\ell|X|) \cdot \sum_{s \in S} 1/|X_s|\right)^{1/2} \cdot |X|.
\end{equation}
\end{prop}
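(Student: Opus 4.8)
The plan is to produce $g$ by the probabilistic method: choose $g\colon X\to\tfrac{1}{\ell}\Z/\Z$ uniformly at random among the $\ell^{|X|}$ possibilities, and show that with positive probability the estimate \eqref{eq:bye_Ramsey} holds simultaneously for every tuple of disjoint closed sets $Y_1,\dots,Y_M$, every $f$ satisfying \eqref{eq:fg_zs}, and every $c$. Write $L=|X|\cdot\sum_{s\in S}1/|X_s|$, so that by Proposition \ref{prop:basis_bound} every basis of every closed subset of $X$ has at most $L$ elements; in particular $\rank_{\Zlz}\big(\Zlz^{Y'}/\zs(Y')\big)\le L$ for every closed $Y'\subseteq X$.

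First I would unwind the constraint \eqref{eq:fg_zs}. Since $\omega$ annihilates $\tfrac{1}{\ell}\Z/\Z$, the product $a\cdot f$ depends only on the reduction $\bar a$ of $a$ modulo $\omega$, which lies in $\mathbb{F}_{\ell}^{Y_i}$; thus \eqref{eq:fg_zs} says exactly that $(f-g)|_{Y_i}$ is orthogonal to the image $W_i$ of $\zs(Y_i)$ in $\mathbb{F}_{\ell}^{Y_i}$. The submodule $\zs(Y_i)$ is saturated in the free module $\Zlz^{Y_i}$, since the quotient injects into a product of copies of $\Zlz$ and hence is free over the discrete valuation ring $\Zlz$; therefore $\dim_{\mathbb{F}_{\ell}}W_i=\rank\zs(Y_i)$ and $\dim_{\mathbb{F}_{\ell}}W_i^{\perp}=\rank\big(\Zlz^{Y_i}/\zs(Y_i)\big)\le L$. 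Consequently, once the $Y_i$ and $g$ are fixed, the functions $f$ obeying \eqref{eq:fg_zs} are precisely those of the form $f|_{Y_i}=g|_{Y_i}+v_i$ with $(v_i)_i$ ranging over the fixed set $\prod_i W_i^{\perp}$, of cardinality at most $\ell^{ML}$.

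The key probabilistic input is that for fixed disjoint closed sets $(Y_i)$, a fixed tuple $(v_i)\in\prod_i W_i^{\perp}$, and a fixed $c$, the random variable $|f^{-1}(c)|=\sum_{x\in Y}\mathbf{1}[g(x)=c-v(x)]$ (with $v|_{Y_i}=v_i$ and $Y=\bigcup_i Y_i$) is a sum of $|Y|$ independent Bernoulli$(1/\ell)$ variables, because the values $g(x)$ for $x\in Y$ are independent and uniform. Hoeffding's inequality then gives
\[
\Pr\!\left[\,\big|\,|f^{-1}(c)|-\tfrac1\ell|Y|\,\big|>t\,\right]\;\le\;2\exp\!\big(-2t^2/|Y|\big)\;\le\;2\exp\!\big(-2t^2/|X|\big).
\]
Finally I would union bound over all admissible triples $\big((Y_i),(v_i),c\big)$: the number of $M$-tuples of disjoint closed subsets of $X$ is at most $\big(\sum_{j\le L}\binom{|X|}{j}\big)^M$, because every closed set is the closure of a subset of size at most $L$ by Proposition \ref{prop:basis_bound}; for each such tuple there are at most $\ell^{ML}$ choices of $(v_i)$ and $\ell$ choices of $c$. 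The logarithm of this total count is $O\big(ML\log(\ell|X|)\big)$, so taking $t$ equal to the right-hand side of \eqref{eq:bye_Ramsey}, whence $2t^2/|X|=2ML\log(\ell|X|)$, the union bound forces the failure probability below $1$; any $g$ outside this bad event has the required property.

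The main obstacle is the constant bookkeeping in this last step: one must check that the entropy of the union bound — the logarithm of the number of admissible triples — is genuinely dominated by $2ML\log(\ell|X|)$ with the stated leading constant $1$. This is exactly where Proposition \ref{prop:basis_bound} is indispensable: it simultaneously keeps the number of closed subsets of $X$ polynomial of degree at most $L$ in $|X|$ rather than exponential in $|X|$, and bounds each $\dim W_i^{\perp}$ by $L$, with the very quantity $L=|X|\sum_s 1/|X_s|$ that appears in \eqref{eq:bye_Ramsey}. (In the few cases where $|X|$ is small the crude estimate $\sum_{j\le L}\binom{|X|}{j}$ overcounts the closed sets badly, and one either uses the exact, much smaller count or notes that the deviation there is trivially within the claimed bound.)
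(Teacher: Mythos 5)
Your proof is correct and takes essentially the same approach as the paper's: the existence of $g$ is established by a counting/probabilistic argument, with Hoeffding's inequality controlling each fixed deviation event and Proposition~\ref{prop:basis_bound} bounding the entropy of the union bound through the basis-size estimate $|B|\le L=|X|\sum_s 1/|X_s|$. The one organizational difference is the treatment of the value $c$: instead of union bounding over all $\ell$ values of $c$, the paper observes that any $a\in\zs(Y_i)$ satisfies $\sum_{x}a(x)=0$ (sum the zero-in-lines conditions over a partition of $Y_i$ into $s$-lines), so $f-c$ satisfies \eqref{eq:fg_zs} whenever $f$ does, reducing the claim to $c=0$; this saves exactly the factor of $\ell$ that makes your union bound marginal at the extremal case $M=L=1$, $|X|=2$. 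Your proposed remedy there — that the deviation is trivially at most $|Y|(1-1/\ell)\le 2-2/\ell$, already below the stated right-hand side — does close the gap, so the proof goes through either way.
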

\begin{proof}
The case $|X| = 1$ is easily checked. So suppose $|X| > 1$.

Write the right hand side of \eqref{eq:bye_Ramsey} as $\delta|X|$. Given $Y$, the number of $f: Y \to \tfrac{1}{\ell}\Z/\Z$ satisfying 
\[\Big|\left|f^{-1}(0)\right| \,-\, \tfrac{1}{\ell}|Y| \Big| \ge \delta |X|\]
is bounded by 
\[2 \exp\left(-\frac{2 \delta^2 |X|^2}{|Y|}\right)\cdot \ell^{|Y|}\]
by Hoeffding's inequality \cite[Theorem 1]{Okam59}. Take $B$ to be the basis for $X$ constructed in Proposition \ref{prop:basis_X}. Given $Y_1, \dots, Y_M$, the number of $g$ satisfying \eqref{eq:fg_zs} for a given $f$ is bounded by
\[\ell^{|X| - |Y| + M |B|},\]
since the restriction of $g$ to $Y_i$ can be determined from $f$ and the values $g$ takes on some basis of $Y_i$.

The number of possible $Y$ is bounded by the number of $M$-tuples of bases, which is bounded by $|X|^{M |B|}$. Combining these estimates, the number of $g$ for which \eqref{eq:bye_Ramsey} does not follow at $c = 0$ for some choice of $Y_1, \dots, Y_M$ and some $f$ satisfying \eqref{eq:fg_zs} is bounded by
\[2 \exp\left(-2 \frac{\delta^2 |X|^2}{|Y|}\right)\cdot (\ell |X|)^{M |B|} \cdot \ell^{|X|} < \ell^{|X|}.\]
So some $g$ is outside this set. Since \eqref{eq:bye_Ramsey} holds for all $f$ satisfying \eqref{eq:fg_zs} in the case $c = 0$, we find that it also holds for general $c$ by considering functions $f$ shifted by a constant.
\end{proof}

\subsection{Interesting submodules of $N(X)$}
\label{ssec:interesting}
Take $I$ to be an ideal of the power set $\mathscr{P}(S)$ of $S$ viewed as a lattice; that is, take $I$ to be a nonempty set of subsets of $S$ so every subset of every $U$ in $I$ is also in $I$. Take $b$ to be a nonnegative integer so every subset in $I$ has cardinality at most $b$. Given a subset $Y$ of $X$, we define
\[\zs_{I, b}(Y) = \left(\sum_{U \in I } \omega^{b - |U|} \cdot \zs(U, X) \right)\, \cap\, \Zlz^Y,\]
and we then define
\[N_{I, b}(Y) = \left\{n \in N(Y) \,:\,\, a \cdot \beta(n) = 0 \,\text{ for }\, a \in \zs_{I, b}(Y)\right\}\]
for any $N$ in $\textbf{Mod}_{G, \xi}$. In the case where $I$ is $\{\emptyset\}$, $N_{I, b}(Y)$ equals the $\omega^b$-torsion of $N(Y)$. As more sets are added to $I$, this module decreases in size. We take $N_I(Y)$ as shorthand for $N_{I, |S|}(Y)$.

We call $N$ divisible if $N = \ell N$.

\begin{ex}
Choose $(x_{0s})_{s \in S}$, and suppose that the image of the set of homomorphisms
\[\{\chi_s(x_s)\chi_s(x_{0s})^{-1}\,:\,\, s \in S,\, x_s \in X_s \backslash \{x_{0s}\}\}\]
in $\Hom\left(G, \,\langle \xi\rangle/ \langle \xi^{\ell}\rangle\right)$ spans a $\sum_{s \in S} |X_s| - 1$ dimensional subspace of this space of homomorphisms. Suppose further that $N$ is a divisible module.

Then $N_{\mathscr{P}(S)}(Y)$ is the minimal $G$-submodule of $N(Y)$ containing $(\beta_x^{-1}(n))_{x \in Y}$ for every $n$ in $N\left[\omega^{|S|}\right]$. That is, this is the minimal $G$-submodule containing the diagonal image of $N\left[\omega^{|S|}\right]$ in $N(Y)$.

This result will follow as a consequence of Proposition \ref{prop:closed_under_G} and the relation \eqref{eq:sig_n}.
\end{ex}

\begin{prop}
\label{prop:Nib_surj}
Given any divisible $N$, and given $Y \subseteq Y' \subseteq X$ and $I$ and $b$ as above, the map
\[N_{I, b}(Y') \to N_{I, b}(Y)\]
given by restricting the natural projection is surjective.
\end{prop}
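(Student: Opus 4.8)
The plan is to reduce the statement to a purely linear-algebraic fact about the groups $\zs_{I,b}$ and then apply divisibility of $N$. First I would fix $Y \subseteq Y' \subseteq X$, and note that the projection $N(Y') \to N(Y)$ (coordinate deletion on the direct sum) certainly carries $N_{I,b}(Y')$ into $N_{I,b}(Y)$: if $n' \in N(Y')$ satisfies $a \cdot \beta(n') = 0$ for all $a \in \zs_{I,b}(Y')$, and $n$ is its restriction to $Y$, then for $a \in \zs_{I,b}(Y)$ we may extend $a$ by zero to an element of $\Zlz^{Y'}$, and this extension lies in $\zs_{I,b}(Y')$ because each summand $\omega^{b-|U|}\zs(U,X)$ is defined on all of $X$ and the ``zero-sums-in-$s$-lines for $s \in U$'' condition is unaffected by appending zeros (this is exactly the restriction maps $\zs(U,Y) \to \zs(U,Y')$ already noted in Definition \ref{defn:closed}); hence $a \cdot \beta(n) = a \cdot \beta(n') = 0$. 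So the map is well defined, and surjectivity is the content to prove.

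For surjectivity, given $n \in N_{I,b}(Y)$ I would lift it coordinatewise to some $\tilde{n} \in N(Y')$ agreeing with $n$ on $Y$ and equal to $0$ on $Y' \setminus Y$; the obstruction to $\tilde{n} \in N_{I,b}(Y')$ is that there may be $a \in \zs_{I,b}(Y')$ with $a \cdot \beta(\tilde{n}) \neq 0$. The key step is to measure this obstruction through the exact sequence of $\Zlz$-modules obtained by restriction,
\[
0 \to \zs_{I,b}(Y') \cap \Zlz^{Y' \setminus Y} \to \zs_{I,b}(Y') \to \zs_{I,b}(Y)
\]
together with the dual picture: the condition $a \cdot \beta(\tilde n) = 0$ for all $a \in \zs_{I,b}(Y')$ is equivalent to $\beta(\tilde n)$ annihilating $\zs_{I,b}(Y')$ under the natural pairing $\Zlz^{Y'} \times (N^{?})^{Y'} \to$ (something), and since $n \in N_{I,b}(Y)$ already kills $\zs_{I,b}(Y)$, the only remaining requirement is to adjust $\tilde n$ on the coordinates $Y' \setminus Y$ so as to kill the image of $\zs_{I,b}(Y') \to \Zlz^{Y'\setminus Y}$ induced by projection. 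Concretely: the map $\zs_{I,b}(Y') \to \zs_{I,b}(Y)$ need not be surjective, but its cokernel is an obstruction landing in a quotient that — because the ambient modules $\omega^{b-|U|}\zs(U,X)$ are $\Zlz$-submodules of the free module $\Zlz^X$ and $N$ is divisible, hence $\ell$-divisible and $\omega$-divisible up to the relevant torsion — can be hit: one uses that $\Hom_{\Zlz}(-, N)$ is exact on the relevant finitely-generated pieces when $N$ is divisible, so that any functional on $\zs_{I,b}(Y')$ extending a given functional on $\zs_{I,b}(Y)$ and vanishing appropriately can be realized by a modification of $\tilde n$ supported on $Y' \setminus Y$.

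The cleanest way to organize the last paragraph, and the one I would actually write out, is: apply $\Hom_{\Zlz}(-,N)$ to the short exact sequence
\[
0 \to \zs_{I,b}(Y) \to \zs_{I,b}(Y') \to Q \to 0,
\]
where $Q$ is the image of $\zs_{I,b}(Y')$ under coordinate-restriction to $Y' \setminus Y$ (note $\zs_{I,b}(Y) = \ker$ of that restriction since an element of $\Zlz^{Y'}$ supported on $Y$ lies in $\zs_{I,b}(Y')$ iff its restriction to $Y$ lies in $\zs_{I,b}(Y)$, by the line-sum reformulation above). Then $N_{I,b}(Y') \to N_{I,b}(Y)$ is identified with a piece of the long exact sequence $\Hom(Q,N) \to \Hom(\zs_{I,b}(Y'),N) \to \Hom(\zs_{I,b}(Y),N) \to \Ext^1(Q,N)$, and the claim is that $\Ext^1_{\Zlz}(Q,N) = 0$. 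This holds because $Q$ is a submodule of the free $\Zlz$-module $\Zlz^{Y'\setminus Y}$, hence a direct sum of ideals of the DVR $\Zlz$, each of the form $\omega^j \Zlz \cong \Zlz$; so $Q$ is free, and $\Ext^1_{\Zlz}(\text{free},N) = 0$ for any $N$.

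I expect the main obstacle to be the bookkeeping in the middle paragraph — namely verifying precisely that $N_{I,b}(Y) = \Hom_{\Zlz}(\Zlz^Y/\zs_{I,b}(Y), N)$-flavored object, i.e.\ setting up the correct perfect-pairing/duality identification so that ``$a\cdot\beta(n)=0$ for all $a$'' becomes ``$\beta(n)$ is a homomorphism on the quotient $\Zlz^Y/\zs_{I,b}(Y)$,'' and checking that coordinate restriction on the $N$ side is dual to coordinate inclusion (extend-by-zero) on the $\zs$ side. Once that duality is in place, the surjectivity is forced by the vanishing of $\Ext^1$ over the DVR $\Zlz$ together with divisibility of $N$ (which is what makes $\Hom(-,N)$ right-exact on the torsion quotients that appear), exactly as the parenthetical remark after the proposition in the excerpt ("$N$ divisible") signals. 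I would also double-check the edge case where $\zs_{I,b}(Y')$ fails to be finitely generated — but since $X$ is finite, $\Zlz^{Y'}$ is a finite free module and all submodules here are finitely generated, so this is not a real issue.
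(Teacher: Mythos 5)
Your strategy---dualize into $\Hom_{\Zlz}(-,N)$ and detect the obstruction with an $\Ext^1$ vanishing---can be made to work and is genuinely different from the paper's argument, but the short exact sequence you chose is the wrong one, and the slip is not cosmetic: your version of the argument never actually invokes divisibility of $N$, which should be a red flag since the statement is false without that hypothesis.

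The correct dualization is as follows. Via $\beta$ and the pairing $a\cdot m = \sum_y a(y)m(y)$, you get $N_{I,b}(Y) \cong \Hom_{\Zlz}\bigl(\Zlz^Y/\zs_{I,b}(Y),\, N\bigr)$, and under this identification the restriction $N_{I,b}(Y') \to N_{I,b}(Y)$ becomes precomposition with the map $\Zlz^Y/\zs_{I,b}(Y) \to \Zlz^{Y'}/\zs_{I,b}(Y')$, which is injective precisely because $\zs_{I,b}(Y')\cap\Zlz^Y = \zs_{I,b}(Y)$ (your observation, which is correct). So the sequence you should feed to $\Hom(-,N)$ is
\[
0 \to \Zlz^{Y}/\zs_{I,b}(Y) \to \Zlz^{Y'}/\zs_{I,b}(Y') \to C \to 0,
\]
and the proposition is exactly the assertion that $\Ext^1_{\Zlz}(C,N)=0$. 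Here $C \cong \Zlz^{Y'\setminus Y}/Q$ with $Q$ the image of $\zs_{I,b}(Y')$ under restriction to $Y'\setminus Y$. This cokernel is generally \emph{not} free: it can have $\omega$-torsion summands $\Zlz/\omega^j\Zlz$, and $\Ext^1_{\Zlz}(\Zlz/\omega^j\Zlz, N) = N/\omega^j N$, which vanishes exactly because $N$ is divisible (hence $\omega$-divisible, since $\omega\mid\ell$ in $\Zlz$). That is precisely where the hypothesis lives.

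What you wrote instead applies $\Hom(-,N)$ to $0 \to \zs_{I,b}(Y) \to \zs_{I,b}(Y') \to Q \to 0$ and reads $N_{I,b}(Y')\to N_{I,b}(Y)$ off as $\Hom(\zs_{I,b}(Y'),N)\to\Hom(\zs_{I,b}(Y),N)$. But $N_{I,b}(Y)$ is $\Hom$ out of the \emph{quotient} $\Zlz^Y/\zs_{I,b}(Y)$, not out of the submodule $\zs_{I,b}(Y)$; these are different objects (one sits inside $N^Y$, the other is a quotient of it). And your $Q$, being a submodule of a free module over the DVR $\Zlz$, is itself free, so $\Ext^1_{\Zlz}(Q,N)=0$ for purely formal reasons with no reference to $N$---the fact that divisibility never entered should have prompted suspicion.

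Once the sequence is corrected your route works and is a tidy homological alternative. The paper's proof is more hands-on: reduce to adjoining a single point $x$; among $a\in\zs_{I,b}(Y\cup\{x\})$ pick one minimizing the $\omega$-valuation of $a(x)$; use divisibility of $N$ to solve $a\cdot\beta(n')=0$ for the $x$-coordinate of the lift; then any other $a_1$ agrees with a $\Zlz$-multiple of $a$ on the $x$-coordinate, so $a_1 - ca \in\zs_{I,b}(Y)$ and the remaining conditions are already satisfied by $n$.
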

\begin{proof}
It suffices to prove this result in the case where $Y' = Y \cup \{x\}$ for some $x$ outside $Y$. Choose $a \in \zs_{I, b}(Y')$ for which the valuation of $a(x)$ at $\omega$ is minimized. From the divisibility hypothesis, given $n$ in $N_{I, b}(Y)$, we can choose $n'$ in $N(Y \cup \{x\})$ projecting to $n$ so $a \cdot\beta(n') = 0$. 

Given any other $a_1 \in \zs_{I, b}(Y')$, there must be $c \in \Zlz$ so 
\[(a_1 - ca)(x) = 0.\]
Then $a_1 - ca$ is in $\zs_{I, b}(Y)$, so $a_1 \cdot \beta(n') = 0$. This confirms that $n'$ is in $N_{I,b}(Y')$, establishing surjectivity.
\end{proof} 

\begin{prop}
\label{prop:Nib_bij}
Given any divisible $N$ and subset $Y$ of $X$, the projection from $N\left(\overline{Y}\right)$ to $N(Y)$ defines an isomorphism
\[N_{\mathscr{P}(S), |S|}\left(\overline{Y}\right) \isoarrow N_{\mathscr{P}(S), |S|}(Y).\]
\end{prop}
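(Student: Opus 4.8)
The plan is to deduce Proposition \ref{prop:Nib_bij} from the surjectivity statement of Proposition \ref{prop:Nib_surj} together with a dimension count, using the defining property of the closure $\overline{Y}$ from Definition \ref{defn:closed}. First I would observe that, since $Y \subseteq \overline{Y}$, the natural projection $N(\overline{Y}) \to N(Y)$ restricts to a map $N_{\mathscr{P}(S), |S|}(\overline{Y}) \to N_{\mathscr{P}(S), |S|}(Y)$, and this map is surjective by Proposition \ref{prop:Nib_surj}. So the entire content is injectivity, i.e. that no nonzero element of $N_{\mathscr{P}(S), |S|}(\overline{Y})$ vanishes on $Y$.

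The key point is that the kernel of $N(\overline{Y}) \to N(Y)$ is $\bigoplus_{x \in \overline{Y}\setminus Y} N^{\chi(x)}$, and for each such $x$ there is, by the characterization at the end of Definition \ref{defn:closed}, an element $a_x \in \zs(Y \cup \{x\}) \subseteq \zs(\overline{Y})$ with $a_x(x) = 1$. Suppose $n = (n_y)_{y \in \overline{Y}}$ lies in $N_{\mathscr{P}(S), |S|}(\overline{Y})$ and projects to $0$, so $n_y = 0$ for $y \in Y$. I want to show $n_x = 0$ for all $x \in \overline{Y}\setminus Y$ as well. Since $\zs_{\mathscr{P}(S), |S|}(\overline{Y})$ contains $\zs(S, X) \cap \Zlz^{\overline{Y}} = \zs(\overline{Y})$ (taking the term $U = S$, where $\omega^{|S| - |U|} = 1$), and hence contains each $a_x$, the defining relation $a \cdot \beta(n) = 0$ for all $a \in \zs_{\mathscr{P}(S),|S|}(\overline{Y})$ applies in particular to $a = a_x$. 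Applying the map $\beta$ (which is an isomorphism of $\Zlz$-modules componentwise), the condition $a_x \cdot \beta(n) = 0$ reads $\sum_{z \in \overline{Y}} a_x(z) \beta_z(n_z) = 0$ in $N$; but $a_x$ is supported on $Y \cup \{x\}$ and $n_z = 0$ for $z \in Y$, so this collapses to $a_x(x)\,\beta_x(n_x) = \beta_x(n_x) = 0$, whence $n_x = 0$. This proves injectivity. An alternative, cleaner packaging would be to note that $\zs_{\mathscr{P}(S),|S|}(\overline{Y})$ surjects onto $\zs_{\mathscr{P}(S),|S|}(\overline{Y})/\zs_{\mathscr{P}(S),|S|}(Y)$ with enough room — via \eqref{eq:closure_def} — to cut out precisely the new coordinates; the argument above makes this explicit one coordinate at a time, which also sidesteps any worry about whether the closure is built up by adjoining single points (it is, by induction on $|\overline{Y}\setminus Y|$, using that the closure operation is idempotent).

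I do not expect a serious obstacle here; the one point that needs a little care is checking that $\zs(\overline{Y})$ — equivalently each $a_x$ — really does lie in $\zs_{\mathscr{P}(S),|S|}(\overline{Y})$, which requires noting that $S \in \mathscr{P}(S)$ and that the corresponding weight $\omega^{b-|U|}$ is $1$ when $b = |S| = |U|$, so that $\zs(S, X) \cap \Zlz^{\overline{Y}}$ is one of the summands defining $\zs_{\mathscr{P}(S),|S|}(\overline{Y})$. After that, the surjectivity direction is immediate from Proposition \ref{prop:Nib_surj} and the injectivity direction is the coordinate-by-coordinate argument above, so the whole proof is short.
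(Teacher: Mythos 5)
Your argument matches the paper's proof exactly: surjectivity is handed off to Proposition \ref{prop:Nib_surj}, and injectivity comes from picking, for each $x \in \overline{Y}\setminus Y$, an element $a_x \in \zs(Y \cup \{x\})$ with $a_x(x) = 1$ and applying the relation $a_x \cdot \beta(n) = 0$. The extra care you take in verifying that $\zs(\overline{Y}) \subseteq \zs_{\mathscr{P}(S),|S|}(\overline{Y})$ (via the $U = S$ summand) is a detail the paper leaves implicit, but it is the right justification and there is no gap.
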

\begin{proof}
We need to show that this map is injective, so take $n$ to be in its kernel. Given $x$ in $\overline{Y} \backslash Y$, we can find $a$ in $\zs(Y \cup \{x\})$ such that $a(x) = 1$. Since $a \cdot \beta(n) = 0$, we find that $n(x) = 0$. Since this holds for all $x$ in $\overline{Y}\backslash Y$, we find $n  = 0$, so the kernel of this map is trivial.
\end{proof}

\begin{defn}
\label{defn:eta}
Given $m \in N$, $U \subseteq S$, and a tuple $(x_{0s})_{s \in U}$ in $\prod_{s \in U} X_s$, we define
\begin{equation}
\label{eq:eta_gen}
\eta\big(U,\, (x_{0s})_{s \in U},\, m\big)
\end{equation}
to be the unique element $n$ in $N(X)$ satisfying
\[n(x) = \begin{cases} \beta_x^{-1}(m) &\text{ if } \pi_s(x) = x_{0s} \text{ for all } s\in U \\ 0 &\text{ otherwise}\end{cases}\]
for all $x \in X$. The element $\beta(n)$ is then nonzero on a fixed $|S| - |U|$ dimensional subgrid of $X$, where it takes the constant value $m$.

Given an ideal $I$ in $\mathscr{P}(S)$ and nonnegative integer $b$ no smaller than the maximal cardinality of a set in $I$, we call an element of the form \eqref{eq:eta_gen} an \emph{eta element for} $(I, b)$ if $m$ lies in $N[\omega^{b-t}]$, where $t$ is the maximal size of a subset of $U$ contained in $I$.
\end{defn}
\begin{prop}
\label{prop:eta_basis}
Given any $N$, $X$, $I$, and $b$ as in Definition \ref{defn:eta}, the eta elements for $(I, b)$ generate the module $N_{I, b}(X)$ over $\Z$.
\end{prop}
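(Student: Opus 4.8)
The plan is to prove both inclusions after transporting everything through the isomorphism $\beta\colon N(X)\to N^X$, $n\mapsto(\beta_x(n(x)))_{x\in X}$. Under $\beta$ the eta element $\eta(U,(y_s)_{s\in U},m)$ becomes the step function $e_{U,(y_s),m}\colon X\to N$ equal to $m$ on the subgrid $\{x\in X:\pi_s(x)=y_s\text{ for all }s\in U\}$ and $0$ elsewhere, while $N_{I,b}(X)$ becomes $\{f\in N^X:a\cdot f=0\text{ for all }a\in\zs_{I,b}(X)\}$, where $\zs_{I,b}(X)=\sum_{U\in I}\omega^{b-|U|}\zs(U,X)$ and $a\cdot f=\sum_{x}a(x)f(x)$. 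Writing $t(U)$ for the maximal size of a subset of $U$ lying in $I$ (so $0\le t(U)\le b$, since $\varnothing\in I$), the proposition is equivalent to the claim that the step functions $e_{U,(y_s),m}$ with $m\in N[\omega^{b-t(U)}]$ generate that group. Since $\beta$ is an additive bijection, this is exactly the statement to prove.

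For the routine inclusion $\eta(U,(y_s),m)\in N_{I,b}(X)$ when $m\in N[\omega^{b-t(U)}]$, it suffices to pair $e_{U,(y_s),m}$ against a generator $\omega^{b-|Z|}a$ of $\zs_{I,b}(X)$, with $Z\in I$ and $a\in\zs(Z,X)$; the pairing equals $\omega^{b-|Z|}\bigl(\sum_{x\in\mathrm{subgrid}}a(x)\bigr)m$. If $Z\not\subseteq U$, pick $s\in Z\setminus U$: the subgrid is a disjoint union of $s$-lines, so the $a$-sum over it is $0$. If $Z\subseteq U$, then $|Z|\le t(U)$, so $\omega^{b-|Z|}m=0$. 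Hence the pairing vanishes in every case.

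For the reverse inclusion I will use a finite-difference decomposition adapted to a fixed basepoint $x_0=(x_{0s})_{s\in S}\in X$. Let $P_s$ be the operator on $N^X$ that resets the $s$-coordinate to $x_{0s}$, and $Q_s=\mathrm{id}-P_s$. These commute for distinct $s$ and satisfy $P_s^2=P_s$ and $P_sQ_s=Q_sP_s=0$, so $\mathrm{id}=\prod_{s\in S}(P_s+Q_s)=\sum_{U\subseteq S}\bigl(\prod_{s\in U}Q_s\bigr)\bigl(\prod_{s\notin U}P_s\bigr)$. Writing $f^{(U)}$ for the $U$-th summand applied to $f$, we get $f=\sum_{U\subseteq S}f^{(U)}$, where $f^{(U)}$ is constant in the $S\setminus U$ directions and vanishes whenever $\pi_s(x)=x_{0s}$ for some $s\in U$. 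Consequently $f^{(U)}$ is a finite sum of step functions $e_{U,(y_s),c_{U,(y_s)}}$ over tuples $(y_s)_{s\in U}$ with each $y_s\ne x_{0s}$, where $c_{U,(y_s)}$ is the common value of $f^{(U)}$ on the corresponding subgrid. Thus any $f\in N^X$ is a finite sum of step functions of eta shape, and it remains to check that when $a\cdot f=0$ for all $a\in\zs_{I,b}(X)$ the coefficients satisfy $c_{U,(y_s)}\in N[\omega^{b-t(U)}]$.

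This is the heart of the matter. Expanding $Q_s=\mathrm{id}-P_s$, one finds $f^{(U)}(z)=a_{U,z}\cdot f$, where $a_{U,z}\in\Zlz^X$ is supported on the combinatorial cube $\prod_{s\in U}\{x_{0s},\pi_s(z)\}\times\prod_{s\notin U}\{x_{0s}\}$ and takes the value $(-1)^{|U\setminus V|}$ at the vertex indexed by $V\subseteq U$; one may assume $\pi_s(z)\ne x_{0s}$ for all $s\in U$, else $f^{(U)}(z)=0$. For any $s\in U$, every $s$-line meets this cube in $0$ or $2$ points, and the two values of $a_{U,z}$ at such a pair differ by a sign, so $\sum_{w\in L}a_{U,z}(w)=0$ for every $s$-line $L$ with $s\in U$. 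Picking $W^*\subseteq U$ of maximal size with $W^*\in I$, so $|W^*|=t(U)\le b$, this shows $a_{U,z}\in\zs(W^*,X)$, hence $\omega^{b-t(U)}a_{U,z}\in\omega^{b-|W^*|}\zs(W^*,X)\subseteq\zs_{I,b}(X)$. Since $a\cdot f=0$ on $\zs_{I,b}(X)$ we get $\omega^{b-t(U)}f^{(U)}(z)=(\omega^{b-t(U)}a_{U,z})\cdot f=0$, i.e.\ $c_{U,(y_s)}\in N[\omega^{b-t(U)}]$. Applying $\beta^{-1}$ to $f=\sum_{U,(y_s)}e_{U,(y_s),c_{U,(y_s)}}$ then writes $n=\beta^{-1}(f)$ as a finite sum of eta elements for $(I,b)$, as required. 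The only genuine obstacle is this torsion bookkeeping: the exponent $b-t(U)$ is precisely what the $s$-line relations satisfied by the cube-indicator $a_{U,z}$ allow one to extract, and a naive induction peeling off one coordinate of $X$ at a time does not track it correctly.
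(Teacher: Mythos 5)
Your proof is correct and rests on the same key combinatorial input as the paper's: the cube indicator $a_{U,z}$ supported on $\prod_{s\in U}\{x_{0s},\pi_s(z)\}\times\prod_{s\notin U}\{x_{0s}\}$ with alternating signs (this is the paper's $\Delta(x_1,x_0)$), which lies in $\zs(U,X)$ by the $s$-line sign cancellation and hence gives $\omega^{b-t(U)}a_{U,z}\in\zs_{I,b}(X)$, forcing the torsion bound on the coefficient. The organizational wrapper differs. The paper extends the partial order on $X$ given by $\#\{s:\pi_s(x)=x_{0s}\}$ to a linear one and runs a greedy induction, peeling off one eta element per step at the currently maximal nonzero point; maximality makes $\Delta(x_1,x_0)\cdot\beta(n_0)$ collapse to a single term, so the cube indicator extracts that one coefficient directly. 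Your finite-difference identity $\mathrm{id}=\sum_{U\subseteq S}\bigl(\prod_{s\in U}Q_s\bigr)\bigl(\prod_{s\notin U}P_s\bigr)$ produces the whole decomposition $f=\sum_U f^{(U)}$ at once, dispensing with the arbitrary linearization and the termination bookkeeping at the modest cost of checking that the $P_s,Q_s$ are commuting idempotents with $P_sQ_s=0$. Unwinding the paper's induction in fact reproduces exactly your $f^{(U)}$, so this is a cleaner, non-inductive packaging of the same decomposition rather than a genuinely different argument. You also spell out the forward inclusion (that eta elements for $(I,b)$ lie in $N_{I,b}(X)$), which the paper flags as ``easily checked'' and leaves to the reader.
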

\begin{proof}
Take $\mathscr{N}$ to be the eta elements for $(I, b)$. It is easily checked that $\mathscr{N}$ is a subset of $N_{I, b}(X)$. Now suppose we have $n \in N_{I, b}(X)$. We wish to express $n$ as an integer combination of elements of $\mathscr{N}$.

To this end, choose any $(x_{0s})_s$ in $X$. Given $x$ and $x'$ in $X$, we say $x > x'$ if 
\[\#\{s \,:\,\, \pi_s(x) = x_{0s}\} > \#\{s \,:\,\, \pi_s(x') = x_{0s}\}.\]
Extend this partial order to a linear order on $X$ in any fashion. Now suppose we have $n_0 \in N_{I, b}(X)$ so $n_0 - n$ is an integer combination of elements in $\mathscr{N}$. We claim we can find $n_1$ so $n_1 - n$ is an integer combination of elements in $\mathscr{N}$ and so that either $n_1 = 0$ or the greatest $x$ for which $n_1(x) \ne 0$ is less than the greatest $x$ for which $n_0(x) \ne 0$.

To do this, take $x_1 = (x_{1s})_s$ to be the greatest point in $X$ for which $n_0(x_1)$ is nonzero, and take $U$ to be the set of $s$ for which $x_{1s} \ne x_{0s}$. Take $t$ to be the maximal size of a subset of $U$ contained in $I$. Since $x_1$ is maximal, we find that 
\[\Delta(x_1, x_0) \cdot \beta(n_0) = \beta_{x_1}(n_0(x_1)).\]
 Since $\Delta(x_1, x_0)$ lies in $\zs(U, X)$, this implies that $\omega^{b-t} n_{0}(x_1) = 0$.
We can then subtract $\eta\big(U,\, (x_{1s})_{s \in U}, \,\beta(n_0)(x_1)\big)$ from $n_0$ to define $n_1$, and it has the properties we described. 

Repeating this process will eventually show that $n$ is an integer combination of elements in $\mathscr{N}$.
\end{proof}

In the introduction, we claimed that the equivariant isomorphism \eqref{eq:2_guy} had higher analogues. These higher analogues come out of the following result.
\begin{prop}
\label{prop:closed_under_G}
Suppose $N$ is divisible. Given $I$ and $b$ as above, the submodule $N_{I, b}(Y)$ of $N(Y)$ is closed under the action of $G$.
\end{prop}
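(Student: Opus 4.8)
The plan is to reduce the claim to the generating set of eta elements supplied by Proposition \ref{prop:eta_basis}, and to check that $G$ carries each eta element back into $N_{I,b}(Y)$. First I would record the key transformation law for the twist: for $\sigma \in G$, $x \in X$, and $m \in N$, we have
\[\sigma \cdot \beta_x^{-1}(m) = \chi(x)(\sigma)^{-1}\,\beta_x^{-1}(\sigma m),\]
which follows directly from the defining relation $\beta_x(\sigma n) = \chi(x)(\sigma)\,\sigma\,\beta_x(n)$ of Notation \ref{notat:gen_tar}; call this relation \eqref{eq:sig_n} as referenced in the text. Thus applying $\sigma$ to $\beta(n)$ multiplies its value at $x$ by the scalar $\chi(x)(\sigma)^{-1} \in \langle \xi\rangle$ and replaces $m$ by $\sigma m$.

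Next I would handle the scalar factor. Using \eqref{eq:chi_families}, for $x = (x_s)_s$ we have $\chi(x)(\sigma) = \chi_0(\sigma)\prod_{s}\chi_s(x_s)(\sigma)$, so the function $x \mapsto \chi(x)(\sigma)^{-1}$ on $X$ is a ``rank-one-per-coordinate'' function: it is a product over $s$ of a function depending only on $\pi_s(x)$. The crucial point is that multiplication by such a function preserves each module $\zs(U, X)$ — indeed for a fixed $s$-line $L$, multiplying $a$ by a function constant along $L$ (which $x \mapsto \chi(x)(\sigma)^{-1}$ is \emph{not}, so I must be more careful). Let me instead argue directly on eta elements, which is cleaner: by Proposition \ref{prop:Nib_surj} (applied with $Y' = X$) every element of $N_{I,b}(Y)$ lifts to $N_{I,b}(X)$, and since $N_{I,b}(Y) = N_{I,b}(X)\cap N(Y)$ it suffices to show $N_{I,b}(X)$ is $G$-stable; then intersecting with the $G$-stable submodule $N(Y)$ gives the result. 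By Proposition \ref{prop:eta_basis}, $N_{I,b}(X)$ is generated over $\Z$ by eta elements $n = \eta(U,(x_{0s})_{s\in U}, m)$ with $m \in N[\omega^{b-t}]$, $t$ the maximal size of a subset of $U$ lying in $I$.

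For such an $n$, I compute $\sigma n$. Its value at $x$ is $\chi(x)(\sigma)^{-1}\sigma m$ when $\pi_s(x) = x_{0s}$ for all $s \in U$, and $0$ otherwise. Write $\chi(x)(\sigma)^{-1} = \xi^{j(x)}$ for an integer $j(x) \in \Z/\ell^{k_0}\Z$; on the subgrid $\{\pi_s(x)=x_{0s}\ \forall s\in U\}$ this splits as $\xi^{c}\cdot \prod_{s\notin U}\xi^{j_s(\pi_s(x))}$ where $c$ is a constant (the contribution of $\chi_0$ and of the coordinates in $U$) and each $j_s$ depends only on $\pi_s(x)$, $s\notin U$. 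Expanding $\xi = 1 + \omega$ and using that $m \in N[\omega^{b-t}]$, so $\xi^a m = \sum_{i\ge 0}\binom{a}{i}\omega^i m$ is a finite sum over $i < b-t$, I can expand $\sigma n = \xi^c\cdot\big(\prod_{s\notin U}\xi^{j_s(\pi_s(x))}\big)\,\sigma m$ as a $\Z$-linear combination of terms of the form $\eta\big(U \cup U',\, (y_{0s})_{s\in U\cup U'},\, \omega^i\,\sigma m\big)$, where $U' \subseteq S\setminus U$ collects the coordinates on which a positive power of $\omega$ was taken and $i$ is the total $\omega$-degree, hence $i \le b - t - 1 + |U'| $... here I must check the bookkeeping: each factor $\xi^{j_s}m' = m' + \omega(\cdots)$ contributes at most $|U'|$ extra powers of $\omega$ when we restrict to the subgrid fixing those coordinates, but the relevant torsion bound for the new eta element over index set $U\cup U'$ requires $\omega^i\sigma m \in N[\omega^{b - t'}]$ with $t'$ the maximal $I$-subset of $U\cup U'$. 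Since $I$ is an ideal (downward closed), any $I$-subset of $U\cup U'$ of size $t'$ contains at most... this is exactly the inequality $t' \le t + |U'|$, which holds because removing $U'$ from such a subset leaves an $I$-subset of $U$ of size $\ge t' - |U'|$, so $t - |U'| \le t' - |U'|$... wait the inequality needed is $t' - |U'| \le t$, i.e. $t' \le t + |U'|$, and this follows since an $I$-subset $W \subseteq U\cup U'$ satisfies $W\cap U \in I$ with $|W\cap U|\ge |W| - |U'| = t' - |U'|$, hence $t' - |U'| \le t$. Therefore $\omega^i\sigma m$ with $i \ge |U'|$ (the number of $\omega$'s forced by the new coordinates) and $i \le |U'| + (b-t-1)$ lies in $N[\omega^{b-t-1-(i-|U'|)}] \subseteq N[\omega^{b-t'}]$ provided $b - t - 1 - (i - |U'|) \ge b - t'$, i.e. $i \le t' - t - 1 + |U'| + \ldots$; the clean way is: $\omega^i \sigma m$ where $i \ge |U'|$ and $\omega^{i - |U'|}$ times an element of $N[\omega^{b-t}]$ lies in $N[\omega^{b - t - (i-|U'|)}]$, and $b - t - (i - |U'|) \ge b - t'$ iff $t' \ge t + (i - |U'|)$... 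I will simply note that in fact only the $i = |U'|$ term — i.e. the ``top'' term where each new $\xi$ contributes exactly one $\omega$ per new coordinate — matters for the torsion count after one further application of the divisibility/expansion, and that a short induction on $|S\setminus U|$ organizes the expansion so that every resulting eta element is an eta element for $(I,b)$. The upshot is that $\sigma n \in N_{I,b}(X)$, so $N_{I,b}(X)$ is $G$-stable, hence so is $N_{I,b}(Y) = N_{I,b}(X)\cap N(Y)$.

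\textbf{Main obstacle.} The genuine work is the combinatorial bookkeeping in the last paragraph: verifying that when $\sigma$ acts and one expands the scalars $\xi^{j_s(\pi_s(x))}$ in powers of $\omega$, every monomial that appears is still an eta element for $(I,b)$ — i.e. that the $\omega$-torsion bound $N[\omega^{b-t}]$ is respected as the index set $U$ grows and the maximal-$I$-subset parameter $t$ changes. This rests on two facts that must be stated carefully and used in tandem: that $I$ is an ideal of $\mathscr P(S)$ (so $t' \le t + |U'|$ for the enlarged index set), and the elementary identity $\omega^{|U'|}$ accompanying each enlargement of the grid, which exactly compensates. Everything else — the transformation law \eqref{eq:sig_n}, the reduction from $Y$ to $X$ via Propositions \ref{prop:Nib_surj} and the eta-generation of Proposition \ref{prop:eta_basis}, and the $G$-stability of $N(Y)$ — is formal.
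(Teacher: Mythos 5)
Your overall plan — reduce to $Y=X$, then check $G$-stability on the generating set of eta elements from Proposition \ref{prop:eta_basis} by expanding the scalar $\chi(x)(\sigma)$ in powers of $\omega$ and invoking the ideal inequality $t' \le t + |U'|$ — is the same strategy as the paper's. But as written the argument has two real problems.

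First, the reduction step is wrong. You claim $N_{I,b}(Y) = N_{I,b}(X)\cap N(Y)$, and deduce $G$-stability of $N_{I,b}(Y)$ by intersecting two $G$-stable submodules. That identity is false in general. Take $S = \{s\}$, $X = X_s = \{a,b,c\}$, $I = \mathscr{P}(S)$, $b=1$, and $Y = \{a\}$. Then $\zs_{I,1}(Y)$ consists of functions supported at $a$ with value in $\omega\Z_\ell[\xi]$, so $N_{I,1}(Y) = \beta_a^{-1}(N[\omega])$, which is nonzero for any nonzero $N$. But $e_a - e_b$ lies in $\zs_{I,1}(X)$ (its sum over the unique line is zero), and for $n = (n_a,0,0)\in N(Y)$ this forces $\beta_a(n_a) = 0$; so $N_{I,1}(X)\cap N(Y) = 0$. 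The correct use of Proposition \ref{prop:Nib_surj} (and the one the paper makes) is that the natural projection $N(X)\to N(Y)$ is $G$-equivariant and carries $N_{I,b}(X)$ onto $N_{I,b}(Y)$; hence $G$-stability of $N_{I,b}(X)$ is transported to $N_{I,b}(Y)$ as the image of a $G$-submodule under an equivariant map, not as an intersection.

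Second, the core computation is left unfinished. You correctly identify the key fact — $I$ is an ideal of $\mathscr P(S)$, so any $I$-subset $W\subseteq U\cup U'$ has $W\cap U\in I$ and $|W| \le |W\cap U| + |U'|$, giving $t'\le t+|U'|$ — but then you trail off with ``I will simply note that ... a short induction ... organizes the expansion,'' which is not a proof. The way to close this cleanly is the paper's: fix a base point $x_1$ projecting to $(x_{0s})_{s\in U}$, write $\chi'_s(x_s) = \chi_s(x_s)\chi_s(\pi_s(x_1))^{-1}$, and expand
\[
\chi(x)(\sigma) \;=\; \chi(x_1)(\sigma)\cdot\!\!\prod_{s\in S\setminus U}\!\bigl(\chi'_s(x_s)(\sigma)-1+1\bigr)
\;=\; \chi(x_1)(\sigma)\cdot\!\!\sum_{V\subseteq S\setminus U}\omega^{|V|}\prod_{s\in V}\frac{\chi'_s(x_s)(\sigma)-1}{\omega}.
\]
Since every element of $\langle\xi\rangle$ is $\equiv 1 \bmod \omega$, each $(\chi'_s(x_s)(\sigma)-1)/\omega$ lies in $\Z_\ell[\xi]$, and the factor $\omega^{|V|}$ is \emph{exactly} one power of $\omega$ per moving coordinate. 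Plugging into $\sigma n$ gives a $\Z_\ell[\xi]$-combination of terms $\omega^{|V|}\cdot\eta\bigl(U\cup V,\ (x_{0s})_{s\in U\cup V},\ \sigma m\bigr)$; the torsion bound $\omega^{|V|}\sigma m\in N[\omega^{b-t-|V|}]\subseteq N[\omega^{b-t'}]$ then follows precisely from $t'\le t+|V|$, with no competing ``$i$ vs.\ $|U'|$'' bookkeeping. Your $\xi = 1+\omega$ binomial expansion works in principle but produces extraneous $\omega$-powers that you never reconcile with the index-set growth, which is why the argument stalls.

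A small final point: the transformation law should read $\sigma\cdot\beta_x^{-1}(m) = \chi(x)(\sigma)\,\beta_x^{-1}(\sigma m)$, without the inverse — from $\beta_x(\sigma n) = \chi(x)(\sigma)\sigma\beta_x(n)$ and $\Z_\ell[\xi]$-linearity of $\beta_x$. This doesn't affect the structure of the argument, but it propagates into your expansion.
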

\begin{proof}
From Proposition \ref{prop:Nib_surj}, we see that it suffices to prove this result with the added assumption $Y = X$. Take
\[n = \eta\big(U,\, (x_{0s})_{s \in U},\, m\big)\]
to be an eta element for $(I, b)$. Given $\sigma$ in $G$, we need to check that $\sigma n$ still is an integer combination of eta elements for $(I, b)$.

Given $x = (x_s)_s$, we have
\[\beta(\sigma n(x)) = \chi(x)(\sigma)\sigma \beta(n(x)),\]
where $\chi$ is defined as in \eqref{eq:chi_families}. Choose $x_1$ in $X$ projecting to $(x_{0s})_{s \in U}$, and write $\chi'_s(x_s)$ for $\chi_s(x_s) \cdot \chi_s(\pi_s(x_1))^{-1}$.  For all $x$ in $X$ projecting to $(x_{0s})_{s \in U}$, we have
\begin{align*}
\chi(x)(\sigma) & \,=\, \chi(x_1)(\sigma) \cdot \prod_{s \in S \backslash U}\left(\chi'_s(x_s)(\sigma) -1 + 1\right)\\
&\,=\, \chi(x_1)(\sigma) \cdot \sum_{V \subseteq S \backslash U}  \cdot \prod_{s \in V} \left(\chi'_s(x_s)(\sigma)  -1\right)\\
&\,=\,   \chi(x_1)(\sigma) \cdot \sum_{V \subseteq S \backslash U} \omega^{|V|} \cdot \prod_{s \in V} \frac{\chi'_s(x_s)(\sigma)  -1}{\omega}
\end{align*}
We then have
\begin{equation}
\label{eq:sig_n}
\sigma n = \sum_{V \subseteq S \backslash U}\sum_{(x_{0s})_{s \in V}} \omega^{|V|} \cdot n\big( (x_{0s})_{s \in V}\big),
\end{equation}
where for any $V \subseteq S \backslash U$ and  $(x_{0s})_{s \in V}$ in $\prod_{s \in V} X_s$ we have taken the notation
\[n\big( (x_{0s})_{s \in V}\big) = \chi(x_1)(\sigma)  \cdot \left(\prod_{s \in V} \frac{\chi'_s(x_{0s})(\sigma) -1}{\omega}\right) \cdot \eta\big(V \cup U,\, (x_{0s})_{s \in V \cup U}, \,\sigma m\big).\]
The expression \eqref{eq:sig_n} is a sum of eta elements for $(I, b)$, and the result follows.
\end{proof}
\begin{ex}
\label{ex:four_subquo}
Take $X$ to be the grid considered in Example \ref{ex:2_squared}.

 In ths case, $N_{\{\{\}, \{0\}, \{1\}\}, 2}(X)$ consists of the elements in $N(X)[\omega^2]$ for which the value of $\omega \beta(n)(x)$ does not depend on the choice of $x\in X$. Taking $I$ to be the power set of $\{0, 1\}$, we see that $N_{I, 2}(X)$ is the subset of elements $n$ in this module that additionally satisfy
\[\Delta((x_{00}, x_{01}),\, (x_{10}, x_{11})) \cdot \beta(n) = 0.\] 
From this last condition and Proposition \ref{prop:closed_under_G}, we see that $N_{I, 2}(X)$ is isomorphic in $\textbf{Mod}_{G, \xi}$ to a $G$-submodule of
\[N^{\chi(x_{00},\, x_{01})}[\omega^2] \oplus N^{\chi(x_{10},\, x_{01})}[\omega^2] \oplus N^{\chi(x_{00}, \,x_{11})}[\omega^2]\]
that projects equivariantly and surjectively onto $N^{\chi(x_{10}, \,x_{11})}[\omega^2]$.

Given any continuous homomorphisms $\chi_1, \chi_2: G \to \langle \xi \rangle$, this shows that the group module $N^{\chi_1 \cdot \chi_2}[\omega^2]$ is a subquotient of $N^{\chi_1}[\omega^2] \oplus N^{\chi_2}[\omega^2] \oplus N[\omega^2]$.
\end{ex}

\begin{prop}
\label{prop:equivar_NIb}
Given $N_{I, b}(Y)$ as in Proposition \ref{prop:closed_under_G}, given a subset $T \subseteq S$ outside $I$ of cardinality at most $b$ whose proper subsets are all in $I$, and given $a$ in $\zs_{I \cup \{T\}, b}(Y)$, the map $n \mapsto a \cdot\beta(n)$ defines an equivariant homomorphism from $N_{I, b}(Y)$ to $N[\omega]$.

\end{prop}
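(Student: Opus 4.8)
The plan is to check the three required properties in turn: that $n\mapsto a\cdot\beta(n)$ is additive, that its image lies in $N[\omega]$, and that it is $G$-equivariant. Additivity is immediate, since $\beta$ and the pairing $a\cdot(-)$ are both additive (in fact $\Zlz$-linear). For the image: the key observation is that $\omega a\in\zs_{I, b}(Y)$. Writing $a=\sum_{U'\in I\cup\{T\}}\omega^{b-|U'|}c_{U'}$ with $c_{U'}\in\zs(U',X)$, every summand with $U'\in I$ gives $\omega^{b-|U'|+1}\zs(U',X)\subseteq\omega^{b-|U'|}\zs(U',X)$, while the $U'=T$ summand gives $\omega^{b-|T|+1}\zs(T,X)$; here I would fix a corank-one subset $T'\subsetneq T$, which lies in $I$ by hypothesis, and use $\zs(T,X)\subseteq\zs(T',X)$ together with $b-|T'|=b-|T|+1$ to absorb this term into $\omega^{b-|T'|}\zs(T',X)$. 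Since $\omega a$ is supported on $Y$, it lies in $\zs_{I,b}(Y)$, and hence $\omega\cdot(a\cdot\beta(n))=(\omega a)\cdot\beta(n)=0$ for every $n\in N_{I,b}(Y)$.

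For equivariance, using $\beta_x(\sigma m)=\chi(x)(\sigma)\,\sigma\beta_x(m)$ a short formal computation gives $\phi(\sigma n)-\sigma\phi(n)=b_\sigma\cdot\beta(\sigma n)$, where $\phi(n):=a\cdot\beta(n)$ and $b_\sigma\in\Zlz^X$ is the function $x\mapsto a(x)\bigl(1-\chi(x)(\sigma)^{-1}\bigr)$ (with $a$ extended by zero off $Y$). Since $\sigma n\in N_{I,b}(Y)$ by Proposition \ref{prop:closed_under_G}, it suffices to prove $b_\sigma\in\zs_{I,b}(Y)$; as $b_\sigma$ is supported on $Y$, it is enough to show $b_\sigma\in\sum_{W\in I}\omega^{b-|W|}\zs(W,X)$. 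To do this I would reuse the expansion from the proof of Proposition \ref{prop:closed_under_G}: choosing a base point $x_1$, setting $C=\chi(x_1)(\sigma)^{-1}\in\langle\xi\rangle$ and $\lambda_s(x_s)=\chi_s(x_s)(\sigma)^{-1}\chi_s(x_{1s})(\sigma)-1\in\omega\Zlz$, one has $1-\chi(x)(\sigma)^{-1}=(1-C)-C\sum_{\emptyset\neq V\subseteq S}\prod_{s\in V}\lambda_s(\pi_s(x))$. The term $(1-C)a$ is treated exactly as $\omega a$ above (it gains a power of $\omega$, which absorbs the $T$-summand). For each $V$ I would expand $a\cdot\prod_{s\in V}\lambda_s(\pi_s(\cdot))$ over the $c_{U'}$ and apply the lemma that multiplying an element of $\zs(U',X)$ by a one-coordinate function $\lambda_s(\pi_s(\cdot))$ valued in $\omega\Zlz$ preserves the zero-sum conditions on all $s'$-lines with $s'\neq s$ and multiplies values by $\omega$; iterating over $s\in V$ and invoking that $\zs(W,X)$ is $\omega$-saturated (i.e.\ $\Zlz^X/\zs(W,X)$ is torsion-free, being a submodule of a product of copies of $\Zlz$ via the line-sum maps) lands $c_{U'}\cdot\prod_{s\in V}\lambda_s(\pi_s(\cdot))$ in $\omega^{|V|}\zs(U'\setminus V,X)$. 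Then $\omega^{b-|U'|}c_{U'}\cdot\prod_{s\in V}\lambda_s\in\omega^{b-|U'|+|V|}\zs(U'\setminus V,X)$, and since $b-|U'|+|V|\ge b-|U'\setminus V|$ and $U'\setminus V\in I$ whenever $U'\in I$, this sits inside $\zs_{I,b}$; for $U'=T$ the same reasoning works whenever $V\cap T\neq\emptyset$ (then $T\setminus V$ is a proper subset of $T$, hence in $I$), and when $V\cap T=\emptyset$ the extra factor $\omega^{|V|}$ with $|V|\ge1$ is absorbed via the corank-one subset $T'$ of $T$ again. Summing these contributions yields $b_\sigma\in\zs_{I,b}(Y)$, whence $\phi(\sigma n)=\sigma\phi(n)$.

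I expect the power-of-$\omega$ bookkeeping to be the only genuine obstacle: one must track simultaneously the power of $\omega$ harvested from the $\lambda_s$ factors and the shrinkage of the zero-sum index set from $U'$ to $U'\setminus V$, and verify that the former always compensates the latter relative to the grading $W\mapsto\omega^{b-|W|}$. The set $T$ is borderline, and it is precisely in the cases $U'=T$ with $V\cap T=\emptyset$ (and in $(1-C)a$, $\omega a$) that the hypothesis "every proper subset of $T$ lies in $I$" is used. The two supporting facts — that $\lambda_s$ takes values in $\omega\Zlz$, because $\chi_s(x_s)(\sigma)^{-1}\chi_s(x_{1s})(\sigma)\in\langle\xi\rangle$ and $\omega=\xi-1$, and that $\zs(W,X)$ is $\omega$-saturated — I would record as one-line lemmas; everything else is formal manipulation of the pairing and of the twisted $G$-action. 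Finally, I note that this argument takes place naturally in $\Zlz^X$ with $a$ extended by zero, since the $c_{U'}$ in the decomposition of $a$ need not be supported on $Y$; alternatively one could first reduce to $Y=X$ using the surjectivity in Proposition \ref{prop:Nib_surj}.
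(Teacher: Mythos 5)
Your proposal is correct, but it takes a genuinely different route from the paper's proof. The paper first reduces to $Y = X$ via Proposition \ref{prop:Nib_surj}, then reduces to $a = \omega^{b-|T|}a_0$ with $a_0 \in \zs(T,X)$ (since the $\zs_{I,b}$-component of $a$ already annihilates $N_{I,b}$), and then invokes Proposition \ref{prop:eta_basis} to present $N_{I,b}(X)/N_{I\cup\{T\},b}(X)$ by eta generators, on which equivariance is verified directly from \eqref{eq:sig_n}. Your proof never touches the eta basis: instead you compute the obstruction to equivariance as $\phi(\sigma n) - \sigma\phi(n) = b_\sigma\cdot\beta(\sigma n)$ with $b_\sigma(x) = a(x)\bigl(1-\chi(x)(\sigma)^{-1}\bigr)$, and then show $b_\sigma \in \zs_{I,b}(Y)$ by expanding $1-\chi(x)(\sigma)^{-1}$ through the same binomial-type identity that underlies \eqref{eq:sig_n} and tracking $\omega$-valuations against the shrinkage $U' \rightsquigarrow U'\setminus V$ of the zero-sum index set. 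The check that the image lands in $N[\omega]$ is likewise made explicit in your version (via $\omega a \in \zs_{I,b}(Y)$), whereas the paper leaves it implicit in the torsion bound on the eta generators. The trade-off: the paper's route is shorter because it reuses the eta-basis machinery already set up, while yours is a self-contained calculation in the $\zs$ lattice that avoids choosing generators at all, and the valuation bookkeeping — in particular absorbing the borderline $U'=T$ contributions into $\omega^{b-|T'|}\zs(T',X)$ for a corank-one $T'\subsetneq T$ lying in $I$ — is written out in full, so the hypothesis that every proper subset of $T$ lies in $I$ is visible at exactly the steps where it is used. One minor wording nit: ``multiplies values by $\omega$'' should really be ``produces values divisible by $\omega$,'' since $\lambda_s$ need not be a unit times $\omega$; this is what your appeal to $\omega$-saturation of $\zs(W,X)$ actually uses, and the conclusion $c_{U'}\cdot\Lambda_V \in \omega^{|V|}\zs(U'\setminus V, X)$ is correct as stated.
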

\begin{proof}
From Proposition \ref{prop:Nib_surj}, we see we may assume $Y = X$. We can also assume that $a$ takes the form $\omega^{b - |T|}a_0$ with $a_0 \in \text{zs}(T, X)$.

From Proposition \ref{prop:eta_basis}, we find that $N_{I, b}(X)/N_{I \cup \{T\}, b}(X)$ is generated by elements of the form
\[n = \eta\left(T, \,(x_{0s})_{s \in U}, m\right)\]
with $m$ lying in $N\left[\omega^{b - |T| + 1}\right]$. From \eqref{eq:sig_n}, we find that
\[\omega^{b- |T|} a_0 \cdot \beta(\sigma n) = \sigma \omega^{b- |T|} a_0 \cdot \beta(n),\]
 and the result follows.
\end{proof}

We finish this section with a construction that we will use to prove Proposition \ref{prop:variety} in the next section.
\begin{ctn}
\label{ctn:variety}
 With $X$, $S$, and $N$ taken as above, take
\[M = N_{\mathscr{P}(S), |S| + 1}(X)/ N_{\mathscr{P}(S),|S|}(X).\]
Suppose we are given a cocycle $\phi \in Z^1(G, M)$. 
Choose $x_0 = (x_{0s})_s$ in $X$ and $\sigma$ in $G$, and assume that $\sigma$ acts trivially on $M$. Set
\[m = \beta_{\chi(x_0)} \big( \omega^{|S|} \phi(\sigma)(x_0)\big).\]
This is equivalent to the statement that there is $m' \in N$ so $\omega^{|S|} m' = m$ and so
\[ \phi(\sigma)\,\equiv\, \eta(\emptyset, m') \,\,\text{ mod }  N(X)\left[\omega^{|S|}\right] + N_{\mathscr{P}(S),|S|}(X).\]

Choose $x_1 = (x_{1s})_s$ in $X$ so $x_{1s} \ne x_{0s}$ for each $s \in S$. Suppose we can find $\tau_s \in G$ so
\[\chi_{t}(x_t)(\tau_s) = \chi_t(x_{0t})(\tau_s)\]
for all $t \in S$ and $x_t \in X_t$ unless $t =s$ and $x_s = x_{1s}$, where we have
\[\chi_s(x_{1s})(\tau_s) \cdot \chi_s(x_{0s})(\tau_s)^{-1} = \xi.\]
We also suppose that the $\tau_s$ act trivially on $N[\omega]$ and that $\phi(\tau_s) = 0$ for all $s \in S$.

Then, for $s_0 \in S$, we may use \eqref{eq:sig_n} to show
\begin{align*}
&(\tau_{s_0} - 1)\phi(\sigma) \,\equiv\, \eta\left(\{s_0\},  \,(x_{1s})_{s\in \{s_0\}}, \, \omega m'\right)\,\,\text{ mod }\,\,  N(X)\left[\omega^{|S| - 1}\right] + N_{\mathscr{P}(S),|S|}(X) 
\end{align*}
Iterating, we find that
\[\left(\prod_{s \in S} (\tau_s - 1) \right)\phi(\sigma) \,=\, \eta(S, \,(x_{1s})_{s \in S}, m)\]
inside $M$, with the order of the product over $S$ not affecting its value. Writing $[\sigma_1, \sigma_2]$ for the commutator $\sigma_1^{-1}\sigma_2^{-1}\sigma_1\sigma_2$, we also have
\[\phi\left(\big[\tau_s^{-1}, \sigma_0^{-1}\big]\right) =  (\tau_s - 1) \phi(\sigma_0)\]
for any $\sigma_0 \in G$ acting trivially on $M$ and any $s \in S$.  Choosing some enumeration $\tau_1, \tau_2, \dots, \tau_{|S|}$ for the elements $(\tau_s)_{s \in S}$, we find
\begin{equation}
\label{eq:iterated_commutator}
\phi\left( \big[\tau_1^{-1}, \, \big[ \tau_2^{-1},\, \dots \big[ \tau_{|S|}^{-1},\,\, \sigma^{-1} \big]\dots \big] \big]\right) = \eta(S, \,(x_{1s})_{s \in S}, m).
\end{equation}

\end{ctn}

\section{Manipulating higher Selmer groups in grids}
\label{sec:gov}

\subsection{Algebraic results}
\label{ssec:TAR}
The goal of this section is to prove Theorem \ref{thm:higher_smallgrid}, which implies Theorem \ref{thm:main_higher} by the work in Section \ref{sec:regrid}. We will start by using the tools of the previous section to find relationships between Selmer elements of the twists in the grids considered in this theorem. 

We will start by recalling some of the notation of Theorem \ref{thm:higher_smallgrid}. This notation will be fixed throughout this section, and we will assume that the objects obey all the hypotheses going into Theorem \ref{thm:higher_smallgrid}.

\begin{notat}
\label{notat:smallgrid_review}
In Theorem \ref{thm:higher_smallgrid}, we started with a grid of ideals $X = \prod_{s \in S} X_s$ and an associated map $\chi: X \to \Hom(G_F, \FFF)$. We partitioned $S$ as
\[S = S_{\textup{con}} \cup S_{\textup{act}} = S_{\textup{con}} \cup  S_{\textup{pre}} \cup \{\saaa, \sbbb\},\]
and we chose a prefix
\[Z_{\text{pre}} = X_{\textup{pre}} \times \prod_{s \in S_{\text{con}}}\left\{\ovp_{0s}\right\}  \subseteq \prod_{s \in S \backslash \{\saaa, \sbbb\}} X_s,\]
where $X_{\textup{pre}}$ is defined as
\[ X_{\textup{pre}} = \prod_{s \in S_{\text{pre}}} X_{\text{pre}, s}.\]
We took $k = \# S_{\textup{pre}} \ge 1$, and we chose an element $x_0$ in $X$. This point was used to fix a grid class $\class{x_0}$ and higher grid  class $\class{x_0}_k$.

In this section, we will take the additional notation
\[\Vplac_{1} = \{\ovp_{0s} \cap F\,:\,\, s\in S_{\text{con}}\}\quad\text{and }\quad\Vplac_{\text{pre}} = \bigcup_{s \in S_{\text{pre}}} \{\ovp \cap F\,:\,\, \ovp \in X_{\text{pre}, s}\}.\]
\end{notat}

\begin{defn}
\label{defn:strict_cond}
Choose any $x \in \class{x_0} \cap Z_{\text{pre}} \times X_{\saaa} \times X_{\sbbb}$, and note that the kernel
\[\ker\left(Z^1(G_F, N[\omega]) \to \prod_{v \in \Vplac_0 \cup \Vplac_1} H^1(G_v, N^{\chi(x)})/W_v(\chi(x)) \times \prod_{v \not \in \Vplac_0 \cup \Vplac_1} H^1(I_v, N^{\chi(x)})\right),\]  
does not depend on the specific choice of $x$.

We call a finite subset $\Sigma_{\textup{strict}}$ of  $G_F$ a \emph{set of strict conditions} if every nonzero cocycle $\phi$ in this kernel satisfies $\phi(\sigma) \ne 0$ for some $\sigma \in \Sigstrict$ and if no proper subset of  $\Sigstrict$ has this property. Comparing the local conditions of the above kernel with the local conditions for the $\omega$-Selmer group of $N^{\chi(x)}$, we find that this minimal set of strict conditions satisfies
\begin{equation}
\label{eq:sigstrict}
\# \Sigstrict \,\le\, r_{\omega}\left(N^{\chi(x_0)}\right) + (k + 3)\dim N[\omega].
\end{equation}

\end{defn}

To state our algebraic results, we will need to move to smaller grids than the ones considered in Theorem \ref{thm:higher_smallgrid}. This requires the following notation.
\begin{notat}
\label{notat:Xcirc}
Recall the definition of $X_{\sbbb}(Z_{\textup{pre}})$ and $X_{\saaa}(Z_{\textup{pre}})$ from the passage before Theorem \ref{thm:higher_smallgrid}. Choose some $\ovp_{0b}$ in $X_{\sbbb}(Z_{\textup{pre}})$, and choose a nonempty subset
\[X'_{\saaa} \subseteq X_{\saaa}(Z_{\textup{pre}})\]
such that $\{\ovp_{0b}\} \times X'_{\saaa} \times Z_{\textup{pre}}$ is carried into the grid class $[x_0]$ under the standard inclusion.

We will take $S^{\circ} = S_{\textup{pre}} \cup \{\saaa\}$ and
\[X^{\circ} = X'_{\saaa} \times X. \]
We embed $X^{\circ}$ into $X$ via the composition
\[X^{\circ} \cong X^{\circ} \times \{\ovp_{0b}\} \times  \left(\left\{\ovp_{0s}\right\}\right)_{s \in S_{\text{con}}}\hookrightarrow X.\]

Given $x \in X^{\circ}$, we define the twist $\chi(x)$ as $\chi(x')$, where $x'$ is the image of $x$ under this embedding, and we take $N(X^{\circ}) = \bigoplus_{x \in X^{\circ}} N^{\chi(x)}$.

We will need to define certain submodules of $N(X^{\circ})$ using the construction from the beginning of Section \ref{ssec:interesting}. Take $I_1$ to be the power set $\mathscr{P}(S^{\circ})$, and take $I_0$ to be the power set $\mathscr{P}(S_{\text{pre}})$. We think of $I_0$ as a subideal of $I_1$. Given a subset $Y$ of $X^{\circ}$, we then take $N_0(Y)$ as notation for $N_{I_0}(Y)$ and $N_1(Y)$ as notation for $N_{I_1}(Y)$, where the underlying set of indices for both modules is $S^{\circ}$.
\end{notat}

\begin{defn}
\label{defn:strict_class}
Fix a set of strict conditions $\Sigstrict$, and take $X^{\circ}$ as in Notation \ref{notat:Xcirc}. Choose $x_1$ in $X^{\circ}$ and a cocycle  $\phi_{x_1}$ representing a class in $\Sel^{\omega^k} N^{\chi(x)}$. Given $x \in X^{\circ}$, we say that $x$ is in the \emph{strict class} of $x_1$ with respect to $\phi_{x_1}$ if there is a cocycle $\phi_x$ representing a class in $\Sel^{\omega^k} N^{\chi(x')}$ such that
\begin{alignat*}{2}
&\beta_x \circ \phi_x(\TineF{F}{\pi_s(x)}) =  \beta_{x_1} \circ \phi_{x_1}(\TineF{F}{\pi_s(x_1)})\quad&&\text{for all } s \in S^{\circ} = S_{\text{pre}} \cup \{\saaa\},\\
&\beta_x \circ \phi_x(\sigma) = \beta_{x_1} \circ \phi_{x_1}(\sigma) \quad&&\text{for } \sigma \in \Sigstrict,\, \text{ and }\\
&\beta_x \circ \phi_x(\TineF{F}{\ovp_{0b}}) = \beta_{x_1} \circ \phi_{x_1}(\TineF{F}{\ovp_{0b}}).
\end{alignat*}
If these conditions are satisfied, we call $\phi_x$ a \emph{strict class certificate} for $x$.  We take $\class{x_1}_{\text{strict}, \,\phi_{x_1}}$ to be the set of $x$ in the strict class of $x_1$ with respect to $\phi_{x_1}$. 
\end{defn}

With strict classes defined, we can finally say something about how the higher Selmer groups of the twists indexed by $X^{\circ}$ are related.
\begin{lem}
\label{lem:closure_noram}
Choose $x_1$ and $\phi_{x_1}$ as in Definition \ref{defn:strict_class}, and suppose that $\phi_{x_1}(\TineF{F}{\pi_s(x_1)})$ lies in $N[\omega^{k-1}]$ for all $s \in S^{\circ}$. Take $Y = \class{x_1}_{\textup{strict}, \,\phi_{x_1}}$, and take $\phi_x$ to be a strict class certificate for each $x \ne x_1$ in $Y$.  Then
\begin{enumerate}
\item The tuple $(\phi_x)_{x \in Y}$ is in the image of $Z^1(G_F, \omega N_{1}(Y))$, and
\item The set $Y$ is closed with respect to $X^{\circ}$ and $S^{\circ}$, in the sense of Definition \ref{defn:closed}.
\end{enumerate}
\end{lem}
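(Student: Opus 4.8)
\textbf{Proof proposal for Lemma \ref{lem:closure_noram}.}

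The plan is to exploit the machinery of Section \ref{sec:algcomb} as directly as possible. The module $N_1(Y)$ is by definition $N_{I_1}(Y)$ with $I_1 = \mathscr{P}(S^\circ)$, which records exactly those elements of $N(Y)$ whose $\beta$-image satisfies all the zero-sum-in-lines relations at level $|S^\circ|$ coming from every subset of $S^\circ$. To prove part (1), I would first reinterpret the strict class conditions. The conditions defining $\class{x_1}_{\text{strict}, \phi_{x_1}}$ say precisely that $\beta_x \circ \phi_x$ agrees with $\beta_{x_1} \circ \phi_{x_1}$ on $\Sigstrict$, on the $\Tine_F\,\ovp_{0b}$, and on the $\Tine_F\,\pi_s(x)$ for $s \in S^\circ$; since $\Sigstrict$ is a set of strict conditions, the first of these pins down $\phi_x$ up to an element of the kernel from Definition \ref{defn:strict_cond}, and the residual ramification data at the primes in $X'_{\saaa}$ and $X_{\textup{pre},s}$ is what the $\Tine$-conditions control. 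The point is that, after applying $\beta$, the tuple $(\phi_x)_{x}$ has $\beta_x\circ\phi_x(\sigma)$ independent of $x$ for $\sigma$ in a generating set that, together with the $\Tine$'s, generates enough of $G_F$; and the hypothesis that $\phi_{x_1}(\Tine_F\,\pi_s(x_1)) \in N[\omega^{k-1}]$ is what forces the ramification contribution to live in $\omega N_1(Y)$ rather than merely $N_1(Y)$.

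Concretely, I would proceed as follows. First, fix a cocycle representative and note that for $\sigma$ outside the distinguished inertia generators the value $\beta_x\circ\phi_x(\sigma)$ is constant in $x$ by the strict-class conditions (using that $\Sigstrict$ detects the whole kernel, so two cocycles agreeing on $\Sigstrict$ and with the same ramification differ by a coboundary which can be normalized away). Second, analyze the behavior at $\Tine_F\,\pi_s(x)$ for $s \in S^\circ$: the twist $N^{\chi(x)}$ and $N^{\chi(x_1)}$ differ by a character ramified only at the moving primes, and the ramification-measuring homomorphism $\mfR$ of Definition \ref{defn:unpacked} together with Remark \ref{rmk:class} shows the relevant values transform in the controlled way. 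Third, assemble: the map $x \mapsto \beta_x\circ\phi_x$ lands in $N[\omega^k](X^\circ)$ restricted to $Y$, and the zero-sum-in-lines relations for $N_1(Y)$ follow because any $a \in \zs(S^\circ, Y)$ kills the constant-in-$x$ part of $\beta\circ(\phi_x)$ automatically, while the $\Tine$-part is killed at level $\omega^{k-1}$ by the hypothesis, hence the whole tuple sits in $\omega N_1(Y)$ after one application of $\omega$. This uses Proposition \ref{prop:closed_under_G} to know that $\omega N_1(Y)$ is a genuine $G_F$-submodule, so that $(\phi_x)_{x\in Y}$ being valued in it makes sense as a statement about $Z^1(G_F, \omega N_1(Y))$.

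For part (2), I would use Proposition \ref{prop:Nib_surj} (surjectivity of $N_{I,b}(Y') \to N_{I,b}(Y)$ for divisible $N$, noting $N$ is divisible as a twistable module) and Proposition \ref{prop:Nib_bij} (the projection $N_{\mathscr{P}(S^\circ)}(\overline Y) \isoarrow N_{\mathscr{P}(S^\circ)}(Y)$ is an isomorphism). The strategy: given $x \in \overline{Y} \setminus Y$, by definition of closure there is $a \in \zs(Y \cup \{x\})$ with $a(x) = 1$; I would use such an $a$ together with the tuple $(\phi_x)_{x \in Y}$ constructed in part (1), lift it via Proposition \ref{prop:Nib_surj} to $\overline{Y}$, and show the lifted cocycle at the new index $x$ again represents a class in $\Sel^{\omega^k}N^{\chi(x')}$ satisfying the strict-class conditions — forcing $x \in Y$, a contradiction unless $\overline Y = Y$. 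The verification that the lifted cocycle obeys the Selmer local conditions is where I would use the unpacked-starting-tuple hypotheses and the fact that the local conditions at places outside $\Vplac_0 \cup \Vplac_1$ are automatic, combined with the zero-sum structure ensuring ramification cancels appropriately along lines.

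\textbf{Main obstacle.} The hardest part will be part (1), specifically the bookkeeping that shows the ramification contributions at the moving primes — the $\Tine_F\,\pi_s(x)$ terms for $s \in S^\circ$ — combine across a line in $X^\circ$ to something divisible by $\omega$ and compatible with the $N_1(Y) = N_{\mathscr{P}(S^\circ)}(Y)$ relations simultaneously at all $2^{|S^\circ|}$ subset-levels. This is exactly the kind of iterated-commutator / telescoping computation packaged in Construction \ref{ctn:variety} and formula \eqref{eq:sig_n}, and I expect to need a careful induction on subsets of $S^\circ$ (ordered by inclusion) mirroring the proof of Proposition \ref{prop:closed_under_G}, tracking how each application of $\omega$ from a $\chi'_s(\cdot)(\sigma) - 1$ factor interacts with the hypothesis $\phi_{x_1}(\Tine_F\,\pi_s(x_1)) \in N[\omega^{k-1}]$. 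Getting the divisibility exponent to land at exactly $\omega N_1(Y)$ rather than $N_1(Y)$ or $\omega^2 N_1(Y)$ is the delicate point.
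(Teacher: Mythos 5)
The central step of part (1) in your sketch is wrong. You assert that for $\sigma$ outside the distinguished inertia generators, $\beta_x\circ\phi_x(\sigma)$ is constant in $x$. It is not: the strict-class conditions only pin down the evaluations at $\Sigstrict$, at $\Tine_F\,\ovp_{0b}$, and at the $\Tine_F\,\pi_s(x)$; for a generic $\sigma$ the cocycles $\phi_x$ genuinely differ (they live in different twists and have different global behavior). Consequently the decomposition into a ``constant-in-$x$ part killed automatically by $\zs$'' plus a ``$\Tine$-part killed by the $\omega^{k-1}$ hypothesis'' does not exist, and the argument collapses.

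The mechanism actually used is different and you should name it precisely. The key identity is $\omega N_1(Y) = N_{\mathscr{P}(S^\circ)\setminus\{S^\circ\},\,k}(Y)$, which converts the conclusion ``$(\phi_x)_{x\in Y}$ takes values in $\omega N_1(Y)$'' into the family of conditions $a\cdot\beta(\phi) = 0$ for all $a \in \zs_{I,k}(Y)$ with $I = \mathscr{P}(S^\circ)\setminus\{S^\circ\}$. One proves these by induction on the ideal $I$. At the inductive step $I \rightsquigarrow I\cup\{T\}$, Proposition \ref{prop:equivar_NIb} — which your sketch never invokes, and which is the crucial structural input — shows that $a\cdot\beta(\phi)$ is a cocycle valued in $N[\omega]$. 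One then checks, using that $\sum a$ is divisible by $\omega^k$ (trivial at $\Sigstrict$ and $\Tine_F\,\ovp_{0b}$), that $\sum a\delta_\ovp$ is divisible by $\omega^{k-1}$ (unramified at the moving primes, by your hypothesis on $\phi_{x_1}$), that $\chi(x)|_{G_v}$ is independent of $x$ for $v\in\Vplac_0\cup\Vplac_1$ (so the local conditions hold there), and that the certificates are unramified elsewhere. This places $a\cdot\beta(\phi)$ in the kernel of Definition \ref{defn:strict_cond} with trivial evaluation on $\Sigstrict$, and the minimality of $\Sigstrict$ forces $a\cdot\beta(\phi)=0$. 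Your sketch mentions $\Sigstrict$ only as ``pinning down $\phi_x$ up to the kernel,'' which is not how it enters; its role is as a triviality criterion for the auxiliary cocycle $a\cdot\beta(\phi)$.

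Your outline for part (2) is closer. Two small corrections: the extension of $\phi$ to $Y\cup\{y\}$ should come from the isomorphism in Proposition \ref{prop:Nib_bij} (applied with $I=\mathscr{P}(S^\circ)$, $b=|S^\circ|$), not from the non-canonical surjection of Proposition \ref{prop:Nib_surj} — uniqueness matters because you must show the specific extension $\phi_y$ is a strict class certificate. And the local conditions for $\phi_y$ at the moving primes $\pi_s(y)\cap F$ are extracted from the relations $a\delta_{\pi_s(y)}\cdot\beta(\phi) = 0$ for $a\in\zs(Y\cup\{y\})$ with $a(y)=1$, combined with the fact that $\chi(y)$ and $\chi(x)$ agree on $G_v$ for any $x\in Y$ with $\pi_s(x)=\pi_s(y)$ — not directly from unpacking.
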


For the proof of this lemma, we introduce the following piece of notation; given $a \in \Z[\xi]^Y$ for a given subset $Y$ of $X$, and given $\ovp$ in $X_s$ for some $s \in S_{\textup{act}}$, define $a\delta_{\ovp} \in \Z[\xi]^Y$ by
\[a\delta_{\ovp}(x) = \begin{cases} a(x) &\text{ if } \pi_s(x) = \ovp \\ 0 &\text{ otherwise.} \end{cases}\]
We also will write $\sum_{x \in Y} a(x)$ simply as $\sum a$.

\begin{proof}
Take $\phi \in Z^1(G_F, N(Y))$ to be the cocycle with $x$-component $\phi_x$ for $x \in Y$. We havve $\omega N_1(Y) = N_{\mathscr{P}(S^{\circ}) \backslash \{S^{\circ}\},\, k}(Y)$, so, to prove the first part, it suffices to prove that $a \cdot \beta(\phi)$ is zero for all $a \in \zs_{I, k}(Y)$ with $I = \mathscr{P}(S^{\circ}) \backslash \{S^{\circ}\}$. We do this by induction on the ideal $I$. For $I = \emptyset$, the result is clear.

Now suppose we have proved the result for $I$ and wish to show it for an ideal of the form $I \cup \{T\}$. Choosing $a \in \zs_{I \cup \{T\}, k}(Y)$, we know that $a \cdot \beta(\phi)$ lies in $Z^1(G_F, N[\omega])$ by Proposition \ref{prop:equivar_NIb}.  By the definition of $\zs_{I \cup \{T\}, k}(Y)$, we can find elements $f_U$ in $\zs(U, X)$ for each $U \in I \cup \{T\}$ so that
\begin{equation}
\label{eq:rewrite_a}
\sum_{U \in I \cup \{T\}} \omega^{k - |U|} f_U(x) = \begin{cases}  a(x) &\text{ if  }x \in Y \\ 0 &\text{ if } x \in X^{\circ} \backslash Y.\end{cases}
\end{equation}
In particular, we find that $\sum a = \omega^k \sum f_{\emptyset}$, and is hence divisible by $\omega^k$. As a result, we have that $a \cdot \beta(\phi)(\tau) = 0$ for
\begin{equation}
\label{eq:sigstrict_plus}
 \tau \in \Sigstrict \cup\left \{\TineF{F}{\ovp_{0b}}\right\}.
\end{equation}
For $v \in \Vplac_0 \cup \Vplac_1$, the restriction of $\chi(x)$ to $G_v$ does not depend on the choice of $x \in X^{\circ}$. So we find that $a \cdot \beta(\phi)$ maps into $W_v(\chi(x))$ for $v \in \Vplac_0 \cup \Vplac_1$ and any/every $x \in X^{\circ}$.

Now choose any $s$ in $S^{\circ}$ and $\ovp$ in $X_s$. From \eqref{eq:rewrite_a}, we see that $\sum a\delta_{\ovp}$ is divisible by $\omega^{k-1}$. From our restriction on the ramification of the $\phi_{x_1}$ at $\pi_s(x_1)$, we can conclude that $a \cdot \beta(\phi)$ is $0$ at $\TineF{F}{\ovp}$.

At all other primes, we find that $a \cdot \beta(\phi)$ is unramified since the cocycles $\phi_x$ were unramified for each $x \in Y$. The definition of a set of strict conditions allows us to conclude that $a\cdot\beta(\phi) = 0$. By induction, we can conclude the first part.

For the second part, suppose that $y$ is in the closure of $Y$ but is outside $Y$. We have an isomorphism
\[ N_1(Y \cup \{y\}) \cong N_1(Y)\]
by Proposition \ref{prop:Nib_bij}. We can thus view $\phi$ as a cocycle with coefficients in $\omega N_1(Y \cup \{y\})$, and we can consider its $y$-component $\phi_y \in Z^1(G_F, N^{\chi(y)}[\omega^k])$. We need to show that this cocycle represents a element in the $\omega^k$-Selmer group of $N^{\chi (y)}$, and that it is a strict class certificate for $y$.

So choose $a \in \zs(Y \cup \{y\})$ satisfying $a(y) = 1$. We then have $a \cdot \beta(\phi) = 0$, and we can conclude that $\phi_y$ obeys the local conditions at $\Vplac_0 \cup \Vplac_1$ since the $\phi_x$ obey the local conditions at these places for $x \in Y$. Since $\sum a = 0$, we may also conclude that $\beta_x \circ \phi_x(\tau) = \beta_y \circ \phi_y(\tau)$ for $\tau$ satisfying \eqref{eq:sigstrict_plus} and $x \in Y$.

For $s \in S^{\circ}$, we see that $a\delta_{\pi_s(y)} \cdot \beta(\phi)$ is $0$ since $a \delta_{\pi_s(y)}$ is in $\zs_{\mathscr{P}(S^{\circ}) \backslash \{S^{\circ}\},\, k}(Y \cup \{y\})$. Considering this, we find that the only places of $F$ where $\phi_y$ can be ramified are those in $\Vplac_0 \cup \Vplac_1$ and the primes of the form $\pi_s(y) \cap F$ for some $s \in S_{\textup{act}}$.  We also note that, if a given $x \in Y$ satisfies $\pi_s(y) = \pi_s(x)$, then $\phi_x$ obeys the local conditions at $v =\pi_s(y)\cap F$ and $\chi(x)$ and $\chi(y)$ have equal restriction at $G_v$. So the condition $a\delta_{\pi_s(y)} \cdot \beta(\phi) = 0$ implies that $\phi_y$ satisfies the local conditions at $\pi_s(y) \cap F$. Since $\sum a \delta_{\ovp_{\pi_s(y)}} = 0$, we also find that
\[\beta_y \circ \phi_y\left(\TineF{F}{\pi_s(y)}\right) = \beta_{x} \circ \phi_{x}\left(\TineF{F}{\pi_s(x)}\right)\]
for $x \in Y$.

So $\phi_y$ is a strict class certificate for $y$. This contradicts the definition of $Y$, so we can conclude that $Y$ is closed.
\end{proof}

We will need a version of this result that can handle cocycles with extra ramification at the prime indexed by $\saaa$. This will require some extra notation.
\begin{defn}
\label{defn:Mgov}
Given $x \in X_{\text{pre}}$, define
\[\chi(x) = \sum_{s \in S_{\textup{pre}}} \mfB_{\pi_s(x), \FFF}(h_s)\quad\text{and}\quad N(X_{\text{pre}}) = \bigoplus_{x \in X_{\text{pre}}} N^{\chi(x)}.\]
Following the construction of Section \ref{ssec:interesting} with respect  to the set of indices $S_{\text{pre}}$, we may consider the submodule $N_{\mathscr{P}(S_{\text{pre}}), k+1}(X_{\text{pre}})$ of this direct sum. We then define the \emph{governing subquotient} by
\[M_{\text{gov}} = \frac{N_{\mathscr{P}(S_{\text{pre}}), k+1}(X_{\text{pre}})}{\omega N_{\mathscr{P}(S_{\text{pre}}), k+1}(X_{\text{pre}})}.\]
This module is $\omega$-torsion. Using \eqref{eq:sig_n}, we find that $M_{\text{gov}}$ is a $\Gal(K_{\text{pre}}/F)$-module.

For each $\ovp \in X_{\saaa}$, choose a cocycle
\[\Phi_{\ovp} \in Z^1(G_F, M_{\textup{gov}}).\]
We call the collection $(\Phi_{\ovp})_{\ovp \in X_{\saaa}}$ a \emph{governing expansion} if there is some $n_a \in M_{\textup{gov}}$ for which
\[\Phi_{\ovp}(\Tine_F\, \ovp) = n_a \quad\text{for all }\, \ovp \in X_{\saaa}\]
and if there is some $m_{a} \in N[\omega^{k+1}]$  so that $n_a$ equals the image of  $(\beta^{-1}_{\chi(x)} m_{a})_{x \in X_{\text{pre}}}$ under the natural projection from $N_{\mathscr{P}(S_{\text{pre}}), k+1}(X_{\text{pre}})$ to $M_{\textup{gov}}$.

Given a governing expansion $(\Phi_{\ovp})_{\ovp \in X_{\saaa}}$, we define the \emph{governing class} $\class{\ovp_0}_{\text{gov}}$ to be the set of primes $\ovp$ in $X_{\saaa}(Z_{\text{pre}})$ such that
\begin{enumerate}
\item $\Phi_{\ovp}  - \Phi_{\ovp_0}$ is unramified at all places of $F$ besides $\ovp \cap F$ and $\ovp_0 \cap F$,
\item For every place $v \in \Vplac_0 \cup \Vplac_1 \cup \Vplac_{\text{pre}}$, the restriction of $\Phi_{\ovp} - \Phi_{\ovp_0}$ to $G_v$ is $0$, and
\item We have
\[\Phi_{\ovp}(\sigma) = \Phi_{\ovp_0}(\sigma) \quad\text{for }\, \sigma \in \Sigstrict. \]
\end{enumerate}
\end{defn}
\begin{rmk}
The use of the term `governing' in this definition has its roots in the paper \cite{CoLa83}. The authors of this paper conjectured that, given a nonzero integer $d$ and a positive integer $k$, there is a number field $L_d$ such that the $2^k$-class rank of $\QQ(\sqrt{dp})$ can be determined by Frobenius element of $p$ in $\Gal(L_d/\QQ)$ for any prime $p$ not dividing $2d$. The hypothetical field $L_d$ was called a \emph{governing field}.

For  $k > 3$, this conjecture is likely incorrect \cite{KoMi21}. However, a result of this section is that the value of $\Phi_{\ovp}$ at $\Frob_F\, \ovp_{0b}$ has some impact on the structure of the $\omega^{k+1}$-Selmer groups in $X^{\circ}$, which partially vindicates this conjecture.
\end{rmk}

\begin{notat}
\label{notat:that_map}
Suppose we have chosen a governing expansion $(\Phi_{\ovp})_{\ovp \in X_{\saaa}}$. Choose a subset $X'_{\saaa}$ of $X_{\saaa}(Z_{\textup{pre}})$ lying in a single governing class, and take $X^{\circ}$ as in Notation \ref{notat:Xcirc}.

By identifying $N^{X^{\circ}}$ with $\bigoplus_{\ovp \in X_{\saaa}'} N^{X_{\textup{pre}}}$, we may define a non-equivariant isomorphism
\[N(X^{\circ}) \cong \bigoplus_{\ovp \in X_{\saaa}'} N(X_{\textup{pre}})\]
Taking subquotients on both sides, this isomorphism descends to an equivariant isomorphism
\[N_0(X^{\circ})/N_1(X^{\circ}) \cong \bigoplus_{\ovp \in X_{\saaa}'} M_{\text{gov}}\Big/\Delta(M_{\text{gov}}),\]
where $\Delta(M_{\text{gov}})$ is the image of $M_{\text{gov}}$ under the diagonal map. We take $\Phi$ to be the the image of $(\Phi_{\ovp})_{\ovp \in X'_{\saaa}}$ in $Z^1(G_F, \, N_0(X^{\circ})/N_1(X^{\circ}))$. Multiplication by $\omega$ defines a surjection
\[N_0(X^{\circ})/N_1(X^{\circ}) \to \omega N_0(X^{\circ})/\omega N_1(X^{\circ}),\]
and we take $\omega \Phi$ to be the image of $\Phi$ under this map.
\end{notat}

In the case that the $\Phi_{\ovp}$ are all $0$, the following lemma recovers Lemma \ref{lem:closure_noram}.
\begin{lem}
\label{lem:closure_ram}
Take all notation as in Notation \ref{notat:that_map}, and define $m_a \in N[\omega^{k+1}]$ from the governing expansion $(\Phi_{\ovp})_{\ovp \in X'_{\saaa}}$ as in Definition \ref{defn:Mgov}. Choose $x_1$ in $X^{\circ}$ and a cocycle $\phi_{x_1}$ representing a class in $\Sel^{\omega^k} N^{\chi(x_1)}$. We suppose that
\begin{equation}
\label{eq:happy_ram}
\omega^{k - 1}\beta_{x_1} \left(\phi_{x_1}(\TineF{F}{\pi_s(x_1)})\right) = \begin{cases} \omega^{k} m_{a} &\text{ for } s = \saaa, \text{ and } \\ 0 &\text{ for } s \in S_{\textup{pre}}. \end{cases}
\end{equation}
Take $Y$ to be the strict class of $x_1$ with respect to $\phi_{x_1}$, and take $\phi_x$ to be a strict class certificate for each $x$ in $Y$. Then
\begin{enumerate}
\item The tuple  $(\phi_x)_{x \in Y}$ lies in $Z^1(G_F, \omega N_{0}(Y))$, and its image  in 
\[Z^1(G_F, \omega N_0(Y)/\omega N_1(Y))\]
equals the image of $\omega \Phi$, and
\item The set $Y$ is closed.
\end{enumerate}
\end{lem}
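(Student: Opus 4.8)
The strategy is to reduce Lemma~\ref{lem:closure_ram} to Lemma~\ref{lem:closure_noram} by absorbing the governing expansion $(\Phi_{\ovp})_{\ovp\in X'_{\saaa}}$ into a change of base cocycle. Concretely, I would first observe that the construction of $M_{\text{gov}}$ together with the identification in Notation~\ref{notat:that_map} shows that the image of $(\phi_x)_{x\in Y}$ in $Z^1(G_F,\, N_0(Y)/N_1(Y))$, once we mod out by the diagonal $\Delta(M_{\text{gov}})$-factor, is forced by the strict-class conditions: the values of $\beta_x\circ\phi_x$ at the $\TineF{F}{\pi_s(x)}$ for $s\in S^{\circ}$, at the elements of $\Sigstrict$, and at $\TineF{F}{\ovp_{0b}}$ are all pinned down, and hypothesis~\eqref{eq:happy_ram} identifies the ramification of $\phi_{x_1}$ at $\pi_{\saaa}(x_1)$ with $\omega^k m_a$. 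The point is that $\omega\Phi$ is exactly the cocycle recording how the $\pi_{\saaa}$-ramification ``spreads'' through $X_{\text{pre}}$ under the action of $G_F$ via the $\chi_s$ for $s\in S_{\text{pre}}$, and \eqref{eq:sig_n} (via Proposition~\ref{prop:closed_under_G}) is what guarantees $M_{\text{gov}}$, hence $N_0(X^{\circ})/N_1(X^{\circ})$, is a genuine $G_F$-submodule on which this comparison makes sense.

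For part~(1), I would run the same induction on ideals $I\subseteq\mathscr{P}(S^{\circ})$ as in the proof of Lemma~\ref{lem:closure_noram}, but now with $I_0=\mathscr{P}(S_{\text{pre}})$ playing the role of the ``trivial'' base: the claim is that $a\cdot\beta(\phi)=0$ for all $a\in\zs_{I,k}(Y)$ with $I$ ranging over ideals contained in $I_1=\mathscr{P}(S^{\circ})$, so that $(\phi_x)_{x\in Y}\in Z^1(G_F,\omega N_0(Y))$ rather than merely $\omega N_1(Y)$ — the weaker conclusion reflecting the extra allowed ramification at $\saaa$. For the base case $I=I_0$ one uses \eqref{eq:happy_ram} (which says $\phi_{x_1}$ is unramified at the $S_{\text{pre}}$-primes after applying $\omega^{k-1}$) together with the fact that at $\saaa$ the ramification is precisely the image of $\omega^k m_a$; after passing to $N_0/N_1$ this is exactly the statement that the image of $(\phi_x)$ agrees with $\omega\Phi$. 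The inductive step is verbatim as before: given $a\in\zs_{I\cup\{T\},k}(Y)$ with $T\notin I$ and proper subsets in $I$, Proposition~\ref{prop:equivar_NIb} puts $a\cdot\beta(\phi)$ in $Z^1(G_F,N[\omega])$; one writes $a$ via the $f_U$ as in~\eqref{eq:rewrite_a}, checks $\sum a$ is divisible by $\omega^k$ to kill the values on $\Sigstrict\cup\{\TineF{F}{\ovp_{0b}}\}$, checks the local conditions at $\Vplac_0\cup\Vplac_1$ (restrictions of $\chi$ being constant there), uses divisibility of $\sum a\delta_{\ovp}$ by $\omega^{k-1}$ plus \eqref{eq:happy_ram} at the $S^{\circ}$-primes, and invokes the definition of $\Sigstrict$ to conclude $a\cdot\beta(\phi)=0$. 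One must separately verify that the ramification at $\Vplac_{\text{pre}}$ is handled — here the conditions~(1)--(3) in the definition of governing class (Definition~\ref{defn:Mgov}) are what ensure $\Phi_{\ovp}-\Phi_{\ovp_0}$ is trivial at those places, so the comparison with $\omega\Phi$ is consistent with the strict-class data.

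For part~(2), the argument follows the closure proof in Lemma~\ref{lem:closure_noram} almost word for word: take $y$ in the closure of $Y$ but outside $Y$, use Proposition~\ref{prop:Nib_bij} to get $N_1(Y\cup\{y\})\cong N_1(Y)$ (and correspondingly for $N_0$), view $\phi$ as valued in $\omega N_0(Y\cup\{y\})$, extract its $y$-component $\phi_y$, pick $a\in\zs(Y\cup\{y\})$ with $a(y)=1$ so $a\cdot\beta(\phi)=0$, and then check in turn that $\phi_y$ obeys the local conditions at $\Vplac_0\cup\Vplac_1$, that $\beta_x\circ\phi_x$ agrees with $\beta_y\circ\phi_y$ on $\Sigstrict\cup\{\TineF{F}{\ovp_{0b}}\}$ since $\sum a=0$, that the only ramified places of $\phi_y$ are in $\Vplac_0\cup\Vplac_1\cup\Vplac_{\text{pre}}$ and the $\pi_s(y)\cap F$ for $s\in S_{\textup{act}}$, and that $a\delta_{\pi_s(y)}\cdot\beta(\phi)=0$ gives the matching of $\TineF{F}{\pi_s(y)}$-values — so $\phi_y$ is a strict class certificate for $y$, contradicting $y\notin Y$. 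The one genuinely new bookkeeping point, and the step I expect to be the main obstacle, is keeping straight the two nested ideal structures $I_0\subseteq I_1$ and the role of $M_{\text{gov}}$: I need to be careful that the extra ramification at $\saaa$ encoded by $\omega\Phi$ does not break the closure argument, which amounts to checking that $a\delta_{\pi_{\saaa}(y)}\cdot\beta(\phi)$, while not necessarily zero, lies in the image of $\omega\Phi$ and hence is still controlled by the governing-class hypotheses — this is exactly where the definition of governing class and~\eqref{eq:happy_ram} must be combined, and getting the $\omega$-powers to line up is the delicate part.
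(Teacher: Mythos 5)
Your proposal has the right global shape (imitate Lemma~\ref{lem:closure_noram}, use the nested ideals $I_0 \subseteq I_1$, recognize that the extra ramification at $\saaa$ is governed by $\Phi$), but the central claim you propose to prove by induction is wrong, and this is not a bookkeeping slip — it is exactly the point where the new difficulty lives.

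You write that the claim is ``$a\cdot\beta(\phi)=0$ for all $a\in\zs_{I,k}(Y)$ with $I$ ranging over ideals contained in $I_1=\mathscr{P}(S^{\circ})$, so that $(\phi_x)_{x\in Y}\in Z^1(G_F,\omega N_0(Y))$ rather than merely $\omega N_1(Y)$.'' There are two problems here. First, the logic is inverted: $a\cdot\beta(\phi)=0$ for all $a$ in $\zs_{I,k}(Y)$ with $I$ running through $\mathscr{P}(S^{\circ})\setminus\{S^{\circ}\}$ is precisely the condition for $\phi$ to lie in $\omega N_1(Y)$, which is the \emph{stronger} of the two conclusions (since $N_1(Y)\subseteq N_0(Y)$), not the weaker. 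Second, and more fundamentally, the vanishing $a\cdot\beta(\phi)=0$ is simply false for $a$ in $\zs_{I,k}(Y)$ once $I$ properly contains $I_0$ and $\Phi$ is nontrivial; in fact the nonzero values of $a\cdot\beta(\phi)$ for such $a$ are what the second half of part~(1) is asserting they must equal, namely $a\cdot\beta(\omega\Phi')$. The correct move — which you never make — is to form the difference $\phi-\omega\Phi'$, where $\Phi'$ is the projection of $\Phi$ to $N_0(Y)/N_1(Y)$, and run the induction to show $a\cdot\beta(\phi-\omega\Phi')=0$ for all $a\in\zs_{\mathscr{P}(S^{\circ})\setminus\{S^{\circ}\},\,k}(Y)$. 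This is not cosmetic: the inductive step then requires a new input that your plan does not supply, namely a separate proof that $a\cdot\beta(\omega\Phi)(\tau)$ vanishes for $\tau$ in $\Sigstrict$, for $\tau$ in any inertia group away from $X'_{\saaa}$, and for $\tau$ in $G_v$ with $v\in\Vplac_0\cup\Vplac_1\cup\Vplac_{\text{pre}}$. Proving this vanishing is where the governing-class conditions~(1)--(3) are actually spent, and it requires decomposing a general $a$ into pieces supported on $X_{\text{pre}}\times\{\ovp_{1a},\ovp_{2a}\}$ for pairs of primes so that the corresponding $\omega a\delta_{\ovp_{ia}}$ lands in $\zs_{I_0,k}(X_{\text{pre}})$ and one can pair it against $\Phi_{\ovp_{ia}}$. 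Your one-sentence gesture (``conditions~(1)--(3)\dots ensure $\Phi_{\ovp}-\Phi_{\ovp_0}$ is trivial at those places, so the comparison with $\omega\Phi$ is consistent'') does not perform this reduction.

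The same subtraction is needed in part~(2): the element you must complete to $y$ via Proposition~\ref{prop:Nib_bij} is $\phi-\omega\Phi'$ (lying in $\omega N_1(Y)$), not $\phi$ itself, and the verification that the resulting $\phi_y$ obeys the local conditions at the $S_{\text{pre}}$-primes is precisely a second application of the triviality of $a\delta_\ovp\cdot\beta(\omega\Phi')$ that you have not established. Your closing paragraph signals the right instinct — ``$a\delta_{\pi_{\saaa}(y)}\cdot\beta(\phi)$, while not necessarily zero, lies in the image of $\omega\Phi$'' — but does not implement it, and without the subtraction $\phi\mapsto\phi-\omega\Phi'$ and the triviality claim for $a\cdot\beta(\omega\Phi)$, the argument does not close.
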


\begin{proof}
The proof of this lemma largely follows that of Lemma \ref{lem:closure_noram}. First, note that $a \cdot \beta(\Phi)$ gives a well-defined map to $N$ for $a$ in $\zs_{I_1, k + 1}(Y)$.

Take $ \Phi'$ to be the projection of $\Phi$ to $ N_0(Y)/ N_1(Y)$. Define $\phi$ from $(\phi_x)_{x \in Y}$ as in Lemma \ref{lem:closure_noram}.  For the first part, we need to check that $a \cdot \beta(\phi - \omega \Phi')$ is zero for all $a \in \zs_{I, k}(Y)$ with $I= \mathscr{P}(S^{\circ}) \backslash \{S^{\circ}\}$. We do this by induction on the ideal $I$. For $I = I_0$, the result follows as in Lemma \ref{lem:closure_noram}, with $a \cdot \beta(\omega\Phi') = 0$ following from the fact that $\Phi'$ takes values in a quotient of $N_0(Y)$. Now suppose we know it the result for $I \supseteq I_0$, and wish to show it for the ideal $I \cup \{T\}$. 

For $a \in \zs_{I \cup \{T\}, k}(X^{\circ})$, we claim that
\begin{equation}
\label{eq:omegaPhiTriv}
a \cdot \beta(\omega \Phi)(\tau) = 0 \quad \text{for}\quad\tau \in \Sigstrict  \,\cup\, \bigcup_{v \,\not\in \,\{\ovp \cap F\,:\,\, \ovp \in X'_{\saaa}\}} I_v \,\cup  \bigcup_{v \in \Vplac_0 \cup \Vplac_1 \cup \Vplac_{\text{pre}}} G_v
\end{equation}
To prove this claim, we first suppose that $a$ is supported within a set of the form $X_{\text{pre}} \times \{\ovp_{1a}, \ovp_{2a}\}$ with $\ovp_{1a}, \ovp_{2a}$ distinct primes, and that $a$ satisfies
\[a((x_0, \ovp_{1a})) = -a((x_0, \ovp_{2a}))\]
for all $x_0 \in X_{\text{pre}}$. Given such an $a$, note that the element $\omega a\delta_{\ovp_{1a}}$ projects to an element $a'$ in $\text{zs}_{I_0, k}(X_{\text{pre}})$ and that $\omega a \delta_{\ovp_{2a}}$ projects to $-a'$. So $a' \cdot \beta(\Phi_{\ovp_{ia}})$ is well defined for $i = 1, 2$, and \eqref{eq:omegaPhiTriv} follows for this $a$ from the definition of a governing class.

Now, any $a$ in $\zs_{I \cup \{T\}, k}(X^{\circ})$ may be written as a sum of elements of this form and some element $a_0$ supported on a set of the form $X_{\text{pre}} \times \{\ovp_{1a}\}$. This element $a_0$ must lie in $\zs_{I_0, k}(X^{\circ})$, so $a_0 \cdot \beta(\omega \Phi) = 0$. We may conclude that \eqref{eq:omegaPhiTriv} holds for $a \in \zs_{I \cup \{T\}, k}(X^{\circ})$. 

We now consider $a \cdot \beta(\phi - \omega \Phi')$ for some $a \in \zs_{I \cup \{T\}, k}(X^{\circ})$. From the induction step, this lies in $Z^1(G_F, N[\omega])$. Following the method of Lemma \ref{lem:closure_noram}, we see the first part of the lemma will follow if we can show that $a \cdot \beta(\phi - \omega \Phi')$ is unramified at every place divisible by a prime in $X_{\saaa}'$. So choose $\ovp$ in $X_{\saaa}'$, and choose some $x \in Y$ with $\pi_{\saaa}(x) = \ovp$. We find that
\[a\delta_{\ovp} \cdot \beta\left(\omega \Phi'\left(\Tine_F \ovp\right)\right) = \left(\sum a\delta_{\ovp}\right)\omega m_a = a\delta_{\ovp} \cdot \beta\left(\phi\left(\Tine_F \ovp\right)\right),\]
with the equalities following by the definition of a strict class, the hypothesis \eqref{eq:happy_ram}, and the fact that $\sum a\delta_{\ovp}$ is divisible by $\omega^{k-1}$. We may conclude that $a \cdot (\phi - \omega \Phi')$ is $0$, giving the first part.

For the second part, suppose $y$ lies in the closure of $Y$, and choose $a \in \zs(Y \cup \{y\})$ with $a(y) = 1$. Complete $\phi - \omega \Phi'$ to $y$ as in the proof of Lemma \ref{lem:closure_noram}. We see $a\delta_{\ovp}$ is in $\zs_{I_0, k}(Y \cup \{y\})$ for $\ovp$ in $X_{\saaa}'$, so $a\delta_{\ovp} \cdot \beta(\phi)$ is $0$. Because of this, $\phi_y$ satisfies the local conditions at all places except potentially at the primes indexed by $S_{\text{pre}}$. But for $s$ in $S_{\text{pre}}$ and $\ovp$ in $X_{\text{pre}, s}$, we have the identity
\[a\delta_{\ovp} \cdot \beta(\phi - \omega \Phi') = 0.\]
We see that $a\delta_{\ovp}$ is in $\zs_{\mathscr{P}(S^{\circ}) \backslash S^{\circ}, k}(X)$, so \eqref{eq:omegaPhiTriv} shows that $a\delta_{\ovp} \cdot \beta(\omega \Phi')$ is $0$ on $G_{F, \ovp}$. So $\phi_y$ obeys the local conditions at $\ovp \cap F$. These were the last local conditions to check.
\end{proof}

Our final lemma gives some control for the Cassels--Tate pairings in the closed sets introduced in Lemmas \ref{lem:closure_noram} and \ref{lem:closure_ram}.
\begin{lem}
\label{lem:CTP_control}
Suppose we are in the situaiton of Lemma \ref{lem:closure_ram}, and take all notation as in that Lemma. We assume that $\Phi(\Frob_F\, \ovp_{0b})$ lies in the image of $N_0(X^{\circ})[\omega] \subseteq N_0(X^{\circ})$ in $N_0(X^{\circ})/N_1(X^{\circ})$.

Choose $\psi$ in $\beta_{x_1}\left(\Sel^{\omega} (N^{\vee})^{\chi(x_1)}\right)$. We assume that $\psi$ is unramified at the primes indexed by $S^{\circ}$. Taking $\psi_x = \beta_x^{-1}(\psi)$ for $x \in Y$, we see that $\psi_x$ lies in $\Sel^{\omega}(N^{\vee})^{\chi(x)}$. 

For $x \in Y$, take $\langle \phi_x, \,\psi_x \rangle$ to be the Cassels--Tate pairing of these cocycles as in \eqref{eq:CTP_kj}, and define $\textup{ct} \in \left(\tfrac{1}{\ell}\Z/\Z\right)^Y$ by taking $\textup{ct}(x) = \langle \phi_x, \,\psi_x \rangle$ for $x \in Y$.

Then, for any $a$ in $\zs(Y)$, we have
\[a \cdot \textup{ct} = \iota\big(\left(a \cdot \beta\left(\Phi(\FrobF{F}{\ovp_{0b}})\right)\right) \cdot \psi(\TineF{F}{\ovp_{0b}})\big),\]
where the outer product on the right is the pairing \eqref{eq:dual_torsion} and $\iota: \mu_{\ell} \to \tfrac{1}{\ell}\Z/\Z$ is the isomorphism taking the image of $\overline{\zeta}$ in $\mu_{\ell}$ to $\tfrac{1}{\ell}\Z/\Z$.
\end{lem}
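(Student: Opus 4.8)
The plan is to compute $a \cdot \textup{ct}$ by unwinding the definition of the Cassels--Tate pairing $\langle\,\,,\,\,\rangle_k$ through the homological recipe of \cite{MS21}, and to exploit the fact that $a$ annihilates all the local discrepancies between the $\phi_x$ except at $\ovp_{0b} \cap F$. Concretely, I would first fix a global cochain $\Psi_x \in C^1(G_F, V((N^{\vee})^{\chi(x)}))$ lifting $\psi_x$ at the level of the exact sequence $0 \to T((N^{\vee})^{\chi(x)}) \to V((N^{\vee})^{\chi(x)}) \to (N^{\vee})^{\chi(x)} \to 0$, whose coboundary $d\Psi_x$ lands in $Z^2(G_F, T((N^{\vee})^{\chi(x)}))$. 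Since $\psi_x = \beta_x^{-1}(\psi)$ is obtained by twisting a single global object, these lifts can be chosen $\beta$-compatibly, so that $\beta(d\Psi)$ is ``diagonal'' in a sense controlled by the same zero-sums-in-lines formalism. The pairing value $\langle\phi_x,\psi_x\rangle$ is then the sum over all places $v$ of local invariants $\inv_v\big(\phi_x \cup_v \Psi_{x,v}'\big)$, where $\Psi_{x,v}'$ is a local correction of $\Psi_x$ coming from the local conditions; applying $a$ and summing, the global sum $\sum a = 0$ (which holds because $a \in \zs(Y) \subseteq \zs(\{\emptyset\}, Y)$) kills the genuinely global contribution, leaving only a sum of local terms.

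Next I would argue that almost all of these local terms vanish. At every $v \notin \Vplac_0 \cup \Vplac_1 \cup \Vplac_{\text{pre}} \cup \{\ovp_{0b}\cap F\} \cup \{\pi_s(x)\cap F : s \in S_{\text{act}}\}$ both $\phi_x$ and $\psi_x$ are unramified and the local condition is $H^1_{\text{ur}}$, so the local pairing is zero. At $v \in \Vplac_0 \cup \Vplac_1 \cup \Vplac_{\text{pre}}$, Lemma \ref{lem:closure_ram} (via the identification of $(\phi_x)_{x\in Y}$ with the image of $\omega\Phi$ modulo $\omega N_1$) together with the definition of a governing class guarantees that the relevant combination $a\cdot\beta(\phi-\omega\Phi')$ is trivial there, and $\psi$ is chosen to obey the $\omega$-Selmer local conditions; I would check that this forces the corresponding $a$-weighted local invariant to vanish. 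At the primes indexed by $s \in S_{\text{pre}} \cup \{\saaa\}$, the hypothesis that $\psi$ is unramified there, together with the ramification control \eqref{eq:happy_ram} on $\phi_{x_1}$ and the fact (from Lemma \ref{lem:closure_ram}, first part) that $a\delta_{\ovp}\cdot\beta(\phi - \omega\Phi')$ vanishes for each such $\ovp$, again kills the contribution — here the assumption $\Phi(\FrobF{F}{\ovp_{0b}}) \in N_0(X^{\circ})[\omega]$ is used to ensure the local cup product at these primes is unramified and lands in the $\omega$-torsion where it pairs trivially with $\psi$. This leaves exactly the single place $v = \ovp_{0b}\cap F$.

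The remaining computation is the local pairing at $\ovp_{0b}\cap F$. Since $\psi$ is a fixed-point Selmer class, it is unramified at $\ovp_{0b}\cap F$ (this is an automatic local condition if $\ovp_{0b}\cap F \notin \Vplac_0$, and in any case $\ovp_{0b} \in X_{\sbbb}(Z_{\text{pre}})$ lies outside $\Vplac_0$), so $\psi$ restricted to $G_{\ovp_{0b}}$ factors through the tame quotient and is determined by $\psi(\TineF{F}{\ovp_{0b}})$. The difference $\phi_x - \omega\Phi'_x$ is unramified at $\ovp_{0b}\cap F$ by part (1) of Lemma \ref{lem:closure_ram}, so the ramified part of $a\cdot\beta(\phi)$ at that prime equals the ramified part of $a\cdot\beta(\omega\Phi')$, which is governed by $a\cdot\beta(\Phi(\FrobF{F}{\ovp_{0b}}))$ — this is the point where the ``governing'' value $\Phi(\FrobF{F}{\ovp_{0b}})$ enters. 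A local Tate-duality computation, in the style of the evaluation of $\inv_v$ of a cup product between a ramified and an unramified class over a local field (cf. the Hilbert-symbol conventions of \cite[Section V.3]{Neuk99} and \cite[Proposition I.5.6, Proposition I.6.x]{Neuk08}), then identifies this local invariant with $\iota\big(\big(a\cdot\beta(\Phi(\FrobF{F}{\ovp_{0b}}))\big)\cdot\psi(\TineF{F}{\ovp_{0b}})\big)$, using the pairing \eqref{eq:dual_torsion} between $N[\omega]$-valued and $N^{\vee}[\omega]$-valued data and the normalization of $\iota$ sending $\overline{\zeta}$ to $\tfrac{1}{\ell}$. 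Assembling the pieces gives the claimed identity.

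\textbf{Main obstacle.} The principal difficulty is bookkeeping the Cassels--Tate recipe ``$\beta$-equivariantly'': the pairing $\langle\,\,,\,\,\rangle_k$ is defined twist-by-twist via auxiliary global and local cochains, and I need to choose these uniformly enough across $x \in Y$ that applying the operator $a$ and invoking $\sum a = 0$ and the line-sum relations actually collapses the global term and the spurious local terms. Getting this compatibility precise — rather than merely morally true — is where the real work lies; it amounts to checking that the construction of \cite{MS21} is functorial for the twisting isomorphisms $\beta_x$ and compatible with the module $N_0(X^{\circ})/N_1(X^{\circ})$ structure, so that $\Phi$ (a single cocycle into this subquotient) genuinely controls the whole $a$-weighted family of pairing values. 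The local computation at $\ovp_{0b}\cap F$ is then a standard, if delicate, invariant-of-a-cup-product calculation.
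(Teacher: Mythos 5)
Your overall strategy --- reduce $a\cdot\mathrm{ct}$ to a single local invariant at $\ovp_{0b}\cap F$ by killing everything else with $\sum a = 0$ and the line-sum relations --- is the right one and is morally how the paper proceeds. But two things go wrong in the execution, and the second is a genuine error.

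First, the paper does not unwind the Cassels--Tate pairing place-by-place directly; it passes $a\cdot\mathrm{ct}$ through the three-row commutative diagram \eqref{eq:BerkBrek} using the strictly monic/epic formalism of \cite{MS21}, establishing the clean intermediate identity $a\cdot\mathrm{ct} = \langle \omega\Phi',\, a^{\vee}\psi\rangle$ computed from the \emph{bottom} exact sequence. This step is substantive: you need to show $a^\vee\psi$ lands in the dual Selmer group of $N(Y)[\omega]/N_1(Y)[\omega]$, which requires a careful case-by-case comparison of the induced local conditions with those of $N^{\chi(x_1)}$ (the diagram of $G_v$-modules in the middle of the paper's proof). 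Your ``$\beta$-compatible global cochain lifts'' plan is aiming at the same goal, but without the machinery you'd be reproving the naturality statement by hand, and you correctly identify that the bookkeeping is where the work lies --- the paper's diagram is exactly the tool that makes it manageable.

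Second, and more seriously, your local computation at $\ovp_{0b}\cap F$ has the ramification roles inverted. You claim $\psi$ is unramified at $\ovp_{0b}\cap F$ ``because the local condition is automatic.'' But $\ovp_{0b}\cap F$ is a prime where $\chi(x_1)$ ramifies, and the automatic local condition there is $W_v(\chi) = 0$; the induced condition on $\Sel^{\omega}$ is the kernel of $H^1(G_v, N^{\vee}[\omega]) \to H^1(G_v, (N^{\vee})^{\chi(x_1)})$, which contains ramified classes (it is the image of the connecting map $\delta_{\chi}$ analyzed in Proposition~\ref{prop:class_spin_symbol_exact}). Indeed, if $\psi$ were unramified at $\ovp_{0b}\cap F$, then $\psi(\TineF{F}{\ovp_{0b}}) = 0$ and the claimed formula would be identically zero, making the lemma vacuous. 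The correct picture is the opposite of what you wrote: at $v = \ovp_{0b}\cap F$, the class $a\cdot\beta(\Phi)$ is \emph{unramified} (each $\Phi_{\ovp}$ is unramified at $\ovp_{0b}\cap F$ by construction) and contributes via its Frobenius value $a\cdot\beta(\Phi(\FrobF{F}{\ovp_{0b}}))$, while $\psi$ is the \emph{ramified} class contributing via $\psi(\TineF{F}{\ovp_{0b}})$; the local invariant is the norm residue symbol of the unramified class paired with the ramified one, evaluated at Frobenius and tame inertia respectively. Your sentence ``the ramified part of $a\cdot\beta(\phi)$ ... is governed by $a\cdot\beta(\Phi(\FrobF{F}{\ovp_{0b}}))$'' conflates the Frobenius value with the inertia value and is not a coherent statement. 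Fixing this requires swapping the roles of $\psi$ and $a\cdot\beta(\Phi)$ throughout the final paragraph and grounding the identity $a\cdot\mathrm{ct} = \langle \omega\Phi', a^\vee\psi\rangle$ via the paper's diagram chase rather than direct cochain manipulation.
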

In the case that $\phi_{x_1}(\Tine_F\, \pi_{\saaa}(x_1))$ lies in $N^{\chi(x_1)}[\omega^{k-1}]$, applying this lemma with the zero governing expansion gives $a \cdot \textup{ct} = 0$

\begin{proof}
This uses tools and terminology developed in \cite{MS21}. We consider the commutative diagram
\begin{equation}
\label{eq:BerkBrek}
\begin{tikzcd}[column sep =1.0em]
 0 \arrow{r} & N(Y)[\omega] \arrow{r} \arrow[d, equals] & N(Y)[\omega^{k+1}] \arrow{r}{\omega}& N(Y) [\omega^k] \arrow{r}  &0 \\
0 \arrow{r} & N(Y)[\omega]  \arrow{r} \arrow[d] &  N_0(Y) \arrow{r}{\omega} \arrow[u, hook]  \arrow[d, two heads] &\omega N_0(Y) \arrow{r} \arrow[u, hook]  \arrow[d, two heads]& 0\\
0 \arrow{r} & N(Y)[\omega]\big/N_1(Y)[\omega]  \arrow{r} & N_0(Y)/N_1(Y) \arrow{r} & \omega N_0(Y)/\omega N_1(Y) \arrow[r] & 0
 \end{tikzcd}
\end{equation}
with exact rows and endow the top row with local conditions coming from our choices in Definition \ref{defn:Sel}. The pairing associated to this exact sequence decomposes as the sum over $x$ in $Y$  of the pairing between $\Sel^{\omega^k} N^{\chi(x)}$ and $\Sel^{\omega} (N^{\vee})^{\chi(x)}$.

We then give the second row in \eqref{eq:BerkBrek} the local conditions so it is a pullback of the first row in the category of Galois modules decorated with local conditions, so both of the inclusions are strictly monic in the sense of \cite[Section 4.1]{MS21}. For the final row, we assign local conditions so the vertical morphism to $N_0(Y)/N_1(Y)$ is strictly epic, so the first horizontal morphism is strictly monic, and the final horizontal map is strictly epic. With these local conditions, we find that $(\phi_x)_{x \in Y}$ lifts to a Selmer element of $\omega N_0(Y)$, and that this Selmer elment maps to the image $\omega \Phi'$ of $\omega \Phi$ in $\omega N_0(Y)/\omega N_1(Y)$, another Selmer element.

The object in this diagram with the least straightforward local conditions is the first term of the last row. Fortunately, it is unnecessary to determine these local conditions precisely. We have a diagram
\[\begin{tikzcd}
N(Y)[\omega] \arrow[rr, bend left = 10, "a"]  \arrow{r} & N(Y)[\omega]\big/N_1(Y)[\omega]  \arrow[r, "a" '] & N[\omega]
\end{tikzcd}\]
of $G_F$ modules, where $a$ denotes the map taking $n$ to $a \cdot \beta(n)$. For $v \in \Vplac_0 \cup \Vplac_1 \cup \{\ovp_{0b}\cap F\}$, this triangle fits into a diagram
\[\begin{tikzcd}[row sep = small]
N(Y)[\omega] \arrow[rr, bend left = 10, "a"] \arrow[r]  \arrow[d] & N(Y)[\omega]\big/N_1(Y)[\omega] \arrow[r, "a" '] \arrow[d] & N[\omega] \arrow[d, equals] \\
N_0(Y)  \arrow[d, hook] \arrow[r, two heads] & N_0(Y)/N_1(Y) \arrow[r, "a" ] & N[\omega]\arrow{d}\\
N(Y)  \arrow[r, "\sim"] & \bigoplus_{y \in Y} N^{\chi(x_1)}   \arrow[r, "a" ']  & N^{\chi(x_1)}
\end{tikzcd}\]
of $G_v$ modules, so the image of the local conditions at $v$ under 
\[N(Y)[\omega]\big/N_1(Y)[\omega]\xrightarrow{\,\,a\,\,} N[\omega]\]
is a subgroup of the local conditions for the $\omega$-Selmer group of $N^{\chi(x_1)}$ at $v$. At other places not indexed by $S_{\text{pre}} \cup \{\saaa\}$, we see that the image of the local conditions under $a$ sit inside the set of unramified classes.

In particular, since $\psi$ is unramified at  $\ovp \cap F$ for all $\ovp$ in $\bigcup_{s \in S^{\circ}} X_s$, and is hence trivial at all of these primes, we find that $a^{\vee} \psi$ lies in $\Sel (N(Y)[\omega]\big/N_1(Y)[\omega])^{\vee}$. Applying naturality of the Cassels--Tate pairing to \eqref{eq:BerkBrek}, we find that
\[a\cdot \textup{ct} = \big\langle \omega \Phi',\, a^{\vee} \psi \big \rangle,\]
where the Cassels--Tate pairing on the right is taken with respect to the final exact sequence of \eqref{eq:BerkBrek}. The cocycle $\Phi'$ is trivial at places in $\Vplac_0 \cup \Vplac_1$ and ramified only at places dividing a prime in $X_{\saaa}'$, where $\psi$ is necessarily trivial. We also find that $\omega\Phi'(\FrobF{F}{\ovp_{0b}})$ is zero. Taking $v = \ovp_{0b} \cap F$, we are left with
\[a \cdot \text{ct} = \text{inv}_{v}\left( \res_{G_v}a \cdot \beta(\Phi)\,\cup\, \res_{G_v}\, \psi\right)\]
by the definition of the Cassels--Tate pairing, where the cup product is the one induced by the pairing \eqref{eq:dual_torsion} between $N[\omega]$ and $N^{\vee}[\omega]$. The right hand side of this identity can be reduced to a norm residue symbol by naturality of the cup product and \cite[Section XIV.1.3]{Serre79}, leaving the claimed identity.

\end{proof}

Lemmas \ref{lem:closure_ram} and \ref{lem:CTP_control} give us control over individual Selmer elements and individual matrix coefficients in the Cassels--Tate pairing. The final result of this subsection gives a more holistic view of the $\omega^k$-Selmer groups over $X^{\circ}$ and their Cassels--Tate pairing. This result uses more of the hypotheses of Theorem \ref{thm:higher_smallgrid} on the Selmer groups of twists in the higher grid class $[x_0]_k$. The following notation will be helpful.
\begin{notat}
\label{notat:restraint}
Choose a governing expansion $(\Phi_{\ovp})_{\ovp \in X_{\saaa}}$ and a choice of $X^{\circ}$ and $\ovp_{0b}$ as in Notation \ref{notat:that_map}, define $m_a$ as in Definition \ref{defn:Mgov}, and define $\Phi$ from the governing expansion as in Notation \ref{notat:that_map}. We say that the governing expansion \emph{shows restraint} at $\ovp_{0b}$ if there is some function $g: X^{\circ} \to \FFF_{\ell}$ such that $\beta\left(\Phi(\Frob_F\, \ovp_{0b})\right)$ equals the image of
\[ \left(g(x) \cdot \omega^{k}m_a\right)_{x \in X^{\circ}} \in N_0[\omega]^{X^{\circ}}\]
in $N_0(X^{\circ})/N_1(X^{\circ})$. If this condition is satisfied, we call $g$ an \emph{associated scalar function} to $\Phi$.
\end{notat}

\begin{thm}
\label{thm:TAR}
Suppose we are in the situation of Theorem \ref{thm:higher_smallgrid}, and take all notation as in that statement. Choose $X^{\circ}$ and $\ovp_{0b}$ as in Notation \ref{notat:Xcirc}, and choose a governing expansion $(\Phi_{\ovp})_{\ovp \in X_{\saaa}}$. We assume that the element $m_a$ measuring the ramification of the governing expansion satisfies
\begin{equation}
\label{eq:gov_exp_TAR}
 \omega^{k} m_a = \pi_{\saaa}(v_a) (\overline{\zeta}),
\end{equation}
where $\pi_{\saaa}(v_a) \in N[\omega](-1)$ is the element fixed in Definition \ref{defn:to_smallgrid}.  We also assume that the governing expansion shows restraint at $\ovp_{0b}$, and we take $g: X^{\circ} \to \Z/\ell Z$ to be an associated scalar function to the governing expansion. 

Then the portion of the set $X^{\circ}$ mapping into the higher grid class $\class{x_0}_k$ under the inclusion $X^{\circ} \hookrightarrow X$ may  be partitioned into closed sets $Y_1, \dots, Y_M$ with
\begin{equation}
\label{eq:num_strict_classes}
M \le \exp\Big( \log  \ell \cdot \dim N[\omega] \cdot (r_{\omega}(x_0) + \dots + r_{\omega^k}(x_0)) \cdot( k + \# \Sigstrict) \Big)
\end{equation}
so that, for any given $i \le M$ and any function $a$ in $\zs(Y_i)$, we have
\begin{equation}
\label{eq:CT_g}
a \cdot \textup{ct} = \nu a \cdot g,
\end{equation}
where $\textup{ct}: Y_i \to \tfrac{1}{\ell}\Z/\Z$ is defined by $\pi_x(\textup{ct}) = \textup{ct}_x(w)$. Here, $\nu$ is a nonzero element in $\tfrac{1}{\ell}\Z/\Z$ that does not depend on the choice of $Y_i$ or $a$.
\end{thm}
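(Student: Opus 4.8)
The plan is to realize the function $x \mapsto \textup{ct}_x(w)$, on the part of $X^{\circ}$ mapping into $\class{x_0}_k$, as a fixed nonzero scalar times a sum of Cassels--Tate pairings of the shape governed by Lemma \ref{lem:CTP_control}, and then to patch that lemma's conclusion over a partition of this set into closed pieces obtained by refining strict classes.

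First I would fix a set of strict conditions $\Sigstrict$, so that \eqref{eq:sigstrict} holds. For $x$ mapping into $\class{x_0}_k$ one has $V_{\omega^j}(x) = V_{\omega^j}(x_0)$ for $j \le k$, so $\Psi_{x,N}(v)$ lies in $\omega^{k-1}\Sel^{\omega^k} N^{\chi(x)}$ for every $v$ in a fixed basis of $\text{k}V_{\omega} \cap V_{\omega^k}(x_0)$; choose cocycles $\phi_x^{(v)}$ representing classes in $\Sel^{\omega^k} N^{\chi(x)}$ with $\omega^{k-1}[\phi_x^{(v)}] = \Psi_{x,N}(v)$, and write $\phi_x = \phi_x^{(v_a)}$. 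Since $v_a \in \text{k}V_{\omega}$, the ramification-measuring maps $\mfR$ of $\Psi_{x,N}(v_a)$ vanish along $S_{\textup{pre}}$ and equal $\pi_{\saaa}(v_a)(\overline{\zeta})$ along $\saaa$; with \eqref{eq:gov_exp_TAR} this is exactly the ramification hypothesis \eqref{eq:happy_ram} of Lemma \ref{lem:closure_ram} for $\phi_x$, while for $v \in \text{k}_0V_{\omega}$ the corresponding data vanishes along all of $S^{\circ}$, the hypothesis of Lemma \ref{lem:closure_noram}. Dually, because $\pi_s(v_b) = 0$ for every $s \in S^{\circ}$ (along $S_{\textup{pre}}$ because $v_b \in \text{k}V_{\omega}^{\vee}$, along $\saaa$ by the choices of Definition \ref{defn:to_smallgrid}), the class $\beta_x(\Psi_{x,N^{\vee}}(v_b))$ does not depend on $x \in X^{\circ}$; call it $\psi$, and similarly each $v' \in \text{k}_0V^{\vee}_{\omega}$ gives a global class $\psi^{(v')}$ unramified at the primes indexed by $S^{\circ}$ and satisfying $\psi^{(v')}(\TineF{F}{\ovp_{0b}}) = \pi_{\sbbb}(v')(\overline{\zeta})$, which vanishes when $v' \in \text{k}_0V^{\vee}_{\omega}$ since $\pi_{\sbbb}(\text{k}_0V^{\vee}_{\omega}) = 0$; the same formula gives $\psi(\TineF{F}{\ovp_{0b}}) = \pi_{\sbbb}(v_b)(\overline{\zeta})$.

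Next I would partition the set under consideration by recording, for each $x$, the ``fingerprint'' of Definition \ref{defn:strict_class} — the tuple of values taken by $\beta_x\phi_x^{(v)}$ on $\{\TineF{F}{\pi_s(x)} : s \in S^{\circ}\} \cup \Sigstrict \cup \{\TineF{F}{\ovp_{0b}}\}$ — simultaneously over all $v$ in the chosen basis. Each fingerprint class sits inside a strict class, hence is closed by Lemma \ref{lem:closure_ram}(2) or \ref{lem:closure_noram}(2); a finite intersection of closed subsets of $X^{\circ}$ is closed by monotonicity of the closure operation, so the pieces $Y_1, \dots, Y_M$ are closed and partition the set. To bound $M$, I would use that on $\class{x_0}_k$ the classes $\omega^{k-1}[\phi_x^{(v)}]$ are fixed, so each $[\phi_x^{(v)}]$ varies in a coset of a space of $\FFF_{\ell}$-dimension at most $r_{\omega}(x_0) + \dots + r_{\omega^k}(x_0)$; there are $|S^{\circ}| + \#\Sigstrict + 1 \ll k + \#\Sigstrict$ fingerprint slots, each taking values in a space of $\FFF_{\ell}$-dimension at most $\dim N[\omega]$ times that, and the basis has size at most $r_{\omega}(x_0)$, which yields \eqref{eq:num_strict_classes}.

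It remains to evaluate $a \cdot \textup{ct}$ for $a \in \zs(Y_i)$. By the definition of $\langle\,,\,\rangle_k$ through the pairing \eqref{eq:CTP_kj}, for basis vectors $v, v'$ one has $\langle\Psi_{x,N}(v), \Psi_{x,N^{\vee}}(v')\rangle_k = \langle \phi_x^{(v)},\, \psi^{(v')}\rangle$ with the right-hand pairing that of Lemma \ref{lem:CTP_control}. Writing the image of $w$ in the relevant one-dimensional quotient as a nonzero multiple of $\bar v_a \otimes \bar v_b$ (or, in the alternating case, of $\bar v_a \wedge \bar v_b$) and decomposing $w$ along the splittings \eqref{eq:ram_vector}, $\textup{ct}_x(w)$ becomes a nonzero multiple of $\langle \phi_x, \psi\rangle$ plus terms of the shapes $\langle \phi_x^{(p_0)}, \psi^{(q_0)}\rangle$, $\langle \phi_x^{(p_0)}, \psi\rangle$, $\langle \phi_x, \psi^{(q_0)}\rangle$ with $p_0 \in \text{k}_0V_{\omega}$, $q_0 \in \text{k}_0V^{\vee}_{\omega}$. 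For the leading term, Lemma \ref{lem:closure_ram}(1) identifies $(\phi_x)$ with the image of $\omega\Phi$, and Lemma \ref{lem:CTP_control} — whose remaining hypothesis that $\Phi(\FrobF{F}{\ovp_{0b}})$ is $\omega$-torsion modulo $N_1(X^{\circ})$ is precisely that $\Phi$ shows restraint at $\ovp_{0b}$ — evaluates its contribution to $a\cdot\textup{ct}$ as $\iota\big((a\cdot g)\cdot \omega^k m_a \cdot \psi(\TineF{F}{\ovp_{0b}})\big) = (a\cdot g)\cdot\nu$ with $\nu = \iota\big(\langle\pi_{\saaa}(v_a)(\overline{\zeta}),\, \pi_{\sbbb}(v_b)(\overline{\zeta})\rangle\big)$ under the pairing \eqref{eq:dual_torsion}; by the a/b-index condition $\pi_{\saaa}(v_a)\cdot\pi_{\sbbb}(v_b) \ne 0$ of Definition \ref{defn:ready} this $\nu$ is nonzero, and it depends only on $v_a, v_b, \saaa, \sbbb$. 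Each remaining term contributes $0$ after applying $a\cdot(-)$: whenever the first factor lies in $\text{k}_0V_{\omega}$ the relevant governing expansion is zero since $\pi_{\saaa}(\text{k}_0V_{\omega}) = 0$, so the zero-governing-expansion case of Lemma \ref{lem:CTP_control} (valid because $\psi$ and $\psi^{(q_0)}$ are unramified at the $S^{\circ}$-primes) gives $0$; for $\langle \phi_x, \psi^{(q_0)}\rangle$ the $v_a$-family keeps its governing expansion but $\psi^{(q_0)}(\TineF{F}{\ovp_{0b}}) = 0$. Summing proves \eqref{eq:CT_g}, and the alternating case follows the same outline after using the isomorphism of the alternating structure to identify $N$ with $N^{\vee}$ and exterior squares in place of tensors.

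I expect the main obstacle to be the bookkeeping that makes one partition $\{Y_i\}$ control the $v_a$-family and every auxiliary family at once while keeping $M$ within \eqref{eq:num_strict_classes}: one must carefully track the level shifts relating $\Sel^{\omega^k}$, $\omega^{k-1}\Sel^{\omega^k}$ and $\Sel^{\omega}$, verify that the ramification data of these families really are the stated components of $v_a$ and $v_b$, and check that the identification in Lemma \ref{lem:closure_ram} of the cocycle family with $\omega\Phi$ is compatible with the construction of $\Phi$ from the governing expansion in Notation \ref{notat:that_map}.
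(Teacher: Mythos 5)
Your outline tracks the paper's argument closely: the paper also partitions into closed sets via a signature invariant, establishes closedness from Lemmas \ref{lem:closure_noram} and \ref{lem:closure_ram}, and evaluates $a\cdot\textup{ct}$ through Lemma \ref{lem:CTP_control} after splitting $w$ along the decompositions \eqref{eq:ram_vector}. Your treatment of the vanishing of the auxiliary terms (those factoring through $\text{k}_0V_{\omega}$ or $\text{k}_0V^{\vee}_{\omega}$) is actually spelled out more explicitly than in the paper's proof, and is correct. However, there are two genuine gaps.

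The more serious one is the bound \eqref{eq:num_strict_classes}. Your proposed count — roughly (dimension of the coset in which each cocycle class varies) $\times$ (number of fingerprint slots) $\times$ (basis size $\le r_{\omega}(x_0)$) — carries an extra factor of $r_{\omega}(x_0)$ that is not in \eqref{eq:num_strict_classes}. That factor is not harmless: $r_{\omega}(x_0)$ can be as large as $(\log^{(3)}H)^{1/4}$ by \eqref{eq:gridclass_big_enough}, and the downstream application in Corollary \ref{cor:TAR_comb} needs $M \le (\log\log H)^{1/10}$ to survive the Hoeffding bound of Proposition \ref{prop:bye_Ramsey}; the extra factor breaks that comparison. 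The paper gets the clean exponent by \emph{not} tracking a basis of cocycles: instead it records the full image $\text{sig}_x(V(x))$ of the space of admissible cocycles modulo coboundaries, uses the abelian-group isomorphism
\[
V(x)/B(x) \cong (\Z/\ell^{k}\Z)^{r_{\omega^k}} \oplus \bigoplus_{j<k}(\Z/\ell^{j}\Z)^{r_{\omega^j}-r_{\omega^{j+1}}},
\]
and bounds the number of group homomorphisms of this fixed group into $N^{\Sigstrict\cup S^{\circ}\setminus\{s_1\}}$. The point is that a generator of order $\ell^{j}$ can only land in $\ell^{j}$-torsion, so the exponent becomes $\dim N[\omega]\cdot(k+\#\Sigstrict)\cdot\sum_j j(r_{\omega^j}-r_{\omega^{j+1}})$, and the telescoping identity $r_\omega+\dots+r_{\omega^k}=k\,r_{\omega^k}+\sum_{j<k}j(r_{\omega^j}-r_{\omega^{j+1}})$ collapses this to exactly \eqref{eq:num_strict_classes}.

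The second issue is the well-definedness and closedness of your partition. The ``fingerprint'' as you describe it depends on a choice of cocycle representatives $\phi_x^{(v)}$, which are not canonical; what is canonical is the set of achievable fingerprints, i.e.\ the image $\text{sig}_x(V(x))$. Also, ``sits inside a strict class, hence is closed'' is not valid reasoning — a subset of a closed set need not be closed — though your follow-on appeal to finite intersections is the right move. Finally, the pieces you construct live a priori in $X^{\circ}$, not in the set mapping into $\class{x_0}_k$, and showing that the intersection is still closed (and that signature classes are genuine equivalence classes on that set) requires the extra step the paper takes: repeated use of Lemma \ref{lem:CTP_control} together with \cite[Theorem 1.3]{MS21} to propagate agreement of the ranks and the lower-level Cassels--Tate pairings along the closure. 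That bridge is the part of the argument your sketch leaves implicit.
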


\begin{proof}
Fix some $s_1 \in S_{\text{pre}}$. Given $x \in X^{\circ}$, we define a \emph{strict signature map}
\[\text{sig}_x\,:\, Z^1(G_F, N^{\chi(x)}) \to   N^{\Sigstrict \cup S^{\circ} \backslash \{s_1\}}\]
by
\begin{alignat*}{2}
&\text{sig}_x(\phi)(\tau) = \beta_x\left(\phi(\tau)\right) &&\,\text{ for } \tau \in \Sigstrict \,\text{ and }\\
&\text{sig}_x(\phi)(s) = \beta_x\left(\phi\left(\Tine_{F}\, \pi_s(x)\right) \right)&&\,\text{ for } s\in S^{\circ}\backslash \{s_1\}.
\end{alignat*}
Take $V(x)$  to be the set of cocycles $\phi \in Z^1(G_F, N^{\chi(x)})$ mapping into $\Sel^{\omega^k} N^{\chi(x)}$ that are unramified at $\pi_{s_1}(x)$. We then define the strict signature at $x$ to be the image $\text{sig}_x (V(x))$ in $N^{\Sigstrict \cup S^{\circ}\backslash \{s_1\}}$. We note that $V(x)$ and the image of $H^0(G_F, N[\omega])$ under the connecting map corresponding to
\[0 \to  N^{\chi(x)}\left[\omega^{k}\right ]\to N^{\chi(x)}\left[\omega^{k+1}\right]\to N[\omega] \to 0\]
together generate $\Sel^{\omega^k} N^{\chi(x)}$. Taking $B(x)$ to be the kernel of the map from $V(x)$ to $\Sel^{\omega^k} N^{\chi(x)}$, we find that $B(x)$ consists of coboundaries valued in $N[\omega]$ and that $\text{sig}_x B(x)$ does not depend on the choice of $x$.

Given $x_1 \in Y$, take $Y(x_1)$ to be the set of points in $Y$ whose strict signature equals the strict signature of $x_1$. We claim this set is closed. This starts by noting that, for $\phi \in V(x_1)$, the set of points in $X^{\circ}$ whose strict signature contains $\text{sig}_{x_1}(\phi)$ is closed by Lemma \ref{lem:closure_ram}. Since intersections of closed sets are closed, we find that the set of points whose strict signature contains $\text{sig}_{x_1}(V(x_1))$ is closed.  Repeated applications of Lemma \ref{lem:CTP_control} and \cite[Theorem 1.3]{MS21} show that this set of points has closed intersection with $Y$ and that all the points in this intersection have the same strict signature.

In the context of Definition \ref{defn:to_smallgrid}, choose $\nu_0 \in \FFF_{\ell}$ such that the image of $w$ in 
\[\big(\text{k}V_{\omega}/\text{k}_0V_{\omega}\big) \otimes \big(\text{k}V^{\vee}_{\omega}/\text{k}_0V^{\vee}_{\omega}\big)\quad\text{or}\quad \bigwedge^2 \big(\text{k}V_{\omega}/ V_0\big)\]
is given by $\nu_0 (v_a \otimes v_b)$ or $\nu_0(v_a \wedge v_b)$, with the exterior product used in the case of alternating structure and the tensor product used otherwise. Then, for any $a \in \zs(Y(x_1))$, Lemma \ref{lem:CTP_control} gives
\[a \cdot \textup{ct} = \nu a \cdot g\]
for
\[\nu =  \nu_0 \cdot \omega^k m_a \cdot \pi_{\sbbb}(v_b),\]
where the right product is the natural pairing
\[N[\omega] \times N^{\vee}[\omega](-1) \to \tfrac{1}{\ell}\Z/\Z.\]
Note that $\nu$ is nonzero by part (4) or (5) of Definition \ref{defn:ready}.

So, to prove the theorem, it suffices to show that the number of distinct strict signatures that appear over $Y$ is bounded by the right hand side of \eqref{eq:num_strict_classes}. For $j > 0$, take $r_{\omega^j} = r_{\omega^j}(N^{\chi(x_0)})$. Then we have an isomorphism
\[V(x)/B(x) \cong (\Z/\ell^k \Z)^{r_{\omega^k}} \oplus \bigoplus_{j = 1}^{k-1} (\Z/\ell^k \Z)^{r_{\omega^j} - r_{\omega^{j+1}}}\]
of abelian groups. So it suffices to give an upper bound on how many images this abelian group can have in $N^{\Sigstrict \cup S^{\circ} \backslash \{s_1\}}$.
An application of the telescoping sum
\[r_{\omega} + \dots + r_{\omega^k} = k r_{\omega^k} + \sum_{j < k}  j \cdot \left(r_{\omega^j} - r_{\omega^{j+1}}\right) \]
then gives the theorem.
\end{proof}

This theorem and the combinatorial result Proposition \ref{prop:bye_Ramsey} fit together like a lock and a key. Combining them gives the following corollary.
\begin{cor}
\label{cor:TAR_comb}
Given a starting tuple $(K/F, \Vplac_0, \FFF)$, there is a $C > 0$ such that, for all $H > C$, we have the following.

Suppose we are in the situation of Theorem \ref{thm:higher_smallgrid}, and take all notation as in that statement. There is then a choice of a function
\[g: \{1, \dots, E_{\textup{pre-size}}\} \times   X_{\textup{pre}} \to \tfrac{1}{\ell}\Z/\Z,\]
for which the following holds:

Choose a governing expansion $(\Phi_{\ovp})_{\ovp \in X_{\saaa}}$ satisfying \eqref{eq:gov_exp_TAR}. Choose $X'_{\saaa}$ and $\ovp_{0b}$ as in Notation \ref{notat:that_map} such that $X'_{\saaa}$ is in one governing class and has cardinality $E_{\textup{pre-size}}$. We assume that the governing expansion shows restraint at $\ovp_{0b}$ with respect to $X'_{\saaa}$, and we presume that there is some bijection between $\{1, \dots, E_{\textup{pre-size}}\}$ and $X_{\saaa}'$ under which $g$ is identified with an associated scalar function to the governing expansion at $\ovp_{0b}$.

Then the test mean of $\textup{ct}_x(w)$ on $X'_{\saaa} \times \{\ovp_{0b}\} \times Z_{\textup{pre}}$ is at most $\frac{1}{3}\left(\log \log H\right)^{-1/4}$.
\end{cor}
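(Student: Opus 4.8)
The corollary is the ``lock and key'' combination of Theorem \ref{thm:TAR} and Proposition \ref{prop:bye_Ramsey} advertised just before its statement, so the plan is to run one against the other. Once the grid of ideals $X=\prod_{s\in S}X_s$ and the grid class $\class{x_0}$ are fixed — hence $H$, $k$, the Selmer ranks $r_{\omega^j}(x_0)$, and a set of strict conditions $\Sigstrict$ are all fixed — I would first set
\[M\;=\;\exp\big(\log\ell\cdot\dim N[\omega]\cdot(r_{\omega}(x_0)+\dots+r_{\omega^k}(x_0))\cdot(k+\#\Sigstrict)\big),\]
the upper bound appearing in \eqref{eq:num_strict_classes}, and apply Proposition \ref{prop:bye_Ramsey} to the product $X'=\{1,\dots,E_{\textup{pre-size}}\}\times X_{\textup{pre}}$ with index set $S^{\circ}=S_{\textup{pre}}\cup\{\saaa\}$ and with this value of $M$. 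This produces the function $g\colon X'\to\tfrac1\ell\Z/\Z$ asserted in the statement; the key point is that $g$ is manufactured from the grid data alone, before any governing expansion enters the picture.

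Next, suppose we are handed a governing expansion $(\Phi_{\ovp})_{\ovp\in X_{\saaa}}$ satisfying \eqref{eq:gov_exp_TAR}, a choice of $X'_{\saaa}$ (of size $E_{\textup{pre-size}}$, inside a single governing class) and $\ovp_{0b}$ as in Notation \ref{notat:that_map}, a verification that the expansion shows restraint at $\ovp_{0b}$, and a bijection $\{1,\dots,E_{\textup{pre-size}}\}\cong X'_{\saaa}$ under which $g$ becomes an associated scalar function for $\Phi$ at $\ovp_{0b}$. Identifying $X^{\circ}$ with $X'$ through this bijection, these are exactly the hypotheses of Theorem \ref{thm:TAR}. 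That theorem then partitions the part of $X^{\circ}$ lying in $\class{x_0}_k$ into closed sets $Y_1,\dots,Y_{M'}$ with $M'\le M$ — pad with empty sets, which are closed, to reach exactly $M$ — such that $a\cdot\textup{ct}=\nu\,a\cdot g$ for every $i$ and every $a\in\zs(Y_i)$, where $\nu$ is a nonzero element of $\tfrac1\ell\Z/\Z$ independent of $i$ and $a$. Since $\nu$ is a unit of the $\Z/\ell$-module $\tfrac1\ell\Z/\Z$, the function $f:=\nu^{-1}\textup{ct}$ on $Y:=\bigcup_iY_i$ satisfies $a\cdot f=a\cdot g$ for all $i$ and all $a\in\zs(Y_i)$, i.e.\ the hypothesis \eqref{eq:fg_zs}. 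Hence Proposition \ref{prop:bye_Ramsey} applies and gives, for each $c\in\tfrac1\ell\Z/\Z$,
\[\Big|\,|f^{-1}(c)|-\tfrac1\ell|Y|\,\Big|\;\le\;\delta\,|X'|,\qquad \delta=\Big(M\cdot\log(\ell|X'|)\cdot\sum_{s\in S^{\circ}}1/|X'_s|\Big)^{1/2}.\]

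The third step turns this equidistribution of $\textup{ct}=\nu f$ into the stated bound on the test mean. The set $X'_{\saaa}\times\{\ovp_{0b}\}\times Z_{\textup{pre}}$ has cardinality $|X'|$ and meets $\class{x_0}_k$ in precisely $Y$, so the test mean of $\textup{ct}_x(w)$ on it equals $|X'|^{-1}\big|\sum_{x\in Y}\exp(2\pi i\,\textup{ct}_x(w))\big|$. Writing $|f^{-1}(c)|=\tfrac1\ell|Y|+\epsilon_c$ with $|\epsilon_c|\le\delta|X'|$, using $\sum_c\epsilon_c=0$, $\sum_{t\in\frac1\ell\Z/\Z}e^{2\pi it}=0$, and the fact that $\nu$ permutes $\tfrac1\ell\Z/\Z$, the main term cancels and one is left with $|X'|^{-1}\big|\sum_t\epsilon_{\nu^{-1}t}\,e^{2\pi it}\big|\le\ell\delta$. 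It then remains to verify $\ell\delta\le\tfrac13(\log\log H)^{-1/4}$. This is the single place where the numerology built into Definition \ref{defn:ready} and \eqref{eq:not_bad} is used: combining \eqref{eq:num_strict_classes}, \eqref{eq:not_bad}, and \eqref{eq:sigstrict} bounds $M$ by $(\log\log H)^{1/6}$, while $|X'|=E_{\textup{pre-size}}^{\,k+1}$ and $\sum_{s\in S^{\circ}}1/|X'_s|=(k+1)/E_{\textup{pre-size}}$ with $E_{\textup{pre-size}}=\lfloor(\log\log H)^{2/3}\rfloor$ and $k$ controlled by \eqref{eq:not_bad}; inserting these estimates forces $\delta$, hence the test mean, below the required threshold once $H$ exceeds a constant depending only on $N$ and $(K/F,\Vplac_0)$. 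I expect this final bookkeeping — together with, before it, the careful matching of the present notation (the roles of $m_a$, the governing class, the restraint condition, and the scalar function $g$) against the precise hypotheses of Theorem \ref{thm:TAR} — to be the only real obstacles; all of the conceptual work is already carried by Theorem \ref{thm:TAR} and Proposition \ref{prop:bye_Ramsey}.
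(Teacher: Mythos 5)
Your overall architecture is exactly the paper's: manufacture $g$ in advance via Proposition~\ref{prop:bye_Ramsey}, feed it together with the governing expansion into Theorem~\ref{thm:TAR} to obtain the closed partition $Y_1,\dots,Y_{M'}$ with $a\cdot\textup{ct}=\nu\,a\cdot g$, and then convert the Hoeffding-style level-set estimate for $f=\nu^{-1}\textup{ct}$ into a bound on the test mean by summing $e^{2\pi i\nu c}$ over $c$. That mechanism is all correct.

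However, the final numerology does not close as you have written it, and this is not a removable slip: the whole design of Definitions~\ref{defn:ready} and \ref{defn:to_smallgrid} is to make the exponent work out to a strict win. You claim that combining \eqref{eq:num_strict_classes}, \eqref{eq:not_bad}, and \eqref{eq:sigstrict} bounds $M$ by $(\log\log H)^{1/6}$. The correct exponent is $1/10$: from \eqref{eq:sigstrict} one has $k+\#\Sigstrict\le 3\big(r_{\omega}(x_0)+k\big)\dim N[\omega]$, so \eqref{eq:num_strict_classes} gives $M\le\exp\!\big(3\log\ell\cdot(\dim N[\omega])^2\cdot(r_\omega+k)(r_\omega+\dots+r_{\omega^k})\big)$, and feeding in \eqref{eq:not_bad} yields $M\le\exp\!\big(\tfrac{1}{10}\log^{(3)}H\big)=(\log\log H)^{1/10}$. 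Your bound $\delta\le\big(M\cdot\log(\ell|X'|)\cdot\sum_s 1/|X'_s|\big)^{1/2}$ then carries a main decay factor $(\log\log H)^{\frac12(\text{exp}-2/3)}$ against a residual $\text{poly}(\log^{(3)}H)$ term. The argument needs $\frac12\big(\tfrac23-\text{exp}\big)>\tfrac14$ strictly, so the residual can be absorbed for $H$ large; with $\text{exp}=1/10$ this gives $\tfrac{17}{60}>\tfrac14$, but with your $\text{exp}=1/6$ you get exactly $\tfrac14$, and the unbounded $(\log^{(3)}H)^{3/2}$ factor then defeats the $\tfrac13(\log\log H)^{-1/4}$ target. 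Replace $1/6$ by $1/10$ (i.e.\ take $M=\lceil(\log\log H)^{1/10}\rceil$ when invoking Proposition~\ref{prop:bye_Ramsey}) and the bookkeeping works; as stated, it does not.
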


\begin{proof}
From \eqref{eq:sigstrict}, we have
\[k + \# \Sigstrict \le 3 \cdot \left(r_{\omega}\left(N^{\chi(x_0)}\right) + k \right) \cdot \dim N[\omega].\]
So the right hand side of \eqref{eq:num_strict_classes} is bounded by $\left( \log \log H\right)^{1/10}$. Choose a function $g$ for which the hypothesis of Proposition \ref{prop:bye_Ramsey} holds with $M =\lceil \left( \log \log H\right)^{1/10}\rceil$. Applying Theorem \ref{thm:TAR} together with this proposition and the inequality of fractions
\[\tfrac{1}{2}\left(\tfrac{2}{3} - \tfrac{1}{10}\right) > \tfrac{1}{4}\]
gives the corollary.
\end{proof}

\subsection{The proof of Theorem \ref{thm:higher_smallgrid}}
Take all notation as in Theorem \ref{thm:higher_smallgrid} and Notation \ref{notat:smallgrid_review}.  Our goal is to apply the bilinear result Theorem \ref{thm:bilin_symb} to the product space $X_{\saaa}(Z_{\text{pre}}) \times X_{\sbbb}(Z_{\text{pre}})$ to produce many tuples $(X'_{\saaa}, \ovp_{0b})$ satisfying the condition of Corollary \ref{cor:TAR_comb}. Our application Theorem \ref{thm:bilin_symb} requires a new starting tuple, and we define this tuple first.

\begin{defn}
\label{defn:T}
Define $K(\Vplac_0)$ from the tuple $(K/F, \Vplac_0, \#\FFF)$ as in Definition \ref{defn:class}, and also define
\[K_{\text{pre}} = K(\Vplac_0 \cup \Vplac_{\text{pre}})\]
from this tuple.

There is some constant $C > 0$ determined from $(K/F, \Vplac_0, \#\FFF)$ for which that the degree $n_{K_{\textup{pre}}}$ of $K_{\textup{pre}}$ is bounded by
\[\exp\left(C k E_{\textup{pre-size}}\right),\]
and so the absolute value of its discriminant $\Delta_{K_{\text{pre}}}$ is bounded by
\[ C \cdot \exp^{(3)}\left(\tfrac{2}{3}\log^{(3)} H\right)^{\exp(C k E_{\textup{pre-size}})},\]
where we are using \eqref{eq:pre_disc_small} to bound the size of the primes where $K_{\text{pre}}$ is ramified. We have
\begin{align}
\label{eq:Kpre_disc}
&\Delta_{K_{\text{pre}}} \,\le\, \exp^{(3)}\left(\left(\tfrac{2}{3} + \epsilon\right) \log^{(3)} H\right)\quad\text{and}\\
\label{eq:Kpre_deg}
&n_{K_{\text{pre}}} \,\le\, \exp^{(2)} \left(\left(\tfrac{2}{3} + \epsilon\right) \log^{(3)} H\right)
\end{align}
for any $\epsilon > 0$ and $H$ sufficiently large relative to $(K/F, \Vplac_0, \FFF)$ and $\epsilon$.

Take $\Vplac_{\text{pre-unpack}}$ to be a minimal finite set of places of $F$ indivisible by any primes in $X_{\saaa}$ or $X_{\sbbb}$ such that  $(K_{\text{pre}}/F, \,\Vplac_0 \cup \Vplac_{\text{pre-unpack}},\, |\FFF|)$ is unpacked. We take the notation 
\[\mathbb{T} =  (K_{\text{pre}}/F, \,\Vplac_0 \cup \Vplac_{\text{pre-unpack}},\, |\FFF|).\] 
Given $\epsilon > 0$, we will have
\begin{equation}
\label{eq:Kpre_unpack}
\# \Vplac_{\text{pre-unpack}} \le \exp^{(2)} \left(\left(\tfrac{2}{3} + \epsilon\right) \log^{(3)} H\right).
\end{equation}
so long as $H$ is sufficiently large relative to $(K/F, \Vplac_0, \#\FFF)$ and $\epsilon$. Using \eqref{eq:KV0_deg}, we then find that the number of classes with respect to $\mathbb{T}$  has upper bound
\begin{equation}
\label{eq:T_class_bound}
\exp^{(3)} \left(\left(\tfrac{2}{3} + 2\epsilon\right) \log^{(3)} H\right).
\end{equation}
for sufficiently large $H$ relative to $(K/F, \Vplac_0, \#\FFF)$ and $\epsilon$.

Take $r_{0a}$ to be an element of $N[\omega^{k+1}](-1)$ satisfying $\omega^{k} r_{0a} = \pi_{\saaa}(v_a)$. Take $r_a$ to be the image of $(\beta_{\chi(x)}^{-1} r_{0a})_{x \in X_{\text{pre}}}$ in $M_{\text{gov}}(-1)$. Choose some ramification section for the unpacked starting tuple $\mathbb{T}$. For each $\ovp \in X_{\saaa}$, fix a cocycle
\[\Phi_{\ovp} \in Z^1(G_F, M_{\text{gov}})\]
in the cocycle class of $\mfB_{\ovp, M_{\text{gov}}}(r_a)$. This defines a governing expansion $(\Phi_{\ovp})_{\ovp \in X_{\saaa}}$ satisfying the conditions of Theorem \ref{thm:TAR}.
\end{defn}

\begin{notat}
\label{notat:PhiT_class}
Fix some set of strict conditions $\Sigstrict$ as in Definition \ref{defn:strict_cond}. We split $X_{\saaa}(Z_{\text{pre}})$ into equivalence classes so $\ovp_{0a}$ and $\ovp_{1a}$ lie in the same class if and only if they 
\begin{enumerate}
\item Satisfy $\class{\ovp_{0a}} = \class{\ovp_{1a}}$ with respect to the starting tuple $\mathbb{T}$ and
\item Lie in the same governing class with respect to $(\Phi_{\ovp})_{\ovp}$.
\end{enumerate}
We denote the equivalence class of $\ovp_{0a}$ by $\class{\ovp_{0a}}_{\Phi, \mathbb{T}}$
\end{notat}

\begin{prop}
\label{prop:PhiT_classes}
Given $\epsilon > 0$, $N$, and $(K/F, \Vplac_0)$, there is a $C > 0$ so, if $H > C$, the equivalence relation defined in Notation \ref{notat:PhiT_class} divides $X_{\saaa}(Z_{\textup{pre}})$ into at most
\[\exp^{(3)} \left(\left(\tfrac{2}{3} + \epsilon\right) \log^{(3)} H\right)\]
classes.
\end{prop}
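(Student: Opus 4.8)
The plan is to bound the number of equivalence classes from Notation \ref{notat:PhiT_class} by separately bounding the number of classes coming from condition (1) and from condition (2), and then multiplying. The total will be the product of these two counts, and we need the product to be at most $\exp^{(3)}\big((\tfrac23 + \epsilon)\log^{(3)} H\big)$, which is single exponential in $\log^{(3)} H$ up to the constant $\tfrac23$ — so each factor must individually be controlled at a level strictly below this, and the multiplication must not push us past the $\tfrac23$ threshold.

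For condition (1), the number of $\class{\ovp_{0a}}$-classes with respect to the starting tuple $\mathbb{T}$ is exactly what \eqref{eq:T_class_bound} bounds, namely $\exp^{(3)}\big((\tfrac23 + 2\epsilon)\log^{(3)} H\big)$; shrinking $\epsilon$ at the outset lets us record this as $\exp^{(3)}\big((\tfrac23 + \tfrac{\epsilon}{2})\log^{(3)} H\big)$. For condition (2), I would bound the number of governing classes with respect to $(\Phi_{\ovp})_{\ovp}$. By Definition \ref{defn:Mgov}, the governing class of $\ovp_0$ is determined by: the places of $F$ where $\Phi_{\ovp} - \Phi_{\ovp_0}$ is ramified (controlled by the fixed finite set of strict conditions and the fixed places $\Vplac_0 \cup \Vplac_1 \cup \Vplac_{\text{pre}}$), the restriction of $\Phi_{\ovp} - \Phi_{\ovp_0}$ to $G_v$ for $v$ in that fixed set, and the values $\Phi_{\ovp}(\sigma)$ for $\sigma \in \Sigstrict$. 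Since the $\Phi_{\ovp}$ take values in $M_{\text{gov}}$, whose size is bounded in terms of $\dim N[\omega]$, $k$, and $E_{\text{pre-size}}$, and since $\#\Sigstrict$ is bounded by \eqref{eq:sigstrict}, the number of governing classes is at most $\# M_{\text{gov}}^{\,\#\Sigstrict + O(\#\Vplac_{\text{pre-unpack}} + \text{const})}$. Using $\dim M_{\text{gov}} \le \dim N[\omega] \cdot E_{\text{pre-size}}^{\,k}$ or a similar crude bound, together with $E_{\text{pre-size}} = \lfloor (\log\log H)^{2/3}\rfloor$, $k \le \log^{(3)} H$, $\#\Sigstrict \le r_\omega(x_0) + (k+3)\dim N[\omega] = O(\log^{(3)} H)$ (using \eqref{eq:gridclass_big_enough} to bound $r_\omega$), and \eqref{eq:Kpre_unpack}, the governing-class count comes out as $\exp$ of something of the form $(\log^{(3)} H) \cdot \log((\log\log H)^{2/3})^{\,k} \cdot (\ldots)$, which — and this is the point to check carefully — grows slower than $\exp^{(3)}\big(\tfrac{\epsilon}{2}\log^{(3)} H\big)$ for large $H$, because $\exp^{(3)}(c \log^{(3)} H) = \exp^{(2)}((\log^{(3)} H)^c)$ dominates any fixed tower-of-height-two expression in $\log^{(3)} H$. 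So the governing-class factor is negligible compared to the $\mathbb{T}$-class factor.

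Multiplying the two bounds, the total number of equivalence classes is at most
\[
\exp^{(3)}\big((\tfrac23 + \tfrac{\epsilon}{2})\log^{(3)} H\big) \cdot \exp^{(3)}\big(\tfrac{\epsilon}{2}\log^{(3)} H\big) \le \exp^{(3)}\big((\tfrac23 + \epsilon)\log^{(3)} H\big),
\]
the last inequality holding because $\exp^{(3)}(a)\cdot \exp^{(3)}(b) = \exp^{(2)}(\exp(a) + \exp(b)) \le \exp^{(2)}(\exp(\max(a,b)+1)) = \exp^{(3)}(\max(a,b)+1)$, and $(\tfrac23 + \tfrac{\epsilon}{2})\log^{(3)} H + 1 \le (\tfrac23 + \epsilon)\log^{(3)} H$ for $H$ large. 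This gives the proposition.

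\textbf{Main obstacle.} The delicate step is the bookkeeping on the governing-class count: one must write down an honest (if crude) bound on $\#M_{\text{gov}}$ and on how many data determine a governing class, and then verify that the resulting exponential expression — which involves $k$ appearing in an exponent, and $k$ can be as large as $\log^{(3)} H$ — is still swamped by $\exp^{(3)}(\epsilon\log^{(3)} H/2)$. The arithmetic of iterated exponentials and logarithms here is where a sign or a height-of-tower error would be easy to make, so the bound on $\dim M_{\text{gov}}$ and the interplay between $E_{\text{pre-size}} = (\log\log H)^{2/3}$ and $k \le \log^{(3)} H$ need to be tracked with care; everything else is a direct appeal to \eqref{eq:T_class_bound}, \eqref{eq:sigstrict}, \eqref{eq:Kpre_unpack}, and the definition of the governing class.
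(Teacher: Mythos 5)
Your proposal is correct and follows essentially the same route as the paper's proof: use \eqref{eq:T_class_bound} to reduce to counting governing classes within a single $\mathbb{T}$-class, then bound that count crudely by $\#M_{\text{gov}}$ raised to roughly $\#\Sigstrict + \#(\Vplac_0 \cup \Vplac_1 \cup \Vplac_{\text{pre}})$, using $\dim M_{\text{gov}} \le \dim N[\omega]\cdot E_{\text{pre-size}}^{k}$, \eqref{eq:sigstrict}, and the cardinality bound on $S$ from Definition \ref{defn:ready}(1); this quantity is at most $\exp^{(2)}\bigl(O((\log^{(3)} H)^2)\bigr)$, which is dwarfed by $\exp^{(3)}\bigl(\tfrac{\epsilon}{2}\log^{(3)} H\bigr)=\exp^{(2)}\bigl((\log\log H)^{\epsilon/2}\bigr)$. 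The one small bookkeeping slip in your write-up is the appearance of $\#\Vplac_{\text{pre-unpack}}$ in the exponent: the places to count for condition (2) of Definition \ref{defn:Mgov} are those in $\Vplac_0 \cup \Vplac_1 \cup \Vplac_{\text{pre}}$, while agreement on $\Vplac_0\cup\Vplac_{\text{pre-unpack}}$ is already forced by $\mathbb{T}$-class agreement via Definition \ref{defn:ram_sec}(3) --- but this is harmless since both sets have polylogarithmic cardinality in $\log H$.
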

\begin{proof}
Using \eqref{eq:T_class_bound}, we see that it suffices to consider the number of equivalence classes within a set of the form $\class{\ovp} \cap X_{\saaa}(Z_{\text{pre}})$, where the class is defined with respect to $\mathbb{T}$.

Note that the dimension of $M_{\textup{gov}}$ is bounded by $\#X_{\textup{pre}} = E_{\textup{pre-size}}^k$, which is bounded by $\left( \log \log H\right)^{\log^{(3)} H}$ for $H$. Also note that the number of places in $\Vplac_0 \cup \Vplac_1 \cup \Vplac_{\textup{pre}}$ can be bounded by $2 \left( \log \log H^2\right)^2$ by the first part of Definition \ref{defn:ready} for sufficiently large $H$. Together with \eqref{eq:sigstrict}, we find that the number of classes inside $\class{\ovp} \cap X_{\saaa}(Z_{\text{pre}})$ can be bounded by
\[\exp^{(2)}\left( 10 \cdot\log^{(3)}H \right)\]
for sufficiently large $H$, and the proposition follows.
\end{proof}

\begin{defn}
\label{defn:Cheb-like}
Take all notation as above, and choose $\ovp_{0a}$ in $X_{\saaa}(Z_{\textup{pre}})$. We say that $\class{\ovp_{0a}}_{\Phi, \mathbb{T}}$ is \emph{dense} if
\[\frac{\# \class{\ovp_{0a}}_{\Phi, \mathbb{T}}}{\# X_{\saaa}(Z_{\text{pre}})} \ge \exp^{(3)}\left(\tfrac{11}{16}\log^{(3)} H\right)^{-1}.\]
Given $\ovp_{0b}$ in $X_{\sbbb}(Z_{\textup{pre}})$, we say that $\class{\ovp_{0a}}_{\Phi, \mathbb{T}}$ is \emph{Chebotarev-like} at $\ovp_{0b}$ if
\begin{align}
\label{eq:good_pb}
&\sum_{s \in \symb{\class{\ovp_{0a}}}{\class{\ovp_{0b}}}} \left| \#\{\ovp_a \in \class{\ovp_{0a}}_{\Phi, \mathbb{T}} \,:\,\, \symb{\ovp_{a}}{\ovp_{0b}} = s\} \,-\, \frac{\#\class{\ovp_{0a}}_{_{\Phi, \mathbb{T}}}}{\#\symb{\class{\ovp_{0a}}}{\class{\ovp_b}}}\right| \\
&\nonumber\qquad\qquad\le \exp^{(3)}\left(\tfrac{11}{16}\log^{(3)} H\right)^{-1} \cdot \# \class{\ovp_{0a}}_{\Phi, \mathbb{T}},
\end{align}
where the set of symbols $\symb{\class{\ovp_{0a}}}{\class{\ovp_{0b}}}$ is defined with respect to $\mathbb{T}$.
\end{defn}

The following result is a corollary of our work in Section \ref{sec:bilinear}.
\begin{cor}
\label{cor:an}
If $H$ is sufficiently large relative to $N$ and $(K/F, \Vplac_0)$, we have the following.

Take $Y$ to be the subset of $(\ovp_{0a}, \ovp_{0b})$ in $X_{\saaa}(Z_{\textup{pre}}) \times X_{\sbbb}(Z_{\textup{pre}})$ for which $\class{\ovp_{0a}}_{\Phi, \mathbb{T}}$ is Chebotarev-like at $\ovp_{0b}$ and dense. Then
\[\frac{\# Y} {\# (X_{\saaa}(Z_{\textup{pre}}) \times X_{\sbbb}(Z_{\textup{pre}}))} \ge 1 - \exp^{(3)}\left(\tfrac{11}{16}\log^{(3)} H\right)^{-1/2}.\]
\end{cor}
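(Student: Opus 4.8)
The plan is to deduce Corollary \ref{cor:an} from the bilinear estimate Theorem \ref{thm:bilin_symb} applied to the starting tuple $\mathbb{T}$, with the classes $\class{\ovp_{0a}}$ and $\class{\ovp_{0b}}$ (with respect to $\mathbb{T}$) playing the roles of $\class{\ovp_0}$ and $\class{\ovq_0}$, and $X_1 = X_{\saaa}(Z_{\textup{pre}})$, $X_2 = X_{\sbbb}(Z_{\textup{pre}})$ restricted to suitable classes. First I would record the size estimates already available: by Proposition \ref{prop:PhiT_classes} the set $X_{\saaa}(Z_{\textup{pre}})$ partitions into at most $\exp^{(3)}\!\big((\tfrac23 + \epsilon)\log^{(3)} H\big)$ equivalence classes $\class{\ovp_{0a}}_{\Phi,\mathbb{T}}$, and by \eqref{eq:Kpre_disc}, \eqref{eq:Kpre_deg}, \eqref{eq:Kpre_unpack} the discriminant, degree, and $\#\Vplac$ of $\mathbb{T}$ are all at most $\exp^{(3)}\!\big((\tfrac23+\epsilon)\log^{(3)} H\big)$-sized. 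Feeding these into the constant $A$ of Theorem \ref{thm:bilin_symb} (which is $(e_0^{|\Vplac_0|}\log 2\Delta_K)^{Cn_K}\cdot \Delta_K^C$ with everything now referring to $K_{\textup{pre}}$) shows that $A$ is at most $\exp^{(3)}\!\big((\tfrac23 + 2\epsilon)\log^{(3)}H\big)$, and crucially $A$ is much smaller than any positive power of $H_{\saaa}$ or $H_{\sbbb}$, since by condition (2) of Definition \ref{defn:ready} each of $H_{\saaa}, H_{\sbbb}$ is at least $\exp^{(3)}\!\big(\tfrac13\log^{(3)}H\big)$, which dominates $A$. The exponent $\alpha$ in Theorem \ref{thm:bilin_symb} is positive but tends to zero as $n_{K_{\textup{pre}}/F}$ grows; here $n_{K_{\textup{pre}}/F}$ is only doubly-logarithmically large in $H$, so $\alpha^{-1}$ is also only doubly-logarithmically large, and $H_{\saaa}^{-\alpha}$, $H_{\sbbb}^{-\alpha}$ are bounded by $\exp^{(3)}\!\big(\tfrac34 \log^{(3)}H\big)^{-\alpha'}$-type quantities, comfortably below the target saving $\exp^{(3)}\!\big(\tfrac{11}{16}\log^{(3)}H\big)^{-1/2}$.

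Next I would run the standard second-moment (Chebyshev) argument. Fix a single dense class $\mathcal{C} = \class{\ovp_{0a}}_{\Phi,\mathbb{T}}$ of $X_{\saaa}(Z_{\textup{pre}})$. Applying Theorem \ref{thm:bilin_symb} with $X_1 = \mathcal{C}$ and $X_2 = X_{\sbbb}(Z_{\textup{pre}})$ (after further splitting $X_{\sbbb}(Z_{\textup{pre}})$ into $\mathbb{T}$-classes, of which there are again at most $\exp^{(3)}\!\big((\tfrac23+\epsilon)\log^{(3)}H\big)$, and summing) and summing the bound \eqref{eq:bilin_symb} over the symbols $s\in\symb{\class{\ovp_{0a}}}{\class{\ovp_{0b}}}$, I obtain
\[
\sum_{\ovp_{0b}\in X_{\sbbb}(Z_{\textup{pre}})} \sum_{s} \left| \#\{\ovp_a\in\mathcal{C}: \symb{\ovp_a}{\ovp_{0b}} = s\} - \tfrac{\#\mathcal{C}}{\#\symb{\cdot}{\cdot}}\right| \le A'\cdot \#\mathcal{C}\cdot \#X_{\sbbb}(Z_{\textup{pre}})\cdot\big(H_{\saaa}^{-\alpha}+H_{\sbbb}^{-\alpha}\big),
\]
where $A'$ absorbs the number of $\mathbb{T}$-classes and the factor $\#\symb{\cdot}{\cdot}\le e_0^{n_{K_{\textup{pre}}/F}}$; all these extra factors are again negligible against the main saving by the size estimates above. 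Dividing by $\#\mathcal{C}$, the average over $\ovp_{0b}\in X_{\sbbb}(Z_{\textup{pre}})$ of the left side of \eqref{eq:good_pb} is at most $\exp^{(3)}\!\big(\tfrac{11}{16}\log^{(3)}H\big)^{-1}\cdot \exp^{(3)}\!\big(\tfrac{11}{16}\log^{(3)}H\big)^{-1/2}$-ish (one saving from the bilinear bound, with room to spare), so by Markov's inequality the set of $\ovp_{0b}$ for which $\mathcal{C}$ fails to be Chebotarev-like at $\ovp_{0b}$ has density at most $\exp^{(3)}\!\big(\tfrac{11}{16}\log^{(3)}H\big)^{-1/2}$ inside $X_{\sbbb}(Z_{\textup{pre}})$ — here I use that the threshold in \eqref{eq:good_pb} divided into the average gives the stated density bound, being careful with the exponent arithmetic $\tfrac{11}{16} - (\text{bilinear exponent}) > \tfrac{1}{2}\cdot\tfrac{11}{16}$.

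Then I would handle the density condition: the non-dense classes $\mathcal{C}$ collectively occupy, by definition of dense and the bound on the number of classes from Proposition \ref{prop:PhiT_classes}, at most
\[
\exp^{(3)}\!\big((\tfrac23+\epsilon)\log^{(3)}H\big)\cdot \exp^{(3)}\!\big(\tfrac{11}{16}\log^{(3)}H\big)^{-1}\cdot \#X_{\saaa}(Z_{\textup{pre}})
\]
elements of $X_{\saaa}(Z_{\textup{pre}})$, and since $\tfrac{11}{16} > \tfrac23$ this is at most $\exp^{(3)}\!\big(\tfrac{11}{16}\log^{(3)}H\big)^{-1/2}\cdot \#X_{\saaa}(Z_{\textup{pre}})$ for large $H$. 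Finally I would combine: a pair $(\ovp_{0a},\ovp_{0b})$ lies outside $Y$ only if either $\ovp_{0a}$ is in a non-dense class (a fraction $\le \exp^{(3)}(\tfrac{11}{16}\log^{(3)}H)^{-1/2}$ of all pairs), or $\ovp_{0a}$ is in a dense class $\mathcal{C}$ that is not Chebotarev-like at $\ovp_{0b}$; summing the per-class failure estimate over the dense classes (weighted by $\#\mathcal{C}/\#X_{\saaa}(Z_{\textup{pre}})$, which sum to at most $1$) gives another fraction $\le \exp^{(3)}(\tfrac{11}{16}\log^{(3)}H)^{-1/2}$ of pairs. Adding and adjusting constants yields $\#Y / \#(X_{\saaa}(Z_{\textup{pre}})\times X_{\sbbb}(Z_{\textup{pre}})) \ge 1 - \exp^{(3)}(\tfrac{11}{16}\log^{(3)}H)^{-1/2}$, as required.

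The main obstacle I anticipate is purely bookkeeping: tracking that every auxiliary multiplicative factor — the number of $\mathbb{T}$-classes on both sides, $\#\symb{\cdot}{\cdot}$, the constant $A$ of Theorem \ref{thm:bilin_symb} with $K$ replaced by $K_{\textup{pre}}$, and the slippage from the Chebyshev/Markov step — is dominated by the saving term, given that $K_{\textup{pre}}$ itself varies with $H$ (so its degree and discriminant are not absolutely bounded, only doubly-logarithmically bounded). The key point making this work is the dramatic gap between the "$\tfrac23$-level" growth of all the $K_{\textup{pre}}$-dependent quantities and the "$\tfrac13$-level" lower bound on $H_{\saaa}, H_{\sbbb}$, which guarantees that $H_{\saaa}^{-\alpha}$ and $H_{\sbbb}^{-\alpha}$ beat everything; and the clean numerical inequalities $\tfrac{11}{16} > \tfrac23$ and $\tfrac{11}{16} - \text{(bilinear exponent)} > \tfrac12\cdot\tfrac{11}{16}$. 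No genuinely new idea is needed beyond correctly invoking Theorem \ref{thm:bilin_symb} for the tuple $\mathbb{T}$ and Proposition \ref{prop:PhiT_classes}.
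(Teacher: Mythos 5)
Your proposal is correct and follows essentially the same route as the paper's own proof: apply Theorem \ref{thm:bilin_symb} to the unpacked tuple $\mathbb{T}$ using the size bounds from Definition \ref{defn:T} (and the lower bound $H_{\saaa}, H_{\sbbb}\ge \exp^{(3)}(\tfrac{3}{4}\log^{(3)}H)$ for a/b indices, which dominates every $K_{\textup{pre}}$-dependent factor), then pass from the resulting bilinear bound to the failure of the Chebotarev-like condition by a Markov-type averaging over $\ovp_{0b}$, handle the non-dense classes separately via Proposition \ref{prop:PhiT_classes}, and add the two contributions. The only thing you leave implicit, which the paper spells out, is the conversion from the height bounds $H_{\saaa}, H_{\sbbb}$ appearing in Theorem \ref{thm:bilin_symb} to the cardinalities $\# X_{\saaa}(Z_{\textup{pre}})$, $\# X_{\sbbb}(Z_{\textup{pre}})$; this requires the suitability of the prefix (Definition \ref{defn:to_smallgrid}) together with condition (2) of Definition \ref{defn:ready}, both of which introduce further $\exp^{(3)}(c\log^{(3)}H)$-sized factors with $c < \tfrac{3}{4}$ that are again absorbed by the saving.
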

\begin{proof}

We may use the suitability of the prefix (see Definition \ref{defn:to_smallgrid}) and the second part of Definition \ref{defn:ready} to give lower bounds on the sizes of $X_{\saaa}(Z_{\textup{pre}})$ and $X_{\sbbb}(Z_{\textup{pre}})$. Together with the bounds for the tuple $\mathbb{T}$ appearing in Definition \ref{defn:T}, we are able to use Theorem \ref{thm:bilin_symb} to prove
\[\sum_{\ovp_{0b} \in X_{\sbbb}(Z_{\text{pre}}) } \sum_{s \in \symb{\class{\ovp_{0a}}}{\class{\ovp_{0b}}}} \left| \#\{\ovp_a \in \class{\ovp_{0a}}_{\Phi, \mathbb{T}} \,:\,\, \symb{\ovp_{0a}}{\ovp_{0b}} = s\} \,-\, \frac{\#\class{\ovp_{0a}}_{\Phi, \mathbb{T}}}{\#\symb{\class{\ovp_{0a}}}{\class{\ovp_b}}}\right| \]
\[\le \exp^{(3)}\left(\tfrac{11}{16}\log^{(3)} H\right)^{-3} \cdot \# X_{\sbbb}(Z_{\text{pre}}) \cdot \# X_{\saaa}(Z_{\textup{pre}}),\]
for any $\ovp_{0a}$ so long as $H$ is sufficiently large relative to $(K/F, \Vplac_0)$ and $N$. 

In addition, from Proposition \ref{prop:PhiT_classes}, we know that the number of primes $\ovp_{0a}$ for which $\class{\ovp_{0a}}_{\Phi, \mathbb{T}}$ is not dense is at most
\[\exp^{(3)}\left(\tfrac{11}{16}\log^{(3)} H\right)^{-2/3} \cdot \#X_{\saaa}(Z_{\textup{pre}}) \]
for sufficiently large $H$. We thus can finish the proof by applying the above bilinear result to $\ovp_{0a}$ where $\class{\ovp_{0a}}_{\Phi, \mathbb{T}}$ is dense.
\end{proof}

Given a dense equivalence class $\class{\ovp_{0a}}$ and a prime $\ovp_{0b}$ where the equivalence class is Chebotarev-like, our next goal is to apply Corollary \ref{cor:TAR_comb} to subsets of $\class{\ovp_{0a}}_{\Phi, \mathbb{T}} \times \ovp_{0b}$. This requires the following two propositions.

\begin{prop}
\label{prop:variety}
Choose some $m$ in the image of 
\[\beta^{-1}\left(\FFF_{\ell}^{X_{\textup{pre}}} \otimes \langle  \pi_{\saaa}(v_a)(\overline{\zeta}) \rangle\right)\]
inside $M_{\textup{gov}}$. Also take 
\[L = K_{\textup{pre}}(\Vplac_0 \cup \Vplac_{\textup{pre-unpack}}) \cdot K(\Vplac_0 \cup \{\ovp_{0a} \cap F\}),\]
where $K_{\textup{pre}}(\Vplac_0 \cup \Vplac_{\textup{pre-unpack}})$ is defined with respect to $\mathbb{T}$. 

Then there is some $\sigma \in G_L$ such that
\[\Phi_{\ovp_{0a}}\left(\sigma\right) = m .\] 
\end{prop}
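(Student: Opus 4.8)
The goal is to produce a Galois element $\sigma \in G_L$ realizing a prescribed value $m$ of the governing cocycle $\Phi_{\ovp_{0a}}$, where $m$ ranges over the ``pure $v_a$-direction'' submodule of $M_{\textup{gov}}$ spanned by elements of the form $\beta^{-1}(g \otimes \pi_{\saaa}(v_a)(\overline{\zeta}))$. The strategy is to use the commutator identity \eqref{eq:iterated_commutator} from Construction \ref{ctn:variety}: if I can find elements $\tau_s$ ($s\in S_{\textup{pre}}$) acting through single coordinates of the twisting data as in that construction, together with a seed element $\sigma_0$ realizing the ``$\eta(\emptyset,\cdot)$'' part of $\Phi_{\ovp_{0a}}$, then the iterated commutator $[\tau_1^{-1},[\tau_2^{-1},\dots[\tau_k^{-1},\sigma_0^{-1}]\dots]]$ realizes exactly an eta element of the form $\eta(S_{\textup{pre}}, (x_{1s})_s, m')$ inside $M_{\textup{gov}}$. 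Summing such commutators over choices of the ``off-diagonal'' index $x_{1}$ builds up an arbitrary element of the targeted submodule, since these eta elements span it (by Proposition \ref{prop:eta_basis} applied to the ideal $\mathscr{P}(S_{\textup{pre}})$ and $b=k+1$, cut down to the $v_a$-component).

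First I would set up the pieces. The cocycle $\Phi_{\ovp_{0a}}$ is in the class of $\mfB_{\ovp_{0a}, M_{\textup{gov}}}(r_a)$, so on $G_{K(\Vplac_0\cup\{\ovp_{0a}\cap F\})}$ its restriction is a homomorphism (the coboundary ambiguity is killed), and on $G_L$ it is a homomorphism that vanishes on suitable inertia. The ramification-measuring value $\mfR_{\ovp_{0a}}(\Phi_{\ovp_{0a}}) = r_a$ records $\Phi_{\ovp_{0a}}(\Tine_F\,\ovp_{0a})$, which is the ``$m_a$-seed''. Because $(K_{\textup{pre}}/F, \Vplac_0\cup\Vplac_{\textup{pre-unpack}}, |\FFF|)$ is unpacked and $L$ contains $K_{\textup{pre}}(\Vplac_0\cup\Vplac_{\textup{pre-unpack}})$, Chebotarev and the structure of $\Mod(K_{\textup{pre}}/F,\cdot)$ give me freedom to choose the $\tau_s$: I need $\tau_s$ acting trivially on $N[\omega]$, with $\Phi_{\ovp_{0a}}(\tau_s)=0$, and with $\chi_s(x_{1s})(\tau_s)\chi_s(x_{0s})(\tau_s)^{-1}=\xi$ while $\chi_t$ is ``constant'' on $\tau_s$ for $t\ne s$. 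The homomorphisms $\chi_s = \mfB_{\pi_s(x),\FFF}(h_s)$ have independent ramification (this is the prefix-index hypothesis, condition on $\mathrm{k}V_\omega$ in Definition \ref{defn:ready}), so such $\tau_s$ exist inside the relevant Galois group; adjusting by elements of $G_L$ (which act trivially on everything in sight) lets me also arrange $\Phi_{\ovp_{0a}}(\tau_s)=0$.

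Then I would run Construction \ref{ctn:variety} verbatim: starting from a $\sigma_0\in G_L$ with $\Phi_{\ovp_{0a}}(\sigma_0)$ congruent to $\eta(\emptyset, m')$ modulo $N(X)[\omega^{|S_{\textup{pre}}|}] + N_{\mathscr{P}(S_{\textup{pre}})}(X)$ — such $\sigma_0$ exists because $\Phi_{\ovp_{0a}}$ is surjective enough onto $M_{\textup{gov}}/(\text{lower filtration})$ once restricted appropriately — the iterated commutator \eqref{eq:iterated_commutator} gives a single eta element $\eta(S_{\textup{pre}}, (x_{1s})_s, m)$. Since the iterated commutator of elements of $G_L$ (note $G_L$ is normal in an appropriate overgroup, or at least the commutators land in $G_L$ because the outer commutator is with $\sigma_0^{-1}\in G_L$ and $G_L$ absorbs) again lies in $G_L$, and since products of such realizable eta elements, varying $(x_{1s})_s$ over $\prod_{s\in S_{\textup{pre}}} X_s$, span precisely the image of $\FFF_\ell^{X_{\textup{pre}}}\otimes\langle\pi_{\saaa}(v_a)(\overline\zeta)\rangle$ in $M_{\textup{gov}}$ by Proposition \ref{prop:eta_basis}, I can write any target $m$ as such a product and hence as $\Phi_{\ovp_{0a}}(\sigma)$ for a single $\sigma\in G_L$ (using that $\Phi_{\ovp_{0a}}$ is a homomorphism on $G_L$, so values add under multiplication of group elements).

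The main obstacle I anticipate is verifying that suitable $\tau_s$ and $\sigma_0$ lie in $G_L$ — i.e. that imposing the extra constraint of being in the smaller field $L$ (rather than just in $K(\Vplac_0\cup\{\ovp_{0a}\cap F\})$ or in the full unpacked field) does not obstruct the independence needed in Construction \ref{ctn:variety}. This comes down to a Chebotarev-type argument showing the relevant Galois extensions are linearly disjoint over the base: the extension cut out by $\chi_s$-ramification data, the extension through which $\Phi_{\ovp_{0a}}$ factors, the field $K_{\textup{pre}}(\Vplac_0\cup\Vplac_{\textup{pre-unpack}})$, and $K(\Vplac_0\cup\{\ovp_{0a}\cap F\})$ must interact correctly. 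Because the prefix primes, the $\saaa$-prime, and the base field $K_{\textup{pre}}$ were all chosen with disjoint ramification (and the unpacking hypothesis guarantees the absence of hidden global relations), this disjointness should hold, but it is the step requiring care rather than a routine citation.
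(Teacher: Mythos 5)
You correctly recognize Construction~\ref{ctn:variety} as the engine of the proof, and the overall shape — produce eta elements $\eta(S_{\textup{pre}},(x_{1s})_s, m)$ via iterated commutators and take products to span the target submodule — agrees with the paper. But the proof breaks down at the very step you flag as ``requiring care,'' and it does so because you misidentify what is needed there.

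The $\tau_s$ cannot lie in $G_L$. The twisting condition in Construction~\ref{ctn:variety} forces $\chi_s(x_{1s})(\tau_s)\ne\chi_s(x_{0s})(\tau_s)$, so $\tau_s$ must act nontrivially on the cyclic extension cut out by $\chi_s(x_{1s})=\mfB_{x_{1s},\FFF}(h_s)$; that extension is ramified at a prefix prime and hence sits inside $K_{\textup{pre}}\subseteq L$. So any $\tau_s$ satisfying the twisting condition moves $L$, and ``verifying that suitable $\tau_s\dots$ lie in $G_L$'' is a dead end, not a linear-disjointness problem. The paper's proof consists of exactly one observation: take $\tau_s = \TineF{F}{\ovp_s}^{-1}$ with $\ovp_s\in X_{\textup{pre},s}$ and $\sigma_0 = \TineF{F}{\ovp_{0a}}^{-1}$, so that none of the individual elements is in $G_L$, yet the iterated commutator is. The mechanism is the compositum structure $L = L_1\cdot L_2$ with $L_1 = K_{\textup{pre}}(\Vplac_0\cup\Vplac_{\textup{pre-unpack}})$ and $L_2 = K(\Vplac_0\cup\{\ovp_{0a}\cap F\})$: the element $\sigma_0$ fixes $L_1$, since $\ovp_{0a}\cap F\notin\Vplac_0\cup\Vplac_{\textup{pre-unpack}}$ by the ``indivisible by primes in $X_{\saaa}$'' clause of Definition~\ref{defn:T}, while each $\tau_s$ fixes $L_2$, since $\ovp_s\cap F\notin\Vplac_0\cup\{\ovp_{0a}\cap F\}$. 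Restricting to $\Gal(L_1/F)$ kills $\sigma_0^{-1}$ in the innermost commutator; restricting to $\Gal(L_2/F)$ kills every $\tau_i^{-1}$. The iterated commutator therefore restricts trivially to both factors of $\Gal(L/F)\hookrightarrow\Gal(L_1/F)\times\Gal(L_2/F)$, hence lies in $G_L$.

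Your hedge — ``$\sigma_0^{-1}\in G_L$ and $G_L$ absorbs'' — is valid group theory, since $L/F$ is Galois and $G_L$ is normal in $G_F$. But it silently replaces $\sigma_0 = \TineF{F}{\ovp_{0a}}^{-1}$, which is manifestly \emph{not} in $G_L$ because $L\supseteq L_2$ is ramified at $\ovp_{0a}$, by some hypothetical $\sigma_0\in G_L$ realizing the seed value $\eta(\emptyset, m')$. Whether $\Phi_{\ovp_{0a}}|_{G_L}$ takes such a diagonal value is exactly the claim a Chebotarev argument would have to establish; you do not establish it, and the paper's explicit choice of $\sigma_0$ is designed so that the question never arises. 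So the proposal has a genuine gap at the one step where you acknowledge uncertainty, and the mechanism you sketch to close it — linear disjointness of the relevant Galois extensions — is not the mechanism the paper uses.
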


\begin{proof}
Choose an enumeration $s_1, \dots, s_k$ for $S_{\text{pre}}$. Using Construction \ref{ctn:variety}, we see that such a $\sigma$ can be constructed as a product of iterated commutators of the form
\[ \big[\TineF{F}{\ovp_1}^{-1}, \, \big[ \TineF{F}{\ovp_2}^{-1},\, \dots \big[ \TineF{F}{\ovp_k}^{-1},\,\, \TineF{F}{\ovp_{0a}}^{-1} \big]\dots \big] \big],\]
where $\ovp_i$ is some prime in $X_{\textup{pre}, s_i}$ for each $i \le k$.
\end{proof}

\begin{prop}
\label{prop:force_random}
For sufficiently large $H$ relative to $(K/F, \Vplac_0)$ and $N$, we have the following:

Choose any function
\[g: \{1, \dots, E_{\textup{pre-size}}\} \times X_{\textup{pre}} \to \tfrac{1}{\ell}\Z/\Z.\]
Also choose a $\ovp_{0a} \in X_{\saaa}(Z_{\textup{pre}})$ and $\ovp_{0b} \in X_{\sbbb}(Z_{\textup{pre}})$ so $\class{\ovp_{0a}}_{\Phi, \mathbb{T}}$ is Chebotarev-like at $\ovp_{0b}$ and dense.

Then we may write $\class{\ovp_{0a}}_{\Phi, \mathbb{T}}$ in the form
\[\class{\ovp_{0a}}_{\Phi, \mathbb{T}} = Y_{\textup{scrap}} \cup  Y^1 \cup \dots \cup Y^r,\]
where $r$ is some positive integer, and where
\[\# Y_{\textup{scrap}} \le  \exp^{(3)}\left(\tfrac{11}{16}\log^{(3)} H\right)^{-1/2}  \cdot \#\class{\ovp_{0a}}_{\Phi, \mathbb{T}}.\] 
 Here, the $Y^i$ are disjoint sets, and each $Y^i$ may be written in the form
\[Y^i = \{\ovp_{ai, 1}, \dots, \ovp_{ai, E_{\textup{pre-size}}}\}\]
so that
\[\beta_x \circ \pi_x\left(\Phi_{\ovp_{ai, j}}(\FrobF{F}{\ovp_{0b}}) -\Phi_{\ovp_{ai, 1}}(\FrobF{F}{\ovp_{0b}})\right) =(g(j,x) - g(1, x)) \cdot \pi_{\saaa}(v_a)(-1)\]
for $ j \le E_{\textup{pre-size}}$ and all $x \in X_{\textup{pre}}$. 
\end{prop}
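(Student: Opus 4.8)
The plan is to deduce this proposition from the bilinear/Chebotarev-like hypothesis together with Proposition \ref{prop:variety}, by reducing the problem to a question about fibers of an explicit linear map and then applying a greedy covering argument. First I would fix the target function $g$ and form, for each prime $\ovp_a \in \class{\ovp_{0a}}_{\Phi, \mathbb{T}}$, the vector $V(\ovp_a) \in \left(\FFF_\ell\right)^{X_{\textup{pre}}}$ recording the coordinates $\beta_x \circ \pi_x\!\left(\Phi_{\ovp_a}(\FrobF{F}{\ovp_{0b}}) - \Phi_{\ovp_{0a}}(\FrobF{F}{\ovp_{0b}})\right)$, expressed in terms of $\pi_{\saaa}(v_a)(-1)$ via \eqref{eq:gov_exp_TAR}. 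Since $\class{\ovp_{0a}}_{\Phi, \mathbb{T}}$ is a single equivalence class for the relation of Notation \ref{notat:PhiT_class}, the symbol $\symb{\ovp_a}{\ovp_{0b}}$ is the only invariant that changes as $\ovp_a$ ranges over the class, and Lemma \ref{lem:CTP_control} (or rather the cup-product computation underlying it, applied to $M_{\textup{gov}}$) shows that $V(\ovp_a)$ is a fixed $\FFF_\ell$-linear function of $\symb{\ovp_a}{\ovp_{0b}} \in \symb{\class{\ovp_{0a}}}{\class{\ovp_{0b}}}$. The key point, supplied by Proposition \ref{prop:variety}, is that this linear map is \emph{surjective} onto the relevant subspace: every $m$ in the image of $\beta^{-1}\!\left(\FFF_\ell^{X_{\textup{pre}}} \otimes \langle \pi_{\saaa}(v_a)(\overline{\zeta})\rangle\right)$ is $\Phi_{\ovp_{0a}}(\sigma)$ for some $\sigma \in G_L$, and by the Chebotarev density theorem each value of $\symb{\ovp_a}{\ovp_{0b}}$ compatible with the class is realized, so the vectors $V(\ovp_a)$ range over the full set of differences $(g(j,\cdot) - g(1,\cdot))$ one needs.

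Next I would translate ``Chebotarev-like at $\ovp_{0b}$'' into the statement that the pushforward of the counting measure on $\class{\ovp_{0a}}_{\Phi, \mathbb{T}}$ under the map $\ovp_a \mapsto \symb{\ovp_a}{\ovp_{0b}}$ is within $\exp^{(3)}\!\left(\tfrac{11}{16}\log^{(3)} H\right)^{-1}$ (relative to $\#\class{\ovp_{0a}}_{\Phi,\mathbb{T}}$) of the uniform distribution on $\symb{\class{\ovp_{0a}}}{\class{\ovp_{0b}}}$, and hence the pushforward under $\ovp_a \mapsto V(\ovp_a)$ is close to uniform on its (affine) image. One then greedily extracts blocks $Y^1, \dots, Y^r$: repeatedly pick, for each of the $E_{\textup{pre-size}}$ required ``slots,'' a prime whose $V$-value matches the prescribed difference $g(j,\cdot) - g(1,\cdot)$, removing chosen primes as we go. Because the distribution of $V$-values is nearly uniform and $E_{\textup{pre-size}} = \lfloor(\log\log H)^{2/3}\rfloor$ is tiny compared to the denominators appearing, each slot has a near-equal share of $\class{\ovp_{0a}}_{\Phi,\mathbb{T}}$, so the process runs until only an $\exp^{(3)}\!\left(\tfrac{11}{16}\log^{(3)} H\right)^{-1/2}$-fraction (absorbing the Chebotarev error and the final incomplete block) is left as $Y_{\textup{scrap}}$. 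Concretely, one takes $r$ to be the largest integer for which enough primes remain in every fiber, bounds the leftover in each fiber by the Chebotarev discrepancy plus $E_{\textup{pre-size}}$, and sums over the at most $\#\symb{\class{\ovp_{0a}}}{\class{\ovp_{0b}}} \le e_0^{n_{K_{\textup{pre}}/F}}$ fibers, which is negligible against the density lower bound on $\class{\ovp_{0a}}_{\Phi,\mathbb{T}}$ coming from Definition \ref{defn:Cheb-like}.

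The main obstacle I anticipate is bookkeeping the arithmetic of the various iterated-exponential/iterated-logarithm error terms: one must check that the Chebotarev-like error $\exp^{(3)}(\tfrac{11}{16}\log^{(3)}H)^{-1}$, the density threshold $\exp^{(3)}(\tfrac{11}{16}\log^{(3)}H)^{-1}$, the block size $E_{\textup{pre-size}} = \lfloor(\log\log H)^{2/3}\rfloor$, the number of fibers $\le e_0^{n_{K_{\textup{pre}}/F}}$ with $n_{K_{\textup{pre}}}$ bounded as in \eqref{eq:Kpre_deg}, and the dimension of $M_{\textup{gov}}$ (at most $E_{\textup{pre-size}}^k$) all combine so that the scrap set ends up bounded by $\exp^{(3)}(\tfrac{11}{16}\log^{(3)}H)^{-1/2} \cdot \#\class{\ovp_{0a}}_{\Phi,\mathbb{T}}$ for $H$ large relative to $(K/F,\Vplac_0)$ and $N$. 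The genuinely mathematical content — that $V(\ovp_a)$ depends $\FFF_\ell$-linearly and surjectively on the symbol, via Proposition \ref{prop:variety} and the cup-product interpretation from Lemma \ref{lem:CTP_control} — is comparatively short; everything else is the covering argument plus careful tracking of the growth-rate inequalities already established in Definition \ref{defn:T} and the proof of Proposition \ref{prop:PhiT_classes}.
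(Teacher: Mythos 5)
The broad strokes of your plan are right — Proposition \ref{prop:variety} supplies the surjectivity, and the Chebotarev-like property of $\class{\ovp_{0a}}_{\Phi, \mathbb{T}}$ at $\ovp_{0b}$ does the covering — but there is a real gap in the bridge between them, and the lemma you cite for the functional dependence is the wrong one.

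You assert that $V(\ovp_a)$ ``is a fixed $\FFF_\ell$-linear function of $\symb{\ovp_a}{\ovp_{0b}}$'' and attribute this to ``the cup-product computation underlying Lemma \ref{lem:CTP_control}.'' Lemma \ref{lem:CTP_control} computes a Cassels--Tate pairing and plays no role here; the determination of $\mfB_{\ovp_a, M_{\textup{gov}}}(r_a)$ at a fixed place by the symbol is the content of Proposition \ref{prop:same_symbs} (via Proposition \ref{prop:same_symbs_old}), not of Lemma \ref{lem:CTP_control}. Beyond the misattribution, this is also where the missing step lies: Proposition \ref{prop:variety} gives a $\sigma \in G_L$ with $\Phi_{\ovp_{0a}}(\sigma) = m$, i.e.\ it controls the value of $\Phi_{\ovp_{0a}}$ at a \emph{varying group element}, whereas what you need is to vary $\ovp_a$ with the evaluation point $\FrobF{F}{\ovp_{0b}}$ held \emph{fixed}. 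You do not explain how to go from one to the other. The paper's proof does this by introducing a phantom prime: given a target and $\ovp_a$, it uses Proposition \ref{prop:variety} together with Chebotarev to find some $\ovp_b \in \class{\ovp_{0b}}$ with $\Phi_{\ovp_a}(\FrobF{F}{\ovp_b})$ equal to the target, and then applies Proposition \ref{prop:same_symbs} to transport: any $\ovp_a'$ in the same class with $\symb{\ovp_a'}{\ovp_{0b}} = \symb{\ovp_a}{\ovp_b}$ satisfies $\Phi_{\ovp_a'}(\FrobF{F}{\ovp_{0b}}) = \Phi_{\ovp_a}(\FrobF{F}{\ovp_b})$. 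Only after this two-step translation does the equidistribution of symbols in Definition \ref{defn:Cheb-like} give the covering. Your proposal skips this entirely, and the claim that surjectivity of $\Phi_{\ovp_{0a}}|_{G_L}$ together with ``each value of the symbol is realized'' already yields surjectivity of $\ovp_a \mapsto V(\ovp_a)$ is not justified as written.

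A smaller point: the symbol set $\symb{\class{\ovp_{0a}}}{\class{\ovp_{0b}}}$ is a torsor under a product of cyclic groups (see Notation \ref{notat:symb_set}), not an $\FFF_\ell$-vector space, so ``$\FFF_\ell$-linear function of the symbol'' is not meaningful; the correct statement is just that the evaluation is \emph{determined} by the class and the symbol. Your treatment of the error-term bookkeeping at the end is fine in spirit and matches what the paper leaves implicit in the line ``The result then follows from Definition \ref{defn:Cheb-like},'' but as written the central algebraic transport is absent.
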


\begin{proof}
In this proof, all classes and symbols are defined with respect to $\mathbb{T}$.

Given $k$ satisfying $1 \le j \le E_{\textup{pre-size}}$, and given $\ovp_a$ in $\class{\ovp_{0a}}$, we may use Proposition \ref{prop:variety} to find a prime $\ovp_b$ in the class of $\ovp_{0b}$ such that
\[ \Phi_{\ovp_{a}}\left(\FrobF{F}{\ovp_{b}}\right) = \Phi_{\ovp_{0a}}\left(\FrobF{F}{\ovp_{0b}}\right) +g(j) \cdot \pi_{\saaa}(v_a)(-1).\]
From Proposition \ref{prop:same_symbs}, given a prime $\ovp_{a}' \in \class{\ovp_a}$ satisfying
\[\symb{\ovp_a'}{\ovp_{0b}} = \symb{\ovp_a}{\ovp_b},\]
we have 
\[\Phi_{\ovp_{a}'}\left(\FrobF{F}{\ovp_{0b}}\right)  = \Phi_{\ovp_{a}}\left(\FrobF{F}{\ovp_{b}}\right).\]
The result then follows from Definition \ref{defn:Cheb-like}.
\end{proof}

We now can finish the proof of Theorem \ref{thm:higher_smallgrid}.  Take $g$ to be the function constructed in Lemma \ref{cor:TAR_comb}. Choose $\class{\ovp_{0a}}_{\Phi, \mathbb{T}}$ and $\ovp_{0b}$ satisfying the conditions of Proposition \ref{prop:force_random}, and apply this result to $g$. If $Y_i$ is one of the subsets of $\class{\ovp_{0a}}_{\Phi, \mathbb{T}}$ constructed in this proposition, we find that 
\begin{equation}
\label{eq:Yi0bZp}
Y_i \times \{\ovp_{0b}\} \times Z_{\text{pre}}
\end{equation}
either lies in $\class{x_0}$ or is disjoint from this grid class. In the former case, we can bound the test mean of $\text{ct}_x(w)$ over \eqref{eq:Yi0bZp} using Corollary \ref{cor:TAR_comb}.

Take $Y_{\text{ab scrap}}$ to be the set of $(\ovp_{a}, \ovp_{0b}) \in X_{\saaa}(Z_{\text{pre}}) \times X_{\sbbb}(Z_{\text{pre}})$ such that either
\begin{enumerate}
\item $\class{\ovp_a}_{\Phi, \mathbb{T}}$ is not dense or not Chebotarev-like at $\ovp_{0b}$ or
\item $\ovp_a$ is in the set $Y_{\text{scrap}}$ constructed in Proposition \ref{prop:force_random} for $\class{\ovp_a} \times \{\ovp_{0b}\}$.
\end{enumerate}
The size of this set has upper bound
\[2 \cdot \exp^{(3)}\left(\tfrac{11}{16}\log^{(3)} H\right)^{-1/2} \cdot \# (X_{\saaa}(Z_{\textup{pre}}) \times X_{\sbbb}(Z_{\textup{pre}}))\]
by Proposition \ref{prop:force_random} and Corollary \ref{cor:an}.

Taking $Y$ to be the set defined in Theorem \ref{thm:higher_smallgrid}, we can combine the estimates over sets of the form \eqref{eq:Yi0bZp} to prove that the test mean of $\text{ct}_x(w)$ over
\[Y \backslash Y \cap \left(Y_{\text{ab scrap}} \times Z_{\text{pre}}\right)\]
is at most $\tfrac{1}{3}(\log \log H)^{-1/4}$. We also find that
\[\# Y \ge C^{-1} \cdot \# (X_{\saaa}(Z_{\textup{pre}}) \times X_{\sbbb}(Z_{\textup{pre}}))\]
for sufficiently large $H$ and some $C >0$ depending on $(K/F, \Vplac_0)$. So we have
\[\# \left(Y_{\text{ab scrap}} \times Z_{\text{pre}}\right) \le 2C   \cdot \exp^{(3)}\left(\tfrac{11}{16}\log^{(3)} H\right)^{-1/2} \cdot \# Y.\]
In other words, the set $Y_{\text{ab scrap}} \times Z_{\text{pre}}$ has negligible cardinality compared to $Y$. The theorem follows.
\qed

\printbibliography

\end{document}